\newcolumntype{L}[1]{>{\raggedright\let\newline\\\arraybackslash\hspace{0pt}}m{#1}}
\newcolumntype{C}[1]{>{\centering\let\newline\\\arraybackslash\hspace{0pt}}m{#1}}
\newcolumntype{R}[1]{>{\raggedleft\let\newline\\\arraybackslash\hspace{0pt}}m{#1}}
\numberwithin{section}{part}
\numberwithin{equation}{section}
\definecolor{aqua}{rgb}{0, 1.0, 1.0}
\definecolor{fuschia}{rgb}{1.0, 0, 1.0}
\definecolor{gray}{rgb}{0.502, 0.502, 0.502}
\definecolor{lime}{rgb}{0, 1.0, 0}
\definecolor{maroon}{rgb}{0.502, 0, 0}
\definecolor{navy}{rgb}{0, 0, 0.502}
\definecolor{olive}{rgb}{0.502, 0.502, 0}
\definecolor{purple}{rgb}{0.502, 0, 0.502}
\definecolor{silver}{rgb}{0.753, 0.753, 0.753}
\definecolor{teal}{rgb}{0, 0.502, 0.502}
\definecolor{cite}{RGB}{143, 113, 218}
\definecolor{url}{RGB}{218, 113, 136}
\theoremstyle{plain}
\newtheorem{theorem}{Theorem}[section]
\newtheorem{lemma}[theorem]{Lemma}
\newtheorem*{lemma*}{Lemma}
\newtheorem{proposition}[theorem]{Proposition}
\newtheorem{corollary}[theorem]{Corollary}
 \newtheoremstyle{TheoremNum}
        {\topsep}{\topsep}              %%% space between body and thm
        {\itshape}                      %%% Thm body font
        {}                              %%% Indent amount (empty = no indent)
        {\bfseries}                     %%% Thm head font
        {.}                             %%% Punctuation after thm head
        { }                             %%% Space after thm head
        {\thmname{#1}\thmnote{ \bfseries #3}}%%% Thm head spec
\theoremstyle{TheoremNum}
\newtheorem{thmn}{Theorem}
\theoremstyle{definition}
\newtheorem{definition}[theorem]{Definition}
\newtheorem{example}[theorem]{Example}
\newtheorem{remark}[theorem]{Remark}
\renewcommand{\owedge}{\varowedge}
\newcommand{\spi}{\pi^\mathrm{st}}
\newcommand{\dslash}{/\!\!/}
\providecommand{\keywords}[1]{{\small{\textit{Keywords:}} #1}}
\begin{document}

%-------------------------------------------------------------------
%

\title{Strict algebraic models for rational parametrised spectra II}
\author{V.~Braunack-Mayer\footnote{Mathematics, Division of Science, New York University Abu Dhabi, UAE.\newline\indent\indent
\emph{Email address:} \href{mailto://v.braunackmayer@gmail.com}{\tt v.braunackmayer@gmail.com}}}
\date{2020}
\maketitle

\begin{abstract}
In this article, we extend Sullivan's PL de Rham theory to obtain simple algebraic models for the rational homotopy theory of parametrised spectra.
This simplifies and complements the results of \emph{Strict algebraic models for rational parametrised spectra I}, which are based on Quillen's rational homotopy theory.

According to Sullivan, the rational homotopy type of a nilpotent space $X$ with finite Betti numbers is completely determined by a commutative differential graded algebra $A$ modelling the cup product on rational cohomology.
In this article we extend this correspondence between topology and algebra to parametrised stable homotopy theory: for a space $X$ corresponding to the cdga $A$, we prove an equivalence between specific rational homotopy categories for parametrised spectra over $X$ and for differential graded $A$-modules.
While not full, the rational homotopy categories we consider contain a large class of parametrised spectra.
The simplicity of the approach that we develop enables direct calculations in parametrised stable homotopy theory using differential graded modules.

To illustrate the usefulness of our approach, we build a comprehensive dictionary of algebraic translations of topological constructions; providing algebraic models for base change functors, fibrewise stabilisations, parametrised Postnikov sections, fibrewise smash products, and complexes of fibrewise stable maps.
\end{abstract}
\keywords{rational homotopy theory, parametrized spectra, differential graded algebra, minimal models}

\tableofcontents

\section{Introduction}
In this work we build upon Sullivan's rational homotopy theory to produce strict algebraic models for rational parametrised spectra.

Parametrised spectra are continuously varying families of spectra that are parametrised by a topological space. 
First introduced to study the Becker--Gottlieb transfer \cite{clapp_duality_1981}, parametrised spectra naturally arise throughout much of algebraic topology as the classifying objects of twisted homology and cohomology theories.
In applications, parametrised spectra are frequently either invoked explicitly, such as in recent treatments of twisted $K$-theory \cite{ando_twists_2010, hebestreit_homotopical_2020} and generalised Thom spectra \cite{ando_parametrized_2018}, or else they provide a useful, though often implicit, contextual backdrop; for example the Eilenberg--Moore and Atiyah--Hirzebruch spectral sequences are fundamentally statements about parametrised spectra, as are many other classical results.

The usefulness of parametrised spectra stems from the fact that they encode a subtle mixture of both stable and unstable homotopy theory.
For a given space $X$, an $X$-parametrised spectrum is a homotopically-coherent spectrum-valued local system $P\colon x\mapsto P_x$ on $X$.
It is fruitful to think of these data as providing a stable representation of the homotopy type of the base space $X$.
This is true in at least two ways: at each point $x$ of $X$ there is a \lq\lq Serre'' or \lq\lq holonomy'' action of $\Omega_x X$ on the fibre spectrum $P_x$; alternatively, by identifying the basepoints of the fibre spectra $P_x$ at each $x$ we obtain an ordinary (unparametrised) spectrum $X_! P$ equipped with a natural $X_+$-coaction.
That an $X$-spectrum encodes both of these notions of representation is a topological analogue of Koszul duality\footnote{Specifically, it is an analogue of the Koszual duality between Lie algebras and cocommuative coalgebras.}.

For each space $X$, the $X$-spectra are naturally organised into a stable $\infty$-category $\mathrm{Sp}_X$.
While we do not expect the reader to be fluent in the language of higher category theory, we occasionally  argue with stable $\infty$-categories in order to streamline certain arguments.
For instance, since $\mathrm{Sp}_X$ is a stable $\infty$-category, for any $X$-spectra $A, B$ there is a spectrum $X_\ast F_X(A,B)$ of \emph{fibrewise stable maps} from $A$ to $B$, with stable homotopy groups\footnote{Beware! The grading convention here differs by a sign from that of \cite{braunack-mayer_combinatorial_2020, braunack-mayer_strict_2020}.}
\[
\{A,B\}_X^{-k}  := \spi_{k} \big(X_\ast F_X(A,B)\big)\,.
\]
A map of $X$-spectra $f\colon A\to B$ is an equivalence precisely if the induced map of $\mathbb{Z}$-graded abelian groups $\{B, E\}_X^\ast\to \{A, E\}^\ast_X$ is an isomorphism for all $X$-spectra $E$.
This is equivalent to the condition that $f$ induces equivalences $f_x\colon A_x\to B_x$ on all fibre spectra.
Working modulo torsion, $f$ is a \emph{rational equivalence} if for all $X$-spectra $E$ the induced map
\[
\{B, E\}_X^\ast\otimes_\mathbb{Z}\mathbb{Q}
\longrightarrow 
\{A, E\}^\ast_X\otimes_\mathbb{Z}\mathbb{Q}
\]
is an isomorphism of $\mathbb{Z}$-graded rational vector spaces.
Equivalently, $f$ is a rational equivalence if each of the induced maps  $f_x\colon A_x\to B_x$ is a rational equivalence of fibre spectra.
In this article we are interested in studying the rational homotopy theory of $X$-spectra; that is, the localisation of $\mathrm{Sp}_X$ at the class of rational equivalences.

In the case that the parameter space $X$ is simply connected, Quillen \cite{quillen_rational_1969} shows that the rational homotopy type is completely determined by relatively simple algebraic data.
A natural question to ask, therefore, is whether the rational homotopy theory of parametrised spectra over a simply connected space also admits a description in terms of simple algebraic categories.
Previously in Part I \cite{braunack-mayer_strict_2020} we answered this question in the affirmative by proving a series of equivalences between the rational homotopy category of parametrised spectra over a simply connected space and algebraic homotopy categories of differential graded Lie representations and comodules.
These equivalences are highly structured and give a means by which to realise many constructions with parametrised spectra in terms of algebra; for example the fibrewise smash product of parametrised spectra becomes either (i) the derived tensor product of Lie representations or (ii) the derived cotensor product of comodules.
The trouble with these results is their complexity.
Building on Quillen's original arguments, we obtained equivalences of rational homotopy categories by deriving a long zig-zag of Quillen equivalences of model categories.
This complicates the issue of translating freely between algebra and topology, especially since tractable concrete models are difficult to lay hands on.

In this article we pursue a dual approach to the problem of producing strict algebraic models for rational parametrised spectra.
We work with Sullivan's approach to rational homotopy theory, which is based on a single pair of adjoint functors relating the category of simplicial sets and the category of rational commutative differential graded algebras (cdgas).
This adjunction induces an equivalence between the homotopy category of nilpotent spaces with finite Betti numbers and a certain homotopy category of cdgas.
The result is dual to Quillen's in the sense that the algebraic data involved are algebras computing cohomology rather than coalgebras computing homology.
While Sullivan's rational homotopy theory does require certain finiteness conditions, it is the vastly simpler of the two approaches.
This simplicity, along with its ability to deal with nilpotent $\pi_1$-actions, is what makes the Sullivan approach suitable for a very wide range of applications.

According to Sullivan, then, the rational homotopy type of a nilpotent space $X$ with finite Betti numbers may be identified with an equivalence class $[A]$  of cdgas up to quasi-isomorphism. 
For any cdga $A$ in this class, the cohomology algebra $H^\bullet(A)$ computes the cup product structure of $H^\bullet(X;\mathbb{Q})$ and, conversely, the rational homotopy groups of $X$ can be recovered from a minimal model of $A$.
By carefully choosing representatives $X$ and $A$ of the rational homotopy and quasi-isomorphism types, respectively, we are able to produce an adjunction
\begin{equation}
\label{eqn:TheAdjunction}
\begin{tikzcd}
\mathrm{Sp}_X
\ar[rr, shift left = 2, "\mathfrak{M}_A"]
\ar[rr, shift left = -2, leftarrow, "\bot", "\mathfrak{P}_A"']
&&
A\mathrm{-Mod}^\mathrm{op}
\end{tikzcd}
\end{equation}
relating $X$-spectra to differential graded $A$-modules.
We establish this adjunction using the combinatorial models for parametrised spectra studied in \cite{braunack-mayer_combinatorial_2020} and demonstrate that the adjunction is Quillen for the stable model structure on sequential parametrised spectra.
For any $X$-spectrum $P$, the $X_+$-coaction on the collapse spectrum $X_! P$ gives rise to an action of $H^\bullet(X;\mathbb{Q})$ on $H^\bullet(X_!P;\mathbb{Q})$.
If $P$ is a cofibrant sequential $X$-spectrum, the cohomology of the $A$-module $\mathfrak{M}_A(P)$ computes this $H^\bullet(X;\mathbb{Q})$-action.
On the other hand, the right adjoint $\mathfrak{P}_A$ provides a means by which to realise differential graded modules over cdgas directly in topology.

The main result of this article is that the adjoint functors of \eqref{eqn:TheAdjunction} induce inverse equivalences between specific rational homotopy categories of parametrised spectra and of differential graded modules:
\begin{thmn}[\ref{thm:RatParamHomThry}]
Let $X$ be a nilpotent space with finite Betti numbers and let $A$ be a cofibrant connected cdga modelling the rational homotopy type of $X$.
There is an adjoint equivalence of categories
\[
\begin{tikzcd}
Ho\big(\mathrm{Sp}_{X}\big)^\mathbb{Q}_{\mathrm{f.t.,nil,bbl}}
\ar[rr, shift left =2, "\mathbf{L}\mathfrak{M}_A"]
\ar[rr, leftarrow, shift left =-2, "\simeq", "\mathbf{R}\mathfrak{P}_A"']
&&
Ho\big(A\mathrm{-Mod})^\mathrm{op}_\mathrm{f.h.t.}
\end{tikzcd}
\]
between:
\begin{itemize}
  \item The rational homotopy category of nilpotent $X$-spectra whose fibrewise stable homotopy groups are bounded below and of degreewise finite rank; and 
  
  \item The homotopy category of $A$-modules of finite homotopical type.
\end{itemize}
\end{thmn}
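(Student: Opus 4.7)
The plan is to verify that the Quillen adjunction $(\mathfrak{M}_A \dashv \mathfrak{P}_A)$ of \eqref{eqn:TheAdjunction} descends to an equivalence on the specified subcategories. Two things need to be checked: (a) the derived functors $\mathbf{L}\mathfrak{M}_A$ and $\mathbf{R}\mathfrak{P}_A$ restrict to functors between the specified subcategories; (b) the derived unit and counit are equivalences there. I would attack (b) by cellular/Postnikov induction, first establishing the equivalence on a class of generators and then propagating it via fibre/cofibre sequences and convergent (co)limits.

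\textbf{Generators and base case.} On the algebraic side, $A$-modules of finite homotopical type admit semifree resolutions built cell-by-cell from shifts $A[n]$. On the topological side, bounded-below nilpotent $X$-spectra of degreewise finite rank are resolved by parametrised Postnikov towers whose successive fibres are parametrised rational Eilenberg--MacLane $X$-spectra carrying nilpotent monodromy. The base case of the induction consists in verifying that $\mathbf{L}\mathfrak{M}_A$ and $\mathbf{R}\mathfrak{P}_A$ exchange these two families of generators up to quasi-isomorphism. Since $A$ models the rational homotopy type of $X$ by hypothesis, this reduces, stratum by stratum, to classical Sullivan theory applied to each Postnikov piece: the parametrised $K(\mathbb{Q}^r,n)$-fibres are controlled by cohomologies of the corresponding shifted free $A$-modules, via the Quillen functoriality of the adjunction already established.

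\textbf{Inductive step and convergence.} Both derived functors send cofibre sequences to fibre sequences, being Quillen between pointed stable model categories, so cellular induction propagates the base case through any finite filtration. Convergence to the full subcategories then requires: on the algebraic side, that $A$-modules of finite homotopical type are precisely those obtained as homotopy colimits of their finite-cell approximations; on the topological side, that nilpotent $X$-spectra satisfying the stated conditions admit convergent Postnikov decompositions with finite-type pieces. The compatibility of these filtrations with the adjunction, that is the commutation of $\mathbf{R}\mathfrak{P}_A$ with homotopy limits and of $\mathbf{L}\mathfrak{M}_A$ with homotopy colimits of the relevant diagrams, then closes the argument.

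\textbf{Main obstacle.} I expect the hardest step to be the base case together with verifying that the adjunction faithfully tracks the nilpotent $\pi_1(X)$-action on fibres. This is precisely where the restriction to \emph{nilpotent} $X$-spectra becomes essential: Sullivan's rational theory only sees the nilpotent part of fundamental group actions, so the derived functors can only reproduce the corresponding piece of parametrised stable homotopy. A secondary technical issue is the convergence of the Postnikov filtration on the topological side, where one must ensure that the bounded-below and degreewise-finite-rank hypotheses suffice to control the $\mathrm{lim}^1$-obstructions to strict convergence of derived inverse limits in $\mathrm{Sp}_X$; the degreewise-finite-rank condition on fibrewise stable homotopy groups is the natural parametrised analogue of the finite Betti numbers hypothesis on $X$ and is what makes this convergence go through.
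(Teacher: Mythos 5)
Your proposal captures the paper's strategy: verify the unit/counit on a generator, propagate by cellular/Postnikov induction using exactness of the derived functors, then conclude by a convergence argument. Two details deserve sharpening, because they are where the content actually lives.

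First, the base case. You describe the topological generators as ``parametrised rational Eilenberg--MacLane $X$-spectra carrying nilpotent monodromy'' and the algebraic generators as the shifts $A[n]$; but these do not directly correspond. The shift $A[n]$ matches only the \emph{untwisted} spectrum $X^*\Sigma^n H\mathbb{Q}$, whereas a nilpotent (but nontrivially twisted) EM $X$-spectrum corresponds to a minimal $A$-module $A\otimes V^{=k}$ that need not be free when $A^1\neq 0$ (cf.~Remark \ref{rem:ModularEM} and Corollary \ref{cor:kthSlice}). The bridge is precisely Theorem \ref{thm:Nil} / Corollary \ref{cor:NilBblFinRatType}: nilpotence is equivalent to saying each slice $P_{=k}$ factors as a \emph{finite} sequence of extensions by untwisted $X^*\Sigma^k H\mathbb{Q}$'s. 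The paper's base case is therefore the single pair $(X^*H\mathbb{Q}, A)$, verified explicitly via computation with $\mathfrak{P}_A(A) = X^*H\mathbb{Q}$; the induction over extensions by untwisted EM spectra is then carried out on both $P_{=k}$'s and Postnikov sections. Your formulation conceals the step where nilpotence is most essentially used, and a careful execution of your plan would have to rediscover it.

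Second, the convergence. You propose tracking $\lim^1$-obstructions to show that the unit is an equivalence on all of $P$ given equivalences on the truncations $P_{\leq l}$. Note the potential snag: $\mathbf{L}\mathfrak{M}_A$ is a left derived functor, so it does not in general commute with the homotopy limit $P \simeq \mathrm{holim}_l P_{\leq l}$. The paper instead argues via connectivity: it shows (Corollary \ref{cor:Spec2ModkConnective}) that $\mathbf{R}\mathfrak{P}_A \mathbf{L}\mathfrak{M}_A$ preserves $k$-connectivity, so in the morphism of fibre sequences $P_{\geq(l+1)}\to P\to P_{\leq l}$ the error term $\mathbf{R}\mathfrak{P}_A\mathbf{L}\mathfrak{M}_A(P_{\geq(l+1)})$ becomes arbitrarily connective, and a five-lemma argument on fibrewise rational stable homotopy in a growing range of degrees concludes. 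This is morally equivalent to your $\lim^1$ argument, but sidesteps the commutation issue and is cleaner to run. Also note that it is the \emph{bounded below} hypothesis, together with this connectivity estimate, that drives the convergence; the degreewise-finite-rank hypothesis is what makes the correspondence with $A$-modules of finite homotopical type and allows the finiteness of each Postnikov stage, not the convergence per se.

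Overall this is the paper's proof in outline, with the two points above needing to be made precise before it would go through.
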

\noindent Let us briefly explain the various qualifications needed in this theorem since,
unlike the results of \cite{braunack-mayer_strict_2020}, here we must impose some finiteness conditions in order to get an equivalence of rational homotopy theories.
On the topological side we work with $X$-spectra $P$ for which the fibrewise stable homotopy groups are (i) bounded below and (ii) of degreewise finite rank.
Since we allow nilpotent parameter spaces $X$ we also work with \emph{nilpotent $X$-spectra}, which are $X$-spectra  $P$ such that the Serre action of $\pi_1X$ on each fibrewise stable homotopy group $\spi_k P_x$ is nilpotent.
Despite these restrictions we are still left with a relatively large class of parametrised spectra containing, for instance, the fibrewise suspension spectra of nilpotent fibrations over $X$. 

The conditions that we impose on the algebraic side are a little more complicated to describe.
For any bounded below $A$-module $M$ there is a quasi-isomorphism of $A$-modules $A\otimes V\to M$  for which the underlying graded module of the domain is free on some graded rational vector space $V$ and the differential on $A\otimes V$ is of a certain prescribed form.
In this case we say that $A\otimes V$ is a \emph{minimal model} of $M$.
A bounded below $A$-module is of \emph{finite homotopical type} if it has a minimal model of the form $A\otimes V$ where $V$ is degreewise finite dimensional.
Under the equivalence of Theorem \ref{thm:RatParamHomThry}, the condition that an $A$-module be of finite homotopical type ensures that the fibrewise stable homotopy groups of the $X$-spectrum $\mathbf{R}\mathfrak{P}_A(M)$ are of degreewise finite rank, and conversely.

An important consequence of Theorem \ref{thm:RatParamHomThry} is the fact that any algebraic construction in terms of differential graded modules over a cdga has a topological translation.
The final pages of this article are given over to exploring various aspects of this rational homotopy theory dictionary between topology and algebra;
the results are summarised in Table \ref{table}.
We find straightforward algebraic identifications of many important constructions from  parametrised stable homotopy theory: fibrewise stabilisation and destabilisation; pushforwards and pullbacks along a map of base of spaces; connective covers and Postnikov sections.
Extending classical results of Eilenberg and Moore, we show that  Theorem \ref{thm:RatParamHomThry} identifies the fibrewise smash product of $X$-spectra with the derived tensor product of $A$-modules in rational homotopy theory. 
Generalising the main result of \cite{felix_fibrewise_2010}, we show that rational homotopy classes of fibrewise stable maps between $X$-spectra are computed as $\mathrm{Ext}$-groups of $A$-modules and we use this result to derive spectral sequences for computing fibrewise stable maps.

\paragraph*{Organisation.}
This article is organised as follows:

In Section \ref{S:ParamSpec} we give a brief $\infty$-categorical overview of the central constructions of parametrised stable homotopy theory: parametrised spectra, base change functors, fibrewise stabilisations of retractive spaces, and connective covers and Postnikov sections.
We often need to work with point-set models for parametrised spectra, for which we recall the \emph{sequential parametrised spectra} of \cite{braunack-mayer_combinatorial_2020}.
Using these combinatorial models for parametrised spectra we prove a useful technical result characterising fibrewise Eilenberg--Mac Lane spectra parametrised over a connected space $X$ in terms of $\mathbb{Z}[\pi_1 X]$-modules.
In Section \ref{SS:NilParamSpec} we study a novel class of nilpotent parametrised spectra.
An $X$-spectrum over connected $X$ is nilpotent if $\pi_1 X$ acts nilpotently on its fibrewise stable homotopy groups; for an $X$-spectrum $P$ with bounded below fibrewise stable homotopy groups we show that $P$ is nilpotent if and only if it can be obtained from the trivial $X$-spectrum by a sequence of untwisted fibrewise Eilenberg--Mac Lane spectra over $X$.
The class of nilpotent bounded below $X$-spectra is important to us for two reasons: it contains the fibrewise stabilisations of nilpotent fibrations over $X$ and it is the topological setting for our rational homotopy theory results of Sections \ref{S:FibStabRat} and \ref{S:Dictionary}.

In Section \ref{S:RHT} we recall the Sullivan--de Rham equivalence theorem as proven by Bousfield and Gugenheim in \cite{bousfield_rational_1976}.
This equivalence of rational homotopy theories is induced by a Quillen adjunction between the model categories of simplicial sets and of commutative differential graded algebras (or, more specifically, the opposite category).
Passing to slice categories and taking augmentation ideals we get Quillen adjunctions between model categories of retractive spaces and connective differential graded modules.
We give a topological interpretation of the module associated to a retractive space in terms of a natural cohomology action on the quotient and prove a key relation between suspensions of retractive spaces and their associated modules.

Section \ref{S:FibStabRat} contains the proof of our main theorem.
Stabilising the adjunctions of Section \ref{S:RHT}, for each cdga $A$ we prove a Quillen adjunction between (the opposite of) the model category of unbounded $A$-modules and the model category of sequential spectra parametrised over the Sullivan spatial realisation of $A$.
After proving several convenient properties of these adjunctions, we formulate and prove our main result: Theorem \ref{thm:RatParamHomThry}.
The method of proof essentially boils down to an induction over extensions by untwisted fibrewise Eilenberg--Mac Lane spectra, for which the nilpotence condition is indispensable.

In the final Section \ref{S:Dictionary} we expand on our equivalence theorem by elucidating various aspects of a rational homotopy theory dictionary that relates algebraic constructions with modules to topological constructions with parametrised spectra.
The results are summarised in Table \ref{table}.
We give algebraic constructions modelling pushforwards and pullbacks of parametrised spectra, fibrewise stabilisation of retractive spaces, and connective covers and Postnikov sections of parametrised spectra.
We prove that fibrewise smash products (of nilpotent, bounded below parametrised spectra of finite type) are identified in rational homotopy theory with the derived tensor product of modules, generalising classical results of Eilenberg and Moore and situating them in the broader context of parametrised stable homotopy theory.
We conclude this discussion by demonstrating that rational homotopy classes of fibrewise stable maps between $X$-spectra are computed by forming $\mathrm{Ext}$-groups of the corresponding $A$-modules.
In favourable situations, we show that these $\mathrm{Ext}$-groups can be calculated by means of either a \lq\lq hyper-$\mathrm{Ext}$'' or \lq\lq minimal'' spectral sequence.

\paragraph*{Notation and conventions.}
\begin{itemize}
	\item We use $\infty$-categories in the first few pages of the article, but the reader is not required to be at all fluent in this language.
	The bulk of our work is carried out using explicit point-set models for homotopy types in the context of specific model categories.
	
	\item Our focus is rational homotopy theory and so, unless otherwise indicated, all homology and cohomology groups are with rational coefficients. For example $H^\bullet(-)$ is understood to mean $H^\bullet(-;\mathbb{Q})$ for both spaces and spectra.
	
	\item The category of rational cochain complexes is denoted by $Ch$.
	For a rational cochain complex $V\in Ch$ and an element $v\in V$ of homogeneous degree, we write $|v|$ for the degree of $v$.
	
	\item With a single exception, we adopt the terminology and notation for parametrised stable homotopy theory used in \cite{braunack-mayer_combinatorial_2020, braunack-mayer_strict_2020}.
	The exception is this: for $X$-spectra $P$ and $Q$ the $\mathbb{Z}$-graded abelian group of fibrewise stable maps from $P$ to $Q$ is given in degree $k$ by 
	\[
	\{P,Q\}_X^k = \mathrm{Sp}_X\big(P, \Sigma^k_X Q\big) \cong
	\mathrm{Sp}_X\big(\Sigma^{-k}_X P, Q\big)\,,
	\]
	which differs from \emph{loc.~cit.}~by a minus sign. 
	We make this change for two related reasons: upper indices in homotopy theory really ought to be \lq\lq cohomologically graded'', and we wish to be consistent with existing notation (e.g.~in \cite{crabb_fibrewise_1998, felix_fibrewise_2010}).
\end{itemize}

\paragraph*{Acknowledgements.}
This article is the third and final in a series \cite{braunack-mayer_combinatorial_2020, braunack-mayer_strict_2020} based on my 2018 PhD
thesis. I would like to thank Adam Rohrlach, Hisham Sati and Urs Schreiber for useful comments and encouragement during the writing of this article.

%%%%%%%%%%%%%%%%%%%%%%
%%%%%%%%%%%%%%%%%%%%%%

\section{Parametrised spectra}
\label{S:ParamSpec}
In this section we give a brief overview of parametrised spectra and provide a treatment of nilpotent parametrised spectra, a notion which appears to be new.

Intuitively, a parametrised spectrum is a spectrum-valued system of local coefficients over some base space $X$:
\begin{itemize}
  \item To each point $x$ in $X$ we assign a spectrum $P_x$ in a suitably continuous way,
  
  \item Each path $\gamma$ from $x$ to $y$ gives rise to a map of spectra $\gamma_\ast\colon P_x\to P_y$,
  
  \item A based homotopy of paths $h\colon \gamma\to \delta$ determines a homotopy between $\gamma_\ast$ and $\delta_\ast$,
\end{itemize}
and so on.
Such an object encodes a homotopically-coherent  Serre (or \lq\lq holonomy'') action of loops at $x\in X$ on the spectrum $P_x$, combining the unstable homotopy of $X$ (or, rather, of $\Omega_x X$) with the stable homotopy of $P_x$.

Regarding the base space $X$ as an $\infty$-groupoid, the most succinct definition of an \emph{$X$-parametrised spectrum} is as an $\infty$-functor $X\to \mathrm{Sp}$ valued in the stable $\infty$-category of spectra.
The $\infty$-category of $X$-parametrised spectra is thus
\[
\mathrm{Sp}_X := \mathrm{Fun}(X%^\mathrm{op}
, \mathrm{Sp})\,,
\]
the $\infty$-category of spectrum-valued presheaves on $X$%\footnote{Note that since $X$ is an $\infty$-groupoid there is a canonical equivalence $X^\mathrm{op}\cong X$.}
.
The suspension and loop functors prolong objectwise to auto-equivalences $\Sigma_X\colon (x\mapsto  P_x)\mapsto (x\mapsto \Sigma P_x)$ and $\Omega_X\colon (x\mapsto P_x) \mapsto (x\mapsto \Omega P_x)$ with respect to which $\mathrm{Sp}_X$ is a stable $\infty$-category.
The smash product of spectra furnishes $\mathrm{Sp}_X$ with a \emph{fibrewise smash product} $P \wedge_X Q \colon (x\mapsto P_x\wedge Q_x)$, which is closed symmetric monoidal (in the appropriate $\infty$-categorical sense).
For a map of base spaces $f\colon X\to Y$ there is an associated adjoint triple of exact functors
\begin{equation}
\label{eqn:BChangeooCat}
(f_!\dashv f^\ast\dashv f_\ast)\colon
\begin{tikzcd}
\mathrm{Sp}_X
\ar[rr, leftarrow, "\bot"]
\ar[rr, shift left =4]
\ar[rr, shift left =-4, "\bot"]
&&
\mathrm{Sp}_Y
\end{tikzcd}
\end{equation}
obtained respectively by left Kan extension, pullback, and right Kan extension along $f$.
The pullback functor $f^\ast$ is strongly closed monoidal, a consequence of which is the projection formula
\[
f_! (f^\ast A\wedge_X B) \cong A\wedge_Y f_! B
\]
for $A\in \mathrm{Sp}_Y$ and $B\in \mathrm{Sp}_X$.
\begin{remark}
\label{rem:ComoduleStructure}
For any space $X$, we write $X_! \colon \mathrm{Sp}_X \to \mathrm{Sp}$ for the pushforward along the terminal map $X\to \ast$.
For an $X$-spectrum $P$, the projection formula endows $X_!P$ with the structure of an $X_+$-comodule spectrum as follows.
Forming fibrewise smash products of $P$ with the component of the $(X_!\dashv X^\ast)$-unit at the fibrewise sphere spectrum $X^\ast S$ and applying $X_!$, we get
\[
\begin{tikzcd}[row sep = tiny]
X_! (X^\ast S\wedge_X P)
\ar[d, equals, "\wr"]
\ar[r]
&
X_!(X^\ast X_!X^\ast S\wedge_X P)
\ar[d, equals, "\wr"]
\\
X_!P
&
X_!X^\ast X
\wedge X_! P
\end{tikzcd}
\]
by the projection formula.
The pushforward $X_! X^\ast S \cong \Sigma^\infty_+ X$ is the suspension spectrum of $X$ (see Example \ref{exam:TypicalXSpec} (1)) and the resulting morphism of spectra $X_! P\to X_+ \wedge X_! P$ defines a coaction of $X_+$ (equivalently of $\Sigma^\infty_+ X$) on $X_! P$.
\end{remark}

For $A, B \in \mathrm{Sp}_X$ there is a spectrum of fibrewise stable maps from $A$ to $B$.
Explicitly, this spectrum is obtained by forming the spectrum $X_\ast F_X(A,B)$ of sections of the fibrewise internal hom $F_X(A,B)$.
The stable homotopy groups of this spectrum comprise the $\mathbb{Z}$-graded abelian group of \emph{fibrewise stable maps} from $A$ to $B$, which is given in dimension $k$ by
\[
\{A,B\}_X^k := \spi_{-k} \big(X_\ast F_X(A,B)\big)
\cong 
\mathrm{Sp}_X \big(A, \Sigma^k_X  B\big)\,.
\]
A map of $X$-spectra $A\to B$ is an equivalence in $\mathrm{Sp}_X$ precisely if for all $E\in \mathrm{Sp}_X$, the induced map of $\mathbb{Z}$-graded abelian groups $\{B,E\}_X^\ast \to \{A,E\}^\ast_X$ is an isomorphism.
Any point $x\colon \ast \to X$ gives rise to a pushforward functor $x_!\colon \mathrm{Sp}\to \mathrm{Sp}_X$ and, if $X$ is connected, the pushforward of the sphere spectrum $x_! S$ detects equivalences.
Note that if $x$ and $y$ are in the same path component of $X$, a choice of path from $x$ to $y$ determines an equivalence of functors $x_!\cong y_!$ and hence also between adjoints $x^\ast \cong y^\ast$.
In the case that $y=x$, the space $\Omega_x X$ acts by automorphisms on $x^\ast$ and we have the following
\begin{proposition}
\label{prop:FibModEquiv}
For any connected space $X$ there is an equivalence of stable $\infty$-categories between $\mathrm{Sp}_X$ and the $\infty$-category $\Omega X_+\mathrm{-Mod}$ of $\Omega X_+$-module spectra.
Under this equivalence, the base change adjunction $(x_!\dashv x^\ast)\colon \mathrm{Sp}\to \mathrm{Sp}_X$ induced by $x\colon \ast \to X$ is identified with the free-forgetful adjunction
\[
\begin{tikzcd}
\mathrm{Sp}
\ar[rr, shift left=2 , "\Omega X_+ \wedge -"]
\ar[rr, leftarrow, shift left =-2, "\bot"]
&&
\Omega X_+\mathrm{-Mod}\,.
\end{tikzcd}
\]
\end{proposition}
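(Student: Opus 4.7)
The strategy is to apply $\infty$-categorical monadicity to the adjunction $(x_!\dashv x^\ast)\colon \mathrm{Sp}\to \mathrm{Sp}_X$ and then compute the resulting monad. First I would verify the hypotheses of Lurie's Barr–Beck theorem. The right adjoint $x^\ast$ is conservative: an arrow in $\mathrm{Sp}_X = \mathrm{Fun}(X,\mathrm{Sp})$ is an equivalence precisely when each evaluation $y^\ast$ sends it to an equivalence, and for connected $X$ any choice of path from $x$ to $y$ induces an equivalence $x^\ast\simeq y^\ast$, so it suffices to test at $x$. Furthermore $x^\ast$ preserves all colimits, since by \eqref{eqn:BChangeooCat} it sits in an adjoint triple and so admits the further right adjoint $x_\ast$. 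Since $\mathrm{Sp}$ and $\mathrm{Sp}_X$ are both presentable stable $\infty$-categories, monadicity then exhibits $x^\ast$ as identifying $\mathrm{Sp}_X$ with modules over the monad $T := x^\ast x_!$ acting on $\mathrm{Sp}$.

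Next I would compute $T$ explicitly as the free module functor $\Omega X_+\wedge(-)$. The relevant input is base change around the pullback square
\[
\begin{tikzcd}
\Omega_x X \ar[r] \ar[d] & \ast \ar[d, "x"] \\
\ast \ar[r, "x"'] & X\,,
\end{tikzcd}
\]
which gives $x^\ast x_!\simeq (\Omega_x X)_!(\Omega_x X)^\ast$; pushing a spectrum $E$ forward along $\Omega_x X\to\ast$ after pulling it back trivially yields $\Sigma^\infty_+\Omega_x X\wedge E = \Omega X_+\wedge E$. The monad multiplication and unit come from the standard loop-composition and unit maps on $\Omega_x X$, so the $E_1$-monad structure obtained this way is precisely the one underlying the $E_1$-algebra structure on $\Omega X_+$. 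Identifying $T$-modules with $\Omega X_+$-module spectra and unwinding the adjunction produced by monadicity, one sees that under the resulting equivalence $\mathrm{Sp}_X \simeq \Omega X_+\mathrm{-Mod}$ the functor $x_!$ becomes the free module functor $E\mapsto \Omega X_+\wedge E$ and $x^\ast$ becomes the forgetful functor, as claimed.

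I expect the main obstacle to be justifying the two homotopy-coherent inputs invoked above: the base-change equivalence $x^\ast x_!\simeq (\Omega_x X)_!(\Omega_x X)^\ast$ (which is a smooth/proper base-change statement valid in the six-functor formalism for parametrised spectra, or equivalently a consequence of colimits in $\mathrm{Fun}(-,\mathrm{Sp})$ commuting with pullback of the indexing $\infty$-groupoid), and the matching of the monad structure on $T$ with the $E_1$-algebra structure on $\Omega X_+$ coming from loop concatenation. The latter is ultimately formal: the iterated composite $x^\ast x_! x^\ast x_!$ is computed by the double fibre product $\ast\times_X\ast\times_X\ast\simeq \Omega_x X\times \Omega_x X$, and the monad multiplication is induced by the folding map down to $\ast\times_X\ast\simeq \Omega_x X$, which is exactly concatenation of loops.
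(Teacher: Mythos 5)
Your proof is correct, and it takes a genuinely different route from the paper's. The paper does not invoke Lurie's Barr--Beck theorem. Instead, after computing $T(S)\simeq\Sigma^\infty_+\Omega X$ and observing that the adjunction $(x_!\dashv x^\ast)$ factors through $\Omega X_+\mathrm{-Mod}$, it produces the comparison functor $L\colon\Omega X_+\mathrm{-Mod}\to\mathrm{Sp}_X$ by the adjoint functor theorem, notes that $x^\ast\colon\mathrm{Sp}_X\to\Omega X_+\mathrm{-Mod}$ is conservative, and reduces to proving the unit $M\to x^\ast LM$ is an equivalence. That last step is done at the triangulated level: the class of $M$ for which the unit is an equivalence is a localising subcategory of $Ho(\Omega X_+\mathrm{-Mod})$, and it contains the weak compact generator $\Sigma^\infty_+\Omega X$ (where the unit is visibly the identity), so it is everything. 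In effect the paper gives a hands-on verification of monadicity via compact-generator techniques, whereas you verify the hypotheses of Barr--Beck--Lurie directly (conservativity of the pointwise evaluation $x^\ast$ on a connected groupoid, colimit-preservation because $x^\ast$ has the further right adjoint $x_\ast$, presentability of both sides) and then compute the monad by a Beck--Chevalley base change around the loop-space pullback square. Your computation of $T$ is slicker than the paper's "$T$ is determined by $T(S)$" argument, and the base-change description also makes the identification of the monad multiplication with loop concatenation transparent via the iterated fibre product $\ast\times_X\ast\times_X\ast$. What your route costs is that you must lean on two pieces of $\infty$-categorical machinery the paper avoids citing explicitly: the Barr--Beck--Lurie theorem itself, and the identification of modules over a monad of the form $A\wedge(-)$ with modules over the $E_1$-ring $A$. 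Both are standard (Higher Algebra \S4.7), and your instinct that these are the points requiring the most care is exactly right; once they are granted, your argument is complete and arguably cleaner than the paper's.
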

\begin{proof}[Sketch of proof.]
The monad of the adjunction $(x_!\dashv x^\ast)$ is $T\colon P\mapsto x^\ast x_! P$.
Since $x_!$ and $x^\ast$ preserve colimits, $T$ is determined by the image of the sphere spectrum $T(P)\cong T(S)\wedge P$.
One finds that $T(S)\cong \Sigma^\infty_+\Omega X$, with monadic structure corresponding to the concatenation of loops (this can be seen explicitly in certain model category presentations, e.g.~\cite[Example 2.20]{braunack-mayer_combinatorial_2020}).
The adjunction $(x_!\dashv x^\ast)$ thus factors as
\[
\begin{tikzcd}
\mathrm{Sp}
\ar[rr, shift left = 2, "\Omega X_+ \wedge -"]
\ar[rr, shift left=-2, leftarrow, "\bot", "\text{forgetful}"']
&&
\Omega X_+\mathrm{-Mod}
\ar[rr, shift left = 2, "L"]
\ar[rr, shift left = -2, leftarrow, "\bot", "x^\ast"']
&&
\mathrm{Sp}_X\,,
\end{tikzcd}
\]
where the existence of the left adjoint $L$ is a consequence of the adjoint functor theorem for $\infty$-categories.
The forgetful functor $\Omega X_+\mathrm{-Mod}\to \mathrm{Sp}$ preserves and reflects equivalences, as does $x^\ast\colon \mathrm{Sp}_X\to \mathrm{Sp}$ since $X$ is connected.
It follows that $x^\ast\colon \mathrm{Sp}_X\to \Omega X_+\mathrm{-Mod}$ preserves and reflects equivalences, so to show that the adjunction $(L\dashv x^\ast)$ is an equivalence it suffices to shows that the unit $M\to x^\ast L M$ is an equivalence for all $\Omega X_+$-module spectra $M$.
For this, we let $\mathcal{E}$ be the full subcategory on $Ho(\Omega X_+\mathrm{-Mod})$ on homotopy classes of $\Omega X_+$-module spectra for which $M \to x^\ast LM$ is an equivalence.
It is straightforward to verify that $\mathcal{E}$ is a localising subcategory.
The triangulated category $Ho(\Omega X_+\mathrm{-Mod})$ has weak compact generator (the homotopy class of ) $\Sigma^\infty_+ \Omega X$ and the component of the unit at this object is equivalent to the identity:
\[
\Sigma^\infty_+\Omega X \longrightarrow x^\ast L \Sigma^\infty_+\Omega X
\cong
x^\ast x_! S \cong \Sigma^\infty_+ \Omega X\,.
\]
Since $\mathcal{E}$ is localising and contains the weak compact generator, we have $\mathcal{E} = Ho(\Omega X_+\mathrm{-Mod})$ and thus $(L\dashv x^\ast)$ is an equivalence of stable $\infty$-categories. 
\end{proof}

\begin{corollary}
\label{cor:CompactGen}
For any connected space $X$,  $\mathrm{Sp}_X$ is compactly generated by the pushforward $x_! S$ of the sphere spectrum along any map $x\colon \ast \to X$. 
\end{corollary}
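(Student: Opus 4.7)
The plan is simply to transport the statement across the equivalence of Proposition \ref{prop:FibModEquiv}. That proposition identifies $\mathrm{Sp}_X$ with $\Omega X_+\mathrm{-Mod}$ and sends $x_!S$ to the free module $\Omega X_+\wedge S\simeq \Sigma^\infty_+\Omega X$; moreover, its proof already records that $\Sigma^\infty_+\Omega X$ is a weak compact generator of the module $\infty$-category. The remaining task is to upgrade this to an honest compact generator.

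Both aspects follow formally from the free-forgetful adjunction $(\Omega X_+\wedge -\dashv U)$. The adjunction supplies a natural equivalence
\[
\mathrm{Map}_{\Omega X_+\mathrm{-Mod}}(\Omega X_+\wedge S,\, M) \;\simeq\; \mathrm{Map}_{\mathrm{Sp}}(S,\, UM)
\]
for every module $M$. Since $U$ preserves filtered colimits (being the underlying-spectrum functor for modules over a ring spectrum) and $S$ is compact in $\mathrm{Sp}$, the left-hand side commutes with filtered colimits in $M$, so $\Omega X_+\wedge S$ is compact. The same equivalence identifies the fibrewise stable maps $\{x_!S, M\}^k_X$ with $\spi_{-k}UM$; vanishing of these groups for all $k$ forces $UM\simeq 0$ and hence $M\simeq 0$, because $U$ detects equivalences, as used already in the proof of Proposition \ref{prop:FibModEquiv}. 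Transporting back across the equivalence yields compactness and generation of $x_!S$ in $\mathrm{Sp}_X$.

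The only step that requires any justification beyond the equivalence of Proposition \ref{prop:FibModEquiv} is the preservation of filtered colimits by $U$, which is standard for modules over any $E_1$-ring spectrum; no real obstacle is anticipated, and the whole argument should fit in a few lines.
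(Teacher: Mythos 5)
Your argument is correct and matches the route the paper implicitly takes: the corollary is stated immediately after Proposition \ref{prop:FibModEquiv} precisely because it transports across that equivalence, under which $x_!S$ becomes the free module $\Sigma^\infty_+\Omega X$ and compact generation follows from the standard facts you invoke (conservativity and filtered-colimit preservation of the forgetful functor, compactness of $S$ in $\mathrm{Sp}$). Your fleshing-out of the compactness and generation checks is sound, including the sign bookkeeping $\{x_!S,M\}^k_X\cong\spi_{-k}UM$.
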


\begin{remark}
\label{rem_FibSmashProd}
The equivalence of stable $\infty$-categories $\mathrm{Sp}_X\cong \Omega X_+\mathrm{-Mod}$ identifies the fibrewise smash product of $X$-spectra $P\wedge_X Q$ with the smash product of fibre spectra $x\mapsto x^\ast P\wedge x^\ast Q$ equipped with the diagonal $\Omega X_+$-action (see  \cite[Section 2.4]{braunack-mayer_strict_2020} for an argument using model categories).
\end{remark}

\begin{remark}
All results stated for connected base spaces throughout this section are true for non-connected base spaces with some minor adjustments---one simply works separately over each connected component.
It is simpler to state results for connected spaces and so this is what we do.
\end{remark}

The main goal of this article is to provide algebraic models for rational parametrised spectra.
A map of $X$-spectra $A\to B$ is a \emph{rational equivalence} precisely if for all $E\in \mathrm{Sp}_X$, the induced map $\{B, E\}_X^\ast\otimes_\mathbb{Z}\mathbb{Q}\to \{A, E\}_X^\ast\otimes_\mathbb{Z}\mathbb{Q} $ is an isomorphism of graded rational vector spaces.
Localising at the class of rational equivalences we get a reflective subcategory
\[
\begin{tikzcd}
\mathrm{Sp}_X^\mathbb{Q}
\ar[rr, hookrightarrow, shift left =-2, "\bot"]
\ar[rr, leftarrow, shift left =2]
&&
\mathrm{Sp}_X
\end{tikzcd}
\]
of \emph{rational parametrised spectra}, with localisation functor given by smashing with the Eilenberg--Mac Lane spectrum $H\mathbb{Q}$ at each fibre.
\begin{remark}
\label{rem:RatEquiv}
The following conditions on a map of $X$-spectra $f\colon A\to B$ are equivalent:
\begin{itemize}
  \item $f$ is a rational equivalence of $X$-spectra,
  \item $f\wedge_X X^\ast H\mathbb{Q}$ is an equivalence of $X$-spectra,
  \item $x^\ast f\wedge H\mathbb{Q}$ is an equivalence of spectra for all $x\in X$, and 
  \item $f$ induces an isomorphism of graded rational vector spaces $\spi_\ast x^\ast A\otimes_\mathbb{Z}\mathbb{Q}\to \spi_\ast x^\ast B\otimes_\mathbb{Z}\mathbb{Q}$ for all $x\in X$.
\end{itemize}
In practice, it suffices to check the latter two conditions at a single point in each connected component of $X$.
\end{remark}

The first examples of parametrised spectra arise by forming fibrewise suspension spectra.
A \emph{retractive space} over $X$ is a space $Z$ with  maps $i\colon X\to Z$ and $p\colon Z \to X$ exhibiting $X$ as a retract (so that $p\circ i = \mathrm{id}_X$).
Retractive spaces over $X$ form an $\infty$-category $\mathcal{S}_{\dslash X}$, which by Lurie's straightening/unstraightening construction is equivalent to the $\infty$-category $\mathrm{Fun}(X%^\mathrm{op}
, \mathcal{S}_\ast)$ of presheaves on $X$ valued in pointed spaces.
Given a retractive space $Z$ over $X$ realised as a functor $\underline{Z}\colon X%^\mathrm{op}
\to \mathcal{S}_\ast$, the corresponding \emph{fibrewise suspension spectrum} is the composite
\[
\Sigma^\infty_X \underline{Z}
\colon 
\begin{tikzcd}
X%^\mathrm{op}
\ar[r, "\underline{Z}"]
&
\mathcal{S}_\ast
\ar[r, "\Sigma^\infty"]
&
\mathrm{Sp}
\end{tikzcd}
\]
given by stabilising the fibres of $Z\to X$.
Stabilising Lurie's straightening/unstraightening construction gives  an equivalence of $\infty$-categories $\mathrm{Sp}_X\simeq \mathrm{Stab}(\mathcal{S}_{\dslash X})$.
It follows that fibrewise suspension spectra generate $\mathrm{Sp}_X$ under colimits and finite limits.
\begin{example}
\label{exam:TypicalXSpec}
There are two especially useful examples of $X$-spectra to keep in mind:
\begin{enumerate}[label=(\arabic*)]
  \item The fibrewise stabilisation of $X\times  S^0 \cong X\coprod X$, which is equivalent to the pullback $X^\ast S$ of the sphere spectrum along the terminal map $X\to \ast$.
  For connected $X$, under the equivalence of Proposition \ref{prop:FibModEquiv} $X^\ast S$ corresponds to the sphere spectrum $S$ with trivial $\Omega X_+$-action.
  
  \item The fibrewise stabilisation of the wedge sum $X\vee_x S^k$ of the $k$-sphere at a point $x\in X$, which is equivalent to the pushforward $x_! \Sigma^k S \cong \Sigma^k_X x_! S$.
  For connected $X$, $\Sigma^k_X x_! S$ corresponds under Proposition \ref{prop:FibModEquiv} to the $k$-shifted suspension spectrum $\Sigma^k \Sigma^\infty_+\Omega_x X$ regarded as an $\Omega_x X$-module spectrum in the obvious way.
\end{enumerate}
\end{example}

\begin{example}
\label{exam:FibrewiseSuspensionSpectra}
Any map of spaces $p\colon Y\to X$ gives rise to a retractive space $Y_{X+}$
\[
\begin{tikzcd}
X
\ar[r]
&
\displaystyle{X\coprod Y}
\ar[r, "\mathrm{id}_X + p"]
&
X\,
\end{tikzcd}
\]
which is the result of freely adjoining a basepoint to each of the fibres of $p$.
We write $\Sigma_{X+}^\infty Y$ for the corresponding fibrewise suspension spectrum;
the fibre spectrum of $\Sigma_{X+}^\infty Y$ at  $x\in X$ the suspension spectrum $\Sigma^\infty_+ F_x$ of the fibre of $p$ at $x$. 

If $X$ is connected, the fibre $F$ of $p$ has a natural $\Omega X$-action.
Under Proposition \ref{prop:FibModEquiv}, the fibrewise suspension spectrum $\Sigma_{X+}^\infty Y$ corresponds to $\Sigma^\infty_+ F$ with the stabilised $\Omega X_+$-action:
\[
\Sigma^\infty_+( \Omega X\times F \to F)\cong (\Omega X_+\wedge \Sigma^\infty_+ F\to \Sigma^\infty_+ F)\,.
\]
\end{example}

Analogously to the unparametrised setting, we can form connective covers and Postnikov sections of parametrised spectra.
While the basic ideas are straightforward, we did not find the details set down in the literature and so we record them here.
\begin{definition}
For an integer $k$, an $X$-spectrum $P$ is \emph{$k$-connective} if the spectrum $x^\ast P$ is $k$-connective for any $x\colon \ast \to X$.
Similarly, $P$ is \emph{$k$-coconnective} if $x^\ast P$ is $k$-coconnective for any $x\colon \ast \to X$.
The full subcategories of $\mathrm{Sp}_X$ spanned by the $k$-connective and $k$-coconnective spectra are $\mathrm{Sp}_X^{\geq k}$ and $\mathrm{Sp}_X^{\leq k}$ respectively.
\end{definition}
\begin{remark}
If the base space $X$ is connected, it suffices to check for $k$-(co)connectivity at a single point of $X$.
\end{remark}
\begin{proposition}
\label{prop:kconnectiveXspec}
Fix $k\in \mathbb{Z}$, a connected space $X$, and a point $x\in X$.
The full subcategory $\mathrm{Sp}_X^{\geq k}$ spanned by $k$-connective $X$-spectra is generated by $x_! \Sigma^k S$ under colimits.
\end{proposition}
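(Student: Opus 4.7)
The plan is to leverage the equivalence $\mathrm{Sp}_X\simeq\Omega X_+\mathrm{-Mod}$ of Proposition \ref{prop:FibModEquiv}, under which $x_!\Sigma^k S$ corresponds to the shifted free module $\Sigma^k\Omega X_+$, and then to carry out a standard cell filtration argument in modules over the connective ring spectrum $\Omega X_+$.

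First I would verify the easy inclusion. The functor $x^\ast$ is exact and, as a left adjoint to $x_\ast$, preserves colimits; moreover Proposition \ref{prop:FibModEquiv} gives $x^\ast x_! S\simeq\Sigma^\infty_+\Omega X$, so $x^\ast x_!\Sigma^k S\simeq\Sigma^k\Sigma^\infty_+\Omega X$ is $k$-connective and hence $x_!\Sigma^k S\in\mathrm{Sp}_X^{\geq k}$. Since $x^\ast$ preserves all colimits and $\mathrm{Sp}^{\geq k}$ is closed under colimits in $\mathrm{Sp}$, the subcategory $\mathrm{Sp}_X^{\geq k}$ is itself closed under colimits. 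Writing $\mathcal{D}$ for the smallest full subcategory of $\mathrm{Sp}_X$ containing $x_!\Sigma^k S$ and closed under small colimits, we deduce $\mathcal{D}\subseteq\mathrm{Sp}_X^{\geq k}$.

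For the reverse inclusion, I would first observe that $\Sigma(-)$ is the pushout $\ast\sqcup_{(-)}\ast$ and hence itself a colimit, so $\mathcal{D}$ contains $x_!\Sigma^n S$ for every $n\geq k$. Transporting via Proposition \ref{prop:FibModEquiv} reduces the problem to showing that every $\Omega X_+$-module $M$ with $k$-connective underlying spectrum lies in the colimit closure of $\{\Sigma^n\Omega X_+:n\geq k\}$. I would build a cellular approximation $M^{(k)}\to M^{(k+1)}\to\cdots$ equipped with a compatible map to $M$ and exhibit $M$ as $\mathrm{colim}_n M^{(n)}$. The base case sets $M^{(k)}=\bigoplus_{I_k}\Sigma^k\Omega X_+$ with map hitting a generating set for $\spi_k M$. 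Inductively, given $M^{(n)}\to M$ that is a $\spi_i$-isomorphism for $i<n$ and a $\spi_n$-surjection, choose a map $\bigoplus\Sigma^n\Omega X_+\to M^{(n)}$ whose image generates $\ker(\spi_n M^{(n)}\twoheadrightarrow\spi_n M)$ and that lifts through the homotopy fibre of $M^{(n)}\to M$ (so that the composite to $M$ is null), take its cofibre, and then take the coproduct with a wedge $\bigoplus_{J_{n+1}}\Sigma^{n+1}\Omega X_+$ indexed by generators of $\spi_{n+1}M$. A routine long-exact-sequence check then upgrades the inductive hypothesis to stage $n+1$, and since colimits in $\mathrm{Sp}_X$ are detected on homotopy groups via $x^\ast$, the colimit over $n$ yields an equivalence $\mathrm{colim}_n M^{(n)}\xrightarrow{\simeq}M$.

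The main obstacle is performing the cellular construction coherently in the $\infty$-categorical setting---specifically, making the successive choices of attaching maps and nullhomotopies into a genuine diagram and verifying the colimit identification. This is however the well-known cell filtration for connective modules over a connective $E_1$-ring spectrum (compare Lurie, \emph{Higher Algebra}, §7.2.2), and it applies verbatim to $R=\Omega X_+$ with the t-structure inherited from $\mathrm{Sp}$.
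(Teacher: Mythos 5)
Your proof is correct and follows the same underlying idea as the paper's, namely producing a cellular approximation of an arbitrary $k$-connective $X$-spectrum built from shifted copies of $x_!\Sigma^k S$. The difference is one of explicitness and framing: the paper stays in $\mathrm{Sp}_X$ and simply \emph{invokes} the abstract existence of a cellular approximation $P_{\geq k}\to P$ for the class of objects generated by $x_!\Sigma^k S$ under sums and suspensions in the triangulated category $Ho(\mathrm{Sp}_X)$, then notes that $x_! S^p$ corepresents $P\mapsto\spi_p(x^\ast P)$ to conclude that this map is an equivalence when $P$ is $k$-connective; whereas you transport the problem across the equivalence $\mathrm{Sp}_X\simeq\Omega X_+\mathrm{-Mod}$ of Proposition~\ref{prop:FibModEquiv} and construct the cellular filtration $M^{(k)}\to M^{(k+1)}\to\cdots\to M$ by hand, attaching free $\Omega X_+$-cells degree by degree. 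Your route has the advantage of landing squarely in the textbook situation of modules over a connective $E_1$-ring, where the cell filtration and its $\infty$-categorical coherence are standard (you correctly flag this and point to the right reference); the paper's route is shorter because it defers the construction to general triangulated-category machinery, which is entirely in keeping with the ``sketch of proof'' label. Both are fine; one small thing worth noting is that the paper's argument makes the corepresentability of $\spi_p(x^\ast(-))$ by $x_! S^p$ the crux of the ``equivalence when $P$ is $k$-connective'' step, while in your version that role is played by the direct long-exact-sequence bookkeeping in the inductive construction --- these are two faces of the same fact.
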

\begin{proof}[Sketch of proof]
Write $\langle x_! \Sigma^k S\rangle$ denote the full subcategory of $\mathrm{Sp}_X$ generated by $x_! \Sigma^k S$ under colimits.
The fibre of $x_! \Sigma^k S$ at $x$ is $\Sigma^k \Sigma^\infty_+ \Omega X$ (Example \ref{exam:TypicalXSpec} (2)), which is certainly $k$-connective.
The functor $x^\ast \colon \mathrm{Sp}_X\to \mathrm{Sp}$ computing fibre spectra at $x$ preserves colimits, hence any $X$-spectrum in $\langle x_! \Sigma^k S\rangle$ is $k$-connective.

Conversely, let $P$ be any $X$-spectrum.
The subcategory $\langle x_! \Sigma^k S\rangle$ determines a class of objects of the triangulated category $Ho(\mathrm{Sp}_X)$ which is closed under sums and suspensions, and hence there is an $X$-spectrum $P_{\geq  k} \in \langle x_! \Sigma^k S\rangle$ and a map of $X$-spectra $P_{\geq  k}\to P$ such that
\[
Ho(\mathrm{Sp}_X)(x_! S^p, P_{\geq k} )\longrightarrow 
Ho(\mathrm{Sp}_X)(x_! S^p, P)
\]
is an isomorphism for all $p\geq k$.
Since $x_! S^p$ corepresents the functor $P\mapsto \spi_p (x^\ast P)$, $P_{\geq k} \to P$ is an equivalence if $P$ is $k$-connective.
\end{proof}
\begin{corollary}
\label{cor:ConnCover}
For a connected base space $X$ and any $k\in \mathbb{Z}$, the subcategory of $k$-connective $X$-spectra is coreflective
\[
\begin{tikzcd}
\mathrm{Sp}_X^{\geq k}
\ar[rr, hookrightarrow, shift left =2]
\ar[rr, leftarrow, shift left=-2, "\bot", "\mathrm{cn}_k^X"']
&&
\mathrm{Sp}_X
\end{tikzcd}
\]
with coreflector $\mathrm{cn}_k^X\colon P\mapsto P_{\geq k}$.
The counit $P_{\geq k}\to P$ induces an isomorphism on fibrewise stable homotopy groups in dimensions $\geq k$.
\end{corollary}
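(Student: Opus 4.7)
The corollary is essentially a packaging of Proposition \ref{prop:kconnectiveXspec}, so the plan is to leverage that result directly rather than reprove anything substantive.

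First, for each $X$-spectrum $P$, Proposition \ref{prop:kconnectiveXspec} supplies an object $P_{\geq k}\in \mathrm{Sp}_X^{\geq k}$ together with a morphism $P_{\geq k}\to P$ such that the induced map
\[
Ho(\mathrm{Sp}_X)(x_!\Sigma^p S,\, P_{\geq k})\longrightarrow Ho(\mathrm{Sp}_X)(x_!\Sigma^p S,\, P)
\]
is a bijection for every $p\geq k$. Since $x_!\Sigma^p S$ corepresents $Q\mapsto \spi_p(x^\ast Q)$ (this is part of what is used in the sketch proof of Proposition \ref{prop:kconnectiveXspec}, and follows from the base change adjunction together with the fact that $S$ corepresents $\spi_0$), this bijection is the assertion that $P_{\geq k}\to P$ induces an isomorphism on fibrewise stable homotopy groups in all dimensions $\geq k$, settling the second claim.

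For the coreflection, the point to verify is that for every $k$-connective $X$-spectrum $Q$ the precomposition map
\[
Ho(\mathrm{Sp}_X)(Q,\, P_{\geq k})\longrightarrow Ho(\mathrm{Sp}_X)(Q,\, P)
\]
is a bijection. By Proposition \ref{prop:kconnectiveXspec}, $\mathrm{Sp}_X^{\geq k}$ is generated under colimits by $x_!\Sigma^k S$; since contravariant Hom sends colimits to limits, the class of $Q$ for which the displayed map is an isomorphism is closed under colimits, and we have just verified it contains all suspensions $x_!\Sigma^p S$ with $p\geq k$. Hence the map is a bijection for every $Q\in \mathrm{Sp}_X^{\geq k}$, which establishes the adjunction and, by the universal property, the functoriality of $\mathrm{cn}_k^X\colon P\mapsto P_{\geq k}$.

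The only subtlety is the implicit set-theoretic issue of knowing that the constructions assemble into an actual functor $\mathrm{Sp}_X\to \mathrm{Sp}_X^{\geq k}$; this is immediate from the universal property on morphism sets just proved, which pins down $P_{\geq k}$ up to unique isomorphism and makes each map $P\to P'$ lift uniquely. No genuine obstacle arises, since the heavy lifting — the existence of $P_{\geq k}\in \langle x_!\Sigma^k S\rangle$ together with its universal mapping property on generators — has already been absorbed into the preceding proposition.
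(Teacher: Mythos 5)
Your overall route is different from, and more explicit than, the paper's: where the author simply cites the adjoint functor theorem for locally presentable $\infty$-categories (the inclusion $\mathrm{Sp}_X^{\geq k}\hookrightarrow \mathrm{Sp}_X$ preserves colimits, and the right adjoint is then identified via Proposition~\ref{prop:kconnectiveXspec}), you instead verify the universal property on the generators $x_!\Sigma^p S$ and propagate by closure under colimits. That is a legitimate and more elementary alternative, and the check that the counit induces an isomorphism on $\spi_p$ for $p\geq k$ is correctly reduced to the corepresentability of $Q\mapsto\spi_p(x^\ast Q)$.

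However, there is a genuine imprecision in the closure step. You assert that ``contravariant Hom sends colimits to limits'' to conclude that the class of $Q$ with $Ho(\mathrm{Sp}_X)(Q,P_{\geq k})\to Ho(\mathrm{Sp}_X)(Q,P)$ a bijection is closed under colimits. This is not true at the level of the homotopy category: a sequential colimit in $\mathrm{Sp}_X$ becomes only a weak colimit in $Ho(\mathrm{Sp}_X)$, and $[\mathrm{hocolim}_i Q_i, P]$ is related to $\lim_i[Q_i,P]$ by a Milnor exact sequence with a potentially nonzero $\lim^1$ term, so hom-sets in $Ho(\mathrm{Sp}_X)$ do not convert $\infty$-categorical colimits into limits of sets. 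The correct formulation is to run the argument at the level of mapping spaces: let $\mathcal{C}$ be the class of $Q$ such that $\mathrm{Map}_{\mathrm{Sp}_X}(Q,P_{\geq k})\to\mathrm{Map}_{\mathrm{Sp}_X}(Q,P)$ is an equivalence of spaces. This class \emph{is} closed under colimits, since $\mathrm{Map}_{\mathrm{Sp}_X}(-,A)$ converts colimits into limits in the $\infty$-category of spaces. It remains to check $x_!\Sigma^p S\in\mathcal{C}$ for $p\geq k$: for $n\geq 0$,
\[
\pi_n\,\mathrm{Map}\big(x_!\Sigma^p S,\,P\big)\;\cong\;\big[x_!\Sigma^{p+n}S,\,P\big]\;\cong\;\spi_{p+n}(x^\ast P)\,,
\]
and since $p+n\geq k$ the map to $\spi_{p+n}(x^\ast P_{\geq k})$ is an isomorphism by the first part of your argument. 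Hence $\mathcal{C}$ contains the colimit closure $\langle x_!\Sigma^k S\rangle=\mathrm{Sp}_X^{\geq k}$, and applying $\pi_0$ recovers the bijection on hom-sets that you wanted. With this fix the remainder of your proposal — uniqueness of $P_{\geq k}$ from the universal property and the resulting functoriality of $\mathrm{cn}_k^X$ — is sound.
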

\begin{proof}
This is an immediate consequence of the Proposition and the adjoint functor theorem for locally presentable $\infty$-categories.
\end{proof}
\begin{remark}
For integers $k > l$, the counit $P_{\geq k }\to P$ factors as $P_{\geq k }\to P_{\geq l} \to P$.
\end{remark}
\begin{corollary}
\label{cor:PfwdConnectivity}
For any map of connected spaces $f\colon X\to Y$ and $k\in \mathbb{Z}$, the pushforward $f_!\colon \mathrm{Sp}_X\to \mathrm{Sp}_Y$ sends $k$-connective $X$-spectra to $k$-connective $Y$-spectra.
\end{corollary}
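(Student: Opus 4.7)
The plan is to reduce the statement to a single generating object using Proposition \ref{prop:kconnectiveXspec} and the fact that the pushforward $f_!$ is a left adjoint.

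First, I pick a point $x \in X$ and set $y = f(x) \in Y$. By Proposition \ref{prop:kconnectiveXspec}, the full subcategory $\mathrm{Sp}_X^{\geq k}$ of $k$-connective $X$-spectra is generated under colimits by the single object $x_! \Sigma^k S$; in particular, every $k$-connective $X$-spectrum $P$ may be presented as a colimit of copies of $x_! \Sigma^k S$.

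Next, I apply $f_!$. Since $f_!$ is defined as a left Kan extension (see \eqref{eqn:BChangeooCat}), it is a left adjoint and therefore preserves colimits. Moreover, by functoriality of left Kan extension along composites, $f_! x_! \simeq (f \circ x)_! = y_!$, so $f_! (x_! \Sigma^k S) \simeq y_! \Sigma^k S$. The latter lies in $\mathrm{Sp}_Y^{\geq k}$ by Proposition \ref{prop:kconnectiveXspec} applied this time to the connected space $Y$ with basepoint $y$.

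Finally, I invoke the fact that $\mathrm{Sp}_Y^{\geq k}$ is, by Proposition \ref{prop:kconnectiveXspec}, itself generated by $y_! \Sigma^k S$ under colimits, and in particular is closed under colimits inside $\mathrm{Sp}_Y$. Combining these observations, $f_!$ sends the generator of $\mathrm{Sp}_X^{\geq k}$ into $\mathrm{Sp}_Y^{\geq k}$ and preserves the colimits that build the remaining $k$-connective $X$-spectra, so $f_! P \in \mathrm{Sp}_Y^{\geq k}$ for every $P \in \mathrm{Sp}_X^{\geq k}$. There is no serious obstacle here; the only point requiring mild care is the transition $f_! x_! \simeq y_!$, which is just the standard compatibility of left Kan extension with composition of functors between $\infty$-groupoids.
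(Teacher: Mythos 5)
Your argument is correct and follows essentially the same route as the paper: use Proposition \ref{prop:kconnectiveXspec} to reduce to the generator $x_!\Sigma^k S$, note $f_! x_! \cong f(x)_!$, and invoke the fact that $f_!$ preserves colimits while $\mathrm{Sp}_Y^{\geq k}$ is closed under them. The only stylistic quibble is the phrase ``presented as a colimit of copies of $x_!\Sigma^k S$,'' which is slightly looser than ``lies in the colimit-closure of $\{x_!\Sigma^k S\}$''; your closing sentence correctly records the precise argument, so no harm is done.
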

\begin{proof}
Choosing $x\colon \ast \to X$, the pushforward of $x_! \Sigma^k S$ along $f$ is equivalent to $f(x)_! \Sigma^k S$.
Since $f_!$ preserves colimits, 
the claim follows from the Proposition.
\end{proof}

Now that we are equipped with functorial connective covers, we are in a position to define Postnikov sections of parametrised spectra.
For a connected space $X$ and $k\in \mathbb{Z}$, the \emph{$k$-th Postnikov section} of an $X$-spectrum $P$ is the cofibre 
\[
P_{\leq k}:=\mathrm{cofib} (P_{\geq (k+1)}\to P)\,.
\]
The map of $X$-spectra $P\to P_{\leq k}$ is readily seen to induce isomorphisms on fibrewise stable homotopy groups in dimensions $\leq k$; the fibrewise stable homotopy groups of $P_{\leq k}$ are trivial in dimensions $>k$.
We have thus essentially proven the following
\begin{proposition}
For a connected base space $X$ and any integer $k$, the subcategory of $(k+1)$-coconnective $X$-spectra is reflective
\[
\begin{tikzcd}
\mathrm{Sp}^{\leq k}_X
\ar[rr, hookrightarrow, shift left=-2, "\bot"]
\ar[rr, leftarrow, shift left =2, "\tau_{\leq k}^X"]
&&
\mathrm{Sp}_X
\end{tikzcd}
\] 
with reflector $\tau^X_{\leq k}\colon P\mapsto P_{\leq k}$.
\end{proposition}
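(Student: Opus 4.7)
My plan is a direct verification: since $\tau_{\leq k}^X P := P_{\leq k}$ has already been defined and shown (in the paragraph preceding the statement, via the long exact sequence attached to the cofibre sequence $P_{\geq k+1}\to P\to P_{\leq k}$) to be $(k+1)$-coconnective, what remains is to establish the universal property of a reflection: for any $Q\in \mathrm{Sp}^{\leq k}_X$, the unit $P\to P_{\leq k}$ induces an equivalence of mapping spaces $\mathrm{Map}_{\mathrm{Sp}_X}(P_{\leq k},Q)\simeq \mathrm{Map}_{\mathrm{Sp}_X}(P,Q)$. Functoriality of $\tau_{\leq k}^X$ is inherited from the functoriality of the coreflector $\mathrm{cn}_{k+1}^X$ supplied by Corollary \ref{cor:ConnCover}.

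The strategy is to apply $\mathrm{Map}_{\mathrm{Sp}_X}(-, Q)$ to the cofibre sequence $P_{\geq k+1}\to P\to P_{\leq k}$, producing a fibre sequence of mapping spaces
\[
\mathrm{Map}_{\mathrm{Sp}_X}(P_{\leq k}, Q)\longrightarrow \mathrm{Map}_{\mathrm{Sp}_X}(P, Q)\longrightarrow \mathrm{Map}_{\mathrm{Sp}_X}(P_{\geq k+1}, Q)\,,
\]
and thereby to reduce the proof to showing that the rightmost mapping space is contractible. This vanishing is the crux of the argument---really the only non-routine step---and the main obstacle one might expect is checking it uniformly in $P$ rather than object-by-object.

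The way around this obstacle is to exploit Proposition \ref{prop:kconnectiveXspec}: $\mathrm{Sp}^{\geq k+1}_X$ is generated under colimits by the single object $x_!\Sigma^{k+1}S$. The class of $R\in \mathrm{Sp}_X$ for which $\mathrm{Map}_{\mathrm{Sp}_X}(R, Q)\simeq \ast$ is closed under colimits (mapping out of a colimit is a limit, and a limit of contractible spaces is contractible), so the vanishing for every $R\in \mathrm{Sp}^{\geq k+1}_X$ follows as soon as it is established for the generator $R=x_!\Sigma^{k+1}S$. By the adjunction $(x_!\dashv x^\ast)$ the relevant mapping space is
\[
\mathrm{Map}_{\mathrm{Sp}_X}(x_!\Sigma^{k+1}S, Q)\simeq \mathrm{Map}_{\mathrm{Sp}}(\Sigma^{k+1}S, x^\ast Q)\,,
\]
whose $n$-th homotopy group is $\spi_{n+k+1}x^\ast Q$; these vanish for all $n\geq 0$ because $n+k+1>k$ and $Q\in \mathrm{Sp}^{\leq k}_X$. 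Specialising the vanishing to $R=P_{\geq k+1}$ furnishes the required equivalence on mapping spaces and completes the proof.
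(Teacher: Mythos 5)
Your proof is correct and, since the paper offers no explicit proof here—it simply remarks "we have thus essentially proven the following" after recording that $P\to P_{\leq k}$ is an isomorphism on fibrewise stable homotopy in degrees $\leq k$ and that $P_{\leq k}$ vanishes in degrees $>k$—your argument fills in exactly the details the paper elides. The reduction via Proposition \ref{prop:kconnectiveXspec} to the generator $x_!\Sigma^{k+1}S$ is the natural way to make the universal property precise, and it is the same generation result underpinning Corollary \ref{cor:ConnCover} on which the paper's construction relies, so this is a fleshed-out version of the intended argument rather than a genuinely different route.
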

\begin{proposition}
\label{prop:PostnikovPB}
For a map of connected spaces $f\colon X\to Y$, the pullback $f^\ast\colon \mathrm{Sp}_Y\to \mathrm{Sp}_X$ commutes with forming Postnikov sections; that is, for all $k\in \mathbb{Z}$ there is a natural equivalence $f^\ast \circ \tau^Y_{\leq k}\cong \tau_{\leq k}^X\circ f^\ast$.
\end{proposition}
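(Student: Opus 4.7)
The plan is to deduce this from two facts: first, that $f^\ast$ is exact, and second, that $f^\ast$ commutes with fibrewise connective covers.

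First I would record that $f^\ast$ preserves fibre spectra on the nose: for any $x\in X$ there is a natural equivalence $x^\ast f^\ast \simeq f(x)^\ast$ (by composition of pullbacks along $x\colon \ast \to X$ and $f\colon X\to Y$). Consequently $f^\ast$ preserves (co)connectivity in the fibrewise sense: if $P\in \mathrm{Sp}_Y$ is $k$-connective then so is $f^\ast P$, and $f^\ast$ preserves fibrewise stable homotopy groups. Moreover, sitting in the adjoint triple \eqref{eqn:BChangeooCat}, the functor $f^\ast$ is exact, so in particular it preserves cofibres.

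Next I would show $f^\ast$ commutes with connective covers, that is, $f^\ast\circ \mathrm{cn}_{k+1}^Y\cong \mathrm{cn}_{k+1}^X\circ f^\ast$. Applying $f^\ast$ to the counit $\mathrm{cn}_{k+1}^Y(Q)\to Q$ produces a map $f^\ast \mathrm{cn}_{k+1}^Y(Q)\to f^\ast Q$ whose domain is $(k+1)$-connective by the previous paragraph, and which induces isomorphisms on $\spi_n (-)_x$ for all $n\geq k+1$ and all $x\in X$ by Corollary \ref{cor:ConnCover} applied at $f(x)\in Y$. By the universal property of the connective cover (Corollary \ref{cor:ConnCover}), this map exhibits $f^\ast\mathrm{cn}_{k+1}^Y(Q)$ as $\mathrm{cn}_{k+1}^X(f^\ast Q)$, naturally in $Q$.

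Finally I would assemble these pieces. Applying the exact functor $f^\ast$ to the defining cofibre sequence $Q_{\geq k+1}\to Q\to \tau^Y_{\leq k}(Q)$ gives a cofibre sequence
\[
f^\ast Q_{\geq k+1}\longrightarrow f^\ast Q\longrightarrow f^\ast\tau^Y_{\leq k}(Q)\,,
\]
and by the previous step the leftmost object is naturally equivalent to $(f^\ast Q)_{\geq k+1}$. The induced identification of the cofibre with $\tau_{\leq k}^X(f^\ast Q)$ is the required natural equivalence. There is no real obstacle here; the only thing to take care of is using the right characterisation of the connective cover so that the comparison map is natural, rather than appealing to abstract uniqueness.
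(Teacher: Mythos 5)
Your proof is correct, but it takes the dual route to the paper's. The paper argues directly on the Postnikov side: it observes that $f^\ast \tau^Y_{\leq k}(A)$ is $(k+1)$-coconnective (since $f^\ast$ preserves fibre spectra), so by the reflective universal property the counit $f^\ast A\to \tau^X_{\leq k}(f^\ast A)$ factors through $f^\ast\tau^Y_{\leq k}(A)$, and the factored map is then checked to be an equivalence on fibrewise stable homotopy. You instead establish first that $f^\ast$ commutes with connective covers (via the coreflective universal property on $(k+1)$-connective spectra), and then apply exactness of $f^\ast$ to the defining cofibre sequence $Q_{\geq k+1}\to Q\to Q_{\leq k}$ to transfer the identification to Postnikov sections. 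Your route is a step longer but as a byproduct it records the commutation $f^\ast\circ \mathrm{cn}_{k+1}^Y\cong \mathrm{cn}_{k+1}^X\circ f^\ast$ explicitly, which the paper does not state; the paper's route is shorter because it avoids the passage through connective covers and cofibres. One small point of precision: when you invoke the universal property to identify $f^\ast \mathrm{cn}_{k+1}^Y(Q)$ with $\mathrm{cn}_{k+1}^X(f^\ast Q)$, the universal property only gives the comparison map; you still need to note (as you implicitly do with the homotopy-group check) that both are $(k+1)$-connective and the map is an isomorphism on $\spi_n$ for $n\geq k+1$, hence an equivalence. With that spelled out the argument is complete.
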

\begin{proof}
For each $x\colon \ast \to X$ and any $A\in \mathrm{Sp}_Y$ there is an equivalence of spectra $x^\ast f^\ast A \cong f(x)^\ast A$.
It follows that $f^\ast \tau^Y_{\leq k}  A$ is a $(k+1)$-coconnective $X$-spectrum and hence, by the above result, that the counit $f^\ast A \to \tau^X_{\leq k} f^\ast A$ naturally factors as
\[
f^\ast 
A
\longrightarrow
f^\ast \tau^Y_{\leq k}A
\longrightarrow
\tau^X_{\leq k} A\,.
\]
The latter map induces an equivalence of fibrewise stable homotopy groups at all points of $X$, so is an equivalence.
\end{proof}

For each integer $k$, consider the  commuting diagram of pulllback-pushout squares in $\mathrm{Sp}_X$
\begin{equation}
\label{eqn:SliceSquares}
\begin{tikzcd}
P_{\geq (k+1)} 
\ar[r]
\ar[d]
&
0_X
\ar[d]
\\
P_{\geq k}
\ar[r]
\ar[d]
&
P_{=k}
\ar[d]
\ar[r]
&
0_X
\ar[d]
\\
P
\ar[r]
&
P_{\leq k}
\ar[r]
&
P_{\leq (k-1)}
\end{tikzcd}
\end{equation}
The $X$-spectrum $P_{=k}$ is called the \emph{$k$-th slice} of $P$.
The fibrewise stable homotopy groups of $P_{=k}$ are concentrated in dimension $k$, where they coincide with those of $P$.
Pullback functors are exact, so as an immediate consequence of Proposition \ref{prop:PostnikovPB} we have the following
\begin{corollary}
\label{cor:SliceFibre}
For a map of connected spaces $f\colon X\to Y$, the pullback $f^\ast\colon \mathrm{Sp}_Y\to \mathrm{Sp}_X$ commutes with forming slices; that is, for all $k\in \mathbb{Z}$ and $A\in \mathrm{Sp}_Y$ there is a natural equivalence of $X$-spectra $(f^\ast A)_{=k} \cong f^\ast (A_{=k})$.
\end{corollary}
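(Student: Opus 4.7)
The plan is to realise the $k$-th slice as a fibre of a map between adjacent Postnikov sections and then reduce the claim to Proposition \ref{prop:PostnikovPB} via exactness of $f^\ast$. Concretely, the bottom-right square of the diagram \eqref{eqn:SliceSquares} exhibits $P_{=k}$ as the fibre of the natural map $P_{\leq k}\to P_{\leq (k-1)}$, and this identification is manifestly natural in $P$.

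Applying $f^\ast$ and using that $f^\ast$ is exact (it sits in the adjoint triple \eqref{eqn:BChangeooCat} and in particular preserves fibre sequences), I would first obtain
\[
f^\ast(A_{=k}) \;\simeq\; f^\ast\,\mathrm{fib}\bigl(A_{\leq k}\to A_{\leq(k-1)}\bigr)
\;\simeq\; \mathrm{fib}\bigl(f^\ast A_{\leq k}\to f^\ast A_{\leq (k-1)}\bigr).
\]
Then, invoking Proposition \ref{prop:PostnikovPB} at levels $k$ and $k-1$, I rewrite $f^\ast A_{\leq k}\simeq (f^\ast A)_{\leq k}$ and $f^\ast A_{\leq (k-1)}\simeq (f^\ast A)_{\leq (k-1)}$. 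The resulting fibre is, by the same slice-as-fibre identification, equivalent to $(f^\ast A)_{=k}$.

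Naturality of the chain of equivalences in $A$ follows because every ingredient (the defining pullback-pushout square for the slice, the exactness equivalences for $f^\ast$, and the natural equivalence $f^\ast\circ\tau^Y_{\leq k}\cong \tau^X_{\leq k}\circ f^\ast$ of Proposition \ref{prop:PostnikovPB}) is natural. There is no serious obstacle: once one agrees to characterise the slice as a fibre rather than as a cofibre of connective covers, the argument is a formal composition of three natural equivalences. If instead one preferred to use the cofibre description $P_{=k}\simeq \mathrm{cofib}(P_{\geq(k+1)}\to P_{\geq k})$, one would need the analogue of Proposition \ref{prop:PostnikovPB} for connective covers; exactness of $f^\ast$ would let one extract this from the Postnikov compatibility, but the fibre route is cleaner.
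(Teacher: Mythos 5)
Your proof is correct and takes essentially the same route the paper intends: the paper's preamble to the corollary says "Pullback functors are exact, so as an immediate consequence of Proposition \ref{prop:PostnikovPB}...", and you have simply spelled out the details — identifying $P_{=k}$ as $\mathrm{fib}(P_{\leq k}\to P_{\leq(k-1)})$ via \eqref{eqn:SliceSquares}, applying exactness of $f^\ast$, and invoking Proposition \ref{prop:PostnikovPB} at levels $k$ and $k-1$.
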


\subsection{Parametrised sequential spectra}
We often need to lay hands on good point-set models in order to build examples, carry out explicit calculations, and illuminate certain properties.
There are by now a fair few modern approaches to parametrised stable homotopy theory---for instance the comprehensive \cite{may_parametrized_2006} and the more recent \cite{ando_parametrized_2018, hebestreit_multiplicative_2020, malkiewich_parametrized_2019}, to name a few.
We shall use the sequential parametrised spectra of \cite{braunack-mayer_combinatorial_2020}.
Sequential parametrised spectra provide a bare-bones combinatorial approach to parametrised stable homotopy theory; while many features of the theory (particularly fibrewise smash products) are obscured, this minimality is particularly well-suited for studying the rationalisation of parametrised stable homotopy theory.

Sequential parametrised spectra are based on simplicial objects.
For any simplicial set $X$, there is a combinatorial model category of \emph{retractive spaces} over $X$
\[
\mathrm{sSet}_{\dslash X} = (\mathrm{sSet}_{/X})^{\mathrm{id}_X/}
\cong (\mathrm{sSet}^{X/})_{/\mathrm{id}_X}
\] 
whose objects are commuting diagrams of simplicial sets
\[
\begin{tikzcd}
X
\ar[rr, bend left =-20, "\mathrm{id}_X"']
\ar[r]
&
Z
\ar[r]
&
X
\end{tikzcd}
\]
Objects of $\mathrm{sSet}_{\dslash X}$ are often written $Z$, with a particular choice of structure maps exhibiting $X$ as a retract to be understood from context.
A morphism in $\mathrm{sSet}_{\dslash X}$ is a map of simplicial sets $\psi\colon Z\to Z'$ commuting with the structure maps.
The model structure on $\mathrm{sSet}_{\dslash X}$ is detected by the forgetful functor $\mathrm{sSet}_{\dslash X}\to \mathrm{sSet}$, so that that a morphism $\psi\colon Z\to Z'$ of retractive spaces is a weak equivalence, cofibration, or fibration precisely if the underlying map is so in the Kan model structure on simplicial sets.

The category $\mathrm{sSet}_{\dslash X}$ is enriched, tensored, and cotensored over pointed simplicial sets.
The tensor of $K\in \mathrm{sSet}_\ast$ and $Z\in\mathrm{sSet}_{\dslash X}$ is the colimit (computed in $\mathrm{sSet}$)
\[
K\owedge_X Z :=
\mathrm{colim}
\left\{
\begin{tikzcd}[sep=small]
X
\ar[r]
\ar[d]
&
X\times K
\ar[dd]
\ar[dr]
\\
Z
\ar[dr]
\ar[rr, crossing over]
&&
Z\times_X (X\times K)
\\
&
X
\end{tikzcd}
\right\}
\]
equipped with the evident structure maps as a retractive space over $X$.
One shows that $\mathrm{sSet}_{\dslash X}$ is a $\mathrm{sSet}_\ast$-model category, so that in particular the suspension in $Ho(\mathrm{sSet}_{\dslash X})$ is modelled by tensoring cofibrant objects with the simplicial circle $S^1 = \Delta^1/\partial \Delta^1$.
This operation gives rise to a functor $\Sigma_X \colon Z\mapsto S^1\owedge_X Z$.

\begin{remark}
\label{rem:SuspensionPresentation}
The category $\mathrm{sSet}_{\dslash X}$ also admits tensors by unpointed simplicial sets. 
For a simplicial set $K$ and $Z\in \mathrm{sSet}_{\dslash X}$ we set $K\otimes_X Z := K_+\owedge_X Z$.
It is straightforward to check that $K\otimes_X Z$ coincides with the pushout of simplicial sets
\begin{equation}
\label{eqn:TensoringinUndercat}
\begin{tikzcd}
K\times X
\ar[r]
\ar[d]
&
X
\ar[d]
\\
K\times Z
\ar[r]
&
K\otimes_X Z\,,
\end{tikzcd}
\end{equation}
which is regarded as a retractive space over $X$ via the evident structure maps.
Note that since $S^1 = \Delta^1_+ /\partial \Delta^1_+$, the cofibre  of the map $\partial\Delta^1 \otimes_X Z \to \Delta^1 \otimes_X Z$ computes the suspension.
\end{remark}

\begin{remark}
\label{rem:ModelPresentsoo}
Any simplicial model category $\mathcal{M}$ gives rise to an $\infty$-category $\mathcal{M}^\infty$ by (i) passing to the full subcategory $\mathcal{M}^\circ\hookrightarrow \mathcal{M}$ on cofibrant-fibrant objects and (ii) applying the homotopy coherent nerve functor $N_\Delta\colon \mathrm{sSetCat}\to \mathrm{sSet}$ to obtain $\mathcal{M}^\infty = N_\Delta \mathcal{M}^\circ$.
In the case of $\mathrm{sSet}_{\dslash X}$, the $\infty$-category $\mathrm{sSet}_{\dslash X}^\infty$ produced by this recipe is equivalent to $\mathcal{S}_{\dslash X}$ mentioned in the previous section.
\end{remark}

For any map of simplicial sets $f\colon X\to Y$ there is a triple of adjoint functors
\begin{equation}
\label{eqn:BChangeRetSp}
(f_!\dashv f^\ast\dashv f_\ast)\colon
\begin{tikzcd}
\mathrm{sSet}_{\dslash X}
\ar[rr, leftarrow, "\bot"]
\ar[rr, shift left =4]
\ar[rr, shift left =-4, "\bot"]
&&
\mathrm{sSet}_{\dslash Y}
\end{tikzcd}
\end{equation}
in which $f_!$ and $f^\ast$ are respectively obtained by forming pushouts and pullbacks along $f$.
Both $f_!$ and $f^\ast$ preserve $\mathrm{sSet}_\ast$-tensors, and $(f_!\dashv f^\ast)$ is a $\mathrm{sSet}_\ast$-Quillen adjunction which is a Quillen equivalence whenever $f$ is a weak equivalence.
\begin{remark}
Since pullbacks of simplicial sets generally fail to preserve weak equivalences, the adjunction $(f^\ast\dashv f_\ast)\colon \mathrm{sSet}_{\dslash Y}\to \mathrm{sSet}_{\dslash X}$ is not generally Quillen.
However, if $f\colon X\to Y$ is a fibration $(f^\ast\dashv f_\ast)$ is a $\mathrm{sSet}_\ast$-Quillen adjunction, which is additionally a Quillen equivalence if $f$ acyclic.
\end{remark}

\begin{definition}
For a simplicial set $X$, a \emph{sequential $X$-spectrum} is a sequence of retractive spaces $A_0, A_1, A_2, \dots$ over $X$ together with maps $\sigma_n\colon \Sigma_X A_n \to A_{n+1}$ in $\mathrm{sSet}_{\dslash X}$ for each $n\geq 0$; these data are often abbreviated to $A$, with the underlying sequence of retractive spaces and structure maps to be understood.
A morphism of sequential $X$-spectra $\psi\colon A\to B$ is a sequence of maps of retractive spaces $\psi_n\colon A_n\to B_n$ commuting with the structure maps.
Sequential $X$-spectra organise into a category, which is denoted $\mathrm{Sp}^\mathbb{N}_X$.
\end{definition}

For each $k\geq 0$ there is an adjunction 
\begin{equation}
\label{eqn:StabAdj}
\begin{tikzcd}
\mathrm{sSet}_{\dslash X}
\ar[rr, shift left =2, "\Sigma^{\infty-k}_X"]
\ar[rr, shift left=-2, leftarrow, "\widetilde{\Omega}^{\infty-k}_X"', "\bot"]
&&
\mathrm{Sp}_X^\mathbb{N}
\end{tikzcd}
\end{equation}
where $\widetilde{\Omega}^{\infty-k}_X\colon A\mapsto A_k$ extracts the $k$-th retractive space of a sequential $X$-spectrum and $\Sigma^{\infty-k}_X$ sends the retractive space $Z$ to the sequential $X$-spectrum freely generated by $Z$ in degree $k$
\[
(\Sigma^{\infty-k}Z)_{n}
=
\begin{cases}
0_X & n<k\\
\Sigma^{n-k}_X Z & n\geq k
\end{cases}
\]
equipped with the evident structure maps.
In the above expression, $0_X= (X\to X\to X)$ is the zero object of $\mathrm{sSet}_{\dslash X}$.
Taken together, for $k\geq 0$ the adjunctions \eqref{eqn:StabAdj} can be used to equip $\mathrm{Sp}^\mathbb{N}_X$ with a \emph{projective model structure}, for which the fibrations and weak equivalences are jointly detected by the functors  $\widetilde{\Omega}^{\infty-k}_X$.

\begin{remark}
\label{rem:SeqSpecCofibGen}
The projective model structure on $\mathrm{Sp}^\mathbb{N}_X$ is cofibrantly generated.
We describe sets of generating cofibrations and generating acyclic cofibrations (cf.~\cite[Section 2.1.1]{braunack-mayer_combinatorial_2020}).
For a simplex $\sigma\colon \Delta^n \to X$ there are morphisms of retractive spaces over $X$
\[
i_n(\sigma_{X+})=
\left\{
\begin{tikzcd}[sep=small]
&X
\ar[dr]
\ar[dl]
&
\\
X\displaystyle{\coprod}  \partial\Delta^n 
\ar[dr, "\mathrm{id}_X + \sigma|_{\partial\Delta^n}"']
\ar[rr]
&&
X\displaystyle{\coprod}\Delta^n
\ar[dl, "\mathrm{id}_X + \sigma"]
\\
&X&
\end{tikzcd}
\right\}
\;\;
\mbox{ and }
\;\;
h^n_k(\sigma_{X+})=
\left\{
\begin{tikzcd}[sep=small]
&X
\ar[dr]
\ar[dl]
&
\\
X\displaystyle{\coprod}  \Lambda^n_k 
\ar[dr, "\mathrm{id}_X + \sigma|_{\Lambda^n_k}"']
\ar[rr]
&&
X\displaystyle{\coprod}\Delta^n
\ar[dl, "\mathrm{id}_X + \sigma"]
\\
&X&
\end{tikzcd}
\right\}
\]
induced by the inclusions of the boundaries $i_n\colon \partial\Delta^n \hookrightarrow \Delta^n$ and horns $h^n_k\colon \Lambda^n_k\hookrightarrow \Delta^n$ respectively.
A set of generating cofibrations of $\mathrm{Sp}^\mathbb{N}_X$ is 
\[
\mathcal{I}^\mathbb{N}_X =\big\{  \Sigma^{\infty-k}_X i_n(\sigma_{X+}) \,\big|\,  \sigma\colon \Delta^n \to X\,,\; n,k\geq 0 \big\}
\]
obtained by applying the shifted fibrewise suspension functors to the $i_n (\sigma_{X+})$.
A set of generating acyclic cofibrations of $\mathrm{Sp}^\mathbb{N}_X$ is 
\[
\mathcal{J}^\mathbb{N}_X =\big\{  \Sigma^{\infty-k}_X h^n_l(\sigma_{X+}) \,\big|\,  \sigma\colon \Delta^n \to X\,,\; 0\leq l\leq n\,,\; n,k\geq 0  \big\}
\]
obtained by applying shifted fibrewise suspension functors to horn inclusions of simplices in $X$.
\end{remark}

The \emph{stable model structure} on $\mathrm{Sp}^\mathbb{N}_X$ is obtained as the left Bousfield localisation of the projective model structure at the set of morphisms
\begin{equation}
\label{eqn:LocClass}
S_{\mathbb{N}, X}:=\big\{
\zeta_{k,X}(C)\colon 
\Sigma^{\infty-(k+1)}_X (\Sigma_X C)\to \Sigma^{\infty-k}_X(C)
\big\}\,,
\end{equation}
where $\zeta_{k,X}(Z)$ is the adjunct of the identity map $\Sigma_X Z \to \Sigma_X Z = \widetilde{\Omega}^{\infty-(k+1)}_X\Sigma^k_X Z$, $C$ ranges over retractive spaces over $X$ of the form $X\coprod\partial\Delta^n$ or $X\coprod\Delta^n$ (as in Remark \ref{rem:SeqSpecCofibGen}), and $k\geq 0$. 
The stable model structure has the following key properties (established in \cite[Section 2.1.1]{braunack-mayer_combinatorial_2020}):
\begin{itemize}
  \item The stable model structure on $\mathrm{Sp}^\mathbb{N}_X$ is a left proper combinatorial $\mathrm{sSet}_\ast$-model structure. 
  The suspension and looping auto-equivalences of $Ho(\mathrm{Sp}^\mathbb{N}_X)$ are modelled by the functors $\Sigma_X$ and $\Omega_X$ that respectively compute $\mathrm{sSet}_\ast$-tensors and cotensors with the simplicial circle $S^1$.
  The adjunction $(\Sigma_X\dashv \Omega_X)$ is a $\mathrm{sSet}_\ast$-Quillen auto-equivalence of the stable model structure on $\mathrm{Sp}^\mathbb{N}_X$, which is therefore a stable model category.
  
  \item The fibrant objects are precisely the \emph{fibrant $\Omega_X$-spectra}; that is, the sequential $X$-spectra $A$ for which (i) each $A_n$ is fibrant in $\mathrm{sSet}_{\dslash X}$ and (ii) the adjoint structure maps $\sigma_n^\vee\colon A_n \to \Omega_X A_{n+1}$ are weak equivalences for all $n\geq 0$.
  The stable weak equivalences between fibrant $\Omega_X$-spectra are precisely the levelwise weak equivalences.
  
  \item The stable model structure presents an $\infty$-category $(\mathrm{Sp}^\mathbb{N}_X)^\infty$ (cf.~Remark \ref{rem:ModelPresentsoo}) equivalent to the $\infty$-category $\mathrm{Sp}_X$ of $X$-parametrised spectra discussed previously.
\end{itemize}
For any map of simplicial sets $f\colon X\to Y$, the functors of \eqref{eqn:BChangeRetSp} prolong to functors of sequential parametrised spectra by levelwise application.
\begin{proposition}
For any map of simplicial sets $f\colon X\to Y$ there is a triple of adjoint functors 
\begin{equation}
\label{eqn:BChangeSeqSpec}
(f_!\dashv f^\ast\dashv f_\ast)\colon
\begin{tikzcd}
\mathrm{Sp}^\mathbb{N}_X
\ar[rr, leftarrow, "\bot"]
\ar[rr, shift left =4]
\ar[rr, shift left =-4, "\bot"]
&&
\mathrm{Sp}^\mathbb{N}_Y
\end{tikzcd}
\end{equation}
such that
$(f_!\dashv f^\ast)$ is a $\mathrm{sSet}_\ast$-Quillen adjunction for the stable model structures.
In addition:
\begin{itemize}
  \item If $f$ is a weak equivalence then $(f_!\dashv f^\ast)$ is a $\mathrm{sSet}_\ast$-Quillen equivalence.
  \item If $f$ is a fibration then $(f^\ast\dashv f_\ast)$ is a $\mathrm{sSet}_\ast$-Quillen adjunction for the stable model structures, which is moreover a Quillen equivalence if $f$ is acyclic.
\end{itemize}
\end{proposition}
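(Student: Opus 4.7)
The plan is to construct the adjoint triple by levelwise prolongation of \eqref{eqn:BChangeRetSp} and then transfer the Quillen properties from retractive spaces first to the projective, and then to the stable, model structure on sequential spectra. The crucial input is that $f_!$ and $f^*$ are $\mathrm{sSet}_\ast$-left adjoints and so preserve tensors, giving natural isomorphisms $f_!\Sigma_X\cong \Sigma_Y f_!$ and $f^\ast\Sigma_Y\cong \Sigma_X f^\ast$; dually $f_\ast\Omega_Y\cong \Omega_X f_\ast$. Setting $(f_!A)_n:= f_!(A_n)$, $(f^\ast B)_n:= f^\ast(B_n)$, $(f_\ast A)_n:= f_\ast(A_n)$ and using these natural isomorphisms (together with unit/counit for the latter) to produce structure maps, one obtains the prolonged adjoint triple.

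Next I would check that $(f_!\dashv f^\ast)$ is $\mathrm{sSet}_\ast$-Quillen for the projective model structure. By Remark \ref{rem:SeqSpecCofibGen} the generating (acyclic) cofibrations have the form $\Sigma^{\infty-k}_X i_n(\sigma_{X+})$ and $\Sigma^{\infty-k}_X h^n_l(\sigma_{X+})$. Because $f_!\Sigma^{\infty-k}_X\cong \Sigma^{\infty-k}_Y f_!$ and the pushout $f_!(X\coprod K)$ along $X\to Y$ for $K\to X$ a map of simplicial sets equals $Y\coprod K$, each generator is sent to a generator of $\mathrm{Sp}^\mathbb{N}_Y$ of the same type. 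This yields the projective Quillen adjunction, and the simplicial enrichment is inherited from the retractive-space adjunction. To upgrade to the stable model structure I appeal to the universal property of left Bousfield localisation: it suffices to verify that $\mathbf{L}f_!$ carries the class $S_{\mathbb{N},X}$ of \eqref{eqn:LocClass} into stable equivalences on $Y$. The previous commutation identities imply $f_!\zeta_{k,X}(C)\cong \zeta_{k,Y}(f_!C)$, which for $C= X\coprod \partial\Delta^n$ or $X\coprod \Delta^n$ lies in $S_{\mathbb{N},Y}$ and is therefore a stable equivalence.

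For the Quillen equivalence when $f$ is a weak equivalence, I would start from the known Quillen equivalence on retractive spaces and check that prolongation preserves it. Since the projective model structures are cofibrantly generated and $(f_!\dashv f^\ast)$ prolongs levelwise, the projective adjunction is a Quillen equivalence. As the localising classes $S_{\mathbb{N},X}$ and $S_{\mathbb{N},Y}$ correspond under $f_!$ up to stable equivalence (as above), the stable adjunction remains a Quillen equivalence by the functoriality of left Bousfield localisation with respect to Quillen equivalences.

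Finally, for $f$ a fibration I would again first handle the projective level: pullback along a fibration preserves fibrations and acyclic fibrations of simplicial sets, so $f^\ast$ preserves projective fibrations and acyclic fibrations levelwise and $(f^\ast\dashv f_\ast)$ is projective Quillen. To promote to the stable structure, I would use the dual criterion for Bousfield localisation and check that $f_\ast$ preserves fibrant $\Omega$-spectra: since $f_\ast$ is a right adjoint it commutes with $\Omega_Y$, and it preserves levelwise fibrancy, so $f_\ast$ of a fibrant $\Omega_Y$-spectrum has weak-equivalence adjoint structure maps in $\mathrm{sSet}_{\dslash X}$. The Quillen equivalence for acyclic $f$ follows from the retractive-space Quillen equivalence by the same lifting argument as in the previous paragraph. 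The main obstacle throughout is the bookkeeping that ensures each base change functor correctly transports shifted suspensions, generating cofibrations, and the localisation class; no individual step is deep, but this coordinated compatibility is where the verification genuinely lies.
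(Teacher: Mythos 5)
The overall strategy --- levelwise prolongation of the retractive-space adjunctions, verification of Quillen-ness on the generating (acyclic) cofibrations, and descent to the stable structure by checking $f_!$ carries $S_{\mathbb{N},X}$ into $S_{\mathbb{N},Y}$ --- is sound and is roughly the argument one should give.

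However, your argument that $(f^\ast\dashv f_\ast)$ is projectively Quillen when $f$ is a fibration uses the wrong adjoint condition. You write that $f^\ast$ preserving fibrations and acyclic fibrations establishes the Quillen adjunction, but $f^\ast$ is the \emph{left} adjoint in the pair $(f^\ast\dashv f_\ast)$; what is needed is that $f^\ast$ preserves cofibrations and acyclic cofibrations, or equivalently that the right adjoint $f_\ast$ preserves fibrations and acyclic fibrations. That $f^\ast$ preserves cofibrations is because cofibrations of retractive spaces are detected as underlying monomorphisms of simplicial sets and pullbacks preserve monomorphisms --- this holds for any $f$. Preservation of \emph{acyclic} cofibrations is where the fibration hypothesis enters: an acyclic cofibration $Z\rightarrowtail Z'$ in $\mathrm{sSet}_{\dslash Y}$ pulls back to $f^\ast Z\to f^\ast Z'$, which is the pullback of $Z\to Z'$ along $f^\ast Z'\to Z'$; the latter is itself a pullback of $f$ and hence a Kan fibration when $f$ is, so right properness of $\mathrm{sSet}$ shows $f^\ast Z\to f^\ast Z'$ is still a weak equivalence. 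The statement that pullbacks preserve (acyclic) fibrations is a universal fact of model categories and is true for any $f$; it cannot be the point where the hypothesis that $f$ is a fibration does its work, and in any case says nothing about $f^\ast$ being left Quillen.

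There are also two slips of direction in that paragraph worth correcting: since $f_\ast\colon\mathrm{Sp}^\mathbb{N}_X\to\mathrm{Sp}^\mathbb{N}_Y$, the commutation you want is $f_\ast\circ\Omega_X\cong\Omega_Y\circ f_\ast$, and $f_\ast$ should be applied to fibrant $\Omega_X$-spectra (producing fibrant $\Omega_Y$-spectra), not the other way around as written. Finally, your claim that ``the localising classes $S_{\mathbb{N},X}$ and $S_{\mathbb{N},Y}$ correspond under $f_!$'' is only literally true when $f$ is surjective on simplices; in general $f_!(S_{\mathbb{N},X})\subsetneq S_{\mathbb{N},Y}$, so the argument for the descent of the Quillen equivalence deserves a sentence explaining why the two localisations nonetheless agree (e.g.\ by comparing stably fibrant objects, noting that $f^\ast$ both preserves and reflects the $\Omega$-spectrum condition between levelwise fibrant objects when $f$ is a weak equivalence).
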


\begin{remark}
A $\mathrm{sSet}$-Quillen adjunction of simplicial model categories $(L\dashv R)\colon \mathcal{M}\to \mathcal{N}$ induces an adjunction of $\infty$-categories $(L\dashv R)\colon \mathcal{M}^\infty\to \mathcal{N}^\infty$.
In this manner the $\mathrm{sSet}_\ast$-Quillen adjunctions of \eqref{eqn:BChangeSeqSpec} are identified with the adjunctions \eqref{eqn:BChangeooCat}.
\end{remark}

For a reduced simplicial set $X$ we use Kan's simplicial monoid $\mathbb{G}X$ to model the homotopy type of $\Omega X$.
By means of the $\mathrm{sSet}_\ast$-tensoring on sequential spectra we form a model category of sequential $\mathbb{G}X_+$-module spectra; that is, of sequential spectra with a $\mathbb{G}X_+$-action.
The resulting model category, denoted $\mathbb{G}X_+\mathrm{-Mod}^\mathbb{N}$, is isomorphic to the model category of sequential spectra in pointed $\mathbb{G}X$-spaces.
\begin{proposition}
\label{prop:SeqSpecFibvsModule}
For any reduced simplicial set $X$, there is a $\mathrm{sSet}_\ast$-Quillen equivalence
\[
\begin{tikzcd}
\mathbb{G}X_+\mathrm{-Mod}^\mathbb{N}
\ar[rr, shift left =2, "\mathfrak{b}_X"]
\ar[rr, shift left=-2, leftarrow, "\bot", "\mathfrak{s}_X"']
&&
\mathrm{Sp}^\mathbb{N}_{X}
\end{tikzcd}
\]
\end{proposition}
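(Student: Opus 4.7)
The plan is to bootstrap from an unstable Quillen equivalence, between pointed $\mathbb{G}X$-simplicial sets and retractive spaces over $X$, and then prolong to sequential spectra. The key classical input is Kan's universal principal bundle $W\mathbb{G}X \to \overline{W}\mathbb{G}X$ together with the canonical weak equivalence $X \xrightarrow{\simeq} \overline{W}\mathbb{G}X$ available for reduced $X$.

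First I would define the functors explicitly. The left adjoint $\mathfrak{b}_X$ is the (levelwise) Borel construction $M \mapsto W\mathbb{G}X \times_{\mathbb{G}X} M$, regarded as a retractive space (or sequential spectrum) over $\overline{W}\mathbb{G}X$ and then transported to one over $X$ by pushforward along $X \simeq \overline{W}\mathbb{G}X$. The right adjoint $\mathfrak{s}_X$ goes in the reverse direction: pull an $X$-spectrum back to $W\mathbb{G}X$ and take the $\mathbb{G}X$-equivariant section—equivalently, $\mathfrak{s}_X$ records the fibre spectrum at the basepoint vertex of $X$ together with its natural $\mathbb{G}X$-action coming from the principal bundle.

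The Quillen property of $(\mathfrak{b}_X \dashv \mathfrak{s}_X)$ follows by direct inspection on generating (acyclic) cofibrations: the generators of $\mathbb{G}X_+\mathrm{-Mod}^\mathbb{N}$ are obtained by smashing with $\mathbb{G}X_+$ the usual generators of unparametrised sequential spectra, and $\mathfrak{b}_X$ sends these precisely to generators in $\mathrm{Sp}^\mathbb{N}_X$ of the form described in Remark \ref{rem:SeqSpecCofibGen}, indexed by the images of the vertices of $\mathbb{G}X$ in $X$. Moreover, the Bousfield-localising classes used to stabilise the projective model structures are defined by the same formula on both sides and are matched by $\mathfrak{b}_X$, so the Quillen property descends to the stable model structures.

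For the Quillen equivalence, I would follow the strategy of Proposition \ref{prop:FibModEquiv}. Both homotopy categories are compactly generated by a single object—$\mathbb{G}X_+\wedge S$ on the module side, and $x_!S$ on the topological side by Corollary \ref{cor:CompactGen}—and by construction $\mathbf{L}\mathfrak{b}_X$ carries the former to (a representative of) the latter. Since $\mathfrak{s}_X$ preserves and reflects stable weak equivalences (it is essentially the fibre-at-basepoint functor, whence stable equivalences are detected fibrewise), the full subcategory of $Ho(\mathbb{G}X_+\mathrm{-Mod}^\mathbb{N})$ on which the derived unit is an equivalence is localising; containing the compact generator, it is everything. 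The main obstacle is verifying the identification $\mathbf{L}\mathfrak{b}_X(\mathbb{G}X_+\wedge S) \simeq x_! S$, which ultimately rests on the contractibility of $W\mathbb{G}X$ and the fact that $\mathbb{G}X \to W\mathbb{G}X \to \overline{W}\mathbb{G}X$ is a principal fibre sequence.
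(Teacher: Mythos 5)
Your overall strategy matches what the paper indicates: the paper's own proof is just a citation to the companion paper's Remark 2.13 and Lemma 1.31, with the instruction to apply the sequential stabilisation machine to the latter (an unstable equivalence between pointed $\mathbb{G}X$-spaces and retractive spaces over $X$) — which is exactly your plan — and the compact-generator argument you invoke mirrors the one used in Proposition \ref{prop:FibModEquiv}. Your reading of the right adjoint $\mathfrak{s}_X$ as the fibre at the basepoint with its $\mathbb{G}X$-action is consistent with Remark \ref{rem:SeqSpecFibvsModule}.

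There is, however, a genuine gap in your construction of $\mathfrak{b}_X$. You define it as the Borel construction $M \mapsto W\mathbb{G}X \times_{\mathbb{G}X} M$ over $\overline{W}\mathbb{G}X$, then ``transported to one over $X$ by pushforward along $X \simeq \overline{W}\mathbb{G}X$.'' The only simplicial map available is $\eta\colon X \to \overline{W}\mathbb{G}X$, and pushforward $\eta_!$ runs $\mathrm{Sp}^\mathbb{N}_X \to \mathrm{Sp}^\mathbb{N}_{\overline{W}\mathbb{G}X}$ — the wrong direction. If you instead mean pullback $\eta^\ast\colon \mathrm{Sp}^\mathbb{N}_{\overline{W}\mathbb{G}X} \to \mathrm{Sp}^\mathbb{N}_X$, that functor is \emph{right} Quillen (it participates in $(\eta_!\dashv\eta^\ast)$), and is only left Quillen when $\eta$ is a fibration, which it is not in general. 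Either way, the composite ``Borel construction then transport'' is not manifestly a single left Quillen functor, so the proposed $\mathfrak{b}_X$ does not yet yield a Quillen adjunction $(\mathfrak{b}_X \dashv \mathfrak{s}_X)$ at the point-set level, as the proposition requires — not merely a zig-zag.

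The repair is to first pull back the universal bundle along $\eta$, obtaining a principal $\mathbb{G}X$-bundle $\widetilde{X} := X\times_{\overline{W}\mathbb{G}X}W\mathbb{G}X \to X$ whose total space is weakly contractible precisely because $\eta$ is a weak equivalence for reduced $X$. Setting $\mathfrak{b}_X(E) := \widetilde{X}\times_{\mathbb{G}X}E$, with the retractive structure given by the zero section $X = \widetilde{X}/\mathbb{G}X \hookrightarrow \widetilde{X}\times_{\mathbb{G}X}E$, lands directly in $\mathrm{sSet}_{\dslash X}$ (and prolongs to $\mathrm{Sp}^\mathbb{N}_X$ levelwise) and is left adjoint to the fibre-at-basepoint functor. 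With this definition your subsequent checks — $\mathfrak{b}_X$ sending generating (acyclic) cofibrations to generating (acyclic) cofibrations, compatibility with the stabilising localisation, the identification $\mathbf{L}\mathfrak{b}_X(\mathbb{G}X_+\wedge S) \simeq x_! S$ (using contractibility of $\widetilde{X}$), and the compact-generator bootstrap via reflection of equivalences by $\mathbf{R}\mathfrak{s}_X$ on the connected space $X$ — go through as you sketched.
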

\begin{proof}[Sketch of proof.]
The adjunction in question is found in \cite[Remark 2.13]{braunack-mayer_combinatorial_2020}.
The proof that this adjunction is a Quillen equivalence is sketched in \emph{loc.~cit.}, alternatively apply the sequential stabilisation machine to \cite[Lemma 1.31]{braunack-mayer_combinatorial_2020}.
\end{proof}
\begin{remark}
\label{rem:SeqSpecFibvsModule}
The functor $\mathfrak{b}_X$ sends a $\mathbb{G}X_+$-module spectrum $E$ to the $X$-spectrum $x_! (E/\mathbb{G}X)$, with $x\colon \ast \to X$ the unique $0$-simplex.
The Quillen equivalence of Proposition \ref{prop:SeqSpecFibvsModule} thus identifies the pushforward functor $x_!\colon \mathrm{Sp}^\mathbb{N}\to \mathrm{Sp}^\mathbb{N}_X$ with the free functor $P\mapsto \mathbb{G}X_+\wedge P$.
In addition, for any fibrant $\Omega_X$-spectrum $A$, $\mathfrak{s}_X A$ has underlying spectrum  equivalent to $x^\ast A$, the fibre spectrum of $A$ at $x\in X$.
\end{remark}

The next result uses the combinatorial models to characterise fibrewise Eilenberg--Mac Lane spectra.
\begin{proposition}
\label{prop:HeartofSpX}
Let $X$ be a connected space with $x\in X$.
For any $k\in \mathbb{Z}$, the functor
\begin{align*}
\mathrm{Sp}_X
&\longrightarrow \mathbb{Z}[\pi_1 X]\mathrm{-Mod}
\\
P
&\longmapsto
\spi_k (x^\ast P)
\end{align*}
restricts to an equivalence on the full subcategory of $X$-spectra with fibrewise stable homotopy concentrated in dimension $k$. 
\end{proposition}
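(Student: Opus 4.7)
The plan is to combine Proposition \ref{prop:FibModEquiv} with the observation that both module structures over, and maps between, Eilenberg--Mac Lane spectra are rigidified by the connectivity of $\Omega X_+$. Under the equivalence $\mathrm{Sp}_X \simeq \Omega X_+\mathrm{-Mod}$, the functor $P\mapsto \spi_k(x^\ast P)$ corresponds to taking $\spi_k$ of the underlying spectrum, which acquires a canonical $\pi_0(\Omega X_+) = \mathbb{Z}[\pi_1 X]$-module structure; the full subcategory of $X$-spectra with fibrewise stable homotopy concentrated in dimension $k$ corresponds to the full subcategory of $\Omega X_+$-module spectra whose underlying spectrum has homotopy concentrated in degree $k$. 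Shifting by the auto-equivalence $\Sigma^{-k}_X$ reduces the problem to $k=0$.

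I would construct the inverse functor by restriction along the Postnikov truncation $\Omega X_+ \to \tau_{\leq 0}\Omega X_+ = H\mathbb{Z}[\pi_1 X]$, which is a map of connective $\mathbb{E}_1$-ring spectra. An ordinary $\mathbb{Z}[\pi_1 X]$-module $A$ determines an $H\mathbb{Z}[\pi_1 X]$-module spectrum $HA$, which then restricts to an $\Omega X_+$-module spectrum whose underlying spectrum is concentrated in degree $0$ and for which $\spi_0$ recovers $A$ as a $\mathbb{Z}[\pi_1 X]$-module. This manifestly lands in the full subcategory in question, and composing with $\spi_0$ recovers the identity on $\mathbb{Z}[\pi_1 X]\mathrm{-Mod}$.

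The central technical input needed to check that this restriction functor is both essentially surjective and fully faithful is the fact that for any abelian groups $A$ and $B$ the function spectrum $F(HA, HB)$ is $0$-coconnective, i.e.~$\spi_i F(HA, HB) = 0$ for $i > 0$. On the one hand, this shows that the mapping space in $\Omega X_+\mathrm{-Mod}$ between two Eilenberg--Mac Lane module spectra of the type under consideration agrees with the discrete set of $\mathbb{Z}[\pi_1 X]$-linear maps, delivering fully faithfulness. On the other hand, it forces any $\mathbb{E}_1$-algebra map $\Omega X_+ \to F(HA, HA)$ encoding an $\Omega X_+$-action on $HA$ to factor essentially uniquely through the $\mathbb{E}_1$-truncation $\tau_{\leq 0}\Omega X_+ = H\mathbb{Z}[\pi_1 X]$, which yields essential surjectivity since such an action is then determined by the induced $\mathbb{Z}[\pi_1 X]$-module structure on $A = \spi_0(HA)$. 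The main obstacle is carrying out this obstruction-theoretic factorisation rigorously at the level of $\mathbb{E}_1$-algebra maps, rather than merely at the level of underlying spectrum maps; alternatively, one may bypass this by working combinatorially via Proposition \ref{prop:SeqSpecFibvsModule}, where the argument reduces to a concrete Postnikov--Moore-type induction for sequential $\mathbb{G}X_+$-module spectra modelling Eilenberg--Mac Lane spectra.
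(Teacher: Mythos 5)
Your approach is genuinely different from the paper's, and in outline it is sound, but it leaves nontrivial steps unfinished even beyond the one you flag.

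The paper's proof, after the same reduction to $k=0$, works entirely inside the combinatorial model $\mathbb{G}X_+\mathrm{-Mod}^\mathbb{N}$ of Proposition \ref{prop:SeqSpecFibvsModule}. For full faithfulness it avoids any computation of function spectra: it forms the full subcategory $\mathcal{E}\subset Ho(\mathrm{Sp}_X)$ of connective $A$ for which $\{A,B\}^0_X\to \mathrm{Hom}_{\mathbb{Z}[\pi_1 X]}(\spi_0 x^\ast A,\spi_0 x^\ast B)$ is an isomorphism for every coconnective $B$, observes that $\mathcal{E}$ contains $x_!S$ (where the map is the evident free-module isomorphism) and is closed under sums and cofibre sequences, and then invokes Proposition \ref{prop:kconnectiveXspec} to conclude $\mathcal{E}$ is everything. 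For essential surjectivity it writes down an explicit point-set model: $HM_n = M[S^n]$ with $\mathbb{G}X_+$-action induced from the $\mathbb{Z}[\pi_1 X]$-action on $M$. Your route instead works in $\Omega X_+\mathrm{-Mod}$ abstractly, restricting along $\Omega X_+\to\tau_{\leq 0}\Omega X_+ = H\mathbb{Z}[\pi_1 X]$, and relies on the coconnectivity of $F(HA,HB)$. The abstract approach is arguably more conceptual and would generalise (say, to modules over other connective $\mathbb{E}_1$-rings), whereas the paper's approach is completely elementary, needing no higher algebra.

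Two points in your sketch need filling in. The one you identify yourself --- factoring the $\mathbb{E}_1$-algebra map $\Omega X_+\to F(HA,HA)$ through $\tau_{\leq 0}\Omega X_+$ --- is real, though resolvable: one first factors through the connective cover $\tau_{\geq 0}F(HA,HA)$ using coreflectivity of connective $\mathbb{E}_1$-rings, notices that this cover is precisely $H\mathrm{End}_{\mathbb{Z}}(A)$ by the coconnectivity bound, and then applies the universal property of Postnikov truncation of connective $\mathbb{E}_1$-rings. But the full-faithfulness clause is also less immediate than you state. Coconnectivity of $F(HA,HB)$ shows the cobar tower computing $F_{\Omega X_+}(HA,HB)$ consists of coconnective spectra and hence that the mapping spectrum is coconnective (so the mapping \emph{space} is discrete), but identifying $\pi_0$ of the totalisation with $\mathrm{Hom}_{\mathbb{Z}[\pi_1 X]}(A,B)$ requires running the Bousfield--Kan spectral sequence and using coconnectivity to kill all entries off the zero column; restriction along $\Omega X_+\to H\mathbb{Z}[\pi_1 X]$ is not fully faithful on modules, so you cannot shortcut this by passing to $H\mathbb{Z}[\pi_1 X]$-linear maps directly. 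Neither gap is fatal, but both demand more than the sketch provides, which is presumably why the paper opts for the localising-subcategory argument and an explicit model instead.
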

\begin{proof}
By applying fibrewise suspension or looping functors as necessary, we may suppose that $k=0$ without loss of generality.
We may also take $X$ to be a reduced simplicial set with unique $0$-simplex $x$, in which case Proposition \ref{prop:SeqSpecFibvsModule} shows that the $\infty$-category $\mathrm{Sp}_X$ is presented by the model category $\mathbb{G}X_+\mathrm{-Mod}^\mathbb{N}$ of sequential $\mathbb{G}X_+$-module spectra.
By Remark \ref{rem:SeqSpecFibvsModule}, the functor $P\mapsto \spi_n (x^\ast P)$ on $X$-spectra is modelled in terms of $\mathbb{G}X_+$-module spectra simply by computing $n$-th stable homotopy groups.

Let $E$ be a sequential $\mathbb{G}X_+$-module spectrum corresponding to the $X$-spectrum $P$, taking $E$ to be cofibrant-fibrant without loss of generality.
The structure map
\[
\rho_E\colon \mathbb{G}X_+\wedge E\longrightarrow E
\]
furnishes $\pi_0^\mathrm{st}E$ with the structure of an $\mathbb{Z}[\pi_1 X]$-module via the action map 
\[
\begin{tikzcd}
\mathbb{Z}[\pi_0\mathbb{G}X] \otimes \spi_0 E
\ar[r, "\cong"]
&
\pi_0^\mathrm{st}\mathbb{G}X\otimes \spi_0 E
\ar[r]
&
\pi_0^\mathrm{st}(\mathbb{G}X_+\wedge E)
\ar[r, "\spi_0\rho_E"]
&
\pi_0^\mathrm{st} E\,,
\end{tikzcd}
\]
using $\pi_0 \mathbb{G}X \cong \pi_1 X$.
Upon taking zeroth stable homotopy groups, morphisms of $\mathbb{G}X_+$-module spectra give rise to morphisms of $\mathbb{Z}[\pi_1 X]$-modules and thus the functor $\mathrm{Sp}_X\to \mathbb{Z}\mathrm{-Mod}$ that sends $P\mapsto \spi_0 (x^\ast P)$ factors via the functor $\mathbb{Z}[\pi_1X]\mathrm{-Mod}\to \mathbb{Z}\mathrm{-Mod}$ induced by the canonical augmentation $\mathbb{Z}[\pi_1 X]\to \mathbb{Z}$.

Suppose that the $X$-spectra $A$ and $B$ are ($0$-)connective and ($0$-)coconnective respectively.
By the above discussion there is a homomorphism
\[
\begin{tikzcd}
\{A,B\}^0_X
\ar[r, "\varpi_{A,B}"]
&
\mathrm{Hom}_{\mathbb{Z}[\pi_1 X]} \big(\spi_0 (x^\ast A), \spi_0(x^\ast B)\big)
\end{tikzcd}
\]
which we claim is an isomorphism.
To see this, consider the full subcategory $\mathcal{E}\hookrightarrow Ho(\mathrm{Sp}_X)$ of connective $X$-spectra $A$ for which $\varpi_{A,B}$ is an isomorphism for all coconnective $X$-spectra $B$.
The subcategory $\mathcal{E}$ contains $x_! S$, for $\spi_0 (x^\ast x_! S) = \mathbb{Z}[\pi_1 X]$ and thus for any coconnective $X$-spectrum $B$, $\varpi_{x_!S, B}$ is the evident isomorphism
\[
\{x_! S,B\}^0_X \cong \spi_0 (x^\ast B)
\longrightarrow
\mathrm{Hom}_{\mathbb{Z}[\pi_1 X]}\big(\mathbb{Z}[\pi_1 X], \spi_0 (x^\ast B)\big)\,.
\]
The category $\mathcal{E}$ is readily seen to be closed under forming sums and cofibre sequences, hence under all colimits.
By Proposition \ref{prop:kconnectiveXspec} we have that $\mathcal{E}$ is the full subcategory spanned by connective $X$-spectra.

An easy consequence of the above is that the assignment $P\mapsto \spi_0 (x^\ast P)$ becomes fully faithful when restricted to the homotopy category of $X$-spectra with fibrewise stable homotopy concentrated in dimension zero.
To show that this functor is also essentially surjective, we once more use objects of $\mathbb{G}X_+\mathrm{-Mod}^\mathbb{N}$ to model $X$-spectra.
For a $\mathbb{Z}[\pi_1X]$-module $M$, first form the sequential Eilenberg--Mac Lane spectrum $HM$ whose $n$-th space $HM_n = M[S^n]$ is the reduced $M$-linearisation of the simplicial $n$-sphere $S^n$, equipped with sequential spectrum structure maps 
\begin{align*}
S^1 \wedge M[S^n] &\longrightarrow M[S^{n+1}]
\\
x \wedge \Big(\sum_{i} m_i y_i\Big)
&
\longmapsto
\sum_i m_i (x\wedge y_i)\,.
\end{align*}
We use the $\mathbb{Z}[\pi_1 X]$-module structure of $M$ to equip each space $M[S^n]$ with $\mathbb{G}X_+$-structure maps
\begin{align*}
\mathbb{G}X_+\wedge M[S^n] &\longrightarrow M[S^n]
\\
\gamma \wedge \Big(\sum_{i} m_i y_i\Big)
&
\longmapsto
\sum_i ([\gamma]\cdot m_i) y_i\,,
\end{align*}
where $[\gamma]$ is the class of $\gamma$ in $\pi_0\mathbb{G}X\cong \pi_1 X$.
These $\mathbb{G}X_+$-structure maps are clearly compatible with the suspension spectrum structure maps, so that $HM$ is a (fibrant) sequential $\mathbb{G}X_+$-module spectrum.
By fibrancy, in order to compute zeroth stable homotopy groups of $HM$ it is sufficient to take unstable $\pi_0$ in dimension zero, where we recover $M$ with its given structure as a $\mathbb{Z}[\pi_1 X]$-module.
\end{proof}

\begin{remark}
\label{rem:NaturalityHeart}
Let $f\colon X\to Y$ be a map of connected spaces.
Under the equivalence of Proposition \ref{prop:HeartofSpX}, the restriction of $f^\ast\colon  \mathrm{Sp}_Y\to \mathrm{Sp}_X$ to full subcategories of parametrised spectra with fibrewise stable homotopy concentrated in dimension zero is identified with the base change functor
\[
\mathbb{Z}[\pi_1Y]\mathrm{-Mod}\longrightarrow \mathbb{Z}[\pi_1 X]\mathrm{-Mod}
\]
induced by $\pi_1 f \colon \pi_1 X \to \pi_1 Y$.
In the case that $Y =\ast$, this implies that any $X$-parametrised spectrum $P$ with fibrewise stable homotopy concentrated in degree zero for which $\pi_1 X$ acts trivially on $\spi_0 x^\ast P$ must be stably equivalent to $X^\ast HA$ for some abelian group $A$.
\end{remark}

\subsection{Nilpotent parametrised spectra}
\label{SS:NilParamSpec}
For connected $X$, we will call an $X$-parametrised spectrum $P$ \emph{nilpotent} if $\pi_1 X$ acts nilpotently on each fibrewise stable homotopy group $\spi_k( x^\ast P)$. 
This condition turns out to be equivalent to requiring that each stage Postnikov tower of $P$ factors as a finite sequence of extensions by untwisted Eilenberg--Mac Lane spectra.

Recall that an action $\alpha\colon \pi\to \mathrm{Aut}(G)$ of $\pi$ on a group $G$ is \emph{nilpotent} if there is a finite sequence of subgroups of $G$
\[
G = G_0 \geq G_1\geq \dotsb \geq  G_j \geq  \dotsb \geq  G_q = \ast
\]
such that for each $j$:
\begin{enumerate}[label=(\roman*)]
  \item $G_j$ is closed under the action $\alpha$,
  \item $G_{j+1}\trianglelefteq G_j$ is normal with abelian quotient, and 
  \item The induced action of $\pi$ on $G_j /G_{j+1}$ is trivial.
\end{enumerate}
We are frequently concerned with the case that $G = A$ is an abelian group, i.e.~a $\mathbb{Z}[\pi]$-module.
We shall need the following easy lemma:
\begin{lemma}
\label{lem:NilExactSeq}
If $\pi$ acts on a short exact sequence of groups 
\[
\begin{tikzcd}
\ast
\ar[r]
&
G'
\ar[r]&
G
\ar[r]
&
G''
\ar[r]&
\ast
\end{tikzcd}
\]
then the action of $\pi$ on $G$ is nilpotent if and only if the actions of $\pi$ on $G'$ and $G''$ are nilpotent.
\end{lemma}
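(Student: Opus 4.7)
The plan is to establish the two directions separately, in each case combining or decomposing the witnessing filtrations and verifying the three defining properties of a nilpotent action term by term.

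For the forward direction, assume a nilpotent filtration $G = G_0 \geq G_1\geq \dotsb\geq  G_q = 1$ is given, and write $p\colon G\to G''$ for the quotient. I would set $G_j'' := p(G_j)\leq G''$ and $G_j' := G_j\cap G'\leq G'$. Both sequences are $\pi$-stable (images and intersections of $\pi$-stable subgroups are $\pi$-stable), and normality of consecutive subgroups passes to both images and intersections. The quotient $G_j''/G_{j+1}''$ is a quotient of the abelian group $G_j/G_{j+1}$ with trivial $\pi$-action, hence has the same property. For the subgroup filtration, the second isomorphism theorem gives an embedding
\[
G_j'/G_{j+1}' \;=\; G_j'/(G_j'\cap G_{j+1})
\;\hookrightarrow\; G_j/G_{j+1},
\]
so $G_j'/G_{j+1}'$ inherits both being abelian and having trivial $\pi$-action.

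For the reverse direction, suppose we have witnessing filtrations $G'' = H_0''\geq\dotsb\geq H_m'' = 1$ and $G' = H_0'\geq\dotsb\geq H_n' = 1$. I would concatenate them into a single filtration of $G$ by pulling back and then continuing inside the kernel:
\[
G_j := p^{-1}(H_j'')\quad (0\leq j\leq m),\qquad G_{m+k} := H_k'\quad(0\leq k\leq n),
\]
which is well-defined since $G_m = p^{-1}(1) = G' = H_0'$. For $j<m$, stability, normality, and triviality of the $\pi$-action on $G_j/G_{j+1}\cong H_j''/H_{j+1}''$ transfer directly from the $G''$-filtration; for $j\geq m$ they are the corresponding properties of the $G'$-filtration.

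The main obstacle is essentially bookkeeping: one must keep track of the fact that the three properties must be verified separately at each stage, and in the forward direction one has to use that the subquotients of the induced filtration of $G'$ embed into those of $G$ in order to inherit the triviality of the $\pi$-action (not merely being abelian). Aside from this point, the argument is purely formal manipulation with subgroup lattices and equivariant quotient maps.
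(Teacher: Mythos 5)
The paper states this as ``the following easy lemma'' and gives no proof, so there is nothing in the text to compare against; it is left to the reader as a standard exercise in nilpotent group theory. Your argument is correct and is precisely the standard one: in the forward direction, intersecting and pushing forward the witnessing filtration, and using the second isomorphism theorem (together with $\pi$-equivariance of the resulting embedding and quotient) to transfer abelianness and triviality of the $\pi$-action to the subquotients; in the reverse direction, concatenating the preimage of the $G''$-filtration with the $G'$-filtration and using the correspondence theorem to identify the subquotients $p^{-1}(H_j'')/p^{-1}(H_{j+1}'')\cong H_j''/H_{j+1}''$ as $\pi$-modules. All the required checks (stability, relative normality, abelian subquotients, trivial induced action) go through exactly as you describe, and you correctly flag the one point deserving care, namely that subquotients of the induced filtration of $G'$ must be embedded into those of $G$ to inherit triviality of the $\pi$-action rather than merely being abelian.
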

\begin{remark}
A fibration with $p\colon E\to B$ with connected fibre $F$ is \emph{nilpotent} if the action of $\pi_1 E$ on each $\pi_k F$ is nilpotent.
A fibration $p\colon E\to B$ is nilpotent if and only if every stage of the Moore--Postnikov tower of $p$ can be factored as the composite of finitely many principal fibrations with fibres $K(\pi, n)$'s (see \cite[Section III.5]{bousfield_limits_1972}).

A connected space $X$ is nilpotent if the terminal map $X\to \ast$ is nilpotent; equivalently, if $\pi_1 X$ is a nilpotent group and the $\pi_1 X$-action on $\pi_n X$ is nilpotent for all $n\geq 2$.
\end{remark}

\begin{definition}
\label{defn:Nilpotent}
Let $X$ be a connected space with $x\in X$.
An $X$-parametrised spectrum $P$ with fibre spectrum $x^\ast P$ at $x$ is called \emph{nilpotent} if the action of $\pi_1 X$ on each $\spi_k (x^\ast P)$ is nilpotent.
We write $Ho(\mathrm{Sp}_X)_\mathrm{nil}\hookrightarrow Ho(\mathrm{Sp}_X)$ for the full subcategory on nilpotent parametrised spectra.
\end{definition}

\begin{example}
Any $X$-spectrum in the image of the pullback functor $X^\ast \colon Ho(\mathrm{Sp})\to Ho(\mathrm{Sp}_X)$ is nilpotent since $\Omega X_+$ acts trivially on the fibre spectrum (cf.~Remark \ref{rem:NaturalityHeart}).
\end{example}

\begin{proposition}
\label{prop:NilXSpecSeq}
For a connected space $X$ and fibre sequence of $X$-parametrised spectra $A\to B\to C$,
if any two of $A$, $B$, or $C$ is nilpotent then so is the third.
\end{proposition}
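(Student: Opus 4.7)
The plan is to extract a long exact sequence of $\mathbb{Z}[\pi_1 X]$-modules from the fibre sequence and reduce the problem to a diagram chase using Lemma \ref{lem:NilExactSeq}.

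First I would fix a point $x \in X$ and apply the (exact) fibre functor $x^\ast\colon \mathrm{Sp}_X \to \mathrm{Sp}$ to obtain a fibre sequence of spectra $x^\ast A \to x^\ast B \to x^\ast C$. Taking stable homotopy groups yields a long exact sequence of abelian groups
\[
\cdots \to \spi_{k+1}(x^\ast C) \to \spi_k(x^\ast A) \to \spi_k(x^\ast B) \to \spi_k(x^\ast C) \to \spi_{k-1}(x^\ast A) \to \cdots
\]
The crucial observation is that this is naturally a long exact sequence of $\mathbb{Z}[\pi_1 X]$-modules. Indeed, under the equivalence $\mathrm{Sp}_X \simeq \Omega X_+\mathrm{-Mod}$ of Proposition \ref{prop:FibModEquiv}, our fibre sequence is modelled by a fibre sequence of $\Omega X_+$-module spectra, and applying $\spi_k$ produces modules over $\pi_0(\Omega X_+) \cong \mathbb{Z}[\pi_1 X]$ with equivariant connecting maps.

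Next I would decompose this long exact sequence into short exact sequences of $\mathbb{Z}[\pi_1 X]$-modules in the standard way: writing $A_k = \spi_k(x^\ast A)$, $B_k = \spi_k(x^\ast B)$, $C_k = \spi_k(x^\ast C)$ and letting $I_k, J_k, K_k$ be the images of the successive maps, one has
\[
0 \to K_k \to A_k \to I_k \to 0, \qquad 0 \to I_k \to B_k \to J_k \to 0, \qquad 0 \to J_k \to C_k \to K_{k-1} \to 0.
\]

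Finally I would apply Lemma \ref{lem:NilExactSeq} in each of the three cases. If $A$ and $C$ are nilpotent, the first and third sequences give nilpotence of all four subquotients $K_k, I_k, J_k, K_{k-1}$, so the middle sequence yields nilpotence of $B_k$. If $A$ and $B$ are nilpotent, the first sequence gives nilpotence of $I_k$ and $K_k$, then the second sequence gives nilpotence of $J_k$, and finally the third sequence gives nilpotence of $C_k$. The case in which $B$ and $C$ are nilpotent is strictly analogous. I do not anticipate any real obstacle; the only nontrivial point is verifying that the connecting maps are $\pi_1 X$-equivariant, which is immediate from naturality of the Serre action via Proposition \ref{prop:FibModEquiv}.
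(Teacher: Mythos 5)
Your proof is correct and follows essentially the same route as the paper's: take fibre spectra at a point $x\in X$, obtain a long exact sequence of $\mathbb{Z}[\pi_1 X]$-modules in stable homotopy, and invoke Lemma~\ref{lem:NilExactSeq}. The only difference is that you spell out the standard decomposition of the long exact sequence into short exact sequences and the resulting three-case diagram chase, which the paper leaves implicit in the phrase \lq\lq it follows that if any two of $\spi_\ast x^\ast A$, $\spi_\ast x^\ast B$, and $\spi_\ast x^\ast C$ are nilpotent $\mathbb{Z}[\pi_1 X]$-modules then so is the third.'' Your added detail is a faithful unpacking of exactly that step and uses both directions of Lemma~\ref{lem:NilExactSeq} correctly.
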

\begin{proof}
Taking fibre spectra yields a fibre sequence $x^\ast A \to x^\ast B \to x^\ast C$, which in turn induces a long exact sequence of stable homotopy groups
\[
\begin{tikzcd}
\dotsb
\ar[r]
&
\spi_{k+1} x^\ast C
\ar[r]
&
\spi_k x^\ast A
\ar[r]
&
\spi_k x^\ast B
\ar[r]
&
\spi_k x^\ast C
\ar[r]
&
\spi_{k-1} x^\ast A
\ar[r]
&
\dotsb
\end{tikzcd}
\]
in which each arrow is a homomorphism of $\mathbb{Z}[\pi_1 X]$-modules.
Using Lemma \ref{lem:NilExactSeq}, it follows that if any two of $\spi_\ast x^\ast A$, $\spi_\ast x^\ast B$, and $\spi_\ast x^\ast C$ are nilpotent $\mathbb{Z}[\pi_1 X]$-modules then so is the third.
\end{proof}

\begin{corollary}
For a connected space $X$, 
$Ho(\mathrm{Sp}_X)_\mathrm{nil}\hookrightarrow Ho(\mathrm{Sp}_X)$
is a triangulated subcategory.
\end{corollary}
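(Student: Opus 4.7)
The plan is to verify the two defining properties of a triangulated subcategory of $Ho(\mathrm{Sp}_X)$: closure under the shift auto-equivalence $\Sigma_X$ (and its inverse $\Omega_X$), and the two-out-of-three property for distinguished triangles. Since $Ho(\mathrm{Sp}_X)_\mathrm{nil}$ is defined as a full subcategory, it is automatically closed under isomorphism, and the zero $X$-spectrum is trivially nilpotent (all its fibrewise stable homotopy groups vanish).

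For closure under shifts, I would observe that for any $X$-spectrum $P$, the fibre functor $x^\ast$ commutes with suspension up to natural equivalence, so $x^\ast(\Sigma_X^n P)\simeq \Sigma^n x^\ast P$, and therefore $\spi_k(x^\ast \Sigma_X^n P)\cong \spi_{k-n}(x^\ast P)$ as $\mathbb{Z}[\pi_1 X]$-modules (the Serre action is preserved because $\Sigma_X$ is induced from smashing with $S^1$ equipped with trivial $\Omega X$-action, equivalently from the equivalence $\mathrm{Sp}_X\simeq \Omega X_+\mathrm{-Mod}$ of Proposition \ref{prop:FibModEquiv}). Consequently, nilpotency of the $\pi_1 X$-action is preserved under $\Sigma_X^{\pm 1}$.

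For the two-out-of-three property, I would invoke Proposition \ref{prop:NilXSpecSeq} directly: any distinguished triangle $A\to B\to C\to \Sigma_X A$ in $Ho(\mathrm{Sp}_X)$ is represented by a fibre sequence of $X$-spectra, and the proposition says that nilpotency of any two of $A$, $B$, $C$ implies nilpotency of the third. Together with closure under shifts, this is exactly the definition of a triangulated subcategory.

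There is no real obstacle: the corollary is a formal consequence of Proposition \ref{prop:NilXSpecSeq} together with the elementary observation that shifting does not alter the $\pi_1 X$-action on fibrewise stable homotopy groups. The only mild subtlety is making sure that the $\mathbb{Z}[\pi_1 X]$-module identification $\spi_k(x^\ast \Sigma_X P)\cong \spi_{k-1}(x^\ast P)$ respects the Serre action, which follows from the fact that $\Sigma_X$ corresponds to smashing with a trivially-actioned sphere under the equivalence of Proposition \ref{prop:FibModEquiv}.
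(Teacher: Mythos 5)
Your proposal is correct and takes essentially the same approach as the paper, which leaves this corollary without a written proof precisely because it follows formally from Proposition~\ref{prop:NilXSpecSeq}. One small simplification: closure under $\Sigma_X^{\pm 1}$ is itself a consequence of Proposition~\ref{prop:NilXSpecSeq} applied to the rotated fibre sequences $P\to 0_X\to \Sigma_X P$ and $\Omega_X P\to 0_X\to P$ (the zero $X$-spectrum being trivially nilpotent), so the explicit verification that $x^\ast$ intertwines $\Sigma_X$ with $\Sigma$ compatibly with the $\pi_1 X$-action, while correct, is not strictly needed.
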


We shall require the following useful characterisation of nilpotent $X$-spectra:
\begin{theorem}
\label{thm:Nil}
An $X$-spectrum $P$ is nilpotent if and only if for all $k$, the zero morphism $P_{=k}\to 0_X$ factors as a finite sequence of extensions by $X^\ast (\Sigma^k HA)$'s. 
\end{theorem}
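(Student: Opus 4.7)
The plan is to exploit the heart equivalence of Proposition \ref{prop:HeartofSpX} to reduce the theorem to the corresponding algebraic statement about $\mathbb{Z}[\pi_1 X]$-modules. Both conditions in the theorem are stated slice by slice, so it suffices to fix $k$ and show: the $\pi_1 X$-action on $M := \spi_k(x^\ast P_{=k}) = \spi_k(x^\ast P)$ is nilpotent if and only if $P_{=k} \to 0_X$ factors as a finite sequence of extensions by $X^\ast(\Sigma^k HA_i)$'s. By Proposition \ref{prop:HeartofSpX} the functor $Q \mapsto \spi_k(x^\ast Q)$ is an equivalence between the full subcategory of $X$-spectra with fibrewise stable homotopy concentrated in degree $k$ and $\mathbb{Z}[\pi_1 X]\mathrm{-Mod}$; by Remark \ref{rem:NaturalityHeart} this equivalence sends $X^\ast(\Sigma^k HA)$ to $A$ equipped with the trivial $\pi_1 X$-action.

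For the forward direction, if $P$ is nilpotent then $M$ admits a finite filtration $0 = M_0 \subseteq M_1 \subseteq \cdots \subseteq M_q = M$ by $\pi_1 X$-submodules whose subquotients $A_i := M_{q-i+1}/M_{q-i}$ carry the trivial $\pi_1 X$-action. Translating each short exact sequence $0 \to M_{q-i} \to M_{q-i+1} \to A_i \to 0$ through the heart equivalence yields a cofibre sequence of $X$-spectra whose third term is $X^\ast(\Sigma^k HA_i)$, and splicing these produces the required tower $P_{=k} = P_0 \to P_1 \to \cdots \to P_q = 0_X$ with $\spi_k(x^\ast P_i) \cong M/M_{q-i}$ and fibres $X^\ast(\Sigma^k HA_{i+1})$. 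Conversely, given such a tower, applying $\spi_k(x^\ast -)$ and reading off the kernels of $P_0 \to P_i$ produces a filtration of $M$ by $\pi_1 X$-submodules with trivial-action subquotients, exhibiting the required nilpotence. Running this argument at every integer $k$ then gives the theorem.

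The main technical point — really the only thing to verify carefully — is that the heart equivalence of Proposition \ref{prop:HeartofSpX} is compatible with short exact sequences in both directions. Given a morphism $A \to B$ between heart objects in dimension $k$, the long exact sequence of fibrewise stable homotopy groups shows that the fibre lies in the heart precisely when the induced map $\spi_k(x^\ast A) \to \spi_k(x^\ast B)$ is surjective, in which case its homotopy is the kernel of that surjection with its induced $\pi_1 X$-action; dually, any surjection of $\mathbb{Z}[\pi_1 X]$-modules may be lifted (uniquely up to equivalence) to a morphism of heart objects, and hence to a cofibre sequence of heart objects realising the given short exact sequence. Once this translation is in hand, both directions of the theorem are immediate from the definition of nilpotent action.
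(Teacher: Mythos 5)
Your proposal is correct and takes essentially the same approach as the paper: both arguments hinge on the heart equivalence of Proposition~\ref{prop:HeartofSpX}, the identification of trivial $\pi_1 X$-action objects with pullbacks $X^\ast \Sigma^k HA$ via Remark~\ref{rem:NaturalityHeart}, and translating back and forth between nilpotence filtrations of $\spi_k(x^\ast P)$ and towers of extensions of heart objects via long exact sequences. The paper's proof differs only superficially — in the \lq\lq if'' direction it cites Proposition~\ref{prop:NilXSpecSeq} rather than re-reading off the filtration by hand, and in the \lq\lq only if'' direction it works concretely with $\mathbb{G}X_+$-module spectra rather than abstractly through the heart equivalence. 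One small caution: your stated identification $\spi_k(x^\ast P_i) \cong M/M_{q-i}$ is not consistent with the endpoints $P_0 = P_{=k}$ and $P_q = 0_X$ you prescribe (it should be $M/M_i$), and the fibres should be indexed $A_{q-i}$ rather than $A_{i+1}$; more substantively, the passage from the subobject filtration $0_X = H^k M_0 \hookrightarrow \dotsb \hookrightarrow H^k M_q = P_{=k}$ to the decreasing quotient tower $P_{=k} \to \dotsb \to 0_X$ requires forming the cofibres $P_{=k}/H^k M_j$, which you compress into the single word \lq\lq splicing''; spelling this out would make the argument airtight, but the idea is sound.
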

\begin{proof}
For the \lq\lq if'' direction, suppose that $P$ is an $X$-spectrum such that for each integer $k$, the map $P_{=k}\to 0_X$ factors as a finite sequence of extensions by $X^\ast (\Sigma^k HA)$'s.
Note that fibre spectrum $x^\ast {P_{=k}}$ of $P_{=k}$ is equivalent to $(x^\ast {P})_{=k}$ (Corollary \ref{cor:SliceFibre}) and by hypothesis there is a diagram of $X$-spectra
\[
\begin{tikzcd}[row sep=small, column sep =tiny]
P_{=k} = (P_{=k})_{n_k}
\ar[r]
&
(P_{=k})_{n_k-1}
\ar[r]
&
\dotsb
\ar[r]
&
(P_{=k})_{j}
\ar[r]
&
(P_{=k})_{j-1}
\ar[r]
&
\dotsb 
\ar[r]
&
(P_{=k})_{1}
\ar[r]
&
0_X
\\
X^\ast \Sigma^kHA_{n_k}
\ar[u]
&
X^\ast \Sigma^kHA_{n_k-1}
\ar[u]
&
\dotsb
&
X^\ast \Sigma^kHA_{j}
\ar[u]
&
X^\ast \Sigma^kHA_{j-1}
\ar[u]
&
\dotsb 
&
X^\ast \Sigma^kHA_{1}
\ar[u]
\end{tikzcd}
\]
in which each $X^\ast \Sigma^k HA_j \to (P_{=k})_j\to (P_{=k})_{j-1}$ is a fibre sequence.
As each $X^\ast \Sigma^k HA_j$ is nilpotent, Proposition \ref{prop:NilXSpecSeq} implies that $P_{=k}$ is also. 
Each of the zig-zags $P\leftarrow P_{\geq k}\to P_{=k}$ induces an isomorphism of $\mathbb{Z}[\pi_1 X]$-modules $\spi_k x^\ast P \cong \spi_k x^\ast P_{=k}$ and hence $P$ is nilpotent.

Conversely, suppose $P$ is a nilpotent $X$-spectrum. 
Fixing an integer $k$, there is a filtration of $\spi_k x^\ast {P}$ by subgroups
\[
\label{eqn:PostStageHoFilt}
\tag{$\ast$}
\spi_k x^\ast {P} = \Gamma_0\geq
\Gamma_1\geq \dotsb \geq \Gamma_j \geq \dotsb \geq \Gamma_{n_k} = 0
\]
such that each $\Gamma_j \to \Gamma_{j-1}$ is an injective homomorphism of $\mathbb{Z}[\pi_1 X]$-modules for which the $\mathbb{Z}[\pi_1 X]$-action on  $\Gamma_{j-1}/\Gamma_j$ is trivial (that is, the action factors through the augmentation $\mathbb{Z}[\pi_1 X]\to \mathbb{Z}$).
Recall that the adjoint equivalence of $\infty$-categories $(\mathfrak{b}_X\dashv \mathfrak{s}_X) \colon \Omega X_+\mathrm{-Mod}\to  \mathrm{Sp}_X$ of Propositions \ref{prop:FibModEquiv} and \ref{prop:SeqSpecFibvsModule} identifies $P$ with the stable fibre $x^\ast P$ equipped with the Serre action of $\Omega X_+$.
Since the fibre spectrum $x^\ast {P_{=k}}$ of $P_{=k}$ is equivalent to $(x^\ast P)_{=k}$, Proposition \ref{prop:HeartofSpX} implies that for each $0\leq j \leq n_k$ there is a morphism of $\Omega X_+$-module spectra $\Sigma^k H\Gamma_j \to \mathfrak{s}_X P_{=k}$ which corresponds to the morphism $\mathbb{Z}[\pi_1 X]$-modules $\Gamma_j \to \spi_k x^\ast P$ by taking stable homotopy groups.
Extending to a cofibre sequence $\Sigma^k H\Gamma_j \to \mathfrak{s}_X P_{=k}\to Q_j$, we obtain a sequence of $\Omega X_+$-module spectra
\[
\begin{tikzcd}
\mathfrak{s}_X P_{=k} \cong Q_{n_k}
\ar[r]
&
Q_{n_k -1}
\ar[r]
&
\dotsb
\ar[r]
&
Q_j
\ar[r]
&
\dotsb
\ar[r]
&
Q_0 \cong 0
\end{tikzcd}
\]
where $Q_j \to Q_{j-1}$ induces the morphism of $\mathbb{Z}[\pi_1 X]$-modules $(\spi_k x^\ast P)/\Gamma_j \to (\spi_k x^\ast P)/\Gamma_{j-1} $ on homotopy.
Taking fibres of the morphisms $Q_j \to Q_{j-1}$, we get  fibre sequences of $\Omega X_+$-module spectra $\Sigma^k H (\Gamma_{j-1}/\Gamma_j)\to Q_j \to Q_{j-1}$ for which the $\Sigma^k H (\Gamma_{j-1}/\Gamma_j)$ are trivial $\Omega X_+$-module spectra.
By Remark \ref{rem:NaturalityHeart} the adjoint equivalence  of $\infty$-categories $(\mathfrak{b}_X\dashv \mathfrak{s}_X)\colon \Omega X_+\mathrm{-Mod}\to \mathrm{Sp}_X$ identifies the trivial $\Omega X_+$-module spectra $\Sigma^k H (\Gamma_{j-1}/\Gamma_j)$ with $X^\ast \Sigma^k H (\Gamma_{j-1}/\Gamma_j)$ 
so that we get a diagram of $X$-spectra
\[
\begin{tikzcd}[row sep=small, column sep =tiny]
P_{=k} \cong \mathfrak{b}_X Q_{n_k}
\ar[r]
&
\mathfrak{b}_X Q_{n_k-1}
\ar[r]
&
\dotsb
\ar[r]
&
%\mathfrak{b}_X Q_{j}
%\ar[r]
%&
%\mathfrak{b}_X Q_{j-1}
%\ar[r]
%&
%\dotsb 
%\ar[r]
%&
\mathfrak{b}_X Q_{1}
\ar[r]
&
0_X
\\
X^\ast \Sigma^kH \Gamma_{n_k-1}
\ar[u]
&
X^\ast \Sigma^kH (\Gamma_{n_k-2}/\Gamma_{n_k-1})
\ar[u]
&
\dotsb
&
%X^\ast \Sigma^kH(\Gamma_{j-1}/\Gamma_{j}
%\ar[u]
%&
%X^\ast \Sigma^kH(\Gamma_{j-2}/\Gamma_{j-1})
%\ar[u]
%&
%\dotsb 
%&
X^\ast \Sigma^k H(\Gamma_{0}/\Gamma_1)
\ar[u]
\end{tikzcd}
\]
in which each $X^\ast \Sigma^k H (\Gamma_{j-1}/ \Gamma_j)\to \mathfrak{b}_X Q_j\to \mathfrak{b}_X Q_{j-1}$ is a fibre sequence.
\end{proof}

\begin{corollary}
\label{cor:NilSum}
The class of nilpotent $X$-spectra is closed under finite sums.
\end{corollary}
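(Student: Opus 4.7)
The plan is to deduce this directly from Proposition \ref{prop:NilXSpecSeq}, which states that the class of nilpotent $X$-spectra is closed under the two-out-of-three property in fibre sequences. The key observation is that in the stable $\infty$-category $\mathrm{Sp}_X$, finite sums coincide with finite products, and the obvious split inclusion/projection $P \to P \oplus Q \to Q$ is simultaneously a cofibre and a fibre sequence.

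First I would reduce to the case of a sum of two nilpotent $X$-spectra $P$ and $Q$, since an easy induction on the number of summands will then handle arbitrary finite sums. For this base case, I form the split fibre sequence
\[
P \longrightarrow P \vee Q \longrightarrow Q
\]
in $\mathrm{Sp}_X$. Since both $P$ and $Q$ are nilpotent by assumption, Proposition \ref{prop:NilXSpecSeq} immediately forces $P \vee Q$ to be nilpotent as well.

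There is essentially no obstacle here; the only substantive point is verifying that the split sequence above is genuinely a fibre sequence in $\mathrm{Sp}_X$, which follows from stability (finite coproducts and finite products coincide, and the split short exact sequence is exact on both sides). Alternatively, one could argue at the level of fibre spectra: since $x^\ast$ preserves finite sums, $\spi_k(x^\ast(P \vee Q)) \cong \spi_k(x^\ast P) \oplus \spi_k(x^\ast Q)$ as $\mathbb{Z}[\pi_1 X]$-modules, and Lemma \ref{lem:NilExactSeq} applied to the split short exact sequence of $\mathbb{Z}[\pi_1 X]$-modules yields nilpotence of the sum. Either formulation gives the result in a few lines.
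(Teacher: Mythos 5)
Your argument is correct, and it is genuinely different from---and simpler than---the paper's. The paper proves the corollary by invoking Theorem~\ref{thm:Nil}: it unpacks, for each $P^i$ and each $k$, an explicit finite filtration of $P^i_{=k}$ by extensions of untwisted fibrewise Eilenberg--Mac Lane spectra, aligns the lengths of these filtrations by padding with zeros, and then assembles the termwise direct sums into a filtration of $(\bigoplus_i P^i)_{=k}$, appealing to Theorem~\ref{thm:Nil} again to conclude. Your route bypasses Theorem~\ref{thm:Nil} entirely: the split triangle $P \to P \oplus Q \to Q$ in the stable $\infty$-category $\mathrm{Sp}_X$ is a fibre sequence, so Proposition~\ref{prop:NilXSpecSeq} applies immediately, and finite sums follow by induction. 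Your alternative phrasing at the level of $\mathbb{Z}[\pi_1 X]$-modules (using that $x^\ast$ and $\spi_k$ preserve finite sums and then applying Lemma~\ref{lem:NilExactSeq} to the split short exact sequence) is equally valid and is essentially the content of the Proposition specialised to a split sequence. Your approach is cleaner; it also makes transparent that Corollary~\ref{cor:NilSum} is already implicit in the immediately preceding corollary asserting that $Ho(\mathrm{Sp}_X)_{\mathrm{nil}}$ is a triangulated subcategory, since triangulated subcategories are closed under finite direct sums. What the paper's longer proof buys is an explicit description of a filtration of $\bigoplus_i P^i_{=k}$ in terms of the given filtrations of the summands, which might be useful if one cared about the data, but for the bare closure statement your argument is preferable.
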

\begin{proof}
Suppose that $P^1,\dotsc, P^n$ are nilpotent $X$-spectra.
For each integer $k$ we have $(\bigoplus_{i=1}^n P^i)_{=k} \cong \bigoplus_{i=1}^n P^i_{=k}$.
By Theorem \ref{thm:Nil}, for each $1\leq i\leq n$ there is an integer $m_i$ and sequence of abelian groups $HA^i_1, \dotsc, HA^i_{m_i}$ such that for each $1\leq j\leq m_i$ there are fibre sequences of $X$-spectra 
\[
X^\ast \Sigma^k HA^i_{j}
\longrightarrow
(P^i_{=k})_{j}
\longrightarrow
(P^i_{=k})_{j-1}
\]
with $(P^i_{=k})_{m_i} = P^i_{=k}$ and $(P^i_{=k})_{0} = 0$.
Letting $m:=\max\{m_1, \dotsb, m_n\}$, for each $i$ and $m_i< j\leq m$ we set $HA^i_j := 0$ and $(P^i_{=k})_{j} := P^i_{=k}$.
Then for each $1\leq j\leq m$ we have a fibre sequence of $X$-spectra
\[
\bigoplus_{i=1}^n X^\ast \Sigma^k HA^i_{j}
\cong 
X^\ast \Sigma^k H \bigg(\bigoplus_{i=1}^n A^i_{j}\bigg)
\longrightarrow
\bigoplus_{i=1}^n  (P^i_{=k})_{j}
\longrightarrow
\bigoplus_{i=1}^n  (P^i_{=k})_{j-1}
\]
with $\bigoplus_{i=1}^n  (P^i_{=k})_{0} = 0$ and $\bigoplus_{i=1}^n  (P^i_{=k})_{m} = \bigoplus_{i=1}^n P^i_{=k}$.
Theorem \ref{thm:Nil} now shows that $\bigoplus_{i=1}^n P^i$ is nilpotent.
\end{proof}

\begin{corollary}
\label{cor:NilTrunc}
An $X$-spectrum $P$ is nilpotent if and only if $P_{\leq k}$ is nilpotent for all $k$.
\end{corollary}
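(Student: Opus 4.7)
The plan is to deduce both directions directly from the behaviour of fibrewise stable homotopy under Postnikov truncation, rather than reinvoking the heavy characterisation of Theorem \ref{thm:Nil}. The starting observation is that for every integer $j$ the natural map $P\to P_{\leq k}$ induces a homomorphism of $\mathbb{Z}[\pi_1 X]$-modules $\spi_j(x^\ast P)\to \spi_j(x^\ast P_{\leq k})$ which is an isomorphism for $j\leq k$ and whose codomain vanishes for $j>k$. The isomorphism on stable homotopy groups is immediate from the fibre sequence $P_{\geq (k+1)}\to P\to P_{\leq k}$ and Corollary \ref{cor:ConnCover}; that this isomorphism is $\pi_1 X$-equivariant follows from the naturality of the $\pi_1X$-action on fibrewise stable homotopy groups established in Proposition \ref{prop:HeartofSpX} and Remark \ref{rem:NaturalityHeart}.

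Granted this, the forward direction is routine: if $P$ is nilpotent then every $\spi_j(x^\ast P_{\leq k})$ is either isomorphic (as a $\mathbb{Z}[\pi_1 X]$-module) to a nilpotent module or zero, so $P_{\leq k}$ is nilpotent for every $k$.

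For the reverse direction, specialise to $k=j$: the map $P\to P_{\leq j}$ gives an isomorphism of $\mathbb{Z}[\pi_1 X]$-modules $\spi_j(x^\ast P)\cong \spi_j(x^\ast P_{\leq j})$. If $P_{\leq j}$ is nilpotent for every $j$, then the right-hand side carries a nilpotent $\pi_1 X$-action, hence so does $\spi_j(x^\ast P)$. Since this holds for all $j$, the spectrum $P$ is nilpotent by Definition \ref{defn:Nilpotent}.

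The only point that requires any care is the equivariance of the comparison map on fibrewise stable homotopy groups; once this is in hand the proof is a one-line argument in each direction. Alternatively one could route the reverse direction through the fibre sequence $P_{=k}\to P_{\leq k}\to P_{\leq (k-1)}$ and Proposition \ref{prop:NilXSpecSeq} to deduce that each slice $P_{=k}$ is nilpotent, but this is merely a repackaging of the same information.
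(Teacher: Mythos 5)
Your proof is correct, and it does take a genuinely different route from the paper's. The paper argues at the level of slices: it observes that $P_{=l}\cong (P_{\leq k})_{=l}$ for all $l\leq k$ (and that the slices of $P_{\leq k}$ vanish above $k$), and then invokes the slice-decomposition characterisation of nilpotence from Theorem \ref{thm:Nil}. You instead work directly from Definition \ref{defn:Nilpotent}, noting that $P\to P_{\leq k}$ induces a $\mathbb{Z}[\pi_1 X]$-equivariant isomorphism on fibrewise stable homotopy groups in dimensions $\leq k$ and that the higher groups of $P_{\leq k}$ vanish. This is more elementary and shorter, and it makes transparent that the statement is really a triviality about Postnikov truncation not touching the $\pi_1 X$-action in low degrees; the cost is that it does not exercise the machinery that the surrounding results (\ref{cor:NilSum}, the proof of \ref{thm:RatParamHomThry}) actually depend on, so the paper's phrasing keeps the argument uniform with its neighbours. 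Your equivariance point is indeed the only step requiring a reference: the functoriality built into Proposition \ref{prop:HeartofSpX} (and used explicitly in the long-exact-sequence argument of Proposition \ref{prop:NilXSpecSeq}) is exactly what you need, so that citation is apt.
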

\begin{proof}
For any $X$-spectrum $P$ and integer $k$, by \eqref{eqn:SliceSquares} the $k$-th slice $P_{=k}$ is the fibre of the map of Postnikov sections $P_{\leq k}\to P_{\leq (k-1)}$.
For any $l\leq k$ the $l$-th Postnikov sections of $P$ and $P_{\leq k}$ coincide: $(P_{\leq k})_{\leq l} \cong P_{\leq l}$.
Hence $P_{=l} \cong (P_{\leq k})_{=l}$ for all $l\leq k$ and the result follows by Theorem \ref{thm:Nil}.
\end{proof}

We conclude this section with a result that provides a natural class of examples of nilpotent parametrised spectra.
\begin{proposition}
\label{prop:StabNilFib}
For a connected space $X$, the fibrewise stabilisation of a nilpotent fibration $E\to X$ is a nilpotent parametrised spectrum.
\end{proposition}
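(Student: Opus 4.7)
The plan is to reduce nilpotence of $\Sigma^\infty_{X+} E$ to a statement about stable homotopy of the fibre and then apply the Atiyah--Hirzebruch spectral sequence. Fix a basepoint $x \in X$ with homotopy fibre $F = F_x$, so that $x^\ast \Sigma^\infty_{X+} E \simeq \Sigma^\infty_+ F$ with $\pi_1 X$ acting via the Serre action on $F$. The canonical equivariant splitting $\Sigma^\infty_+ F \simeq S \vee \Sigma^\infty F$ (with trivial action on $S$) reduces the problem to showing that $\pi_1 X$ acts nilpotently on each $\spi_k F$. The hypothesis of nilpotence of the fibration already gives a nilpotent $\pi_1 X$-action on each unstable homotopy group $\pi_k F$, since the $\pi_1 X$-action factors through $p_\ast\colon \pi_1 E \to \pi_1 X$.

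Next I would invoke the Atiyah--Hirzebruch spectral sequence
\[
E^2_{p,q} = \widetilde{H}_p(F;\spi_q S) \Longrightarrow \spi_{p+q}F,
\]
which by naturality in $F$ is $\pi_1 X$-equivariant, with action on the $E^2$-page controlled by the Serre action on $\widetilde{H}_\bullet(F;\mathbb{Z})$ (the coefficients $\spi_q S$ carry trivial $\pi_1 X$-action). Nilpotence of $p$ means the Moore--Postnikov tower of $F$ decomposes at each stage into a finite sequence of principal $K(A,n)$-fibrations over $X$ with trivial $\pi_1 X$-action on each $A$; the standard inductive argument with the integral Serre spectral sequence and Lemma \ref{lem:NilExactSeq} then shows that $\pi_1 X$ acts nilpotently on $\widetilde{H}_p(F;\mathbb{Z})$ for every $p$. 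Equivariance of the differentials combined with Lemma \ref{lem:NilExactSeq} propagates nilpotence from $E^2$ through every subsequent page to $E^\infty$.

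Because $F$ is connected and $\spi_q S$ vanishes in negative degrees, for each fixed $k$ only finitely many bidegrees $(p,q)$ with $p+q = k$ contribute to the abutment. Convergence therefore equips $\spi_k F$ with a finite filtration by $\pi_1 X$-submodules whose successive quotients are the nilpotent $E^\infty_{p,q}$, and iterated application of Lemma \ref{lem:NilExactSeq} yields nilpotence of $\spi_k F$. The chief obstacle in this programme is the inductive step establishing nilpotence on the integral homology of $F$: its resolution rests on the principality of each stage of the Moore--Postnikov tower, which guarantees that the local coefficient systems in each intermediate integral Serre spectral sequence are $\pi_1 X$-trivial and so that nilpotence propagates upward through the tower via Lemma \ref{lem:NilExactSeq}.
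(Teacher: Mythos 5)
Your argument is correct but takes a genuinely different route from the paper's. After passing to the fibre spectrum $\Sigma^\infty_+ F$, you apply the Atiyah--Hirzebruch spectral sequence $\widetilde{H}_p(F;\spi_q S)\Rightarrow \spi_{p+q}F$ in one shot, citing (and sketching) the classical fact \cite[Lemma II.5.4]{bousfield_limits_1972} that a nilpotent fibration yields a nilpotent $\pi_1 X$-action on each $\widetilde{H}_p(F;\mathbb{Z})$; the spectral sequence is homotopy-functorial in $F$, the coefficient groups $\spi_q S$ carry trivial action, the vanishing ranges produce a finite equivariant filtration of each $\spi_k F$, and iterating Lemma~\ref{lem:NilExactSeq} finishes. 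The paper's proof is longer: it first reduces to the Moore--Postnikov stages $E_{\leq k}$ via a connectivity estimate, then factors $E_{\leq k}\to X$ into finitely many principal $K(\pi,n)$-fibrations, and invokes Lemma~\ref{lem:PrincpalKpinSGood} (principal $K(\pi,n)$-fibrations are $S$-good) together with the composition Lemma~\ref{lem:NilExtensionSGood}, whose proof also rests on \cite[Lemma II.5.4]{bousfield_limits_1972} plus the Serre-type spectral sequence $H_p(F(g);\spi_q F(f))\Rightarrow \spi_{p+q}F(gf)$. Both arguments hinge on the same homological input from Bousfield--Kan, but you bypass the induction along the tower by applying a single AHSS to the fibre, which is more economical. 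One minor point worth tidying: the splitting $\Sigma^\infty_+F\simeq S\vee\Sigma^\infty F$ is not literally $\pi_1 X$-equivariant, since the basepoint of $F$ need not be fixed by the holonomy; the clean formulation is that the augmentation $\Sigma^\infty_+F\to S$ is natural in $F$ and hence equivariant, so its fibre $\Sigma^\infty F$ inherits the action, and the resulting long exact sequence of stable homotopy groups together with Lemma~\ref{lem:NilExactSeq} reduces the claim to reduced stable homotopy, exactly as you intend.
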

\begin{proof}
Let $p\colon E\to B$ be a nilpotent fibration with fibre $F$, and let $E\to E_{\leq k} \to X$ be the $k$-th stage of the Moore--Postnikov tower of $p$. 
Define $E^{(k)}$ as the pushout
\[
\begin{tikzcd}
E
\ar[r]
\ar[d]
&
E_{\leq k}
\ar[d]
\\
X
\ar[r]
&
E^{(k)}\,,
\end{tikzcd}
\]
so that $p$ factors as $E\to E_{\leq k}\to E^{(k)}\to X$.
Write $F_{(\leq k)}$ and $F^{(k)}$ for the fibres of $E_{\leq k}\to X$ and $E^{(k)}\to X$ respectively, and observe that we have a cofibre sequence of $X$-parametrised spectra $\Sigma^\infty_{X+}E \to \Sigma^\infty_{X+} E_{\leq k} \to \Sigma^\infty_{X+}E^{(k)}$.

For a fibre sequence $Z\to Y\to X$, the fibrewise stabilisation $\Sigma^\infty_{X+} Y$ has fibre spectrum $\Sigma^\infty_+ Z$ with $\pi_1 X$ acting via the Serre action on $\spi_k Z$ (cf.~Example \ref{exam:FibrewiseSuspensionSpectra}).
Upon passing to fibres, the cofibre (hence fibre) sequence of $X$-parametrised spectra $\Sigma^\infty_{X+}E \to \Sigma^\infty_{X+} E_{\leq k} \to \Sigma^\infty_{X+}E^{(k)}$ induces a fibre sequence of spectra $\Sigma^\infty_+ F \to \Sigma^\infty_+ F_{(\leq k)}\to \Sigma^\infty_+ F^{(k)}$ and hence a long exact sequence of stable homotopy groups
\[
\begin{tikzcd}[sep=small]
\dotsb
\ar[r]
&
\spi_{i+1} F^{(k)}
\ar[r]
&
\spi_i F
\ar[r]
&
\spi_i F_{(\leq k)}
\ar[r]
&
\spi_i F^{(k)}
\ar[r]
&
\spi_{i-1} F
\ar[r]
&
\dotsb
\end{tikzcd}
\]
where each arrow is a homomorphism of $\mathbb{Z}[\pi_1 X]$-modules.
But $X \to E^{(k)}$ induces an isomorphism on $\pi_i$ for $i<k$, from which it follows that the fibre $F^{(k)}$ is $(k-2)$-connected and hence $\spi_i F^{(k)} = 0$ for $i \leq k-2$.
From the long exact sequence, it follows that $\Sigma^{\infty}_{X+} E \to \Sigma^{\infty}_{X+} E_{\leq k}$ induces an isomorphism of fibrewise stable homotopy groups in dimensions $i<k-2$.

In order to show that $\Sigma^\infty_{X+} E$ is a nilpotent $X$-parametrised spectrum, it is now sufficient to show that $\Sigma^\infty_{X+} E_{\leq k}$ is so for each stage of the Moore--Postnikov tower $E\to E_{\leq k}\to X$.
Since $p\colon E\to X$ is nilpotent, $E_k\to X$ factors as a composite of finitely many principal fibrations with $K(\pi, n)$'s as fibres.
Lemmas \ref{lem:NilExtensionSGood} and \ref{lem:PrincpalKpinSGood} below show that $\Sigma^\infty_{X+} E_{\leq k}$ is a nilpotent $X$-spectrum for all $k$.
\end{proof}

Essentially the same argument also proves
\begin{proposition}
\label{prop:StabNilFib2}
Let $Z$ be a retractive space over connected $X$ for which the projection map $Z\to X$ is a nilpotent fibration.
Then the fibrewise stabilisation $\Sigma^\infty_X Z$ is a nilpotent parametrised spectrum.
\end{proposition}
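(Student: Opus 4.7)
The plan is to deduce the proposition from Proposition \ref{prop:StabNilFib} together with Proposition \ref{prop:NilXSpecSeq}, by exhibiting a natural cofibre sequence in $\mathrm{sSet}_{\dslash X}$ that relates $\Sigma^\infty_X Z$ to the freely-based fibrewise suspension spectrum $\Sigma^\infty_{X+} Z$ covered by the preceding result.

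The key construction is a cofibre sequence of retractive spaces
\[
X_{X+} \xrightarrow{\iota} Z_{X+} \longrightarrow Z,
\]
where $Z_{X+} = Z \coprod X$ denotes the retractive space obtained by discarding the original section $s\colon X\to Z$ and freshly adjoining a disjoint basepoint section, and $\iota$ restricts to $s$ on the first factor and to $\mathrm{id}_X$ on the second. I would verify, by direct computation of the pushout $(Z\coprod X)\sqcup_{X\coprod X} X$ in simplicial sets, that the cofibre of $\iota$ in $\mathrm{sSet}_{\dslash X}$ recovers $Z$ with its original retractive structure: the identifications force the second factor of $Z_{X+}$ and $s(X)\subset Z$ to be glued to a single copy of $X$, which is precisely the data encoded by $s$.

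Since $\Sigma^\infty_X\colon \mathrm{sSet}_{\dslash X}\to \mathrm{Sp}^\mathbb{N}_X$ is a left Quillen functor, applying it (to cofibrant replacements if needed) produces a homotopy cofibre sequence
\[
X^\ast S \longrightarrow \Sigma^\infty_{X+} Z \longrightarrow \Sigma^\infty_X Z
\]
in $\mathrm{Sp}_X$, which by stability is simultaneously a fibre sequence. The leftmost term $X^\ast S$ is nilpotent because $\pi_1 X$ acts trivially on any $X$-spectrum pulled back from the point (cf.~the example following Definition \ref{defn:Nilpotent}), and $\Sigma^\infty_{X+} Z$ is nilpotent by Proposition \ref{prop:StabNilFib} applied to the underlying nilpotent fibration $p\colon Z\to X$. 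Proposition \ref{prop:NilXSpecSeq} then forces $\Sigma^\infty_X Z$ to be nilpotent as well.

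The only substantive obstacle is the pushout computation identifying the cofibre of $\iota$ as $Z$; everything else is a direct appeal to results already in hand. As an alternative I could mimic the Moore--Postnikov induction of Proposition \ref{prop:StabNilFib} verbatim, observing that each stage $Z_{\leq k}$ inherits a section from $s$ and is hence itself a retractive space over $X$, but the approach above is considerably more economical.
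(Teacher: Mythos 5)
Your argument is correct and takes a genuinely different (and cleaner) route from the paper.  The paper simply says ``Essentially the same argument also proves'' Proposition~\ref{prop:StabNilFib2}, meaning it intends to re-run the Moore--Postnikov induction of Proposition~\ref{prop:StabNilFib}, with the modification you yourself mention at the end.  You instead observe that the sectioned and unsectioned fibrewise suspension spectra fit into a single cofibre sequence, reducing everything to a three-out-of-two argument.  The key point of your construction — the cofibre sequence $X_{X+}\to Z_{X+}\to Z$ in $\mathrm{sSet}_{\dslash X}$ — is valid: the map $\iota=(s,\mathrm{id}_X)$ is a monomorphism of simplicial sets (hence a cofibration in $\mathrm{sSet}_{\dslash X}$) because $s$ is split injective and its image in $Z$ is disjoint from the freshly adjoined $X$; and the pushout $X\sqcup_{X\coprod X}(Z\coprod X)$ collapses the adjoined basepoint copy of $X$ onto $s(X)\subset Z$, returning exactly $Z$ with its original retractive structure.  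Applying the left Quillen functor $\Sigma^\infty_X$ gives a fibre--cofibre sequence $X^\ast S\to\Sigma^\infty_{X+}Z\to\Sigma^\infty_X Z$ in $\mathrm{Sp}_X$; the first term is nilpotent as a pullback spectrum, the second by Proposition~\ref{prop:StabNilFib}, and Proposition~\ref{prop:NilXSpecSeq} closes the argument.  What you gain is that you never need to re-inspect the Moore--Postnikov tower or the $S$-good lemmas; what the paper's implicit approach buys is a self-contained argument not relying on the cofibre comparison.  Your version is shorter and arguably illuminating, since the cofibre sequence $X^\ast S\to\Sigma^\infty_{X+}Z\to\Sigma^\infty_X Z$ is a basic structural fact worth recording in its own right.
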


\begin{corollary}
For a nilpotent space $X$ with $x\in X$, $x_! S$ is a compact generator of $Ho(\mathrm{Sp}_X)_\mathrm{nil}$.
\end{corollary}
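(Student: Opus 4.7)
The corollary decomposes into three assertions about $x_!S$: that it lies in $Ho(\mathrm{Sp}_X)_\mathrm{nil}$, that it is compact, and that it generates. I would dispatch them in increasing order of difficulty, leaving the nilpotence verification as the substantive step.

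Compactness is an immediate inheritance from Corollary \ref{cor:CompactGen}, which already records $x_!S$ as a compact object of $\mathrm{Sp}_X$; this property is inherited by any full subcategory containing $x_!S$. For the generating property, the base-change adjunction $(x_!\dashv x^\ast)$ supplies, for any $X$-spectrum $P$ and $k\in\mathbb{Z}$, a natural isomorphism
\[
\{x_!S,\, P\}_X^k \;\cong\; \{S,\, x^\ast P\}^k \;=\; \spi_{-k}(x^\ast P).
\]
Since $X$ is connected, the fibre functor $x^\ast$ is conservative (a consequence of Corollary \ref{cor:CompactGen}), so the functor $\{x_!S,\,-\}_X^\ast$ detects equivalences on $\mathrm{Sp}_X$, and in particular on $Ho(\mathrm{Sp}_X)_\mathrm{nil}$.

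For the final assertion that $x_!S\in Ho(\mathrm{Sp}_X)_\mathrm{nil}$, my plan is to identify $x_!S$ with the fibrewise suspension spectrum $\Sigma^\infty_{X+}PX$ of the path-space fibration $p\colon PX\to X$ (paths based at $x$, with endpoint projection). This identification follows from the $\infty$-categorical left Kan extension formula $(x_!S)(y)\simeq \Sigma^\infty_+\mathrm{Map}_X(x,y)$ together with Example \ref{exam:FibrewiseSuspensionSpectra}. Using that $X$ is nilpotent and $PX$ is contractible, the Moore--Postnikov tower of $p$ coincides with the Postnikov tower of $X$ itself and admits a principal refinement by $K(\pi_nX,n)$-fibrations with $\pi_1(PX)=0$ acting trivially; Proposition \ref{prop:StabNilFib} applied to this refinement then yields the nilpotence of $x_!S$.

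The hard part is the applicability of Proposition \ref{prop:StabNilFib}, which is stated for nilpotent fibrations with \emph{connected} fibre, to the path-space fibration whose fibre $\Omega_xX$ may be disconnected when $\pi_1X$ is non-trivial. I anticipate this to be the real content of the proof: either one extends Proposition \ref{prop:StabNilFib} to this setting by working componentwise over $\pi_0(\Omega_xX)$, or one directly mimics its inductive proof for the principal refinement of $p$, at each stage invoking Lemmas \ref{lem:NilExtensionSGood} and \ref{lem:PrincpalKpinSGood}. This technical extension is where the substantive work of the corollary lies.
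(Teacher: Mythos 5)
Your decomposition matches the paper's exactly: compactness and the generation property both come from Corollary \ref{cor:CompactGen}, and the substantive question is whether $x_!S$ is itself a nilpotent $X$-spectrum; the paper likewise reduces this to the assertion that the path fibration $PX\to X$ is a nilpotent fibration and invokes Proposition \ref{prop:StabNilFib}. You are right to flag the disconnected fibre of $PX\to X$ as the crux, but the difficulty is not a technicality waiting to be patched: when $\pi_1 X \neq 0$ the $X$-spectrum $x_!S$ is genuinely \emph{not} nilpotent. Its fibre spectrum at $x$ is $\Sigma^\infty_+\Omega_x X$ (Example \ref{exam:TypicalXSpec} (2)), so $\spi_0(x^\ast x_!S)\cong\mathbb{Z}[\pi_0\Omega_x X]=\mathbb{Z}[\pi_1 X]$ with $\pi_1 X$ acting by left multiplication. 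Taking $X=S^1$, this is the action of $\mathbb{Z}$ on $\mathbb{Z}[t,t^{-1}]$ by $t\cdot(-)$, which is not nilpotent since $(t-1)^n\mathbb{Z}[t,t^{-1}]\neq 0$ for all $n$. Remark \ref{rem:FHTCounterexample} observes, consistently, that $PS^1\to S^1$ is not a nilpotent fibration — which is in tension with the assertion in the paper's proof of this very Corollary.

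Neither of your two proposed repairs will succeed. Working componentwise over $\pi_0(\Omega_x X)$ does not remove the monodromy: $\pi_1 X$ permutes the components of $\Omega_x X$ by translation, and that action on $\pi_0$ is exactly what produces the non-nilpotent $\mathbb{Z}[\pi_1 X]$-module $\mathbb{Z}[\pi_1 X]$. Mimicking the inductive proof via a principal refinement of the Moore--Postnikov tower of $PX\to X$ fails at the very first stage, which is the universal cover $\widetilde{X}\to X$ with discrete fibre $\pi_1 X$; its fibrewise stabilisation already realises the problematic action. The Corollary (and the paper's proof of it) is correct only under the stronger hypothesis that $X$ be simply connected, in which case $\Omega X$ is connected, $\pi_1 PX = 0$ acts trivially, and Proposition \ref{prop:StabNilFib} applies without issue.
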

\begin{proof}
For connected $X$, the fibrewise stabilisation of $x\colon \ast\to X$ is a weak compact generator of $\mathrm{Sp}_X$ (Corollary \ref{cor:CompactGen}), so is a weak generator of $Ho(\mathrm{Sp}_X)_{\mathrm{nil}}$ provided it is nilpotent.
For nilpotent $X$, the path fibration $PX \to X$ is nilpotent and hence so is the fibrewise stabilisation $\Sigma^\infty_{X+} PX \cong x_! S $ by the Proposition (cf.~Example \ref{exam:TypicalXSpec} (2)).
\end{proof}

We say a fibration $p\colon E\to B$ with connected fibre $F$ is \emph{$S$-good} if the Serre action of $\pi_1 B$ on $\spi_k F$ is nilpotent for all $k\geq 0$.

\begin{lemma}
\label{lem:NilExtensionSGood}
Let $f\colon Z\to Y$ and $g\colon Y\to X$ be nilpotent fibrations.
If $f$ and $g$ are $S$-good, then so is $g\circ f\colon Z\to X$.
\end{lemma}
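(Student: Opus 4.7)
The plan is to deduce nilpotence of the $\pi_1 X$-action on $\spi_k F_{g\circ f}$ from the nilpotent actions of $\pi_1 X$ on $\spi_k F_g$ and $\pi_1 Y$ on $\spi_k F_f$ by analyzing the fibration $f|_{F_{g\circ f}}\colon F_{g\circ f}\to F_g$ (here $F_f$, $F_g$, $F_{g\circ f}$ denote the fibres of $f$, $g$, $g\circ f$ respectively; observe that since $F_{g\circ f} = f^{-1}(F_g)$, this restriction is indeed a fibration and its fibre over $y_0\in F_g$ is $F_f$).

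Concretely, I would first extract a long exact sequence of $\mathbb{Z}[\pi_1 X]$-modules
\[
\cdots \to \spi_{k+1} F_g \to \spi_k F_f \to \spi_k F_{g\circ f} \to \spi_k F_g \to \spi_{k-1} F_f \to \cdots
\]
by taking the homotopy fibre in $\mathrm{Sp}_X$ of the map $\Sigma^\infty_{X+}Z\to \Sigma^\infty_{X+}Y$ induced by $f$ and identifying the resulting sequence, after passing to fibre spectra at $x_0$, with the long exact sequence of the above fibration. Each term carries a $\pi_1 X$-action by Serre holonomy of the appropriate fibration, and the connecting maps are $\pi_1 X$-equivariant. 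Splitting the sequence into short exact extensions
\[
0\to K_k \to \spi_k F_{g\circ f} \to J_k \to 0,
\]
where $K_k := \mathrm{Im}(\spi_k F_f\to \spi_k F_{g\circ f})$ and $J_k :=\mathrm{Im}(\spi_k F_{g\circ f}\to \spi_k F_g)$, the $S$-goodness of $g$ gives that $J_k\subseteq \spi_k F_g$ is a nilpotent $\pi_1 X$-module. For $K_k$, I would use that connectedness of $F_g$ makes the comparison map $g_\ast\colon \pi_1 Y\twoheadrightarrow \pi_1 X$ surjective, and a path-lifting argument would identify the $\pi_1 X$-action on $K_k$ as pulled back along $g_\ast$ from the Serre action of $\pi_1 Y$ on $\spi_k F_f$. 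Since the latter is nilpotent by $S$-goodness of $f$, so is the $\pi_1 Y$-action on the quotient $K_k$, and (by surjectivity of $g_\ast$) this coincides with the $\pi_1 X$-action. Applying Lemma \ref{lem:NilExactSeq} to the short exact sequence then yields nilpotence of $\spi_k F_{g\circ f}$ as a $\pi_1 X$-module.

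The main obstacle is the compatibility argument behind the paragraph above: given $\gamma\in \pi_1 X$, its Serre holonomy $\Phi_\gamma$ on $F_{g\circ f}$ does \emph{not} preserve $F_f$ setwise but instead transports $F_f$ to the fibre of $f|_{F_{g\circ f}}$ over the endpoint in $F_g$ of a lift of $\gamma$ to $Y$. Using that $F_g$ is connected, one picks a path in $F_g$ from this endpoint back to $y_0$, concatenates it with the lifted $\gamma$ to produce a genuine lift $\tilde\gamma\in \pi_1 Y$, and uses the concatenated path to adjust $\Phi_\gamma|_{F_f}$ into a self-equivalence of $F_f$. The delicate point is to verify that this adjusted self-equivalence is homotopic to the Serre holonomy of $\tilde\gamma$ on $F_f$ (for the fibration $f$), and that the identification is natural enough to equip the long exact sequence with compatible $\mathbb{Z}[\pi_1 X]$-module structures where the action on $\spi_k F_f$ factors through $g_\ast$. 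Once this compatibility is in place, the nilpotence reduction outlined above is routine.
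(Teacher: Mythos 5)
The long exact sequence you want to use does not exist. Stabilisation does not carry fibre sequences of spaces to fibre sequences of spectra: for a fibration $F_f\to F_{g\circ f}\to F_g$, the homotopy fibre of $\Sigma^\infty_+ F_{g\circ f}\to \Sigma^\infty_+ F_g$ is not $\Sigma^\infty_+ F_f$, and consequently there is no long exact sequence
\[
\cdots\to \spi_k F_f \to \spi_k F_{g\circ f}\to \spi_k F_g\to\cdots\,.
\]
Equivalently, passing to fibre spectra at $x_0$ of the homotopy fibre of $\Sigma^\infty_{X+}Z\to\Sigma^\infty_{X+}Y$ in $\mathrm{Sp}_X$ gives the spectrum-level fibre of $\Sigma^\infty_+F_{g\circ f}\to\Sigma^\infty_+ F_g$, which has nothing to do with $\Sigma^\infty_+ F_f$ in general (for instance, for the path fibration $\ast\to B$ the fibre of $S\to\Sigma^\infty_+ B$ is not $\Sigma^\infty_+\Omega B$). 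Everything after the first display therefore rests on an object that is not there.

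The correct tool for relating $\spi_\ast F_{g\circ f}$ to the stable homotopy of $F_f$ and $F_g$ is the generalised homology Serre spectral sequence
\[
H_p\big(F_g;\, \spi_q F_f\big)\;\Longrightarrow\;\spi_{p+q} F_{g\circ f}
\]
with local coefficients, which is what the paper uses. The argument then proceeds by showing that the fundamental group (most conveniently $\pi_1 Z$, which surjects onto $\pi_1 X$ because the fibre of $g\circ f$ is connected) acts nilpotently on each $E_2$-term: this is done by combining nilpotence of the $\pi_1 Z$-action on $\spi_q F_f$ (pulled back from $\pi_1 Y$ via $S$-goodness of $f$) with nilpotence of the $\pi_1 Z$-action on $H_p F_g$ (from nilpotence of $g$, cf.~\cite[Lemma II.5.4]{bousfield_limits_1972}), the universal coefficient theorem, and repeated use of Lemma~\ref{lem:NilExactSeq}. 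Nilpotence of the $E_2$-page then propagates to $\spi_\ast F_{g\circ f}$ through the filtration, again by Lemma~\ref{lem:NilExactSeq}. Your identification of the fibration $F_f\to F_{g\circ f}\to F_g$ and your instinct to reduce to nilpotence of the two pieces is sound; replacing the nonexistent long exact sequence by this spectral sequence is the missing ingredient. As a smaller point, the path-lifting discussion in your final paragraph becomes unnecessary if you keep track of $\pi_1 Z$-actions throughout and only pass to $\pi_1 X$ at the end via the surjection $\pi_1 Z\twoheadrightarrow\pi_1 X$.
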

\begin{proof}
Letting $F(f)$, $F(g)$, and $F(gf)$ be the fibres of $f$, $g$, and $g\circ f$ respectively, there is a fibre sequence
\[
F(f)\longrightarrow F(gf) \longrightarrow F(g)\,.
\]
There is a convergent generalised homology spectral sequence
$H_p (F(g); \spi_q F(f))\Rightarrow \spi_{p+q} F(gf)$ upon which $\pi_1 Z$ acts.
To show that $\pi_1 Z$ (hence $\pi_1 X$) acts nilpotently on each of the $\spi_k F(gf)$, by Lemma \ref{lem:NilExactSeq} it is sufficient to show that $\pi_1 Z$ acts nilpotently on each of the homology groups $H_p (F(g); \spi_q F(f))$.

Since $\pi_1 Z$ acts nilpotently on $\spi_q F(f)$, there is a filtration by subgroups
\[
\spi_q F(f) = \Gamma_0 \geq \Gamma_1\geq \dots \geq \Gamma_j \geq \dotsb \geq \Gamma_n = 0
\]
such that each $\Gamma_{j+1} \to \Gamma_{j}$ is an injective homomorphism of $\pi_1 Z$-modules and the quotient $\Gamma_j /\Gamma_{j+1}$ has trivial $\pi_1 Z$-action.
On the other hand, since $g$ is a nilpotent fibration $\pi_1 X$ (and hence $\pi_1 Z$) acts nilpotently on each $H_p F(g)$ \cite[Lemma II.5.4]{bousfield_limits_1972} and so it follows that $\pi_1 Z$ acts nilpotently on $H_p F(g)\otimes (\Gamma_j/\Gamma_{j+1})$ and $\mathrm{Tor}(H_{p-1} F(g), \Gamma_j /\Gamma_{j+1})$ for each $p$.
The universal coefficient theorem and Lemma \ref{lem:NilExactSeq} together imply that $\pi_1 Z$ acts nilpotently on each $H_p (F(g);  \Gamma_j /\Gamma_{j+1})$.
Working over coefficient sequences, it now follows that $\pi_1 Z$ acts nilpotently on each $H_p (F(g); \spi_q F(f))$.
\end{proof}

\begin{lemma}
\label{lem:PrincpalKpinSGood}
Let $p\colon E\to B$ be a principal fibration with fibre $K(A,n)$.
Then the Serre action of $\pi_1 B$ on $\spi_k K(A,n)$ is trivial. 
\end{lemma}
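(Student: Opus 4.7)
The plan is to realise $p\colon E\to B$ as the homotopy pullback of the universal principal $K(A,n)$-fibration along a classifying map $k\colon B\to K(A,n+1)$, and then exploit naturality of the Serre action under this pullback.

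By definition, a principal fibration with fibre $K(A,n)$ arises as a homotopy pullback
\[
\begin{tikzcd}
E\ar[r]\ar[d, "p"'] & PK(A,n+1)\ar[d]\\
B\ar[r, "k"'] & K(A,n+1)
\end{tikzcd}
\]
of the path-space fibration $PK(A,n+1)\to K(A,n+1)$ along some classifying map $k\colon B\to K(A,n+1)$. Applying the fibrewise suspension spectrum construction of Example \ref{exam:FibrewiseSuspensionSpectra} to this square identifies $\Sigma^\infty_{B+}E\simeq k^\ast\Sigma^\infty_{K(A,n+1)+}PK(A,n+1)$ in $\mathrm{Sp}_B$, so the fibre spectrum $\Sigma^\infty_+ K(A,n)$ of $\Sigma^\infty_{B+} E$ inherits its Serre action from the universal example.

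Under the equivalence $\mathrm{Sp}_B\simeq \Omega B_+\mathrm{-Mod}$ of Proposition \ref{prop:FibModEquiv}, the base change functor $k^\ast$ corresponds to restriction of module structure along $\Omega k\colon \Omega B\to \Omega K(A,n+1)$. On path components this restriction reduces to base change along $\pi_1 k\colon \pi_1 B\to \pi_1 K(A,n+1)$ (cf.~Remark \ref{rem:NaturalityHeart}), so the Serre action of $\pi_1 B$ on every $\spi_k K(A,n)$ factors through $\pi_1 k$.

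The conclusion follows from the observation that $\pi_1 K(A,n+1)=0$ whenever $n\geq 1$, so the induced action is automatically trivial. (The case $n=0$ would require a separate argument, since $\pi_1 K(A,1)=A$ acts non-trivially on the fibre $K(A,0)=A$ by translations; but only the case $n\geq 1$ is invoked in the proof of Proposition \ref{prop:StabNilFib}, as the Moore--Postnikov tower of a fibration with connected fibre assembles from principal $K(\pi,k)$-fibrations for $k\geq 1$.) The one delicate point is making naturality of the Serre action precise, which ultimately comes down to the identification of base change with $\Omega$-module restriction supplied by Proposition \ref{prop:FibModEquiv}.
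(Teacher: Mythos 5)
Your proof is correct and takes a genuinely different route from the paper's. Where the paper argues classically---a principal fibration is simple, so each element of $\pi_1 B$ acts on $K(A,n)$ by a self-homotopy equivalence inducing the identity on $\pi_n K(A,n)\cong A$, and since $[K(A,n),K(A,n)]\xrightarrow{\ \pi_n\ }\mathrm{Hom}(A,A)$ is a bijection (for $n\geq 1$) this self-equivalence is homotopic to the identity, whence it acts trivially on all $\spi_k$---you instead unwind the defining pullback square of the principal fibration and use naturality of the Serre action under base change, as encoded by the identification $\mathrm{Sp}_B\simeq\Omega B_+\mathrm{-Mod}$ and the observation that $f^\ast$ corresponds to restriction along $\Omega f$. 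The triviality then falls out of $\pi_1 K(A,n+1)=0$ for $n\geq 1$. What your approach buys is conceptual alignment with the parametrised-spectra machinery built in Section 2 and a proof that exposes the universal example; what the paper's approach buys is self-containedness (no recourse to Proposition \ref{prop:FibModEquiv} and Remark \ref{rem:NaturalityHeart}, which strictly speaking establish naturality only for spectra concentrated in a single degree, so a small additional verification is needed to make your step fully rigorous for general $k$). Your observation about the case $n=0$ is a sensible clarification: both proofs implicitly require $n\geq 1$ (the paper's because $[K(A,0),K(A,0)]\to\mathrm{Hom}(A,A)$ is not a bijection), and only $n\geq 1$ arises in the Moore--Postnikov factorisations invoked in Proposition \ref{prop:StabNilFib}.
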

\begin{proof}
A principal fibration of the form $K(A,n)\to E\to B$ is simple.
Every element $\gamma$ of $\pi_1 B$ induces a self-homotopy equivalence $\rho(\gamma) \colon K(A,n)\to K(A,n)$ such that the action of $\pi_1 B$ on $\pi_n K(A,n)$ is identified with the composite
\[
\begin{tikzcd}
\pi_1 B 
\ar[r, "\rho"]
&
{[K(A,n), K(A,n)]}
\ar[r, "\pi_n"]
&
\mathrm{Hom}(A,A)\,,
\end{tikzcd}
\]
where the latter map is an isomorphism.
Since the fibration is simple, each $\gamma\in \pi_1 B$ acts trivially on $\pi_n K(A,n)= A$ and hence each self-homotopy equivalence $\rho(\gamma)\colon K(A,n)\to K(A,n)$ must be homotopic to the identity.
The action of $\pi_1 B$ on $\spi_k K(A,n)$ is identified with the composite
\[
\begin{tikzcd}
\pi_1 B 
\ar[r, "\rho"]
&
{[K(A,n), K(A,n)]}
\ar[r, "\spi_k"]
&
\mathrm{Hom}(\spi_k K(A,n),\spi_k K(A,n))\,,
\end{tikzcd}
\]
which must therefore be trivial.
\end{proof}

\section{Sullivan's rational homotopy theory}
\label{S:RHT}
We now turn to the algebraic categories in which our models for rational homotopy types shall live.
Following \cite{bousfield_rational_1976}, we begin with a careful recollection of the model category of commutative differential graded algebras, the home of Sullivan's rational models for topological spaces.

\begin{definition}
A \emph{commutative differential graded algebra} (\emph{cdga}) over $\mathbb{Q}$ is a non-negatively graded rational cochain complex $(A^\ast, d)$ equipped with an associative unital graded commutative product $\mu\colon A\otimes A\to A$ for which $d$ is a graded derivation (of cohomological degree $1$).
The data $(A^\ast, d, \mu)$ are frequently abbreviated to $A$, with $d_A := d$ and $\mu_A := \mu$ the usual notation for the differential and multiplication of $A$.
We also frequently use the shorthand $a\cdot b := \mu_A (a,b)$ for the multiplication in $A$.

A morphism of cdgas $f\colon A\to B$ is a map of cochain complexes $f\colon (A^\ast, d_A)\to (B^\ast, d_B)$ that is compatible with multiplication maps in the sense that $\mu_B \circ (f\otimes f) = f \circ \mu_A$.

The collection of cdgas (over $\mathbb{Q}$) organises into a category, which we denote by $\mathrm{cDGA}$.
\end{definition}
\begin{example}
\label{exam:Minimal}
There is a class of morphisms in $\mathrm{cDGA}$ that plays a particularly important role in rational homotopy theory.
A morphism $f\colon A\to B$ is \emph{semifree} if the following conditions are satisfied:
\begin{enumerate}[label=(\roman*)]
  \item The underlying graded commutative algebra of $B$ is $B^\ast = A\otimes \Lambda V$, with $\Lambda V$ the free graded-commutative algebra generated by a non-negatively graded rational vector space $V$.
  
  \item The graded rational vector space $V$ has a basis $\{v_\alpha\mid \alpha\in \mathcal{I}\}$ where $\mathcal{I}$ is a well-ordered set and the differential $d_B$ on $B^\ast = A\otimes \Lambda V$ is such that for all $\beta\in \mathcal{I}$, the differential  $d_B v_\beta$ lies in the subcomplex $A\otimes \Lambda V_{<\beta}$ with $V_{<\beta}=\mathrm{span}\{v_\alpha\mid \alpha<\beta\}$.
  
  \item The morphism $f\colon A\to B$ is given on underlying cochain complexes by the evident inclusion $A\to A\otimes \Lambda V$.
\end{enumerate}
A semifree morphism $f\colon A\to B$ is \emph{minimal} if the basis $\{v_\alpha\mid \alpha\in \mathcal{I}\}$ of $V$ satisfies the condition
\[
\alpha \leq \beta
\;\;
\Longrightarrow
\;\;
|v_\alpha| \leq |v_\beta|\,.
\]
A cdga $A$ is \emph{semifree} or \emph{minimal} if the unit map $\eta\colon \mathbb{Q}\to A$ is so.
\end{example}

There is a well-known model structure on the category $\mathrm{cDGA}$ that enables us to do homotopy theory with cdgas \cite[Section 4]{bousfield_rational_1976}.
A map of cdgas $f\colon A\to B$ is a \emph{weak equivalence} or \emph{fibration} if the underlying map of cochain complexes is a quasi-isomorphism or epimorphism respectively.
Cofibrations in $\mathrm{cDGA}$ are the morphisms that have the left lifting property with respect to acyclic fibrations. 

\begin{remark}
\label{rem:cDGACofibGen}
The category $Ch^{\geq 0}$ of non-negatively graded rational cochain complexes admits a cofibrantly generated model structure for which  weak equivalences are quasi-isomorphisms,
fibrations are degreewise epimorphisms, and
 cofibrations are monomorphisms in all positive degrees. 
Generating sets of cofibrations and acyclic cofibrations for $Ch^{\geq 0}$ are given in terms of the \emph{sphere} and \emph{disk} complexes $S(n)$ and $D(n)$.
The sphere complex $S(n)$ is generated by a single closed element of degree $n$.
The disk complex $D(n)$ is generated by elements $a$ and $b$ in degrees $n-1$ and $n$ respectively, with $da = b$.
A set of generating cofibrations for $Ch^{\geq 0}$ is
\[
\mathcal{I} =\{i_n \colon S(n)\to D(n)\mid n\geq 1\}\cup \{i_0\colon 0\to S(0) \}\,
\]
where $i_n$ is the evident inclusion; a set of generating acyclic cofibrations is
\[
\mathcal{J} = \{j_n \colon 0\to D(n)\mid n\geq 1\}\,.
\]
The aforementioned model structure on $\mathrm{cDGA}$ is equivalent to that produced by transferring the model structure of $Ch^{\geq 0}$ along the free-forgetful adjunction
\[
\begin{tikzcd}
Ch^{\geq 0}
\ar[rr, shift left = 2, "\Lambda"]
\ar[rr, shift left=-2, leftarrow, "\bot"]
&&
\mathrm{cDGA}\,.
\end{tikzcd}
\]
In particular, the model structure on $\mathrm{cDGA}$ is cofibrantly generated with generating set of cofibrations $\mathcal{I}_{\mathrm{cDGA}} = \{\Lambda (i_n) \mid n\geq 0\}$  and of acyclic cofibrations $\mathcal{J}_{\mathrm{cDGA}} =\{\Lambda (j_n) \mid n\geq 1\}$.
\end{remark}

\begin{remark}
\label{rem:cdgaMinMod}
By the previous remark, any cofibration in $\mathrm{cDGA}$ is a retract of a relative $\mathcal{I}_{\mathrm{cDGA}}$-cell complex and conversely.
In particular, minimal and semifree morphisms of cdgas are cofibrations.
The utility of minimal morphisms stems from the fact that any morphism of cdgas $f\colon A\to B$ can be factored as
\[
\begin{tikzcd}
A
\ar[rr, "f"]
\ar[dr, "i"']
&&
B
\\
&
A\otimes \Lambda V 
\ar[ur, "f'"']
&
\end{tikzcd}
\]
where $i$ is minimal and $f'$ is a quasi-isomorphism.
The quasi-isomorphism $f'$ is a \emph{minimal model} of $f$ and minimal models are unique up to isomorphism---both existence and uniqueness of minimal models are proven by a version of the small object argument, working degree-by-degree.
Finally, a \emph{minimal model} for a cdga $A$ is a minimal model for the unit $\eta\colon \mathbb{Q}\to A$.
\end{remark}

Sullivan's approach to rational homotopy theory begins by assigning to each simplicial set $X$ a cdga computing the rational cohomology algebra of $X$.
To the combinatorial $n$-simplex $\Delta^n$ we assign the cdga of \emph{polynomial differential forms}
\[
\mathfrak{A}_n:= \Lambda (t_0, \dotsc, t_n, y_0, \dotsc, y_n) \big/ J_n\,,
\]
where the generators satisfy $|t_i |=0$ and $dt_i = y_i$ for all $i$, and we quotient by the differential ideal $J_n$ generated by $\langle 1- \sum_{i=0}^n t_i$ and $\sum_{i=0}^n y_i$.
This construction gives rise to a simplicial cdga
\[
\mathfrak{A}_\bullet \colon \Delta^\mathrm{op}\longmapsto\mathrm{cDGA}
\]
with face and degeneracy maps given by
\begin{equation}
\label{eqn:SDRface}
\partial_i\colon \mathfrak{A}_n \longrightarrow \mathfrak{A}_{n-1}\,,
\qquad
\partial_i\colon t_k \longmapsto
\begin{cases}
t_k & k<i \\
0 & k=i\\
t_{k-1} & k>i
\end{cases}
\end{equation}
and
\begin{equation}
\label{eqn:SDRdegen}
s_i\colon \mathfrak{A}_n \longrightarrow \mathfrak{A}_{n+1}\,,
\qquad
s_i\colon t_k \longmapsto
\begin{cases}
t_k & k<i \\
t_k + t_{k+1} & k=i\\
t_{k+1} & k>i\,.
\end{cases}
\end{equation}
Viewing $\mathfrak{A}_\bullet$ as a cosimplicial object of $\mathrm{cDGA}^\mathrm{op}$, the cdga of \emph{piecewise linear (PL) de Rham forms} on a simplicial set $X$ is obtained as the coend (in $\mathrm{cDGA}^\mathrm{op}$)
\[
\mathfrak{A}(X) := \int^{[n]\in \Delta} X_n \times \mathfrak{A}_n \cong \mathrm{sSet}\big(X, \mathfrak{A}_\bullet\big)\,.
\]
This assignment determines a functor $\mathfrak{A}\colon \mathrm{sSet}\to \mathrm{cDGA}^\mathrm{op}$ which, according to the yoga of nerve and realisation, participates in the \emph{Sullivan--de Rham adjunction}
\begin{equation}
\label{eqn:SullivandeRham}
\begin{tikzcd}
\mathrm{sSet}
\ar[rr, shift left=2,"\mathfrak{A}"]
\ar[rr, shift left=-2, leftarrow, "\bot", "\mathfrak{S}"']
&&
\mathrm{cDGA}^\mathrm{op}\,.
\end{tikzcd}
\end{equation}
The right adjoint $\mathfrak{S}$ sends a cdga $A$ to its \emph{spatial realisation}, which is the simplicial set 
\[
\mathfrak{S}(A)\colon
[n]\longmapsto \mathrm{cDGA}(A, \mathfrak{A}_n)\,.
\]

\begin{remark}
\label{rem:StokesMap}
For any simplicial set $X$ there is a natural $A_\infty$-quasi-isomorphism $\mathfrak{A}(X)\rightsquigarrow C^\bullet(X)$ between the PL de Rham forms on $X$ and the rational cochain complex \cite[Proposition 3.3]{bousfield_rational_1976}.
The underlying quasi-isomorphism is the \emph{Stokes map}
$
\rho_X \colon \mathfrak{A}(X)\to C^\bullet (X)$ determined by 
\[
\langle \rho_X \omega, \sigma\rangle = \int_{\Delta^n} \omega(\sigma)
\]
for an $n$-simplex $\sigma\colon \Delta^n \to X$.
The Stokes map induces an isomorphism of cohomology algebras $H^\bullet(\mathfrak{A}(X))\to H^\bullet(X)$.
\end{remark}
\begin{example}
\label{exam:MinimalEMandS}
We consider minimal models for two important families of spaces:
\begin{itemize}
  \item {\bf Spheres.} For $n=2k+1$ odd, a minimal model of the cdga $\mathfrak{A}(S^n)$ is given by the free algebra $\Lambda \langle \omega_n\rangle$ spanned by a single closed generator $\omega_n$ of degree $n$.
  For $n=2k$ even, a minimal model of $\mathfrak{A}(S^n)$ is given by the semifree algebra $\Lambda\langle\omega_n, \omega_{2n-1}\rangle$ spanned by a closed generator $\omega_n$ of degree $n$ and a generator $\omega_{2n-1}$ of degree $2n-1$ such that $d\omega_{2n-1} = \omega_n \wedge \omega_n$. 
  
  \item {\bf Eilenberg--Mac Lane spaces.} For all $n\geq 1$, a minimal model of the cdga $\mathfrak{A}(K(\mathbb{Q},n))$ is given by the algebra $\Lambda\langle \omega_n\rangle$ spanned by a single closed generator in degree $n$.
\end{itemize}
For odd $n$, the minimal models for $\mathfrak{A}(S^n)$ and $\mathfrak{A}(K(\mathbb{Q}, n))$ coincide.
\end{example}

Associated to any pair of cdgas $A, B$ there is a simplicial \emph{function space}
\[
F(A,B) := \mathrm{cDGA}(A, \mathfrak{A}_\bullet\otimes B)
\]
of maps $A\to B$.
If $i\colon A\to B$ is a cofibration of cdgas and $p\colon X\to Y$ is a fibration, the evident map
\[
F(B,X)\longrightarrow F(A, X)\prod_{F(A,Y)} F(B, Y)
\]
is a fibration of simplicial sets which is moreover a weak equivalence if either of $i$ or $p$ is \cite[Proposition 5.3]{bousfield_rational_1976}.
Since $\mathfrak{S}(A) = F(A, \mathbb{Q})$, it follows that the Sullivan--de Rham adjunction \eqref{eqn:SullivandeRham} is a Quillen adjunction.

The Sullivan--de Rham adjunction restricts to an adjoint equivalence between simplicial and algebraic versions of the rational homotopy category.
A connected, nilpotent space $X$ is \emph{of finite rational type} if the following equivalent conditions are satisfied:
\begin{itemize}
  \item The rational vector spaces $H_1(X)$ and $\pi_n X\otimes_\mathbb{Z}\mathbb{Q}$ for $n\geq 2$ are finite dimensional.
  \item The rational vector spaces $H_n(X)$ for $n\geq 1$ are finite dimensional.
\end{itemize}
A connected space $X$ is nilpotent and of finite rational type if and only if it is rationally homotopy equivalent (or, equivalently, rationally homology equivalent) to a space $X'$ for which the terminal map from each Postnikov section $X'_{\leq n}\to \ast$ factors as the composite of finitely many principal fibrations with fibres of the form $K(\mathbb{Q},r)$ where $r\geq 1$.
A space $X'$ of this latter form is called a \emph{rational} nilpotent space of finite type, and we write $Ho(\mathcal{S})^\mathbb{Q}_{\mathrm{nil,f.t.}} \hookrightarrow Ho(\mathcal{S})$ for the full subcategory spanned by such spaces.

On the algebraic side, we say that a cofibrant cdga $A$ is \emph{(homologically) connected} if the unit $\mathbb{Q}\to A$ induces an isomorphism in $\mathbb{Q}\to H^0(A)$.
Such a cdga is moreover of \emph{finite homotopical type} if the minimal model $N\to A$ is finite dimensional in each dimension.
Equivalently, a cofibrant connected cdga $A$ is of finite homotopical type precisely if each of the vector spaces
\[
\pi^n (A) := H^n ( QA)
\]
is finite dimensional.
Here, $Q$ is the complex of indecomposables which is given in terms of the augmentation ideal $\overline{A} = \mathrm{aug}_\mathbb{Q}(A) = \mathrm{coker}(\mathbb{Q}\to A)$ as $QA = \overline{A}/(\overline{A}\cdot \overline{A})$.
The full subcategory of $Ho(\mathrm{cDGA})$ spanned by the (cofibrant) connected cdgas of finite homotopical type is written $Ho(\mathrm{cDGA})_{\mathrm{f.t.,}\geq 1}\hookrightarrow Ho(\mathrm{cDGA})$.

\begin{remark}
\label{rem:CohomotopytoHomotopy}
The functor $A\mapsto \pi^\ast (A)$ on augmented algebras sends weak equivalences between cofibrant cdgas to isomorphisms of graded rational vector spaces.
If $A$ is a cofibrant connected cdga of finite homotopical type, for each $n\geq 1$ there is an isomorphism of rational vector spaces
\[
\pi_n \mathfrak{S}(A)\cong \mathrm{Hom}(\pi^n A, \mathbb{Q})\,,
\]
which is moreover a group isomorphism for $n\geq 2$ \cite[Proposition 8.13]{bousfield_rational_1976}. 
\end{remark}

Using the previous remark and the minimal models of Example \ref{exam:MinimalEMandS}, one proves that the derived unit and counit of the Sullivan--de Rham adjunction $(\mathfrak{A}\dashv \mathfrak{S})$ are, respectively, rational homotopy equivalences and quasi-isomorphism for the Eilenberg--Mac Lane spaces $K(\mathbb{Q},n)$ and their associated minimal models.
Working stage-by-stage in the Postnikov tower by means of an Eilenberg--Moore theorem for polynomial differential forms, Bousfield and Gugenheim prove the following
\begin{theorem}
\label{thm:SdR}
The derived Sullivan--de Rham adjunction restricts to an equivalence of categories
\[
\begin{tikzcd}
Ho(\mathcal{S})^\mathbb{Q}_{\mathrm{nil,f.t.}}
\ar[rr, shift left = 2, "\mathbf{L}\mathfrak{A}"]
\ar[rr, shift left =-2, leftarrow, "\simeq", "\mathbf{R}\mathfrak{S}"']
&&
Ho(\mathrm{cDGA})^\mathrm{op}_{\mathrm{f.t.,}\geq 1}\,.
\end{tikzcd}
\]
\end{theorem}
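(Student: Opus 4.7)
The plan is to prove the derived Sullivan--de Rham adjunction restricts to an equivalence by a double induction: along Postnikov towers on the topological side and along minimal-model towers on the algebraic side, matched stage by stage using an Eilenberg--Moore theorem for PL de Rham forms. The building blocks are the Eilenberg--Mac Lane spaces $K(\mathbb{Q},n)$ and their minimal cdga models.

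First I would dispatch the base case. For each $n \geq 1$, Example \ref{exam:MinimalEMandS} supplies an explicit minimal model $N_n$ for $\mathfrak{A}(K(\mathbb{Q},n))$. The Stokes map (Remark \ref{rem:StokesMap}) already yields the quasi-isomorphism $\mathfrak{A}(K(\mathbb{Q},n)) \to C^\bullet(K(\mathbb{Q},n))$ on cohomology, so the derived counit at $N_n$ is an equivalence. For the derived unit, I would use Remark \ref{rem:CohomotopytoHomotopy} to compute $\pi_k \mathfrak{S}(N_n) \cong \mathrm{Hom}(\pi^k N_n, \mathbb{Q})$; reading off indecomposables shows these vanish except in degree $n$ where they are one-dimensional, so $\mathfrak{S}(N_n) \simeq K(\mathbb{Q}, n)$ rationally.

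For the inductive step, the key input is an Eilenberg--Moore comparison: for a principal fibration $K(\mathbb{Q}, n) \to E \to B$ classified by a map $k \colon B \to K(\mathbb{Q}, n+1)$ with $B$ already known to admit a cdga model of finite homotopical type, $\mathfrak{A}(E)$ is quasi-isomorphic to the derived pushout $\mathfrak{A}(B) \otimes^{\mathbf{L}}_{\mathfrak{A}(K(\mathbb{Q}, n+1))} \mathbb{Q}$. Equivalently, $\mathfrak{A}$ converts the topological homotopy pullback into a homotopy pullback in $\mathrm{cDGA}^{\mathrm{op}}$. Every nilpotent $X$ of finite rational type admits a Postnikov tower whose stages $X_{\leq n} \to X_{\leq n-1}$ each factor as a finite composite of principal $K(\mathbb{Q}, r)$-fibrations; dually, every cofibrant connected cdga $A$ of finite homotopical type admits a minimal-model filtration (Remark \ref{rem:cdgaMinMod}) built by pushouts along $\Lambda\langle c \rangle \to \Lambda\langle x, c\rangle$ with $dx = c$. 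The Eilenberg--Moore comparison identifies these two towers: $\mathbf{L}\mathfrak{A}$ sends a Postnikov stage to a minimal-model extension, and $\mathbf{R}\mathfrak{S}$ sends a minimal-model extension to a principal $K(\mathbb{Q}, r)$-fibration. Essential surjectivity and fully faithfulness at every stage then cascade from the base case.

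The technical heart, and the main obstacle, is the Eilenberg--Moore quasi-isomorphism itself: proving the canonical comparison map from the derived cdga pushout to $\mathfrak{A}$ of the topological pullback is a quasi-isomorphism requires a multiplicative refinement of the Eilenberg--Moore spectral sequence, whose convergence is precisely what the finite rational type hypothesis secures. A secondary subtlety is the handling of nontrivial nilpotent $\pi_1$-actions at the bottom of the Postnikov tower; here one exploits that a nilpotent $\pi_1$ over $\mathbb{Q}$ becomes unipotent, so its classifying space itself decomposes as a tower of principal $K(\mathbb{Q}, 1)$-fibrations and is absorbed into the inductive machinery.
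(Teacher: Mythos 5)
Your proposal matches the paper's approach, which is itself only a sketch: the paper cites Bousfield--Gugenheim and gives precisely the same outline (base case at $K(\mathbb{Q},n)$ via Example \ref{exam:MinimalEMandS} and Remark \ref{rem:CohomotopytoHomotopy}, then an induction up the Postnikov tower powered by an Eilenberg--Moore theorem for polynomial differential forms). One small calibration worth noting: the role of the finite rational type hypothesis is less about convergence of the Eilenberg--Moore spectral sequence (which holds over a field for simply connected input regardless) and more about making the duality in $\pi_n \mathfrak{S}(A) \cong \mathrm{Hom}(\pi^n A, \mathbb{Q})$ behave correctly, so that the derived unit is a rational equivalence rather than a completion.
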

We shall also need the following property of the Sullivan--de Rham functor $\mathfrak{A}$.
\begin{lemma}
\label{lem:SdRandProducts}
For any simplicial sets $K, L$ there is a natural quasi-isomorphism of commutative differential graded algebras $\mathfrak{A}(K)\otimes \mathfrak{A}(L)\to \mathfrak{A}(K\times L)$.
\end{lemma}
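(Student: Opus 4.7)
The natural map is constructed using the projections $\pi_K\colon K\times L\to K$ and $\pi_L\colon K\times L\to L$. Pulling back along each projection and multiplying in the target cdga yields
\[
\Phi_{K,L}\colon \mathfrak{A}(K)\otimes \mathfrak{A}(L)\longrightarrow \mathfrak{A}(K\times L)\,,
\qquad
\alpha\otimes\beta \longmapsto \mathfrak{A}(\pi_K)(\alpha)\cdot\mathfrak{A}(\pi_L)(\beta)\,.
\]
Naturality in $K$ and $L$ is immediate, and $\Phi_{K,L}$ is a morphism of cdgas because the tensor product of commutative algebras is their coproduct.

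My plan to show $\Phi_{K,L}$ is a quasi-isomorphism is to pass to cohomology and identify the induced map with the classical cross product. Since we work over the field $\mathbb{Q}$, the K\"unneth theorem for cochain complexes gives
\[
H^\bullet\bigl(\mathfrak{A}(K)\otimes\mathfrak{A}(L)\bigr) \;\cong\; H^\bullet(\mathfrak{A}(K))\otimes H^\bullet(\mathfrak{A}(L))\,.
\]
Combining this with the graded ring isomorphisms $H^\bullet(\mathfrak{A}(X))\cong H^\bullet(X;\mathbb{Q})$ coming from the Stokes map (Remark \ref{rem:StokesMap}) for $X\in\{K,L,K\times L\}$, the map $H^\bullet(\Phi_{K,L})$ is identified with the assignment $[\alpha]\otimes [\beta]\mapsto \pi_K^\ast[\alpha]\smile \pi_L^\ast[\beta]$. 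This is exactly the cohomology cross product, which is an isomorphism by the classical K\"unneth theorem over $\mathbb{Q}$, finishing the argument.

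The one nontrivial input is multiplicativity of the Stokes map on cohomology: the product on $H^\bullet(\mathfrak{A}(X))$ coming from the cdga structure must agree with the cup product on $H^\bullet(X;\mathbb{Q})$. This is exactly the content of the $A_\infty$-refinement of $\rho$ recorded in Remark \ref{rem:StokesMap}. Given this multiplicativity, together with naturality of $\rho$ in $X$, the identification of $H^\bullet(\Phi_{K,L})$ with the cross product is routine. The hands-on proof of multiplicativity of $\rho$ reduces via acyclic models to a Fubini-type computation for polynomial integration over products of standard simplices, but the $A_\infty$-machinery packages this cleanly and is what I would invoke.
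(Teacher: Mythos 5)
Your proof takes essentially the same route as the paper's: construct the comparison map from the two projections, apply the algebraic K\"unneth theorem over $\mathbb{Q}$, and use the (multiplicative, $A_\infty$-level) Stokes map of Remark \ref{rem:StokesMap} to identify the induced map on cohomology with the topological cross product. The paper's version is terser, but the logical content and the key inputs agree.
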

\begin{proof}
The projections $K\times L\to K$ and $K\times L\to L$ induce maps of cdgas $\mathfrak{A}(K)\to \mathfrak{A}(K\times L)$ and $\mathfrak{A}(L)\to \mathfrak{A}(K\times L)$ which, by Remark \ref{rem:StokesMap}, realise the projections in rational cohomology. 
The K\"{u}nneth theorem implies that  $\mathfrak{A}(K)\otimes \mathfrak{A}(L)\to \mathfrak{A}(K\times L)$ is a quasi-isomorphism.
Naturality of this quasi-isomorphism is evident from the functoriality both of $\mathfrak{A}$ and of the projection maps.
\end{proof}

\subsection{Relative Sullivan--de Rham adjunctions}
In this section we examine relative versions of the Sullivan--de Rham adjunction obtained by passing to retractive objects. 
Having already encountered the topological protagonists of these adjunctions---the categories of retractive spaces $\mathrm{sSet}_{\dslash X}$ discussed above---we now briefly introduce the participants on the algebraic side of the story.
For any cdga $A$ there is a model category of \emph{augmented $A$-algebras}
\[
\mathrm{cDGA}_{\dslash A} := (\mathrm{cDGA}^{A/})_{/\mathrm{id}_A}
\]
whose objects are commuting diagrams of cdgas
\[
\begin{tikzcd}
A
\ar[rr, bend left =-20, "\mathrm{id}_A"']
\ar[r]
&
B
\ar[r]
&
A
\end{tikzcd}
\]
with the evident notion of morphism.
The model structure of $\mathrm{cDGA}_{\dslash A}$ is created by the forgetful functor to $\mathrm{cDGA}$ in the sense that a map of augmented $A$-algebras $B\to B'$ is a weak equivalence, fibration, or cofibration precisely if the underlying map of cdgas is so.
\begin{proposition}
For each cdga $A$ there is a Quillen adjunction
\begin{equation}
\label{eqn:SlicedSdR}
\begin{tikzcd}
\mathrm{sSet}_{\dslash \mathfrak{S}(A)}
\ar[rr, shift left=2,"\mathfrak{A}_{\dslash A}"]
\ar[rr, shift left=-2, leftarrow, "\bot", "\mathfrak{S}_{\dslash A}"']
&&
\big(\mathrm{cDGA}_{\dslash A})^\mathrm{op}\,.
\end{tikzcd}
\end{equation}
\end{proposition}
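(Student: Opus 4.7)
The plan is to realise this as a standard induced adjunction on retractive slice categories from the Sullivan--de Rham adjunction, and to deduce the Quillen property from the particularly clean form of the right adjoint.

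First I would define the right adjoint $\mathfrak{S}_{\dslash A}$ by applying $\mathfrak{S}$ objectwise: a retract $A \xrightarrow{u_B} B \xrightarrow{\epsilon_B} A$ in $\mathrm{cDGA}$ maps to the retract $\mathfrak{S}(A) \xrightarrow{\mathfrak{S}(\epsilon_B)} \mathfrak{S}(B) \xrightarrow{\mathfrak{S}(u_B)} \mathfrak{S}(A)$ in $\mathrm{sSet}$, where $\mathfrak{S}(u_B)\mathfrak{S}(\epsilon_B) = \mathfrak{S}(\epsilon_B u_B) = \mathrm{id}_{\mathfrak{S}(A)}$ shows it is well-defined. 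For the left adjoint, given a retractive space $\mathfrak{S}(A) \xrightarrow{i_Z} Z \xrightarrow{p_Z} \mathfrak{S}(A)$ I would form the pullback in $\mathrm{cDGA}$
\[
\mathfrak{A}_{\dslash A}(Z) := A \times_{\mathfrak{A}\mathfrak{S}(A)} \mathfrak{A}(Z),
\]
where the two maps into $\mathfrak{A}\mathfrak{S}(A)$ are the counit $\epsilon_A\colon A \to \mathfrak{A}\mathfrak{S}(A)$ and $\mathfrak{A}(i_Z)\colon \mathfrak{A}(Z) \to \mathfrak{A}\mathfrak{S}(A)$. Projection to $A$ serves as augmentation, while the pair $(\mathrm{id}_A, \mathfrak{A}(p_Z)\circ \epsilon_A)$ defines a unit $A \to \mathfrak{A}_{\dslash A}(Z)$; compatibility with the pullback cone follows from the retract identity $\mathfrak{A}(i_Z)\mathfrak{A}(p_Z) = \mathrm{id}_{\mathfrak{A}\mathfrak{S}(A)}$.

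To establish the adjunction I would combine the universal property of the pullback with the Sullivan--de Rham hom-bijection. A $\mathrm{cDGA}_{\dslash A}$-morphism $\beta\colon B \to \mathfrak{A}_{\dslash A}(Z)$ decomposes as a pair $(\epsilon_B, \beta_Z)$ with $\beta_Z \colon B \to \mathfrak{A}(Z)$ subject to
\[
\mathfrak{A}(i_Z)\circ \beta_Z = \epsilon_A \circ \epsilon_B
\qquad \text{and} \qquad
\beta_Z \circ u_B = \mathfrak{A}(p_Z) \circ \epsilon_A.
\]
Applying the adjoint bijection $\mathrm{cDGA}(B, \mathfrak{A}(Z)) \cong \mathrm{sSet}(Z, \mathfrak{S}(B))$, and using naturality together with the triangular identities to rewrite $\epsilon_A \circ \epsilon_B$ and $\mathfrak{A}(p_Z)\circ \epsilon_A$ in terms of the corresponding maps in $\mathrm{sSet}$, these two identities translate verbatim into $\phi \circ i_Z = \mathfrak{S}(\epsilon_B)$ and $\mathfrak{S}(u_B) \circ \phi = p_Z$, i.e.\ exactly the conditions for the adjoint $\phi\colon Z \to \mathfrak{S}(B)$ to be a morphism in $\mathrm{sSet}_{\dslash \mathfrak{S}(A)}$.

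Finally, the Quillen property follows with almost no further work: the model structures on $\mathrm{sSet}_{\dslash \mathfrak{S}(A)}$ and on $\mathrm{cDGA}_{\dslash A}$ are both created by their forgetful functors to $\mathrm{sSet}$ and $\mathrm{cDGA}$, and $\mathfrak{S}_{\dslash A}$ acts on underlying morphisms exactly as $\mathfrak{S}$. Hence the right Quillen property of $\mathfrak{S}\colon \mathrm{cDGA}^\mathrm{op}\to \mathrm{sSet}$ immediately implies that $\mathfrak{S}_{\dslash A}$ preserves fibrations and acyclic fibrations. The one piece of the argument requiring genuine care is the adjunction verification of the preceding paragraph; I expect no real obstacle there, only a diagram chase through the Sullivan--de Rham bijection, since no homotopical input beyond the Bousfield--Gugenheim adjunction is required.
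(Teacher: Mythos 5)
Your proof is correct and takes essentially the same approach as the paper: you define $\mathfrak{S}_{\dslash A}$ by applying $\mathfrak{S}$ objectwise, $\mathfrak{A}_{\dslash A}$ via the same pullback along the Sullivan--de Rham counit, and deduce the Quillen property from the fact that $\mathfrak{S}_{\dslash A}$ is created by $\mathfrak{S}$ on underlying objects (the paper appeals to the general principle that Quillen adjunctions induce Quillen adjunctions on slice and coslice categories, whereas you write out the hom-set bijection explicitly, but these are the same argument).
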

\begin{proof}[Proof sketch.]
This a routine application of the fact that Quillen adjunctions induce Quillen adjunctions on slice and coslice categories.

In slightly more detail, the left adjoint $\mathfrak{A}_{\dslash A}$ sends a retractive space $\mathfrak{S}(A)\xrightarrow{i} Z\xrightarrow{p} \mathfrak{S}(A)$ to the augmented $A$-algebra given by the top horizontal row in the following iterated pullback diagram of cdgas:
\[
\begin{tikzcd}
A
\ar[r]
\ar[d]
&
\mathfrak{A}_{\dslash A} (Z)
\ar[r]
\ar[d]
&
A
\ar[d]
\\
\mathfrak{A}\mathfrak{S}(A)
\ar[r, "\mathfrak{A}(p)"]
&
\mathfrak{A}(Z)
\ar[r, "\mathfrak{A}(i)"]
&
\mathfrak{A}\mathfrak{S}(A)\,.
\end{tikzcd}
\]
The right adjoint $\mathfrak{S}_{\dslash A}$ sends an augmented $A$-algebra $A \to B\to A$ to $\mathfrak{S}(A)\leftarrow \mathfrak{S}(B)\leftarrow \mathfrak{S}(A)$.
The unit and counit natural transformations of the adjunction $(\mathfrak{A}_{\dslash A}\dashv \mathfrak{S}_{\dslash A})$ arise from the unit and counit transformations of $(\mathfrak{A}\dashv \mathfrak{S})$ in the obvious way.

That the adjunction $(\mathfrak{A}_{\dslash A}\dashv \mathfrak{S}_{\slash A})$ is Quillen is a consequence of the fact that $\mathfrak{S}_{\dslash A}$ is given on underlying objects by $\mathfrak{S}$, so preserves fibrations and acyclic fibrations.
\end{proof}

Given a morphism of cdgas $f\colon A\to B$, there is a functor $f^\ast \colon \mathrm{cDGA}_{\dslash B}\to \mathrm{cDGA}_{\dslash A}$ given by forming pullbacks along $f$.
Concretely, an augmented $B$-algebra $B\to C\to B$ is sent to the augmented $A$-algebra appearing as the top horizontal row of the iterated pullback diagram of cdgas:
\[
\begin{tikzcd}
A
\ar[r]
\ar[d, "f"']
&
f^\ast C
\ar[r]
\ar[d]
&
A
\ar[d, "f"]
\\
B
\ar[r]
&
C
\ar[r]
&
B\,.
\end{tikzcd}
\]
The functor $f^\ast$ has left adjoint $f_!$ acting on objects by $(A\to D\to A)\mapsto (B\to B\bigotimes_{A} D\to B)$.
The adjunction $(f_!\dashv f^\ast)\colon \mathrm{cDGA}_{\dslash A}\to \mathrm{cDGA}_{\dslash B}$ is Quillen; the right adjoint $f^\ast$ clearly preserves fibrations and acyclic fibrations.

The base change adjunctions between model categories of retractive spaces and augmented relative algebras are compatible in the following sense:
\begin{proposition}
\label{prop:SlicedSdRPNat}
For any morphism of cdgas $f\colon A\to B$ the diagram of left Quillen functors
\[
\begin{tikzcd}
\mathrm{sSet}_{\dslash \mathfrak{S}(B)}
\ar[r, "\mathfrak{A}_{\dslash B}"]
\ar[d, "\mathfrak{S}(f)_!"']
&
\mathrm{cDGA}^\mathrm{op}_{\dslash B}
\ar[d, "f^\ast"]
\\
\mathrm{sSet}_{\dslash \mathfrak{S}(A)}
\ar[r, "\mathfrak{A}_{\dslash A}"]
&
\mathrm{cDGA}^\mathrm{op}_{\dslash A}
\end{tikzcd}
\]
commutes up to natural isomorphism.
\end{proposition}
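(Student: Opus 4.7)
The plan is to verify the asserted isomorphism object-wise by unfolding the two composites and recognising the result as the same iterated pullback in $\mathrm{cDGA}$. The two key inputs are that $\mathfrak{A}\colon \mathrm{sSet}\to \mathrm{cDGA}^\mathrm{op}$ is a left adjoint, and so sends pushouts of simplicial sets to pullbacks of cdgas, combined with the naturality of the adjunction unit $\eta\colon \mathrm{id}\Rightarrow \mathfrak{A}\mathfrak{S}$.

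Fix a retractive space $(\mathfrak{S}(B)\xrightarrow{i}Z\xrightarrow{p}\mathfrak{S}(B))$ over $\mathfrak{S}(B)$. By definition, $\mathfrak{S}(f)_!Z$ is the pushout of $Z\xleftarrow{i}\mathfrak{S}(B)\xrightarrow{\mathfrak{S}(f)}\mathfrak{S}(A)$, equipped with retraction onto $\mathfrak{S}(A)$ induced by $\mathfrak{S}(f)\circ p$ and $\mathrm{id}_{\mathfrak{S}(A)}$. Applying $\mathfrak{A}$ produces a pullback square in $\mathrm{cDGA}$
\[
\mathfrak{A}(\mathfrak{S}(f)_!Z) \cong \mathfrak{A}(Z) \times_{\mathfrak{A}\mathfrak{S}(B)} \mathfrak{A}\mathfrak{S}(A)\,,
\]
in which the map to $\mathfrak{A}\mathfrak{S}(A)$ induced by the retractive inclusion $\mathfrak{S}(A)\to \mathfrak{S}(f)_!Z$ is projection onto the second factor. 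Unfolding the definition of $\mathfrak{A}_{\dslash A}$ then yields
\[
\mathfrak{A}_{\dslash A}(\mathfrak{S}(f)_!Z) \cong A \times_{\mathfrak{A}\mathfrak{S}(A)} \big(\mathfrak{A}(Z)\times_{\mathfrak{A}\mathfrak{S}(B)}\mathfrak{A}\mathfrak{S}(A)\big)\,.
\]

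Invoking the pasting law for pullbacks together with the identity $A\times_{\mathfrak{A}\mathfrak{S}(A)}\mathfrak{A}\mathfrak{S}(A)\cong A$, this iterated pullback collapses to $A\times_{\mathfrak{A}\mathfrak{S}(B)}\mathfrak{A}(Z)$ where the map $A\to \mathfrak{A}\mathfrak{S}(B)$ is the composite $A\xrightarrow{\eta_A}\mathfrak{A}\mathfrak{S}(A)\xrightarrow{\mathfrak{A}\mathfrak{S}(f)}\mathfrak{A}\mathfrak{S}(B)$; naturality of $\eta$ identifies this composite with $\eta_B\circ f$. A similar unfolding gives $f^\ast\mathfrak{A}_{\dslash B}(Z) = A\times_B\big(B\times_{\mathfrak{A}\mathfrak{S}(B)}\mathfrak{A}(Z)\big)$, which collapses by the same pasting law to $A\times_{\mathfrak{A}\mathfrak{S}(B)}\mathfrak{A}(Z)$ with structure map $\eta_B\circ f$. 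This identifies the two composites as underlying cdgas.

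It remains to verify that the augmented $A$-algebra structures on both sides agree under this isomorphism and that the isomorphism is natural in $Z$. Both claims are immediate from the universal property of pullbacks combined with functoriality of $\mathfrak{A}$, which ensures that all structure maps (splittings to and from $A$) on either side are induced by the simplicial retractive structure of $Z$ in exactly the same way. The main obstacle in the proof is thus purely bookkeeping: the whole argument reduces to the pasting law for pullbacks and naturality of $\eta$, with no substantive additional content.
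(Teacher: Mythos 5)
Your argument is correct and proceeds by essentially the same route as the paper: apply $\mathfrak{A}$ to the defining pushout of $\mathfrak{S}(f)_!Z$ to get a pullback of cdgas, then use the pasting law for pullbacks together with naturality of the map $A\to\mathfrak{A}\mathfrak{S}(A)$ to identify both composites as $A\times_{\mathfrak{A}\mathfrak{S}(B)}\mathfrak{A}(Z)$. One small terminological nit: the natural transformation $\mathrm{id}\Rightarrow\mathfrak{A}\mathfrak{S}$ in $\mathrm{cDGA}$ that you call $\eta$ (the ``unit'') is, in the paper's conventions, the \emph{counit} of $(\mathfrak{A}\dashv\mathfrak{S})\colon\mathrm{sSet}\to\mathrm{cDGA}^{\mathrm{op}}$; the map is the same and the argument is unaffected.
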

\begin{proof}
The functor  $\mathfrak{A}\colon \mathrm{sSet}\to\mathrm{cDGA}^\mathrm{op}$ is a left adjoint, so sends pushouts of simplicial sets to pullbacks of cdgas.
For a retractive space $Z$ over $\mathfrak{S}(B)$, applying $\mathfrak{A}$ to the pushout diagram
\[
\begin{tikzcd}
\mathfrak{S}(B)
\ar[r, "\mathfrak{S}(f)"]
\ar[d]
&
\mathfrak{S}(A)
\ar[d]
\\
Z
\ar[r]
&
\mathfrak{S}(f)_! Z\
\end{tikzcd}
\]
gives an isomorphism of cdgas $\mathfrak{A}(\mathfrak{S}(f)_! Z) \cong \mathfrak{A}\mathfrak{S}(A)\prod_{\mathfrak{A}\mathfrak{S}(B)}\mathfrak{A}(Z)$ from which it follows that $(\mathfrak{A}_{\dslash A}\circ \mathfrak{S}(f)_!)(Z)$ is the pullback of the diagram
\[
\begin{tikzcd}
A
\ar[r]
&
\mathfrak{A}\mathfrak{S}(A)
\ar[r, "\mathfrak{A}\mathfrak{S}(f)"]
&
\mathfrak{A}\mathfrak{S}(B)
\ar[r, leftarrow]
&
\mathfrak{A}(Z)
\end{tikzcd}
\]
Naturality of the $(\mathfrak{A}\dashv \mathfrak{S})$-counit implies that this is the same as the pullback of the diagram
\[
\begin{tikzcd}
A
\ar[r, "f"]
&
B
\ar[r, "\mathfrak{A}\mathfrak{S}(f)"]
&
\mathfrak{A}\mathfrak{S}(B)
\ar[r, leftarrow]
&
\mathfrak{A}(Z)
\end{tikzcd}
\]
computing $(f^\ast\circ  \mathfrak{A}_{\dslash B})(Z)$.
We thus obtain an isomorphism $(\mathfrak{A}_{\dslash A}\circ \mathfrak{S}(f)_!)(Z)\cong (f^\ast\circ  \mathfrak{A}_{\dslash B})(Z)$ which is manifestly natural in $Z$.
\end{proof}

\subsection{Homotopy theory of differential graded modules}
In this section we discuss model categories of differential graded modules over differential graded algebras, the algebraic arena of our work in Sections \ref{S:FibStabRat} and \ref{S:Dictionary} below.

\begin{definition}
Let $A$ be a cdga over $\mathbb{Q}$.
A \emph{differential graded $A$-module} over $A$ (or simply \emph{$A$-module}) is a rational cochain complex $(M^\ast , d_M)$ equipped with a linear map of degree zero $\rho_M \colon A\otimes M \to M$, $(a,m)\mapsto a\cdot m$, subject to the conditions that 
\begin{itemize}
  \item $a\cdot (b\cdot m) = (a\cdot b)\cdot m$ for all $a,b \in A$ and $m\in M$,
  \item $1 \cdot m = m$ for all $m\in M$, with $1\in A$ the unit, and
  \item $d_M (a\cdot m) = d_A a \cdot m + (-1)^{|a|} a\cdot d_M m$ for all $a\in A$ and $m\in M$.
\end{itemize}
The data $(M^\ast , d_M, \rho_M)$ are frequently abbreviated to $M$, with differential $d_M$ and $A$-module structure $\rho_M$ understood from context.
We do not assume $M^\ast$ to be connective, so that the grading is over $\mathbb{Z}$.

A morphism $f\colon M \to N$ of $A$-modules is a map of cochain complexes $f\colon (M^\ast, d_M)\to (N^\ast, d_N)$ such that $f\circ \rho_M = \rho_N\circ (\mathrm{id}_A \otimes f)$.
The $A$-modules are organised into a category, denoted by $A\mathrm{-Mod}$.
\end{definition}

For any cdga $A$, $A\mathrm{-Mod}$ is equipped with a natural projective model structure.
In this projective model structure, a morphism of $A$-modules $f\colon M\to N$ is a \emph{weak equivalence} or \emph{fibration} if the underlying map of cochain complexes is a quasi-isomorphism or degreewise epimorphism respectively.
Cofibrations in $A\mathrm{-Mod}$ are morphisms that have the left lifting property with respect to acyclic fibrations.

\begin{remark}
The category $Ch$ of unbounded rational cochain complexes admits a cofibrantly generated model structure for which the weak equivalences are quasi-isomorphisms and fibrations are degreewise epimorphisms.
A set of generating cofibrations is given in terms of the sphere and disk complexes (cf.~Remark \ref{rem:cDGACofibGen}) by $\mathcal{I} = \{i_n \colon S(n)\to D(n)\mid n\in \mathbb{Z}\}$,
with a set of generating acyclic cofibrations given by $\mathcal{J} = \{j_n \colon 0\to D(n)\mid n\in \mathbb{Z}\}$.

For any cdga $A$ the free-forgetful adjunction
\[
\begin{tikzcd}
Ch
\ar[rr, shift left =2 , "A\otimes (-)"]
\ar[rr, leftarrow, shift left =-2, "\bot"]
&&
A\mathrm{-Mod}
\end{tikzcd}
\]
induces a transferred model structure on $A\mathrm{-Mod}$.
This transferred model structure coincides with the projective model structure so that $A\mathrm{-Mod}$ is cofibrantly generated with generating cofibrations $\mathcal{I}_A =\{A\otimes i_n\mid n\in \mathbb{Z}\}$ and generating acyclic cofibrations $\mathcal{J}_A =\{A\otimes j_n \mid n\in \mathbb{Z}\}$.
\end{remark}

\begin{definition}
\label{defn:SemifreeAMod}
A morphism of $A$-modules $f\colon M\to N$ is \emph{semifree} if the following conditions are satisfied:
\begin{enumerate}[label=(\roman*)]
  \item The underlying graded $A^\ast$-module of $N^\ast$ is $M^\ast \oplus (A^\ast\otimes V)$ for some graded rational vector space $V$.
  
  \item $V$ has a basis $\{v_\alpha\mid \alpha\in \mathcal{I}\}$ for some well-ordered set $\mathcal{I}$ and the differential $d_N$ on $N$ is such that for all $\beta\in \mathcal{I}$, $d_B v_\beta$ lies in the $A$-submodule $M\oplus (A\otimes V_{<\beta})$, with $V_{<\beta}= \mathrm{span}\{v_\alpha\mid \alpha < \beta\}$.
  
  \item The morphism of $A$-modules $f\colon M\to N$ is given on underlying cochain complexes by the evident inclusion $M\to M\oplus (A\otimes V)$.
\end{enumerate}
A semifree morphism of $A$-modules $f\colon M\to N$ is moreover \emph{minimal} if the basis  $\{v_\alpha\mid \alpha\in \mathcal{I}\}$ of $V$ satisfies the additional condition
\[
\alpha\leq \beta \,\Longrightarrow\;|v_\alpha|\leq |v_\beta|\,.
\]
An $A$-module $M$ is \emph{semifree} or \emph{minimal} if the trivial map of $A$-modules $0\to M$ is so. 
\end{definition}

\begin{remark}
\label{rem:SemiFreeChar}
Any semifree morphism of $A$-modules is a cofibration.
Indeed, for a semifree map $f\colon M\to N$, parts (ii) and (iii) of Definition 
\ref{defn:SemifreeAMod} are equivalent to expressing $f$ as the colimit of a sequence of maps 
\[
M
\longrightarrow
\dotsb
\longrightarrow
N_{\alpha}
\longrightarrow
N_{\alpha +1}
\longrightarrow 
\dotsb
\]
where $N_\alpha = M\oplus (A\otimes V_{<(\alpha +1)})$ and there is a pushout diagram of $A$-modules
\begin{equation}
\label{eqn:DiskAttachAMod}
\begin{tikzcd}
A\otimes S(|v_{\alpha+1}|+1)
\ar[r, "dv_{\alpha+1}"]
\ar[d]
&
N_{\alpha}
\ar[d]
\\
A\otimes D(|v_{\alpha+1}| +1)
\ar[r]
&
N_{\alpha+1}
\end{tikzcd}
\end{equation}
for each $\alpha\in \mathcal{I}$.
The condition that $f$ is minimal is equivalent to requiring that the degrees of the disk attachments \eqref{eqn:DiskAttachAMod} is a monotonically increasing function of $\alpha$.
\end{remark}

\begin{example}
\label{exam:MinimalOverQ}
From the characterisation of minimality given above it follows that the minimal $\mathbb{Q}$-modules are precisely the bounded below cochain complexes with trivial differential.
\end{example}

We shall make use of some important properties of minimal maps of $A$-modules.
In the case of connective $A$-modules (cf.~Definiton \ref{defn:ConnectiveModule} below), proofs of these properties can be found in the appendix to \cite{roig_minimal_2000}.
Essentially the same proofs apply in the present setting.
\begin{lemma}
\label{lem:MinimalModuleExist}
Let $A$ be a cdga and let $f\colon M\to N$ be a morphism of $A$-modules.
If there is some integer $k$ such that $H^p (f)\colon H^p(M)\to H^p(N)$ is an isomorphism for $p< k$ and a monomorphism for $p =k$ then $f$ factorises as
\[
\begin{tikzcd}
M
\ar[rr, "f"]
\ar[dr, "i"']
&&
N
\\
&
M\oplus (A\otimes V)
\ar[ur, "f'"']
&
\end{tikzcd}
\]
where $i$ is minimal and $f'$ is a quasi-isomorphism.
In particular, any bounded below $A$-module has a minimal model.
\end{lemma}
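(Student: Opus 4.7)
The plan is to construct $V$ by a transfinite induction that adjoins generators in non-decreasing degree, mimicking the construction of minimal Sullivan models recalled in Remark \ref{rem:cdgaMinMod}. At each stage I add generators in a single degree that simultaneously (a) surject onto missing cohomology classes of $N$ in that degree, and (b) kill unwanted kernel classes in one degree higher. The second sentence of the lemma follows from the first: if $M$ is bounded below, choose $k$ smaller than the lowest nonzero degree of $H^\bullet(M)$, so that the hypothesis on $H^p(f)$ for $p<k$ and $p=k$ is automatic when we take $f$ to be the zero map $0\to M$.

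First, set $M_{(k-1)} := M$, which by hypothesis satisfies the inductive condition that the structure map $M_{(k-1)}\to N$ is an iso on $H^q$ for $q<k$ and a monomorphism on $H^k$. Assume inductively that for some $p\geq k$ we have constructed a factorisation $M\xrightarrow{i_{p-1}} M_{(p-1)} = M\oplus (A\otimes V_{<p})\xrightarrow{f_{(p-1)}} N$ with $i_{p-1}$ semifree with generators arranged in non-decreasing degrees all strictly less than $p$, and with $H^q(f_{(p-1)})$ an isomorphism for $q<p$ and a monomorphism for $q=p$. I would now construct $M_{(p)} = M_{(p-1)}\oplus (A\otimes V_p)$ with $V_p$ concentrated in degree $p$, in two substeps.

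Substep (a): Choose cocycles $\{n_\alpha\}\subset N^p$ whose classes form a basis of a complement of $\mathrm{im}(H^p(f_{(p-1)}))$ in $H^p(N)$, and adjoin free generators $\{v_\alpha\}$ in degree $p$ with $dv_\alpha=0$ and $f'(v_\alpha)=n_\alpha$. Since these generators are closed and in a single new degree, a direct calculation shows the extension changes $H^q$ only for $q=p$, where the induced map is promoted to an isomorphism without destroying injectivity in degree $p+1$. Substep (b): Pick cocycles $\{x_\beta\}$ in the resulting $A$-module whose classes form a basis of the kernel on $H^{p+1}$; their images are coboundaries $f'(x_\beta)=dy_\beta$ in $N$. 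Adjoin free generators $\{w_\beta\}$ of degree $p$ with $dw_\beta = x_\beta$ and $f'(w_\beta)=y_\beta$. By the pushout description \eqref{eqn:DiskAttachAMod}, each such attachment acts as a disk-filling, killing the class $[x_\beta]$ in $H^{p+1}$ of the source while leaving $H^q$ untouched for $q\leq p$. Take $V_p$ to be the span of $\{v_\alpha\}\cup \{w_\beta\}$, well-ordered with the $v_\alpha$'s preceding the $w_\beta$'s; these sit at the top of the accumulated well-ordering on $V_{\leq p}$. The induced map $f_{(p)}\colon M_{(p)}\to N$ now satisfies the inductive hypothesis at level $p+1$.

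Setting $V = \bigcup_p V_p$ and $f' \colon M\oplus (A\otimes V)\to N$ yields a minimal factorisation by Remark \ref{rem:SemiFreeChar}, and $f'$ is a cohomology isomorphism in every degree by construction, hence a quasi-isomorphism.

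\textbf{Main obstacle.} The delicate point is verifying that at each stage the two substeps do not perturb cohomology in degrees $\leq p$, so that the inductive hypothesis genuinely propagates; this amounts to an explicit analysis of cocycles and coboundaries in the semifree extension, exactly parallel to the connective case treated in the appendix of \cite{roig_minimal_2000}. Once that bookkeeping is established, the construction runs without issue: for bounded below $M$ one needs only countably many stages and finitely many generators per stage, and the assumption that $H^{<k}(f)$ is iso and $H^k(f)$ mono ensures the induction can begin at degree $k$ with no generators required in lower degrees, so the minimality constraint $|v_\alpha|\leq |v_\beta|$ for $\alpha\leq \beta$ is preserved throughout.
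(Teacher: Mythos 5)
Your broad strategy — build $V$ degree by degree, alternating between attaching closed generators to fix surjectivity on $H^p$ and disk-filling generators to kill the kernel on $H^{p+1}$ — is the standard one and is what the cited appendix of \cite{roig_minimal_2000} carries out (the paper itself defers entirely to that reference and gives no proof of this lemma). But your execution of the inductive step has a genuine gap once $A$ has nontrivial positive cohomology, because the two substeps do not actually re-establish the inductive hypothesis at level $p+1$.

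The problem is in substep (b). After attaching the $w_\beta$ of degree $p$ along $A\otimes S(p+1)\to A\otimes D(p+1)$, the cofibre is a sum of copies of $A\otimes S(p)$, so the long exact sequence in degree $p+1$ exhibits $H^{p+1}$ of the new module as an extension of $\ker\bigl(H^1(A)\otimes\mathrm{span}\{\bar w_\beta\}\to H^{p+2}\bigr)$ by $\mathrm{coker}\bigl(H^0(A)\otimes\mathrm{span}\{\bar w_\beta\}\to H^{p+1}\bigr)$. Your choice of the $x_\beta$ makes the cokernel piece map injectively to $H^{p+1}(N)$, but the kernel piece consists of genuinely new classes, represented by cocycles of the form $a\cdot w_\beta + \cdots$ with $a$ a degree-$1$ cocycle of $A$, and nothing in your construction controls their images in $N$. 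So $H^{p+1}$ need not be mono after substeps (a)–(b), and the induction stalls. Remark \ref{rem:FHTCounterexample} makes this concrete: running your recipe against $A=\Lambda\langle x\rangle$ with $|x|=1$ and $N=\mathbb{Q}$, starting from $M=0$, substep (a) adds $v_0$ and substep (b) adds $w_1$ with $dw_1 = x\cdot v_0$; but $[x\cdot w_1]$ is then a fresh kernel class in $H^1$, so one must iterate $\omega$ times within degree $0$ before moving on. Accordingly, your closing claim that bounded-below $M$ needs only "finitely many generators per stage" is false — that example needs infinitely many generators all in degree $0$. The fix is to replace your single pass of (a)–(b) at degree $p$ by an $\omega$-indexed (or transfinite) inner induction, attaching fresh degree-$p$ generators to kill the evolving kernel on $H^{p+1}$ and passing to the colimit, then verifying the colimit is mono on $H^{p+1}$; that is the substance elided under "once that bookkeeping is established", and it is not optional.
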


\begin{lemma}
Let $A$ be a cdga and $f\colon M\to N$ a map of $A$-modules.
If a minimal model for $f$ exists, it is unique up to isomorphism fixing $M$.
\end{lemma}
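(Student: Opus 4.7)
The plan is to produce the desired isomorphism in two stages: first use cofibrancy to construct a morphism between the two given minimal models, and then invoke minimality to promote this morphism to an isomorphism. Fix two minimal models $i_j\colon M\to P_j = M\oplus(A\otimes V_j)$ ($j=1,2$) of $f$, with quasi-isomorphisms $\varphi_j\colon P_j\to N$ satisfying $\varphi_j\circ i_j = f$. Remark \ref{rem:SemiFreeChar} shows that each $i_j$ is a cofibration in the projective model structure on $A\mathrm{-Mod}$, so $P_1$ is cofibrant in the undercategory $(A\mathrm{-Mod})^{M/}$. Factoring $\varphi_2$ in this undercategory as an acyclic cofibration followed by a fibration produces an acyclic fibration against which $P_1$ lifts, yielding $\Phi\colon P_1\to P_2$ with $\Phi\circ i_1 = i_2$; equivalently, one may construct $\Phi$ directly by induction on the well-ordered basis of $V_1$, at each stage choosing a preimage along $\varphi_2$ of $\varphi_1(v_\alpha)$ compatible with the already-defined part of $\Phi$ and with a simultaneously-built homotopy $\varphi_2\circ\Phi\simeq \varphi_1$. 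Two-out-of-three then forces $\Phi$ to be a quasi-isomorphism.

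It therefore suffices to prove that any quasi-isomorphism $\Phi\colon P_1\to P_2$ fixing $M$ between two minimal semifree extensions of $M$ is automatically an isomorphism. Passing to the quotient $A$-modules $Q_j := P_j/i_j(M) = A\otimes V_j$, $\Phi$ descends to a quasi-isomorphism $\bar\Phi\colon Q_1\to Q_2$ of minimal semifree $A$-modules. The key consequence of the degree-monotonicity of the basis of each $V_j$ is that $dv_\alpha$ lies in $A^{\geq 1}\otimes V_{j,<\alpha}$: its component in $\mathbb{Q}\cdot 1_A \otimes V_{j,<\alpha}$ would sit in degree $|v_\alpha|+1$, but $V_{j,<\alpha}$ contains only generators of degree at most $|v_\alpha|$, forcing this component to vanish. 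Under the standing connectedness assumption on $A$ (valid in the applications relevant to this article; cf.\ Lemma \ref{lem:MinimalModuleExist}), reduction modulo the augmentation ideal therefore turns $Q_j$ into the complex $(V_j, 0)$ with vanishing differential. Since $Q_j$ is $A$-cofibrant, $\bar\Phi\otimes_A\mathbb{Q}$ computes the derived reduction and is a quasi-isomorphism of trivial-differential complexes, hence a linear isomorphism $V_1\cong V_2$. A Nakayama-style lifting then shows that $\bar\Phi$ is itself an isomorphism of $A$-modules, and the five-lemma applied to the short exact sequences $0\to M\to P_j\to Q_j\to 0$ completes the proof.

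The main obstacle is the rigidity step just sketched: one must carefully verify that the degree-monotone basis condition, combined with semifreeness, really does rigidify minimal $A$-modules up to isomorphism, and that the hypotheses (connectedness of $A$, bounded-below generators) under which this rigidity is cleanest are indeed satisfied in the intended applications. Modulo this bookkeeping, the argument is a faithful transcription of the classical Halperin--Roig--Saralegi proof in \cite{roig_minimal_2000} from the connective to the present unbounded setting.
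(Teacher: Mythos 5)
The paper offers no proof of its own here: it defers to the appendix of Roig--Saralegi~\cite{roig_minimal_2000}, remarking only that \lq\lq essentially the same proofs apply in the present setting''. Your proposal reconstructs exactly that argument---factor $\varphi_2$ as an acyclic cofibration followed by a fibration, lift the cofibrant object $P_1$ of $(A\mathrm{-Mod})^{M/}$ against the acyclic fibration (and retract onto $P_2$, which is fibrant) to obtain a comparison map $\Phi$ fixing $M$; then pass to the quotient by $M$, reduce mod $\overline{A}$, use minimality to see the reduced map is a quasi-isomorphism between complexes with zero differential, and conclude via graded Nakayama and the short five lemma. This is the same route the paper is implicitly invoking, so I regard the proposal as correct.

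Two refinements. First, your concern about bounded-below generators is moot: the index set $\mathcal{I}$ is well-ordered, hence has a least element $\alpha_0$, and degree-monotonicity then forces $|v_\beta| \geq |v_{\alpha_0}|$ for every $\beta$, so $V$ is automatically bounded below. Second, your caveat about connectedness of $A$ is a genuine (if mild) point: the reduction $(-)\otimes_A \mathbb{Q}$ presupposes an augmentation $A\to\mathbb{Q}$, i.e.\ $A^0 = \mathbb{Q}$, which the lemma as stated does not require but Roig--Saralegi do assume and every application in this paper satisfies (cf.\ Remark~\ref{rem:StandingAssumptions}). Your degree computation showing that $dv_\beta$ has no component in $A^0\otimes V_{<\beta}$---because any such component would live in degree $|v_\beta|+1$ while $V_{<\beta}$ sits in degrees $\leq |v_\beta|$---is correct and is indeed the crux of the rigidity step; without connectedness (or at least $A^0$ local) the subsequent Nakayama argument would not run as cleanly.
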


\begin{definition}
\label{defn:ConnectiveModule}
An $A$-module $M$ is \emph{connective} if the underlying cochain complex $M^\ast$ is connective, that is $M^k  = 0$ for $k<0$.
The full subcategory of $A\mathrm{-Mod}$ spanned by the connective $A$-modules is written $A\mathrm{-Mod}_{\geq 0}$.
\end{definition}

\begin{remark}
\label{rem:AModConn}
The inclusion $A\mathrm{-Mod}_{\geq 0}\hookrightarrow A\mathrm{-Mod}$ preserves all small limits and hence has a left adjoint $\mathrm{cn}^0_A$ by the adjoint functor theorem.
The behaviour of $\mathrm{cn}^0_A$ on general $A$-modules is not straightforward though an explicit description can be given using monadicity.
For free $A$-modules $M = A\otimes V$ we have that $\mathrm{cn}^0_A (M) \cong A\otimes \mathrm{cn}^0 V$ is the free $A$-module on the cochain complex
\[
\mathrm{cn}^0 V 
=
\Big[
\begin{tikzcd}
\dotsb
\ar[r]
0
\ar[r]
&
V^0/d(V^{-1})
\ar[r, "d'"]
&
V^1 
\ar[r, "d"]
&
\dotsb
\end{tikzcd}
\Big]
\]
with $d'$ the unique map induced by the differential on the quotient.
\end{remark}

\begin{example}
For a minimal $A$-module $A\otimes V$, let $\{v_\alpha\}_{\alpha\in \mathcal{I}}$ be a basis for $V$ indexed by a well-ordered set $\mathcal{I}$ subject to the conditions that (i) $d v_\alpha \in  A\otimes V_{<\alpha}$ and (ii) $\alpha \leq \beta$ implies $|v_\alpha| \leq |v_\beta|$ for all $\alpha, \beta\in \mathcal{I}$.
The connective cover of $A\otimes V$ is determined by the cofibre sequence of $A$-modules
\[
A\otimes V^{\leq -1}
\longrightarrow
A\otimes V
\longrightarrow
\mathrm{cn}^0_A (A\otimes V)\,,
\]
where $A\otimes V^{\leq -1}$ is the minimal $A$-submodule generated by the $v_\alpha$ for which $|v_\alpha|\leq -1$.
\end{example}

For a map of cdgas $f\colon B\to A$, the \emph{augmentation ideal} of $B$ is the kernel $\mathrm{aug}_A (B) := \ker (f)$.
In the case that $B\in \mathrm{cDGA}_{\dslash A}$ is an augmented $A$-algebra, the underlying cochain complex of $B$ splits as  $B \cong A\oplus \mathrm{aug}_A (B)$ and the augmentation ideal inherits the structure of an $A$-module.
It is easy to check that the resulting functor
$
\mathrm{aug}_A\colon \mathrm{cDGA}_{\dslash A}\longrightarrow A\mathrm{-Mod}_{\geq 0}
$
preserves limits and so has a left adjoint $\Lambda_A$ by the adjoint functor theorem.
The left adjoint $\Lambda_A$ computes free $A$-algebras; we do not need an explicit description in the general case, but note that for a free $A$-module $M\cong A\otimes V$ we have $\Lambda_A (M) \cong A\otimes \Lambda V$.

In summary, for each cdga $A$ we have a composite adjunction
\begin{equation}
\label{eqn:SymAug}
\begin{tikzcd}
A\mathrm{-Mod}
\ar[rr, shift left =2 , "\mathrm{cn}^0_A"]
\ar[rr, hookleftarrow, shift left =-2, "\bot"]
&&
A\mathrm{-Mod}_{\geq 0}
\ar[rr, shift left =2 , "\Lambda_A"]
\ar[rr, leftarrow, shift left =-2, "\bot", "\mathrm{aug}_A"']
&&
\mathrm{cDGA}_{\dslash A}
\end{tikzcd}
\end{equation}
By an abuse of notation we write this adjunction as $(\Lambda_A\dashv \mathrm{aug}_A)$.

\begin{proposition}
\label{prop:SymAugPNat}
The adjunction
\[
\begin{tikzcd}
A\mathrm{-Mod}
\ar[rr, shift left =2 , "\Lambda_A"]
\ar[rr, leftarrow, shift left =-2, "\bot", "\mathrm{aug}_A"']
&&
\mathrm{cDGA}_{\dslash A}
\end{tikzcd}
\]
is Quillen.
\end{proposition}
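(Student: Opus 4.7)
The plan is to verify that the right adjoint in the composite adjunction preserves fibrations and acyclic fibrations; since both model structures are created by underlying cochain complexes, this is all that is needed.

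The whole argument rests on one structural observation. For any augmented $A$-algebra $(A \xrightarrow{u} B \xrightarrow{\varepsilon} A)$, the identity $\varepsilon \circ u = \mathrm{id}_A$ provides a splitting of cochain complexes (equivalently, of $A$-modules)
\[
B \;\cong\; A \oplus \mathrm{aug}_A(B), \qquad \mathrm{aug}_A(B) = \ker(\varepsilon),
\]
in which both summands are sub-cochain complexes. This decomposition is natural in $B$: because a morphism $f \colon B \to B'$ in $\mathrm{cDGA}_{\dslash A}$ commutes with both units and augmentations, it decomposes as $\mathrm{id}_A \oplus \mathrm{aug}_A(f)$.

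Two consequences are immediate. In each degree $n$ the equation $f^n = \mathrm{id}_{A^n} \oplus \mathrm{aug}_A(f)^n$ shows that $f$ is degreewise surjective if and only if $\mathrm{aug}_A(f)$ is. Taking cohomology, the splitting yields $H^\bullet(B) \cong H^\bullet(A) \oplus H^\bullet(\mathrm{aug}_A(B))$ with $H^\bullet(f)$ the identity on the first summand, so $f$ is a quasi-isomorphism if and only if $\mathrm{aug}_A(f)$ is. Since fibrations (resp.\ acyclic fibrations) in $\mathrm{cDGA}_{\dslash A}$ are the maps whose underlying cdga morphisms are degreewise surjections (resp.\ surjective quasi-isomorphisms), and the same characterisation governs the projective model structure on $A\mathrm{-Mod}$, the functor $\mathrm{aug}_A$ preserves both classes.

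Finally, I would note that $\mathrm{aug}_A$ factors through $A\mathrm{-Mod}_{\geq 0}$, but the right adjoint of the composite adjunction \eqref{eqn:SymAug} is simply $\mathrm{aug}_A$ followed by the fully faithful inclusion $A\mathrm{-Mod}_{\geq 0} \hookrightarrow A\mathrm{-Mod}$, which does not alter the underlying cochain complex and thus preserves degreewise surjections and quasi-isomorphisms tautologically. Hence the composite right adjoint is right Quillen. There is no real obstacle here: the proof reduces to observing that the canonical splitting of an augmented algebra respects the differential and is natural in morphisms of augmented $A$-algebras, both of which are immediate from the fact that $u$ and $\varepsilon$ are themselves cdga maps.
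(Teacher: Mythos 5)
Your proof is correct and follows essentially the same approach as the paper: both exploit the natural splitting $B \cong A \oplus \mathrm{aug}_A(B)$ of underlying cochain complexes to see at once that a morphism of augmented $A$-algebras is a (surjective) quasi-isomorphism precisely when its image under $\mathrm{aug}_A$ is. Your extra remark about the factorisation through $A\mathrm{-Mod}_{\geq 0}$ is a welcome clarification that the paper leaves implicit.
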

\begin{proof}
Let $f\colon B\to C$ be a map of augmented $A$-algebras.
The underlying map of chain complexes splits as $\mathrm{id}\oplus \mathrm{aug}_A f\colon A\oplus \mathrm{aug}_A B\to A\oplus \mathrm{aug}_A C$ and it is clear that $f$ is a quasi-isomorphism or epimorphism precisely if $\mathrm{aug}_A f$ is.
Thus $\mathrm{aug}_A$ preserves fibrations and acyclic fibrations and so the adjunction is Quillen.
\end{proof}

Associated to a morphism of cdgas $f\colon A\to B$ there is a base change adjunction relating the module categories $(f_!\dashv f^\ast)\colon A\mathrm{-Mod}\to B\mathrm{-Mod}$.
The left adjoint sends an $A$-module $M$ to the $B$-module $f_! (M) := B\otimes_A M$ and the right adjoint regards a $B$-module $N$ as an $A$-module via restriction along $f$.
Since $f^\ast$ leaves the underlying cochain complexes fixed it is evidently a right Quillen functor.
Note that we use the same symbols to denote base change functors between categories of modules and augmented relative algebras.
This mild abuse of notation is justified by the following 
\begin{proposition}
\label{prop:Alg2ModPNat}
For any morphism of cdgas $f\colon A\to B$ the diagram of left Quillen functors
\[
\begin{tikzcd}
A\mathrm{-Mod}
\ar[r, "\Lambda_A"]
\ar[d, "f_!"']
&
\mathrm{cDGA}_{\dslash A}
\ar[d, "f_!"]
\\
B\mathrm{-Mod}
\ar[r, "\Lambda_B"]
&
\mathrm{cDGA}_{\dslash B}
\end{tikzcd}
\]
commutes up to natural isomorphism.
\end{proposition}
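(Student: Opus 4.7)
The most efficient route is to pass to right adjoints: since adjoints are unique up to unique natural isomorphism, two left adjoint functors are naturally isomorphic if and only if their right adjoints are. The right adjoint of the top-right composite $f_!^{\mathrm{Alg}}\circ \Lambda_A$ is $\mathrm{aug}_A\circ f^\ast$ (augmentation ideal of the pulled-back augmented algebra), while the right adjoint of the left-bottom composite $\Lambda_B\circ f_!^{\mathrm{Mod}}$ is $f^\ast\circ \mathrm{aug}_B$ (augmentation ideal of the $B$-algebra, then restriction of scalars along $f$). It therefore suffices to construct a natural isomorphism
\[
f^\ast \circ \mathrm{aug}_B
\;\cong\;
\mathrm{aug}_A\circ f^\ast
\colon\mathrm{cDGA}_{\dslash B}\longrightarrow A\mathrm{-Mod}.
\]

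\textbf{Step 1.} Unpack the pullback description of the augmented $A$-algebra $f^\ast C$ for $C\in \mathrm{cDGA}_{\dslash B}$. Writing $\varepsilon\colon C\to B$ and $\eta\colon B\to C$ for the augmentation and unit, $f^\ast C$ is the pullback $A\times_B C = \{(a,c)\mid f(a) = \varepsilon(c)\}$ equipped with augmentation $(a,c)\mapsto a$ and unit $a\mapsto (a,\eta f(a))$.

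\textbf{Step 2.} Compute $\mathrm{aug}_A(f^\ast C)$. Because kernels commute with pullback squares, the kernel of the projection $f^\ast C\to A$ is the cochain complex $\ker(\varepsilon) = \mathrm{aug}_B C$, with the isomorphism given by $(0,c)\mapsto c$. A direct multiplication check in $A\times_B C$ shows that the $A$-action on $\mathrm{aug}_A(f^\ast C)$ transported along this isomorphism is precisely the $B$-action on $\mathrm{aug}_B C$ restricted along $f$; that is, it coincides with $f^\ast(\mathrm{aug}_B C)$. Naturality in $C$ is immediate from the functoriality of kernels and pullbacks.

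\textbf{Step 3.} Conclude. Passing the natural isomorphism $\mathrm{aug}_A\circ f^\ast \cong f^\ast\circ \mathrm{aug}_B$ through the adjunction yields the required natural isomorphism $f_!^{\mathrm{Alg}}\circ \Lambda_A \cong \Lambda_B\circ f_!^{\mathrm{Mod}}$.

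\textbf{Expected obstacle.} There is no substantive obstacle: the statement reduces to the compatibility of taking kernels with pullbacks in cochain complexes, together with the bookkeeping of $A$-module structures. The only care required is to distinguish the pullback-based functor $f^\ast$ on augmented algebras from the restriction-of-scalars functor $f^\ast$ on modules, and Step 2 is exactly the verification that the two are compatible on augmentation ideals. As a sanity check one can also evaluate both composites on a free $A$-module $M = A\otimes V$: both yield $B\otimes \Lambda V$ as an augmented $B$-algebra, confirming the isomorphism on a class of generators and extending by colimit preservation of left adjoints.
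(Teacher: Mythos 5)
Your proposal is correct and takes essentially the same approach as the paper: pass to right adjoints by essential uniqueness, identify $\mathrm{aug}_A(f^\ast C)\cong\mathrm{aug}_B C$ from the pullback description of $f^\ast C$, and observe that the $A$-action is the restriction of the $B$-action along $f$. The only cosmetic difference is that you phrase the identification via kernels commuting with pullbacks, whereas the paper uses the direct-sum splitting $f^\ast C\cong A\oplus\mathrm{aug}_B C$ of underlying cochain complexes.
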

\begin{proof}
By essential uniqueness of adjoints, to prove the claim it is sufficient to check that there is a natural isomorphism of composite right adjoints $\mathrm{aug}_A \circ f^\ast\cong f^\ast \circ \mathrm{aug}_B$.
For any augmented $B$-algebra $C$ we have a natural splitting of cochain complexes $C\cong B\oplus \mathrm{aug}_B C$ which implies that the underlying cochain complex of the augmented $A$-algebra $f^\ast C$ naturally splits as $f^\ast C\cong A\oplus \mathrm{aug}_B C$.
By construction, the induced $A$-module structure on $\mathrm{aug}_A (f^\ast C) \cong \mathrm{aug}_B C$ coincides with the restriction of the $B$-module structure along $f$.
\end{proof}

\begin{remark}
\label{rem:BCMinimal}
For a morphism of cdgas $f\colon A\to B$, extension of scalars $f_!\colon A\mathrm{-Mod}\to B\mathrm{-Mod}$ preserves semifreeness and minimality.
This is an easy consequence of the characterisations of these properties given in Remark \ref{rem:SemiFreeChar}.
In the case that $f\colon A\to \mathbb{Q}$ is an augmentation, any minimal $A$-module $A\otimes V$ is thus sent by $f_!$ to $V$ regarded as a chain complex with trivial differential.
\end{remark}

\subsection{Modular models and fibrewise suspensions}
\label{ss:ModularModels}
Combining \eqref{eqn:SlicedSdR} and \eqref{eqn:SymAug}, for any cdga $A$ we obtain a composite Quillen adjunction
\begin{equation}
\label{eqn:UnstableAdj}
(\mathfrak{M}_A^u\dashv \mathfrak{P}_A^u)\colon 
\begin{tikzcd}
\mathrm{sSet}_{\dslash \mathfrak{S}(A)}
\ar[rr, shift left =2, "\mathfrak{A}_{\dslash A}"]
\ar[rr, shift left=-2, leftarrow, "\bot", "\mathfrak{S}_{\dslash A}"']
&&
\mathrm{cDGA}_{\dslash A}^\mathrm{op}
\ar[rr, shift left =2, "\mathrm{aug}_A"]
\ar[rr, shift left=-2, leftarrow, "\bot", "\Lambda_A"']
&&
A\mathrm{-Mod}^\mathrm{op}\,.
\end{tikzcd}
\end{equation}
We will later construct algebraic models for rational parametrised spectra by stabilising these adjunctions over a fixed cdga $A$ (the superscript \lq\lq$u$'' is for \lq\lq unstable'').
In order to do this, we must first examine modules in the image of $\mathfrak{M}^u_A$ and how this functor behaves under forming fibrewise suspensions. 

For a retractive space $Z$ over $\mathfrak{S}(A)$, the underlying cochain complex of $\mathfrak{A}(Z)$ splits naturally as $\mathfrak{A}(Z)\cong \mathfrak{A}\mathfrak{S}(A)\oplus \mathrm{aug}_{\mathfrak{A}\mathfrak{S}(A)} \mathfrak{A}(Z)$.
Since $\mathfrak{A}_{\dslash Z}$ is constructed by pulling back $\mathfrak{A}(Z)\to \mathfrak{A}\mathfrak{S}(A)$ along the Sullivan--de Rham counit $A\to \mathfrak{A}\mathfrak{S}(A)$, it follows that 
\[
\mathrm{aug}_{A}\big(\mathfrak{A}_{\dslash A} (Z)\big) \cong \mathrm{aug}_{\mathfrak{A}\mathfrak{S}(A)}\big( \mathfrak{A}(Z)\big)
\] 
as $A$-modules.
Further, the Sullivan--de Rham functor $\mathfrak{A}\colon \mathrm{sSet}\to \mathrm{cDGA}^\mathrm{op}$ sends the pushout diagram of simplicial sets 
\[
\begin{tikzcd}
\mathfrak{S}(A)
\ar[r]
\ar[d]
&
\ast
\ar[d]
\\
Z
\ar[r]
&
Z/\mathfrak{S}(A)
\end{tikzcd}
\]
to a pullback diagram of cdgas.
Since $Z/\mathfrak{S}(A)$ is pointed, the underlying cochain complex of $\mathfrak{A}(Z/\mathfrak{S}(A))$ splits as $\mathfrak{A}(Z/\mathfrak{S}(A))\cong \mathbb{Q}\oplus \mathrm{aug}_\mathbb{Q} (\mathfrak{A}(Z/\mathfrak{S}(A)))$ and there is a natural isomorphism
$
\mathrm{aug}_{\mathfrak{A}\mathfrak{S}(A)}( \mathfrak{A}(Z))
\cong 
\mathrm{aug}_\mathbb{Q}(\mathfrak{A}(Z/\mathfrak{S}(A))
$ of $\mathfrak{A}\mathfrak{S}(A)$-modules.
Combined with the above, we get a natural isomorphism of $A$-modules
\[
\mathrm{aug}_A \big(\mathfrak{A}_{\dslash A}(Z)\big)\cong \mathrm{aug}_\mathbb{Q} \big(\mathfrak{A}(Z/\mathfrak{S}(A)\big)
\]
so that $\mathfrak{M}^u_A$ sends $Z$ to the $A$-module $\mathrm{aug}_\mathbb{Q}(\mathfrak{A}(Z/\mathfrak{S}(A))$.
The next result provides a topological interpretation of this $A$-module under certain mild conditions.
\begin{proposition}
\label{prop:InterpretUModule}
Let $A$ be a cofibrant connected cdga of finite homotopical type. 
For any retractive space $Z$ over $\mathfrak{S}(A)$, the $H^\bullet (A)$-action on $H^\bullet(\mathfrak{M}^u_A(Z))$ is naturally isomorphic to the $H^\bullet(\mathfrak{S}(A))$-action on the reduced rational cohomology $\widetilde{H}^\bullet (Z/\mathfrak{S}(A))$.
\end{proposition}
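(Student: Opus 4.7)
The plan is to unwind the definition of $\mathfrak{M}^u_A(Z)$ into something explicitly involving $\mathfrak{A}(Z)$, transport the resulting cohomology computation to singular cochains via the Stokes map, and then use the retractive structure to extract the reduced cohomology of $Z/\mathfrak{S}(A)$.

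First, I would make precise the $A$-module structure on $\mathfrak{M}^u_A(Z)$. By the paragraph preceding the proposition there is a natural isomorphism of $A$-modules
\[
\mathfrak{M}^u_A(Z) \;\cong\; \mathrm{aug}_{\mathfrak{A}\mathfrak{S}(A)}\big(\mathfrak{A}(Z)\big)
\;\cong\; \mathrm{aug}_\mathbb{Q}\big(\mathfrak{A}(Z/\mathfrak{S}(A))\big),
\]
where on the middle term the $\mathfrak{A}\mathfrak{S}(A)$-action is induced by $\mathfrak{A}(p)\colon\mathfrak{A}\mathfrak{S}(A)\to\mathfrak{A}(Z)$ and the ring structure of $\mathfrak{A}(Z)$, and $A$ acts via the Sullivan--de Rham counit $\epsilon_A\colon A\to\mathfrak{A}\mathfrak{S}(A)$. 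Since $A$ is cofibrant, connected and of finite homotopical type, Theorem \ref{thm:SdR} implies that $\epsilon_A$ is a quasi-isomorphism and hence induces an isomorphism of cohomology algebras $H^\bullet(A)\xrightarrow{\cong}H^\bullet(\mathfrak{A}\mathfrak{S}(A))$. Composing with the Stokes isomorphism $H^\bullet(\mathfrak{A}\mathfrak{S}(A))\cong H^\bullet(\mathfrak{S}(A))$ of Remark \ref{rem:StokesMap}, the $H^\bullet(A)$-action on $H^\bullet(\mathfrak{M}^u_A(Z))$ corresponds to an $H^\bullet(\mathfrak{S}(A))$-action on $H^\bullet\big(\mathrm{aug}_{\mathfrak{A}\mathfrak{S}(A)}\mathfrak{A}(Z)\big)$.

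Next I would analyse the retractive splitting. Because $Z\to\mathfrak{S}(A)$ admits the section $i$, the map $\mathfrak{A}(i)\colon\mathfrak{A}(Z)\to\mathfrak{A}\mathfrak{S}(A)$ is a retraction of $\mathfrak{A}\mathfrak{S}(A)$-modules along $\mathfrak{A}(p)$, so
\[
\mathfrak{A}(Z)\;\cong\;\mathfrak{A}\mathfrak{S}(A)\oplus\mathrm{aug}_{\mathfrak{A}\mathfrak{S}(A)}\big(\mathfrak{A}(Z)\big)
\]
as $\mathfrak{A}\mathfrak{S}(A)$-modules. Applying the Stokes map naturally to the maps $i$ and $p$, we obtain an analogous $H^\bullet(\mathfrak{S}(A))$-module splitting
\[
H^\bullet(Z)\;\cong\;H^\bullet(\mathfrak{S}(A))\oplus\ker\big(i^\ast\colon H^\bullet(Z)\to H^\bullet(\mathfrak{S}(A))\big),
\]
and the Stokes quasi-isomorphism identifies the two splittings termwise. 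Thus, as $H^\bullet(\mathfrak{S}(A))$-modules,
\[
H^\bullet\big(\mathrm{aug}_{\mathfrak{A}\mathfrak{S}(A)}\mathfrak{A}(Z)\big)\;\cong\;\ker\big(i^\ast\colon H^\bullet(Z)\to H^\bullet(\mathfrak{S}(A))\big).
\]

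Finally, I would identify the right-hand side with $\widetilde H^\bullet(Z/\mathfrak{S}(A))$ via the cofibre sequence $\mathfrak{S}(A)\to Z\to Z/\mathfrak{S}(A)$. The associated long exact sequence of reduced rational cohomology degenerates because $i^\ast$ is split surjective, yielding a natural isomorphism $\widetilde H^\bullet(Z/\mathfrak{S}(A))\cong\ker(i^\ast)$ of $H^\bullet(\mathfrak{S}(A))$-modules (with the action on the left coming from the quotient map $Z\to Z/\mathfrak{S}(A)$, which is the same one induced from $p$ under the splitting). Stringing these natural isomorphisms together produces the desired comparison. The main thing to check carefully, rather than the cohomological calculation itself, is the compatibility of the various module structures at the cochain level---in particular that the $\mathfrak{A}\mathfrak{S}(A)$-action on the augmentation ideal coming from $\mathfrak{A}(p)$ is exactly the one which, after applying Stokes, realises the cup-product action of $H^\bullet(\mathfrak{S}(A))$ on $H^\bullet(Z)$ through $p^\ast$; this follows from the multiplicativity of the Stokes map on cohomology noted in Remark \ref{rem:StokesMap}, together with naturality.
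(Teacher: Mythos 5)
Your proof follows essentially the same route as the paper's: identify $\mathfrak{M}^u_A(Z)$ with the augmentation ideal $\mathrm{aug}_{\mathfrak{A}\mathfrak{S}(A)}\mathfrak{A}(Z)\cong \mathrm{aug}_\mathbb{Q}\mathfrak{A}(Z/\mathfrak{S}(A))$, use the retractive splitting of cochains, compare polynomial forms to singular cochains via the Stokes map, and invoke the hypothesis that the Sullivan--de Rham counit is a quasi-isomorphism. So the skeleton is right.

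The point you flag at the end as ``the main thing to check carefully'' is, however, genuinely the crux of the paper's proof, and the reason you give for it---multiplicativity of the Stokes map on cohomology plus naturality---does not actually address the issue. The subtlety is not Stokes at all: it is what the $H^\bullet(\mathfrak{S}(A))$-\emph{action} on $\widetilde{H}^\bullet(Z/\mathfrak{S}(A))$ even means, and why it agrees, under the isomorphism $\widetilde{H}^\bullet(Z/\mathfrak{S}(A))\cong \ker(i^\ast)\subset H^\bullet(Z)$, with the cup-product action pulled back along $p$. There is no ring map from $C^\bullet(\mathfrak{S}(A))$ to $C^\bullet(Z/\mathfrak{S}(A))$; the module structure on reduced cohomology of the quotient is defined by the $X_+$-comodule structure $Z/X\to X_+\wedge Z/X$, $[z]\mapsto p(z)\wedge[z]$. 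The paper establishes compatibility of the two actions by a space-level commuting diagram through $Z_+\to Z_+\wedge Z_+\to X_+\wedge Z_+$ (the diagonal composed with $p_+\wedge\mathrm{id}$), which exhibits the coaction on $Z/X$ as covering the coaction $Z\to X\times Z$. Your phrase ``the action on the left coming from the quotient map $Z\to Z/\mathfrak{S}(A)$, which is the same one induced from $p$ under the splitting'' asserts exactly the conclusion of that diagram without supplying it; without this, the identification of the two $H^\bullet(\mathfrak{S}(A))$-module structures is not established, only the identification of the underlying graded vector spaces.
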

\begin{proof}
For any simplicial set $X$ and retractive space $Z\in \mathrm{sSet}_{\dslash X}$, applying the rational cochains functor to the iterated pushout diagram
\begin{equation}
\label{eqn:CollapseRetract}
\begin{tikzcd}
X
\ar[r, "i"]
\ar[d]
&
Z
\ar[r, "p"]
\ar[d]
&
X
\ar[d]
\\
\ast
\ar[r]
&
Z/X
\ar[r]
&
\ast
\end{tikzcd}
\end{equation}
gives splittings of cochain complexes
$
C^\bullet(Z)\cong C^\bullet(X) \oplus M
$
and $C^\bullet(Z/X)\cong \mathbb{Q} \oplus M'$.
With respect to these splittings the quotient map $Z\to Z/X$ gives rise to the map $C^\bullet(Z/X)\to C^\bullet(Z)$ given as the sum of the unit $\mathbb{Q}\to C^\bullet(X)$ and a quasi-isomorphism $M' \to M$.

The $H^\bullet(X)$-module structure on $\widetilde{H}^\bullet (Z/X)$ is induced by the space-level coaction
\begin{align*}
Z/X &\longrightarrow X_+\wedge Z/X\\
[z]&\longmapsto p(z) \wedge [z]\,.
\end{align*}
Examining the commuting diagram
\[
\begin{tikzcd}
Z_+
\ar[r]
\ar[d]
&
Z_+ \wedge Z_+
\ar[r, "p_+\wedge\mathrm{id}_{Z_+}"]
&
X_+\wedge Z_+
\ar[d]
\\
Z/X
\ar[rr]
&&
X_+\wedge Z/X
\end{tikzcd}
\]
we find that the  $H^\bullet(X)$-module structure  on $\widetilde{H}^\bullet (Z/X)$ is compatible with the $H^\bullet(X)$-module structure on $H^\bullet(Z)$ induced by the coaction $Z\to X\times Z$.
Taking cohomology, the $M'\to M$ thus becomes an isomorphism of $H^\bullet(X)$-modules.

Applying the Sullivan--de Rham functor $\mathfrak{A}$ to the diagram \eqref{eqn:CollapseRetract} we get splittings of cochain complexes $\mathfrak{A}(Z)\cong \mathfrak{A}(X)\oplus \mathrm{aug}_{\mathfrak{A}(X)}\mathfrak{A}(Z)$ and $\mathfrak{A}(Z/X)\cong \mathbb{Q}\oplus \mathrm{aug}_\mathbb{Q}\mathfrak{A}(Z/X)$.
By \eqref{eqn:CollapseRetract}, the components of the natural $A_\infty$-quasi-isomorphism $\mathfrak{A}(Y)\rightsquigarrow C^\bullet(Y)$ (Remark \ref{rem:StokesMap}) at $Z$ and $Z/X$ respect the splittings and hence we obtain a commuting diagram of quasi-isomorphisms of cochain complexes
\[
\begin{tikzcd}
\mathrm{aug}_\mathbb{Q}\mathfrak{A}(Z/X)
\ar[r]
\ar[d]
&
\mathrm{aug}_{\mathfrak{A}(X)}\mathfrak{A}(Z)
\ar[d]
\\
M'
\ar[r]
&
M
\end{tikzcd}
\] 
Since the vertical arrows arise from $A_\infty$-quasi-isomorphisms, all arrows in the above diagram are isomorphisms of $H^\bullet(X)\cong H^\bullet(\mathfrak{A}(X))$-modules in cohomology.

In the case that $X=\mathfrak{S}(A)$ is the spatial realisation of a cofibrant connected cdga $A$ of finite rational type, the Sullivan--de Rham counit $A\to \mathfrak{A}\mathfrak{S}(A)$ is a weak equivalence by the Sullivan--de Rham equivalence theorem.
For a retractive space $Z\in \mathrm{sSet}_{\dslash \mathfrak{S}(A)}$, the $A$-module structure on $\mathfrak{M}^u_A (Z)$ is obtained by restricting the $\mathfrak{A}\mathfrak{S}(A)$-module structure on $\mathrm{aug}_\mathbb{Q} \mathfrak{A}(Z/\mathfrak{S}(A))$ considered above along the counit $A\to \mathfrak{A}\mathfrak{S}(A)$.
With this observation, the result follows.
\end{proof}

\begin{example}
\label{exam:FreeFibBasePointModule}
Consider the case that $Z = Y_{X+}$ for some $p\colon Y\to X$ (cf.~Example \ref{exam:FibrewiseSuspensionSpectra}).
In this case we have $Z/X \cong Y_+$ and $\mathfrak{M}^u_A(Z) \cong \mathfrak{A}(Y)$ regarded as an $A$-module by restriction along $A\to \mathfrak{A}\mathfrak{S}(A) = \mathfrak{A}(X)\to \mathfrak{A}(Y)$.
\end{example}

We now seek to understand how $\mathfrak{M}^u_A$ interacts with fibrewise stabilisation.
To ease the notation we adopt the following convention:
\begin{center}
{\bf For the remainder of this section we write $X:= \mathfrak{S}(A)$ for the spatial realisation of $A$}.
\end{center}
As $\Sigma_X Z$ coincides with the cofibre of $\partial\Delta^1\otimes_X Z\to \Delta^1 \otimes_X Z $ (cf.~Remark \ref{rem:SuspensionPresentation}), our first step is to analyse how $\mathfrak{A}_{\dslash A}$ interacts with finite $\mathrm{sSet}$-tensors.

\begin{lemma}
\label{lem:FinTensorModuleCompare}
For any simplicial set $K$ and $Z\in \mathrm{sSet}_{\dslash X}$ there is a natural quasi-isomorphism of $A$-modules
\[
\mathfrak{A}(K)\otimes \mathfrak{M}^u_A(Z)\longrightarrow
\mathfrak{M}^u_A (K\otimes_X Z)
\]
with $A$ acting on the second tensor factor of $\mathfrak{A}(K)\otimes \mathfrak{M}^u_A(Z)$.
\end{lemma}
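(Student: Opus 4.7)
The plan is to reduce the claim to an application of the K\"unneth quasi-isomorphism of Lemma~\ref{lem:SdRandProducts}, exploiting the identification $\mathfrak{M}^u_A(Z) \cong \mathrm{aug}_\mathbb{Q}\mathfrak{A}(Z/X)$ (with $A$-action restricted along $A\to\mathfrak{A}(X)$) established just before the lemma.

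First I would compute the pointed quotient $(K\otimes_X Z)/X$ naturally. Pasting the pushout square defining $K\otimes_X Z$ (as in Remark~\ref{rem:SuspensionPresentation}) with the further pushout collapsing $X$ to a point gives, by the pasting lemma, a single pushout square with $K\times X\to \ast$ along one edge and $K\times X\to K\times Z$ (i.e.\ $\mathrm{id}_K\times i$) along the other. A direct inspection of this pushout yields $(K\otimes_X Z)/X \cong K_+\wedge (Z/X)$, natural in $K$ and $Z$. Hence
\[
\mathfrak{M}^u_A(K\otimes_X Z) \;\cong\; \mathrm{aug}_\mathbb{Q}\mathfrak{A}\bigl(K_+\wedge (Z/X)\bigr).
\]

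Next I would apply $\mathfrak{A}$ to the defining pushout $\ast \leftarrow K \to K\times (Z/X)$ of $K_+\wedge(Z/X)$, which yields a pullback of cdgas and therefore identifies $\mathrm{aug}_\mathbb{Q}\mathfrak{A}(K_+\wedge Z/X)$ with the kernel of the cdga map $\mathfrak{A}(K\times Z/X)\to\mathfrak{A}(K)$ induced by $k\mapsto(k,\mathrm{basept})$. By Lemma~\ref{lem:SdRandProducts}, the natural map $\mathfrak{A}(K)\otimes\mathfrak{A}(Z/X)\to\mathfrak{A}(K\times Z/X)$ is a quasi-isomorphism of cdgas, and under it the augmentation $\mathfrak{A}(K\times Z/X)\to\mathfrak{A}(K)$ corresponds to $\mathrm{id}\otimes\epsilon$, with $\epsilon\colon\mathfrak{A}(Z/X)\to\mathbb{Q}$ the augmentation at the basepoint $[X]$. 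Its kernel is visibly $\mathfrak{A}(K)\otimes\mathrm{aug}_\mathbb{Q}\mathfrak{A}(Z/X)=\mathfrak{A}(K)\otimes\mathfrak{M}^u_A(Z)$. Applying the five-lemma to the diagram of short exact sequences of cochain complexes
\[
\begin{tikzcd}[column sep=small]
0 \ar[r] & \mathfrak{A}(K)\otimes \mathfrak{M}^u_A(Z) \ar[r] \ar[d] & \mathfrak{A}(K)\otimes\mathfrak{A}(Z/X) \ar[r] \ar[d, "\sim"] & \mathfrak{A}(K) \ar[r] \ar[d, equals] & 0 \\
0 \ar[r] & \mathfrak{M}^u_A(K\otimes_X Z) \ar[r] & \mathfrak{A}(K\times Z/X) \ar[r] & \mathfrak{A}(K) \ar[r] & 0
\end{tikzcd}
\]
produces the desired quasi-isomorphism on the left-hand vertical arrow.

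Finally I would check $A$-linearity and naturality. On both sides the $A$-module structure is inherited from the $\mathfrak{A}(X)$-module structure on the second tensor factor via restriction along the Sullivan--de Rham counit $A\to\mathfrak{A}(X)$, and the K\"unneth map $\mathfrak{A}(K)\otimes\mathfrak{A}(Z/X)\to\mathfrak{A}(K\times Z/X)$ is built from the two cdga projections, so it is $\mathfrak{A}(X)$-linear (hence $A$-linear) with respect to the action on the $Z/X$-factor alone. Naturality in $K$ and $Z$ is inherited from naturality of $\mathfrak{A}$, of the K\"unneth comparison map, and of the pushout construction. The main obstacle is essentially bookkeeping: making sure the identification $(K\otimes_X Z)/X\cong K_+\wedge (Z/X)$ is compatible with the retractive structures so that the resulting $\mathfrak{A}(X)$-actions on both sides of the five-lemma diagram agree; once this is pinned down by a direct diagram chase through the pasted pushouts, the rest is routine.
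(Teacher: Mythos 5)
Your proof is correct and follows essentially the same route as the paper's: both identify the quotient $(K\otimes_X Z)/X$ in terms of $K$ and $Z/X$ via pasted pushouts, then invoke the K\"unneth quasi-isomorphism of Lemma \ref{lem:SdRandProducts} and pass to augmentation ideals. The only cosmetic difference is that you phrase the intermediate object as the smash $K_+\wedge (Z/X)$ while the paper works with the retractive space $K\times (Z/X)$ over $K$ and its relative augmentation ideal $\mathrm{aug}_{\mathfrak{A}(K)}\mathfrak{A}(K\times Z/X)$; the two are identified by the same pushout-to-pullback argument you use.
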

\begin{proof}
There is a commuting diagram of simplicial sets in which each square is a pushout
\begin{equation}
\label{eqn:BigTensorDiagram}
\begin{tikzcd}[sep=small]
K\times X
\ar[r]
\ar[d]
&
X
\ar[r]
\ar[d]
&
\ast
\ar[r, leftarrow]
\ar[d]
&
K
\ar[d]
\\
K\times Z
\ar[r]
\ar[d]
&
K\otimes_X Z
\ar[r]
\ar[d]
&
(K\times Z)/(K\times X)
\ar[r, leftarrow]
\ar[d]
&
K\times Z/X
\ar[d]
\\
K\times X
\ar[r]
&
X
\ar[r]
&
\ast
\ar[r, leftarrow]
&
K
\end{tikzcd}
\end{equation}
which, upon applying the Sullivan--de Rham functor $\mathfrak{A}$, gives rise to isomorphisms of cochain complexes
\[
\mathrm{aug}_{\mathfrak{A}(K\times X)}\mathfrak{A}(K\times Z)
\cong
\mathrm{aug}_{\mathfrak{A}(X)}\mathfrak{A}(K\otimes_X Z)
\cong
\mathrm{aug}_{\mathbb{Q}}\mathfrak{A}((K\times Z)/(K\times X))
\cong
\mathrm{aug}_{\mathfrak{A}(K)}\mathfrak{A}(K\times Z/X)
\]
compatible with the various module structures.
By Lemma \ref{lem:SdRandProducts},  associated to the right hand column in the diagram \eqref{eqn:BigTensorDiagram} there is a commuting diagram of cdgas
\[
\begin{tikzcd}[sep=small]
\mathfrak{A}(K)
\ar[r]
\ar[d]
&
\mathfrak{A}(K\times Z/X)
\ar[d]
\\
\mathfrak{A}(K)\otimes \mathfrak{A}(Z/X)
\ar[r]
\ar[ur, "\varpi", "\sim"']
&
\mathfrak{A}(K)\,,
\end{tikzcd}
\]
in which $\varpi$ is a quasi-isomorphism and the left hand vertical and bottom horizontal arrows arise from $\mathbb{Q}\to \mathfrak{A}(Z/X)\to \mathbb{Q}$ by tensoring with $\mathfrak{A}(K)$.
Passing to augmentation ideals we get a quasi-isomorphism of cochain complexes
$\mathfrak{A}(K)\otimes \mathrm{aug}_\mathbb{Q}\mathfrak{A}(Z/X)\to \mathrm{aug}_{\mathfrak{A}(K)}\mathfrak{A}(K\times Z/X)$ and hence a quasi-isomorphism
\[
\mathfrak{A}(K)\otimes \mathrm{aug}_\mathbb{Q}\mathfrak{A}(Z/X)
\longrightarrow
\mathrm{aug}_{\mathfrak{A}(X)}\mathfrak{A}(K\otimes_X Z)\,.
\]
Similarly to the proof of Proposition \ref{prop:InterpretUModule}, it is easy to show that this quasi-isomorphism respects the $A$-module structures.
There are natural quasi-isomorphisms $\mathfrak{M}^u_A(Z) \cong \mathrm{aug}_\mathbb{Q}\mathfrak{A}(Z/X)$ and $\mathfrak{M}^u_A(K\otimes_X Z)\cong \mathrm{aug}_{\mathfrak{A}(X)}\mathfrak{A}(K\otimes_X Z)$ respecting the $A$-module structures.
This  observation completes the proof.
\end{proof}

The category $\mathrm{sSet}_{\dslash X}$ is also tensored over pointed simplicial sets, so we have the following
\begin{corollary}
\label{cor:FinSmashModuleCompare}
For any pointed simplicial set $K$ and $Z\in \mathrm{sSet}_{\dslash X}$ there is a natural quasi-isomorphism of $A$-modules
\[
\mathfrak{M}^u_\mathbb{Q}(K)\otimes \mathfrak{M}^u_A(Z)\longrightarrow
\mathfrak{M}^u_A (K\owedge_X Z)
\]
with $A$ acting on the second tensor factor of $\mathfrak{M}^u_\mathbb{Q}(K)\otimes \mathfrak{M}^u_A(Z)$.
\end{corollary}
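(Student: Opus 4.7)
The plan is to deduce this corollary from Lemma \ref{lem:FinTensorModuleCompare} by realising $K\owedge_X Z$ as the cofibre of a naturally-defined map of retractive spaces. For a pointed simplicial set $K$ with basepoint $k_0$, the canonical map $K_+\to K$ collapsing the disjoint basepoint onto $k_0$ fits into a cofibre sequence $S^0 \to K_+ \to K$ in $\mathrm{sSet}_\ast$, where $S^0=\{+, k_0\}\hookrightarrow K_+$. Since $-\owedge_X Z\colon \mathrm{sSet}_\ast \to \mathrm{sSet}_{\dslash X}$ is a left adjoint, applying it produces a cofibre sequence of retractive spaces
\[
Z \longrightarrow K\otimes_X Z \longrightarrow K\owedge_X Z,
\]
where I have used $S^0 \owedge_X Z \cong Z$ and $K_+ \owedge_X Z = K\otimes_X Z$ (Remark \ref{rem:SuspensionPresentation}). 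Assuming without loss of generality that $Z$ is cofibrant and the basepoint of $K$ is a $0$-simplex, this is a homotopy cofibre sequence, and applying the left Quillen functor $\mathfrak{M}^u_A \colon \mathrm{sSet}_{\dslash X} \to A\mathrm{-Mod}^\mathrm{op}$ yields a homotopy fibre sequence in $A\mathrm{-Mod}$:
\[
\mathfrak{M}^u_A(K\owedge_X Z) \longrightarrow \mathfrak{M}^u_A(K\otimes_X Z) \longrightarrow \mathfrak{M}^u_A(Z).
\]

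Next, I would compare this with the fibre sequence obtained by tensoring $\mathfrak{M}^u_A(Z)$ with the (split) short exact sequence of $\mathbb{Q}$-vector spaces $0 \to \mathrm{aug}_\mathbb{Q}\mathfrak{A}(K) \to \mathfrak{A}(K) \to \mathbb{Q} \to 0$:
\[
\mathrm{aug}_\mathbb{Q}\mathfrak{A}(K)\otimes \mathfrak{M}^u_A(Z) \longrightarrow \mathfrak{A}(K)\otimes \mathfrak{M}^u_A(Z) \longrightarrow \mathfrak{M}^u_A(Z).
\]
Lemma \ref{lem:FinTensorModuleCompare}, applied to the underlying unpointed simplicial set of $K$, supplies a natural quasi-isomorphism of middle terms. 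Naturality of that lemma in the first variable, applied to the basepoint inclusion $\ast \to K$, fills in a commuting square identifying the rightmost arrows of the two sequences up to quasi-isomorphism (the bottom map being induced by the augmentation $\mathfrak{A}(K) \to \mathbb{Q}$). Passing to (homotopy) fibres then produces the desired natural quasi-isomorphism
\[
\mathfrak{M}^u_\mathbb{Q}(K)\otimes \mathfrak{M}^u_A(Z) \,=\, \mathrm{aug}_\mathbb{Q}\mathfrak{A}(K)\otimes \mathfrak{M}^u_A(Z) \xrightarrow{\ \sim\ } \mathfrak{M}^u_A(K\owedge_X Z).
\]

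The main point requiring care is verifying that the quasi-isomorphism of Lemma \ref{lem:FinTensorModuleCompare} is genuinely natural in the first variable, so that the comparison square with the basepoint map commutes; this reduces to a routine inspection of the proof's key diagram \eqref{eqn:BigTensorDiagram}, specifically its functoriality along $\ast \to K$. A secondary technicality is that strict kernels in $A\mathrm{-Mod}$ model homotopy fibres provided the target map is a degreewise surjection; this holds here because the left Quillen functor $\mathfrak{M}^u_A$ sends the cofibration $Z \hookrightarrow K\otimes_X Z$ to a fibration in $A\mathrm{-Mod}^\mathrm{op}$, i.e.\ a degreewise surjection in $A\mathrm{-Mod}$.
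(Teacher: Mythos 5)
Your argument coincides with the paper's: both realise $K\owedge_X Z$ as the cofibre of $Z\cong\ast\otimes_X Z\to K\otimes_X Z$ coming from the basepoint $\ast\to K$, apply $\mathfrak{M}^u_A$ to get a fibre sequence of $A$-modules, compare with the split sequence $\mathfrak{M}^u_\mathbb{Q}(K)\otimes\mathfrak{M}^u_A(Z)\to\mathfrak{A}(K)\otimes\mathfrak{M}^u_A(Z)\to\mathfrak{M}^u_A(Z)$ via Lemma~\ref{lem:FinTensorModuleCompare}, and conclude by the five lemma. You have merely spelled out the details the paper leaves to the reader (naturality of Lemma~\ref{lem:FinTensorModuleCompare} in $K$ and the fact that strict kernels of the resulting surjections model the homotopy fibres), which is exactly the intended argument.
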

\begin{proof}
For a pointed simplicial set $(K,k)$ and retractive space $Z$ over $X$, the tensoring $K\owedge_X Z$ is the cofibre of the map $\ast \otimes_X Z \cong Z\to K\otimes_X Z$ obtained by tensoring the retractive space $Z$ with $k\colon \ast \to K$.
Since $\mathfrak{A}(K)\cong \mathbb{Q} \oplus \mathfrak{M}^u_\mathbb{Q}(K)$ as chain complexes, the result follows by the Lemma, the fact that $\mathfrak{M}^u_A$ sends cofibre sequences to fibre sequences of $A$-modules, and a routine application of the five lemma. 
\end{proof}

\begin{remark}
\label{rem:Shifts}
For any integer $n$, write $\mathbb{Q}[n]$ for the rational cochain complex consisting of a single copy of $\mathbb{Q}$ concentrated in degree $-n$.
For a cochain complex $M$ we write $M[n] := M\otimes \mathbb{Q}[n]$, so that  $M[n]^k = M^{k+n}$ and $d_{M[n]} = (-1)^n d_M$.
If $M$ is an $A$-module then so is $M[n]$; writing $m\mapsto m[n]$ for the canonical isomorphism $M^{k+n}\to M[n]^k$ the action of a homogeneous element $a$ of $A$ is given by $a\cdot m[n] := (-1)^{n\cdot|a|} (a\cdot m)[n]$.
\end{remark}

\begin{lemma}
\label{lem:SuspComparison}
Let $A$ be a cdga.
For any retractive space $Z$ over $X =\mathfrak{S}(A)$ there is a natural quasi-isomorphism of $A$-modules
$\tau_Z\colon \mathfrak{M}^u_A (Z)[-1]\to \mathfrak{M}^u_A (\Sigma_X Z)$.
\end{lemma}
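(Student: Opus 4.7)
The plan is to realise $\tau_Z$ through the smash-product presentation of the fibrewise suspension, together with a single universal computation on $S^1$. By construction (Remark \ref{rem:SuspensionPresentation}) $\Sigma_X Z = S^1 \owedge_X Z$ with $S^1 = \Delta^1/\partial\Delta^1$, so Corollary \ref{cor:FinSmashModuleCompare} furnishes a natural quasi-isomorphism of $A$-modules
\[
\kappa_Z\colon \mathfrak{M}^u_\mathbb{Q}(S^1)\otimes \mathfrak{M}^u_A(Z)\longrightarrow \mathfrak{M}^u_A(\Sigma_X Z)
\]
with $A$ acting on the right tensor factor. Granting this, the problem reduces to producing a natural quasi-isomorphism $\mathbb{Q}[-1] \to \mathfrak{M}^u_\mathbb{Q}(S^1)$, tensoring it with $\mathfrak{M}^u_A(Z)$, and identifying $\mathbb{Q}[-1]\otimes \mathfrak{M}^u_A(Z)$ with $\mathfrak{M}^u_A(Z)[-1]$ as $A$-modules.

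First I would construct a quasi-isomorphism of cochain complexes $\iota\colon \mathbb{Q}[-1]\to \mathfrak{M}^u_\mathbb{Q}(S^1)$. By the Stokes map (Remark \ref{rem:StokesMap}) one has $H^\bullet(\mathfrak{M}^u_\mathbb{Q}(S^1))\cong \widetilde{H}^\bullet(S^1;\mathbb{Q})$, which is one-dimensional, concentrated in degree one. An explicit generator is available from the form $dt\in \mathfrak{A}(\Delta^1)$: it vanishes on $\partial\Delta^1$, hence descends to a degree-one class $\omega \in \mathrm{aug}_\mathbb{Q}(\mathfrak{A}(S^1)) = \mathfrak{M}^u_\mathbb{Q}(S^1)$ satisfying $\int_{S^1}\omega = 1$. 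Sending the canonical generator of $\mathbb{Q}[-1]^1$ to $\omega$ defines $\iota$, a quasi-isomorphism. Since we work over the field $\mathbb{Q}$ all cochain complexes are flat, so $\iota\otimes \mathrm{id}_{\mathfrak{M}^u_A(Z)}$ remains a quasi-isomorphism of $A$-modules.

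Second, there is a canonical isomorphism of $A$-modules $\mathbb{Q}[-1]\otimes \mathfrak{M}^u_A(Z) \cong \mathfrak{M}^u_A(Z)[-1]$ sending $1\otimes m \mapsto m[-1]$, where $1$ denotes the canonical generator of $\mathbb{Q}[-1]^1$. The Koszul sign rule on the left gives $a\cdot (1\otimes m) = (-1)^{|a|}\, 1\otimes (a\cdot m)$ (as $|1|=1$ in $\mathbb{Q}[-1]$), while Remark \ref{rem:Shifts} gives $a\cdot m[-1] = (-1)^{|a|}(a\cdot m)[-1]$, so the signs agree. Defining $\tau_Z$ as the composite of this isomorphism with $(\iota\otimes \mathrm{id})$ followed by $\kappa_Z$ yields the desired map.

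The main obstacle, such as it is, is book-keeping: ensuring the choice of $\omega$ is fixed once and for all (independent of $Z$) so that naturality of $\tau_Z$ in $Z$ follows at once from naturality of $\kappa_Z$, and that the shift convention of Remark \ref{rem:Shifts} matches the Koszul sign produced by tensoring $\mathbb{Q}[-1]$ with an $A$-module. Both of these are essentially sign checks; no serious technical difficulty remains.
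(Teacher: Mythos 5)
Your argument is correct, and it reorganises the paper's proof along cleaner conceptual lines. The paper works with the pushout $\partial\Delta^1 \otimes_X Z \to \Delta^1 \otimes_X Z$ directly: it applies $\mathfrak{M}^u_A$, invokes Lemma \ref{lem:FinTensorModuleCompare} to compare pullback squares, introduces an auxiliary $A$-module $P(Z)$ sitting over $\mathfrak{A}_1 \otimes \mathfrak{M}^u_A(Z)$, and brings in a specific section $\varsigma\colon N^\bullet(\Delta^1) \to \mathfrak{A}_1$ of the Stokes map to get a second morphism of pullback squares whose comparison maps are quasi-isomorphisms by Kan's cube lemma. You instead factor through the smash-product presentation $\Sigma_X Z = S^1 \owedge_X Z$, apply Corollary \ref{cor:FinSmashModuleCompare} to get the comparison $\kappa_Z\colon \mathfrak{M}^u_\mathbb{Q}(S^1) \otimes \mathfrak{M}^u_A(Z) \to \mathfrak{M}^u_A(\Sigma_X Z)$, and isolate the universal input $\iota\colon \mathbb{Q}[-1] \to \mathfrak{M}^u_\mathbb{Q}(S^1)$ once and for all. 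This is a genuinely more modular route: the choice of generator $\omega$ (essentially $dt_1$ descending to $\mathfrak{A}(S^1)$, dual under the Stokes map to the fundamental class) is made independently of $Z$, naturality is immediate from naturality of $\kappa_Z$, and tensoring over $\mathbb{Q}$ preserves quasi-isomorphisms so there is no further homotopy-theoretic bookkeeping. Your sign check is also right: with $A$ acting on the second tensor factor via the Koszul convention and $|1| = 1$ in $\mathbb{Q}[-1]$, the sign $(-1)^{|a|}$ matches the convention of Remark \ref{rem:Shifts} exactly.

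What the paper's more hands-on route buys is a concrete and explicit formula for $\tau_Z$ in terms of the section $\varsigma$, without having to identify $\mathfrak{M}^u_\mathbb{Q}(S^1)$ with the kernel $\ker\bigl(\mathfrak{A}_1 \to \mathbb{Q}\oplus\mathbb{Q}\bigr)$ or commit to a preferred cohomology generator of $S^1$. Your version abstracts this away, which is fine for the statement as given — the lemma only asserts existence of a natural quasi-isomorphism. Both arguments share the same essential ingredient (Lemma \ref{lem:FinTensorModuleCompare}, of which Corollary \ref{cor:FinSmashModuleCompare} is a direct consequence), so this is a reorganisation rather than a new idea, but it is a clean and correct one.
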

\begin{proof}
By Remark \ref{rem:SuspensionPresentation}, the fibrewise suspension $\Sigma_X Z$ is computed via the pushout diagram in $\mathrm{sSet}_{\dslash X}$
\[
\begin{tikzcd}
\partial \Delta^1 \otimes_X Z
\ar[r, rightarrowtail]
\ar[d]
&
\Delta^1 \otimes_X Z
\ar[d]
\\
X
\ar[r]
&
\Sigma_X Z\,.
\end{tikzcd}
\]
The tensor $\partial \Delta^1 \otimes_X Z = Z\coprod_X Z$ is the coproduct of  two copies of $Z$ in $\mathrm{sSet}_{\dslash X}$ and $X\in \mathrm{sSet}_{\dslash X}$ is the zero object, hence upon applying $\mathfrak{M}^u_A$ and invoking Lemma \ref{lem:FinTensorModuleCompare} we get a morphism of pullback squares of $A$-modules
\begin{equation}
\label{eqn:tau1}
\begin{tikzcd}[sep = small]
&
\mathfrak{M}^u_A(\Sigma_X Z)
\ar[rr]
\ar[dd]
&&
0
\ar[dd]
\\
P(Z)
\ar[rr, crossing over]
\ar[ur]
\ar[dd]
&&
0
\ar[ur, equals]
&
\\
&
\mathfrak{M}^u_A(\Delta^1\otimes_X Z)
\ar[rr, twoheadrightarrow]
&&
\mathfrak{M}^u_A(Z)\oplus \mathfrak{M}^u_A(Z)
\\
\mathfrak{A}_1\otimes \mathfrak{M}^u_A (Z)
\ar[ur, "\sim"]
\ar[rr, twoheadrightarrow]
&&
\mathfrak{M}^u_A(Z)\oplus \mathfrak{M}^u_A(Z)
\ar[ur, equals]
\ar[from = uu, crossing over]
&
\end{tikzcd}
\end{equation}
with fibrations and quasi-isomorphisms as indicated.
The composite $\mathfrak{A}_1 \to N^\bullet(\Delta^1)$ of the Stokes map with the projection to normalised cochains has a section
\[
\varsigma \colon
N^\bullet(\Delta^1)
=
\bigg[
\begin{tikzcd}
\underset{0}{\mathbb{Q}\oplus\mathbb{Q}}
\ar[r, "{(a,b)\mapsto b-a\vphantom{\big(}}"]
&
\underset{1}{\mathbb{Q}}
\ar[r]
&
0
\ar[r]
&
\dotsb
\end{tikzcd}
\bigg]
\longrightarrow
\mathfrak{A}_1
\]
given in degrees $0$ and $1$ by the assignments
$(a,b)\mapsto at_0 + bt_1$ and $c \mapsto cdt_1$ respectively.
The section $\varsigma$ is a quasi-isomorphism and gives rise to a morphism of pullback squares of $A$-modules 
\begin{equation}
\label{eqn:tau2}
\begin{tikzcd}[sep = small]
&
P(Z)
\ar[rr]
\ar[dd]
&&
0
\ar[dd]
\\
\mathbb{Q}[-1]\otimes \mathfrak{M}^u_A (Z) 
\ar[rr, crossing over]
\ar[ur]
\ar[dd]
&&
0
\ar[ur, equals]
&
\\
&
\mathfrak{A}_1\otimes \mathfrak{M}^u_A (Z)
\ar[rr, twoheadrightarrow]
&&
\mathfrak{M}^u_A(Z)\oplus \mathfrak{M}^u_A(Z)
\\
N^\bullet(\Delta^1)\otimes \mathfrak{M}^u(Z)
\ar[ur, "\sim"]
\ar[rr, twoheadrightarrow]
&&
\mathfrak{M}^u_A(Z)\oplus \mathfrak{M}^u_A(Z)
\ar[ur, equals]
\ar[from = uu, crossing over]
&
\end{tikzcd}
\end{equation}
with fibrations and quasi-isomorphisms as indicated.
All objects of $A\mathrm{-Mod}$ are fibrant, hence by Kan's cube lemma (see \cite[Lemma 5.2.6]{hovey_model_1999}) the top left morphisms of the diagrams \eqref{eqn:tau1} and \eqref{eqn:tau2} are quasi-isomorphisms.
The composite $\tau_Z\colon \mathfrak{M}^u_A(Z)[-1]\to \mathfrak{M}^u_A(\Sigma_X Z)$ is thus a quasi-isomorphism for any retractive space $Z$ over $X$.
The naturality of $\tau_Z$ in $Z$ follows from Lemma \ref{lem:FinTensorModuleCompare}.
\end{proof}

\begin{remark}
Suppose that $p\colon Z\to X$ is a nilpotent fibration of nilpotent spaces of finite rational type.
Supposing that $A$ is a Sullivan model for the rational homotopy type of $X$, then $p$ is modelled in $\mathrm{cDGA}$ by a minimal morphism $A\to \widetilde{A}(Z) := A\otimes \Lambda V$.
If $Z$ is a retractive space over $X$ then $p$ admits a distinguished section, modelled in $\mathrm{cDGA}$ by the morphism $A\otimes \Lambda V\to A$ sending $V$ to $0$.
In this situation, F\'{e}lix, Murillo and Tanr\'{e} have shown \cite{felix_fibrewise_2010} that the fibrewise suspension $\Sigma_X Z\to X$ is modelled by
\[
A\longrightarrow
\widetilde{\mathfrak{A}}(\Sigma_X Z) := A\otimes \big(\mathbb{Q}\oplus (\Lambda^{\geq 1} V)[-1]\big)
\] 
where the product of any pair of elements in $(\Lambda^{\geq 1} V)[-1]$ is zero, and if $v\in \Lambda^{\geq 1} V$ has differential $dv = \sum_i a_i \otimes w_i$ in $A\otimes \Lambda V$, then $d(v[-1]) = \sum_i (-1)^{|a_i|+1} a_i \otimes w_i[-1]$ in $\widetilde{\mathfrak{A}}(\Sigma_X Z)$.

Under the simplifying assumption $X=\mathfrak{S}(A)$, there are quasi-isomorphisms of $A$-modules $\mathrm{aug}_A \widetilde{\mathfrak{A}}(Z) = A\otimes \Lambda^{\geq 1}V \to \mathfrak{M}^u_A (Z)$ and $\mathrm{aug}_A \widetilde{\mathfrak{A}}(\Sigma_X Z) = A\otimes (\Lambda^{\geq 1}V)[-1] \to \mathfrak{M}^u_A (\Sigma_X Z)$.
Taking cohomology we get an isomorphism $H^\bullet(A\otimes (\Lambda^{\geq 1}V)[-1])\cong H^{\bullet+1}(A\otimes \Lambda^{\geq 1} V))$, in accordance with the Lemma.
\end{remark}

\section{Fibrewise stable rational homotopy}
\label{S:FibStabRat}
We have so far established adjunctions \eqref{eqn:UnstableAdj} relating retractive spaces and modules and examined how these adjunctions behave with respect to fibrewise stabilisations.
In this section, we build upon these results to give an algebraic characterisation of the rational homotopy theory of nilpotent parametrised spectra over a nilpotent base space.

In detail, for each cdga $A$ we formulate a Quillen adjunction
\begin{equation}
\label{eqn:StableAdj}
(\mathfrak{M}_A\dashv \mathfrak{P}_A)\colon
\begin{tikzcd}
\mathrm{Sp}^\mathbb{N}_{\mathfrak{S}(A)}
\ar[rr, shift left =2]
\ar[rr, leftarrow, shift left =-2, "\bot"]
&&
A\mathrm{-Mod}^\mathrm{op}
\end{tikzcd}
\end{equation}
which is pseudonatural in $A$ and so covers the Sullivan--de Rham adjunction in a certain sense.
These Quillen adjunctions induce derived adjunctions of homotopy categories and the main result of this article, Theorem \ref{thm:RatParamHomThry}, states that if $A$ is cofibrant, connected, and of finite homotopical type the derived adjunctions restrict to an equivalence of categories
\[
\begin{tikzcd}
Ho(\mathrm{Sp}_{\mathfrak{S}(A)})^\mathbb{Q}_{\mathrm{f.t., nil, bbl}}
\ar[rr, shift left =2]
\ar[rr, leftarrow, shift left =-2, "\simeq"]
&&
Ho(A\mathrm{-Mod})^\mathrm{op}_{\mathrm{f.h.t.}}
\end{tikzcd}
\]
Here we have that:
\begin{itemize}
  \item $Ho(A\mathrm{-Mod})^\mathrm{op}_{\mathrm{f.t., bbl}}$ is the full subcategory spanned by $A$-modules of \emph{finite homotopical type}, namely those $A$-modules admitting a bounded below minimal model which is finitely generated in each degree (see Definition \ref{defn:FHT} below).
  
  \item $Ho(\mathrm{Sp}_{ \mathfrak{S}(A)})^\mathbb{Q}_{\mathrm{f.t., nil, bbl}}$ is the full category of the rational homotopy theory of $\mathfrak{S}(A)$-spectra spanned by rational parametrised that are:
  \begin{enumerate}[label=(\roman*)]
    \item of finite type (e.g.~the fibrewise rational stable homotopy groups are finite dimensional vector spaces in each degree);
    \item nilpotent (e.g.~$\pi_1 \mathfrak{S}(A)$ acts nilpotently on fibrewise stable homotopy as in Section \ref{SS:NilParamSpec}); and
    \item bounded below (e.g.~the fibrewise stable homotopy groups vanish in sufficiently large negative degree).
  \end{enumerate}
\end{itemize}
Under the above conditions, the $A$-module $\mathfrak{M}_A(P)$ associated to any cofibrant $\mathfrak{S}(A)$-spectrum $P$ computes the $H^\bullet(\mathfrak{S}(A))$-action on the rational cohomology groups of the pushforward $\mathfrak{S}(A)_! P$ (Proposition \ref{prop:TopInterp}).
Our result states that under some mild conditions, this $A$-module $\mathfrak{M}_A(P)$ completely determines $P$ up to fibrewise rational homotopy equivalence.

Throughout this section we work with respect to a fixed cdga $A$ and we $X:= \mathfrak{S}(A)$ for the spatial realisation.

\subsection{From spectra to modules}
In this section we construct the stabilised adjunction $(\mathfrak{M}_A\dashv \mathfrak{P}_A)$ relating sequential $X$-spectra with unbounded $A$-modules and we give a topological interpretation the $A$-module $\mathfrak{M}_A$.
For this we require the following auxiliary notion:
\begin{definition}
An \emph{$A$-module cospectrum} is a sequence of connective $A$-modules $N_0, N_1, N_2, \dotsc$ equipped with morphisms of $A$-modules $\varsigma_n \colon N_{n+1} \to N_n[-1]$ for each $n\geq 0$.
A morphism of $A$-module cospectra $f\colon N\to M$ is a sequence of $A$-module morphisms $f_n\colon N_n\to M_n$ commuting with the structure maps $\varsigma_n$.
The category of $A$-module cospectra is written $\mathrm{CoSp}(A\mathrm{-Mod}_{\geq 0})$.
\end{definition}
\begin{remark}
To ease the notation, we write $s$ for the shift functor on $A$-modules $M\mapsto M[-1]$.
The shift functor $s$ is an auto-equivalence of $A\mathrm{-Mod}$ with inverse equivalence $\ell\colon M\mapsto M[1]$.
In the usual fashion, $s$ and $\ell$ determine adjoint equivalences of categories $(s\dashv \ell)$ and $(\ell\dashv s)$.

While the restriction of $s$ to $A\mathrm{-Mod}_{\geq 0}$ is no longer an auto-equivalence, it has a left adjoint given in terms of the reflection adjunction $(\mathrm{cn}^0_A\dashv i)\colon A\mathrm{-Mod}\to A\mathrm{-Mod}_{\geq 0}$ by $\ell' := \mathrm{cn}^0_A \circ \ell\circ i$.
It is clear that if $M$ is $1$-connective, i.e.~ $M^k = 0$ for $k\leq 0$, then $\ell' M = \ell M$.
\end{remark}

By Lemma \ref{lem:SuspComparison} there is a natural weak equivalence of functors $\mathrm{sSet}_{\dslash X}\to A\mathrm{-Mod}^{\mathrm{op}}_{\geq 0}$
\[
\tau\colon \mathfrak{M}^u_A \circ \Sigma_X
\xrightarrow{\;\;\cong\;\;} s\circ \mathfrak{M}^u_A\,.
\]
Consider the \lq\lq dual'' natural transformation $D\tau\colon \ell \circ \mathfrak{P}^u_A\to \Omega_X\circ \mathfrak{P}^u_A$ with components determined by the adjointness diagram
\[
\begin{tikzcd}[row sep=tiny]
A\mathrm{-Mod}^\mathrm{op}_{\geq 0}\big(s\mathfrak{M}^u_A (Z), M\big)
\ar[d, equals, "\wr"]
\ar[r, "\tau_Z"]
&
A\mathrm{-Mod}^\mathrm{op}_{\geq 0}\big(\mathfrak{M}^u_A (\Sigma_X Z), M\big)
\ar[d, equals, "\wr"]
\\
\mathrm{sSet}_{\dslash X}\big(Z, \mathfrak{P}^u_A (\ell' M)\big)
\ar[r, "D\tau_M"']
&
\mathrm{sSet}_{\dslash X} \big(Z, \Omega_X \mathfrak{P}^u_A(M)\big)
\end{tikzcd}
\]
Using $D\tau$ we construct a functor $\mathfrak{P}^\mathbb{N}_A\colon \mathrm{CoSp}(A\mathrm{-Mod}_{\geq 0})^\mathrm{op}\to \mathrm{Sp}^\mathbb{N}_X$ as follows.
For an $A$-module cospectrum $N$, the structure maps $N_{n+1} \to sN_n$ are adjoint to maps $\ell' N_{n+1}\to N_n$.
We set $\mathfrak{P}^\mathbb{N}_A(N)_n := \mathfrak{P}^u_A(N_n)$, which determines a sequential $X$-spectrum with  structure maps adjunct to 
\[
\begin{tikzcd}
\mathfrak{P}^u_A(N_n)
\ar[r]
&
\mathfrak{P}^u_A(\ell' N_n)
\ar[r, "D\tau_{N_n}"]
&
\Omega_X \mathfrak{P}^u_A (N_n)\,.
\end{tikzcd}
\]
This assignment on objects determines a functor $\mathfrak{P}^\mathbb{N}_A\colon \mathrm{CoSp}(A\mathrm{-Mod}_{\geq 0})^\mathrm{op}\to \mathrm{Sp}^\mathbb{N}_X$ which has a left adjoint $\mathfrak{M}^\mathbb{N}_A\colon \mathrm{Sp}_X^\mathbb{N}\to \mathrm{CoSp}(A\mathrm{-Mod}_{\geq 0})^\mathrm{op}$ that is somewhat less straightforward to describe.
Given a sequential $X$-spectrum $P$, iterating the natural weak equivalence $\tau$ gives rise to weak equivalences of $A$-modules
\[
\tau^p\colon  s^p \mathfrak{M}^u_A (P_q)\longrightarrow 
\mathfrak{M}^u_A (\Sigma_X^p P_q)
\]
for all $p,q\geq 0$.
Define $\mathfrak{M}^\mathbb{N}_A(P)_n$ to be the $A$-module given as the equaliser of the diagram
\begin{equation}
\label{eqn:EqualiserforCospec}
\begin{tikzcd}
\displaystyle\prod_{p+q=n}
s^p\mathfrak{M}^u_A(P_q)
\ar[r, shift left =1]
\ar[r, shift left=-1]
&
\displaystyle\prod_{p+q+r=n}
s^p \mathfrak{M}^u_A(\Sigma_X^q P_r)
\end{tikzcd}
\end{equation}
where the top arrow is induced by applying $s^p \mathfrak{M}^u_A$ to the iterated structure maps $\Sigma_X^p P_r\to P_{p+r}$ and the bottom arrow is induced by the maps $s^p\tau^q \colon s^{p+q} \mathfrak{M}^u_A (P_r)\to s^p \mathfrak{M}^u_A (\Sigma_X^q P_r)$.
The structure maps $\mathfrak{M}^\mathbb{N}_A(P)_{n+1} \to s\mathfrak{M}^\mathbb{N}_A(P)_{n}$ are obtained by viewing the equaliser diagram for $s\mathfrak{M}^\mathbb{N}_A(P)_{n}$ as the subdiagram of the equaliser diagram for $\mathfrak{M}^\mathbb{N}_A(P)_{n+1}$ consisting of all terms with at least one $s$.
A tedious but straightforward argument now proves the following
\begin{lemma}
\label{lem:SpecCoSpec}
For each cdga $A$ there is an adjunction
\[
(\mathfrak{M}^\mathbb{N}_A\dashv\mathfrak{P}^\mathbb{N}_A )\colon
\begin{tikzcd}
\mathrm{Sp}^\mathbb{N}_{\mathfrak{S}(A)}
\ar[rr, shift left =2]
\ar[rr, shift left=-2, leftarrow, "\bot"]
&&
\mathrm{CoSp}(A\mathrm{-Mod}_{\geq 0})^\mathrm{op}
\end{tikzcd}
\]
such that $\mathfrak{P}^\mathbb{N}_A(N)_n = \mathfrak{P}^u_A (N_n)$ for any $A$-module cospectrum $N$.
\end{lemma}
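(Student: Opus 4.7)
The plan is to establish the adjunction by producing a natural bijection
\[
\mathrm{CoSp}(A\mathrm{-Mod}_{\geq 0})^\mathrm{op}\bigl(\mathfrak{M}^\mathbb{N}_A P,\; N\bigr)
\;\cong\;
\mathrm{Sp}^\mathbb{N}_{X}\bigl(P,\; \mathfrak{P}^\mathbb{N}_A N\bigr),
\]
using as the only two ingredients the unstable adjunction $(\mathfrak{M}^u_A\dashv \mathfrak{P}^u_A)$ and the natural weak equivalence $\tau\colon \mathfrak{M}^u_A\circ \Sigma_X \Rightarrow s\circ \mathfrak{M}^u_A$ from Lemma \ref{lem:SuspComparison}. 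Note that, on the left-hand side, a morphism in the opposite category amounts to a morphism of cospectra $N\to \mathfrak{M}^\mathbb{N}_A P$ in $\mathrm{CoSp}(A\mathrm{-Mod}_{\geq 0})$.

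The first step is to check that $\mathfrak{P}^\mathbb{N}_A$ really lands in sequential $X$-spectra. For a cospectrum $N$ with structure maps $\varsigma_n\colon N_{n+1}\to s N_n$, the $(\ell'\dashv s)$-adjoint yields $\ell' N_{n+1}\to N_n$ and hence, after applying $\mathfrak{P}^u_A$ and composing with $D\tau$, an adjoint structure map $\mathfrak{P}^u_A(N_n) \to \Omega_X \mathfrak{P}^u_A(N_{n+1})$. Functoriality of $\mathfrak{P}^\mathbb{N}_A$ and the stated identity $\mathfrak{P}^\mathbb{N}_A(N)_n = \mathfrak{P}^u_A(N_n)$ are then tautological, since every step of this construction is natural in $N$.

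To produce the hom bijection, I would start from a morphism $\phi\colon P\to \mathfrak{P}^\mathbb{N}_A N$ and transpose levelwise under $(\mathfrak{M}^u_A\dashv \mathfrak{P}^u_A)$ to obtain maps $\bar\phi_n\colon N_n \to \mathfrak{M}^u_A(P_n)$. The compatibility of $\phi$ with spectrum structure maps translates, after iterated adjunction and repeated use of $\tau$, into the data of a family of maps $N_n \to s^p\mathfrak{M}^u_A(P_q)$ for every $p+q = n$ which is compatible with the two operations defining the equalizer: applying an iterated spectrum structure map of $P$, versus iterating $\tau$. This is exactly a cone over the diagram \eqref{eqn:EqualiserforCospec}, hence factors uniquely through the equalizer to give the desired map $N_n\to \mathfrak{M}^\mathbb{N}_A(P)_n$. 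Letting $n$ vary and tracking the cospectrum structure maps---which embed the equalizer for $s\mathfrak{M}^\mathbb{N}_A(P)_n$ as the sub-equalizer of the diagram for $\mathfrak{M}^\mathbb{N}_A(P)_{n+1}$ consisting of all factors with $p\geq 1$---assembles these into a morphism of cospectra $N \to \mathfrak{M}^\mathbb{N}_A P$. The inverse construction reads off the $(p,q)=(0,n)$ component of a given cospectrum morphism and adjoints back; both assignments are manifestly natural in $P$ and $N$.

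The main obstacle is bookkeeping: one has to verify that the cospectrum structure maps $\mathfrak{M}^\mathbb{N}_A(P)_{n+1}\to s\mathfrak{M}^\mathbb{N}_A(P)_n$ induced by the equalizer inclusions really are compatible with the transposed morphisms just constructed, and that the two parallel arrows of \eqref{eqn:EqualiserforCospec} correspond, via $(\mathfrak{M}^u_A\dashv\mathfrak{P}^u_A)$ and $\tau$, to precisely the two routes one must equate in order that $\phi$ be a morphism of sequential spectra. Both checks are routine but cumbersome diagram chases using naturality of $\tau$ and of the unit/counit of the unstable adjunction, with no further input required.
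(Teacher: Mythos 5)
Your proposal is correct and is the argument the paper has in mind: the lemma is introduced with the remark that ``a tedious but straightforward argument'' proves it, and your levelwise transposition under $(\mathfrak{M}^u_A\dashv\mathfrak{P}^u_A)$ combined with the identification of the equalizer \eqref{eqn:EqualiserforCospec} as encoding exactly the cone data needed for the hom bijection is precisely that argument. The bookkeeping you flag at the end---that the two parallel arrows of \eqref{eqn:EqualiserforCospec} correspond, via iterated adjunction and $\tau$, to the two composites that must be equated for a morphism of sequential spectra, and that the equalizer inclusions are compatible with the cospectrum structure maps---is indeed the content of the verification, and it goes through by naturality of $\tau$ and of the unit/counit exactly as you indicate.
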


Any (not necessarily connective) $A$-module $M$ gives rise to an $A$-module cospectrum $\mathfrak{d}(M)$ as follows.
The $n$-th term of $\mathfrak{d}(M)$ is $\mathfrak{d}(M)_n := \mathrm{cn}^0_A (M[-n])$ with structure maps arising from the factorisations
\begin{equation}
\label{eqn:CoSpMod}
\begin{tikzcd}[row sep=small]
M[-(n+1)] 
\ar[rr]
\ar[dr]
&&
(\mathrm{cn}^0_A M[-n])[-1] = s (\mathrm{cn}^0_A M[-n])
\\
&
\mathrm{cn}^0_A M[-(n+1)]
\ar[ur, dashed, "\exists"']
&
\end{tikzcd}
\end{equation}
which exist since $(\mathrm{cn}^0_A M[-n])[-1]$ is connective.
The resulting functor $\mathfrak{d}$ participates in an adjunction
\[
(\mathfrak{d}\dashv \mathfrak{l})\colon
\begin{tikzcd}
A\mathrm{-Mod}
\ar[rr, shift left =2]
\ar[rr, shift left =-2, leftarrow, "\bot"]
&&
\mathrm{CoSp}(A\mathrm{-Mod}_{\geq 0})
\end{tikzcd}
\]
where $\mathfrak{l}$ sends an $A$-module cospectrum $N$ to the limit of the tower of $A$-modules 
\[
\mathfrak{l}(N) :=
\mathrm{lim}
\Big(
\begin{tikzcd}[sep=small]
\dotsb
\ar[r]
&
N_{n+1}[n+1]
\ar[r]
&
N_n[n]
\ar[r]
&
\dotsb
\ar[r]
&
N_0
\end{tikzcd}
\Big)
\]
obtained by tensoring each $N_{n+1}\to N_n[-1]$ with $\mathbb{Q}[n+1]$.
We leave it to the reader to perform the easy check that $\mathfrak{l}$ and $\mathfrak{d}$ are adjoints.

For any cdga $A$, by composing the adjunction of Lemma \ref{lem:SpecCoSpec} with (the opposite of) \eqref{eqn:CoSpMod} we get an adjunction relating sequential $X= \mathfrak{S}(A)$-spectra to unbounded $A$-modules:
\[
(\mathfrak{M}_A\dashv \mathfrak{P}_A)\colon
\begin{tikzcd}
\mathrm{Sp}^\mathbb{N}_{X}
\ar[rr, shift left =2]
\ar[rr, leftarrow, shift left=-2, "\bot"]
&&
A\mathrm{-Mod}^\mathrm{op}\,.
\end{tikzcd}
\]
This adjunction relates to fibrewise stabilisation in a natural way:
\begin{lemma}
\label{lem:SuspendInMod}
For each $k\geq 0$, the diagram of left adjoint functors
\[
\begin{tikzcd}
\mathrm{sSet}_{\dslash X}
\ar[r,"\mathfrak{M}^u_A"]
\ar[d, "\Sigma^{\infty-k}_X"']
&
A\mathrm{-Mod}_{\geq 0}^\mathrm{op}
\ar[d, "\ell^k \circ i"]
\\
\mathrm{Sp}^\mathbb{N}_X
\ar[r, "\mathfrak{M}_A"]
&
A\mathrm{-Mod}^\mathrm{op}
\end{tikzcd}
\]
commutes up to natural isomorphism.
\end{lemma}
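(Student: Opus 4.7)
Both composites in the diagram are left adjoints as functors into $A\mathrm{-Mod}^\mathrm{op}$, so by essential uniqueness of adjoints it suffices to exhibit a natural isomorphism between their right adjoints. My approach is to show that both right adjoints are equal, on the nose, to the functor
\[
M \longmapsto \mathfrak{P}^u_A\bigl(\mathrm{cn}^0_A(M[-k])\bigr).
\]

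For the clockwise composite $\widetilde{\Omega}^{\infty-k}_X \circ \mathfrak{P}_A$, the right adjoint of $\Sigma^{\infty-k}_X$ is $\widetilde{\Omega}^{\infty-k}_X \colon P \mapsto P_k$ by \eqref{eqn:StabAdj}. The functor $\mathfrak{P}_A$ was constructed by composing the adjunction of Lemma~\ref{lem:SpecCoSpec} with the opposite of $(\mathfrak{d}\dashv \mathfrak{l})$, so that $\mathfrak{P}_A = \mathfrak{P}^\mathbb{N}_A \circ \mathfrak{d}^\mathrm{op}$. Combining the identity $\mathfrak{P}^\mathbb{N}_A(N)_n = \mathfrak{P}^u_A(N_n)$ from Lemma~\ref{lem:SpecCoSpec} with the defining formula $\mathfrak{d}(M)_n = \mathrm{cn}^0_A(M[-n])$ of \eqref{eqn:CoSpMod} immediately gives
\[
\widetilde{\Omega}^{\infty-k}_X \circ \mathfrak{P}_A(M) = \mathfrak{P}^u_A\bigl(\mathrm{cn}^0_A(M[-k])\bigr).
\]

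For the counter-clockwise composite, the right adjoint of $\mathfrak{M}^u_A$ is $\mathfrak{P}^u_A$ by \eqref{eqn:UnstableAdj}, so it remains to identify the right adjoint of $\ell^k \circ i \colon A\mathrm{-Mod}_{\geq 0}^\mathrm{op} \to A\mathrm{-Mod}^\mathrm{op}$. Viewed in the original direction as $A\mathrm{-Mod}_{\geq 0} \to A\mathrm{-Mod}$, this functor is a \emph{right} adjoint: the shift equivalence $(s^k \dashv \ell^k)$ on $A\mathrm{-Mod}$ and the reflection adjunction $(\mathrm{cn}^0_A \dashv i)$ from Remark~\ref{rem:AModConn} compose to yield $\mathrm{cn}^0_A \circ s^k \dashv \ell^k \circ i$. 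Passing to opposite categories swaps left and right adjoints, so the right adjoint of $\ell^k \circ i$ in the opposite-category sense is $\mathrm{cn}^0_A \circ s^k \colon M \mapsto \mathrm{cn}^0_A(M[-k])$, and the composite right adjoint is
\[
\mathfrak{P}^u_A \circ \mathrm{cn}^0_A \circ s^k(M) = \mathfrak{P}^u_A\bigl(\mathrm{cn}^0_A(M[-k])\bigr),
\]
matching the clockwise computation.

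The two right adjoints therefore coincide as functors $A\mathrm{-Mod}^\mathrm{op} \to \mathrm{sSet}_{\dslash X}$, and the identification is natural since every step is a direct unwinding of definitions; transposing through the respective adjunctions returns the desired natural isomorphism of left adjoints. The only real obstacle is careful bookkeeping of opposites: the functor $\ell^k \circ i$ is a right adjoint in the original direction but a left adjoint in the opposite direction, and the factorisation $\mathfrak{M}_A = \mathfrak{l}^\mathrm{op} \circ \mathfrak{M}^\mathbb{N}_A$ must be read against the corresponding right adjoint $\mathfrak{P}_A = \mathfrak{P}^\mathbb{N}_A \circ \mathfrak{d}^\mathrm{op}$. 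Once these signs are straight, the verification is purely formal.
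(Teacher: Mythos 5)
Your proof is correct and takes essentially the same route as the paper: both pass to the diagram of right adjoints, compute the clockwise composite as $M\mapsto \mathfrak{P}^u_A\bigl(\mathrm{cn}^0_A(M[-k])\bigr)$ via the factorisation $\mathfrak{P}_A = \mathfrak{P}^\mathbb{N}_A\circ \mathfrak{d}^\mathrm{op}$, and observe that $\mathrm{cn}^0_A\circ s^k$ is the right adjoint of $\ell^k\circ i$ on opposite categories. You spell out the opposite-category bookkeeping in slightly more detail than the paper, but the argument is the same.
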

\begin{proof}
We argue using the corresponding diagram of right adjoints, since adjoints are essentially unique.
The composite right adjoint $\widetilde{\Omega}_X^{\infty-k}\circ \mathfrak{P}_A$ sends an $A$-module $M$ to the $k$-th space of $\mathfrak{P}_A(M)$ of the sequential $X$-spectrum $\mathfrak{P}_A(M)$, which is $\mathfrak{P}^u_A(\mathrm{cn}^0_A(M[-k]))$.
As this is true for all $M$, we conclude that there is a natural isomorphism of functors $\widetilde{\Omega}_X^{\infty-k}\circ \mathfrak{P}_A\cong \mathfrak{P}^u_A \circ \mathrm{cn}^0_A \circ s^k$.
Since $\mathrm{cn}^0_A \circ s^k$ is \emph{right} adjoint to $\ell^k\circ i\colon A\mathrm{-Mod}_{\geq 0}^\mathrm{op}\to A\mathrm{-Mod}^\mathrm{op}$ the result is proven.
\end{proof}

We are now in a position to prove the first main result of this section.
\begin{theorem}
For any cdga $A$ the adjunction
\[
(\mathfrak{M}_A\dashv \mathfrak{P}_A)\colon
\begin{tikzcd}
\mathrm{Sp}^\mathbb{N}_{\mathfrak{S}(A)}
\ar[rr, shift left =2]
\ar[rr, leftarrow, shift left =-2, "\bot"]
&&
A\mathrm{-Mod}^\mathrm{op}
\end{tikzcd}
\]
is Quillen.
\end{theorem}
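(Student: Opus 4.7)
The plan is to verify the Quillen adjunction in two stages. First I would establish Quillen-ness with respect to the projective model structure on $\mathrm{Sp}^\mathbb{N}_{X}$ (where $X = \mathfrak{S}(A)$); then upgrade to the stable model structure via Hirschhorn's theory of left Bousfield localization, exploiting the fact (recalled above) that the stable model structure on $\mathrm{Sp}^\mathbb{N}_X$ is obtained from the projective one by localizing at the set $S_{\mathbb{N}, X}$ of \eqref{eqn:LocClass}.

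For the first stage, I would check that $\mathfrak{M}_A$ carries the generating cofibrations and generating acyclic cofibrations of the projective model structure to cofibrations and acyclic cofibrations in $A\mathrm{-Mod}^{\mathrm{op}}$. By Remark \ref{rem:SeqSpecCofibGen}, such generators are all of the form $\Sigma^{\infty-k}_X(\varphi)$ for some generating (acyclic) cofibration $\varphi$ in $\mathrm{sSet}_{\dslash X}$, and Lemma \ref{lem:SuspendInMod} provides a natural isomorphism $\mathfrak{M}_A \circ \Sigma^{\infty-k}_X \cong \ell^k \circ i \circ \mathfrak{M}^u_A$. Three observations then close this stage: $\mathfrak{M}^u_A$ is left Quillen as a constituent of \eqref{eqn:UnstableAdj}; the shift $\ell^k$ is an auto-equivalence of $A\mathrm{-Mod}$, hence preserves every class of the projective model structure; and the inclusion $i\colon A\mathrm{-Mod}^{\mathrm{op}}_{\geq 0} \hookrightarrow A\mathrm{-Mod}^{\mathrm{op}}$ is left Quillen, since the dual fully faithful inclusion $A\mathrm{-Mod}_{\geq 0} \hookrightarrow A\mathrm{-Mod}$ evidently preserves fibrations (degreewise epimorphisms) and weak equivalences (quasi-isomorphisms). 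Composing, $\mathfrak{M}_A$ preserves the generating (acyclic) cofibrations.

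For the second stage, by Hirschhorn's criterion it suffices to show that $\mathfrak{M}_A$ sends each localizing morphism $\zeta_{k, X}(C) \in S_{\mathbb{N}, X}$ to a weak equivalence in $A\mathrm{-Mod}^{\mathrm{op}}$; the domain and codomain are already cofibrant since $C$ is. Applying Lemma \ref{lem:SuspendInMod} to both $\Sigma^{\infty-(k+1)}_X \Sigma_X C$ and $\Sigma^{\infty-k}_X C$, the image of $\zeta_{k, X}(C)$ may be identified, in $A\mathrm{-Mod}^{\mathrm{op}}$, with a natural map $\ell^{k+1} \mathfrak{M}^u_A(\Sigma_X C) \to \ell^k \mathfrak{M}^u_A(C)$. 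Unwinding the definition of $\zeta_{k, X}(C)$ as the adjunct of $\mathrm{id}_{\Sigma_X C}$, a diagram chase identifies this map (up to the direction-flip inherent in the opposite category) with the $\ell^{k+1}$-shift of the natural quasi-isomorphism $\tau_C\colon \mathfrak{M}^u_A(C)[-1] \to \mathfrak{M}^u_A(\Sigma_X C)$ of Lemma \ref{lem:SuspComparison}, hence a weak equivalence. The main (mild) technical obstacle lies in carrying out this identification, which amounts to a routine diagram chase through the intermediate category of $A$-module cospectra used to construct $(\mathfrak{M}_A \dashv \mathfrak{P}_A)$; it is not a substantive hurdle.
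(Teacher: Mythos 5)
Your proposal is correct and follows essentially the same two-stage structure as the paper's proof: first verify Quillen-ness for the projective model structure on sequential $X$-spectra via Lemma \ref{lem:SuspendInMod}, then show $\mathfrak{M}_A$ carries the localizing set $S_{\mathbb{N},X}$ to quasi-isomorphisms using Lemma \ref{lem:SuspComparison}. The only difference is in the first stage, where the paper explicitly computes $\mathfrak{M}_A\big(\Sigma^{\infty-k}_X i_n(\sigma_{X+})\big)$ as the epimorphism $\mathfrak{A}_n[k]\to\mathfrak{A}(\partial\Delta^n)[k]$ (and likewise for horns) rather than composing left Quillen functors abstractly as you do; both verifications are valid.
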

\begin{proof}
Writing $X = \mathfrak{S}(A)$ as usual,
 we first prove that the adjunction is Quillen for the projective model structure on sequential $X$-spectra and then show that $\mathfrak{M}_A$ sends the localising set $\mathcal{S}_{\mathbb{N},X}$ \eqref{eqn:LocClass} to weak equivalences and hence descends to the stable model structure.
 
For the first step, we must check that the generating sets  $\mathcal{I}^\mathbb{N}_X$ and $\mathcal{J}^\mathbb{N}_X$ of Remark \ref{rem:SeqSpecCofibGen} are sent to cofibrations and acyclic cofibrations in $A\mathrm{-Mod}^\mathrm{op}$, respectively.
For any simplex $\sigma\colon \Delta^n \to X$ and $k\geq 0$ we have
\[
\mathfrak{M}_A \big(\Sigma^{\infty-k}_X i_n (\sigma_{X+}) \big) 
\cong
\left[
\begin{tikzcd}
\mathfrak{A}_n[k]
\ar[r]
&
\mathfrak{A}(\partial\Delta^n)[k]
\end{tikzcd}
\right]
\] 
by Lemma \ref{lem:SuspendInMod} and Example \ref{exam:FreeFibBasePointModule}; the map $\mathfrak{A}_n [k]\to \mathfrak{A}(\partial\Delta^n)[k]$ is the epimorphism gotten by applying the Sullivan--de Rham functor $\mathfrak{A}$ to the boundary inclusion $\partial\Delta^n \to \Delta^n$, regarded as a map of $A$-modules by restricting along the maps of cdgas $A\to \mathfrak{A}(X)\to \mathfrak{A}_n \to \mathfrak{A}(\partial\Delta^n)$ and then shifting.
Similarly, for any $\sigma\colon \Delta^n \to X$, $k\geq 0$ and $0\leq l\leq k$, applying $\mathfrak{M}_A$ to $\Sigma^{\infty-k}_X h^n_l (\sigma_{X+})$ gives the map of $A$-modules $\mathfrak{A}_n[k]\to \mathfrak{A}(\Lambda^n_l)[k]$, which is an acyclic epimorphism.
Since $\mathfrak{M}_A$ sends morphisms in $\mathcal{I}^\mathbb{N}_X$ and $\mathcal{J}^\mathbb{N}_X$ to cofibrations and acyclic cofibrations in $A\mathrm{-Mod}^\mathrm{op}$, respectively, the adjunction $(\mathfrak{M}_A\dashv \mathfrak{P}_A)$ is Quillen for the projective model structure.

To complete the proof, we show that $\mathfrak{M}_A$ sends each morphism in the set $S_{\mathbb{N},X}$ \eqref{eqn:LocClass} to a weak equivalence.
For any retractive space $Z$ over $X$ and $k\geq 0$ we have 
\[
\mathfrak{M}_A \big(\Sigma^{\infty-(k+1)}_X \Sigma_X Z\big) \cong \mathfrak{M}^u_A \big(\Sigma_X Z\big)[k+1]
\quad
\mbox{ and }
\quad
\mathfrak{M}_A \big(\Sigma^{\infty-k}_X Z\big) \cong \mathfrak{M}^u_A \big(Z \big)[k]
\]
by Lemma \ref{lem:SuspendInMod}.
Carefully unwinding the definitions, we find that $\mathfrak{M}_A$ sends the map of retractive spaces $\zeta_{\mathbb{N},X}(Z)\colon\Sigma^{\infty-(k+1)}_X \Sigma_X Z\to \Sigma^{\infty-k}_X Z$ to the map of $A$-modules
\[
\begin{tikzcd}
{\mathfrak{M}^u_A(Z)[k]}
\ar[r, "{\tau_{Z}[k+1]}"]
&
{\mathfrak{M}^u_A(\Sigma_X Z)[k+1]}
\end{tikzcd}
\]
given by shifting the natural quasi-isomorphism $\tau$ of Lemma \ref{lem:SuspComparison}.
Thus every morphism in $S_{\mathbb{N},X}$ is sent by $\mathfrak{M}_A$ to a weak equivalence and hence $(\mathfrak{M}_A\dashv \mathfrak{P}_A)$ is Quillen for the stable model structure.
\end{proof}

The next result shows that the Quillen adjunctions $(\mathfrak{M}_A\dashv \mathfrak{P}_A)$ vary naturally in $A$.
\begin{proposition}
\label{prop:Spec2ModPNat}
For any morphism of cdgas $f\colon A\to B$ the diagram of left Quillen functors
\[
\begin{tikzcd}
\mathrm{Sp}^\mathbb{N}_{\mathfrak{S}(B)}
\ar[r,"\mathfrak{M}_B"]
\ar[d, "\mathfrak{S}(f)_!"']
&
B\mathrm{-Mod}^\mathrm{op}
\ar[d, "f^\ast"]
\\
\mathrm{Sp}^\mathbb{N}_{\mathfrak{S}(A)}
\ar[r, "\mathfrak{M}_A"]
&
A\mathrm{-Mod}^\mathrm{op}
\end{tikzcd}
\]
commutes up to natural isomorphism.
\end{proposition}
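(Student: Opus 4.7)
The plan is to verify the natural isomorphism by passing to right adjoints. By essential uniqueness of adjoints it suffices to exhibit a natural isomorphism $\mathfrak{P}_B\circ f_!\cong \mathfrak{S}(f)^\ast\circ \mathfrak{P}_A$ of functors $A\mathrm{-Mod}^{\mathrm{op}}\to \mathrm{Sp}^\mathbb{N}_{\mathfrak{S}(B)}$, where $f_!\colon A\mathrm{-Mod}\to B\mathrm{-Mod}$ denotes extension of scalars. I establish this level by level, exploiting the identification $\mathfrak{P}_A(M)_n=\mathfrak{P}^u_A(\mathrm{cn}^0_A(M[-n]))$ extracted from the right-adjoint argument in the proof of Lemma \ref{lem:SuspendInMod}.

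The first step is unstable pseudonaturality: $\mathfrak{S}(f)^\ast\circ \mathfrak{P}^u_A\cong \mathfrak{P}^u_B\circ f_!$ on connective modules. Taking right adjoints in Proposition \ref{prop:SlicedSdRPNat} gives $\mathfrak{S}(f)^\ast\circ \mathfrak{S}_{\dslash A}\cong \mathfrak{S}_{\dslash B}\circ f_!$ (with $f_!$ denoting base change of augmented algebras), and Proposition \ref{prop:Alg2ModPNat} directly gives $f_!\circ \Lambda_A\cong \Lambda_B\circ f_!$ (interpreting the two $f_!$'s on algebras and modules respectively); composing along the factorisation $\mathfrak{P}^u_A=\mathfrak{S}_{\dslash A}\circ \Lambda_A$ from \eqref{eqn:UnstableAdj} yields the claim. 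Next, since restriction of scalars $f^\ast\colon B\mathrm{-Mod}\to A\mathrm{-Mod}$ preserves the underlying cochain complex---in particular, connectivity and shifts---its left adjoint satisfies natural isomorphisms $f_!(M[-n])\cong (f_!M)[-n]$ and $f_!\circ\mathrm{cn}^0_A\cong \mathrm{cn}^0_B\circ f_!$. Combining these with the unstable pseudonaturality yields levelwise isomorphisms
\[
\mathfrak{S}(f)^\ast\mathfrak{P}_A(M)_n \;\cong\; \mathfrak{P}^u_B\bigl(f_!\,\mathrm{cn}^0_A(M[-n])\bigr) \;\cong\; \mathfrak{P}^u_B\bigl(\mathrm{cn}^0_B((f_!M)[-n])\bigr) \;=\; \mathfrak{P}_B(f_!M)_n\,.
\]

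Finally, I verify compatibility of these levelwise isomorphisms with the sequential spectrum structure maps. The structure maps of $\mathfrak{P}_A(M)$ are assembled from $(i)$ the universal factorisations \eqref{eqn:CoSpMod} defining the cospectrum $\mathfrak{d}(M)$ and $(ii)$ the natural transformation $D\tau$ dual to the comparison map $\tau$ of Lemma \ref{lem:SuspComparison}. The factorisations in $(i)$ are natural in $A$ by the universal property of $\mathrm{cn}^0_A$, which commutes with base change by the above. The main---though modest---technical obstacle is the naturality of $\tau$ under base change along $f$; this is settled by inspecting its construction in Lemma \ref{lem:SuspComparison}, where $\tau$ is assembled from the quasi-isomorphism of Lemma \ref{lem:FinTensorModuleCompare} (manifestly natural in the coefficient cdga, being built from the Stokes map and the K\"unneth quasi-isomorphism of Lemma \ref{lem:SdRandProducts}) together with an auxiliary splitting $\varsigma\colon N^\bullet(\Delta^1)\to \mathfrak{A}_1$ that involves no $A$-specific data. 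Once $D\tau$ is seen to be natural, the levelwise isomorphisms above assemble into the required isomorphism of sequential $\mathfrak{S}(B)$-spectra.
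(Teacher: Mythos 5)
Your proof is correct and runs dually to the paper's. The paper works on the left-adjoint side: it establishes that $\tau_{\mathfrak{S}(f)_!Z}$ agrees with $\tau_Z$ under restriction of scalars (via Propositions \ref{prop:SlicedSdRPNat} and \ref{prop:Alg2ModPNat}) and then feeds this through the equaliser description \eqref{eqn:EqualiserforCospec} of the cospectrum $\mathfrak{M}^\mathbb{N}_B$ and the limit functor $\mathfrak{l}$, exploiting that $f^\ast$ leaves underlying cochain complexes fixed and so commutes with the relevant shifts, equalisers and limits. You instead pass to the right adjoints, where the levelwise formula $\mathfrak{P}_A(M)_n=\mathfrak{P}^u_A(\mathrm{cn}^0_A(M[-n]))$ is available, and reduce the stable statement to the unstable pseudonaturality $\mathfrak{S}(f)^\ast\circ\mathfrak{P}^u_A\cong\mathfrak{P}^u_B\circ f_!$ plus the fact that extension of scalars commutes with shifts and connective covers. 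Both proofs rest on the same inputs — Propositions \ref{prop:SlicedSdRPNat} and \ref{prop:Alg2ModPNat} and the base-change naturality of $\tau$ (equivalently of $D\tau$, since one determines the other through the adjunction) — so the content is the same; what your version buys is a cleaner computation that sidesteps the equaliser description of $\mathfrak{M}^\mathbb{N}$, which the paper itself calls ``somewhat less straightforward to describe,'' at the modest cost of verifying that $f_!\circ\mathrm{cn}^0_A\cong\mathrm{cn}^0_B\circ f_!$. Your justification of that verification is sound: restriction along $f$ preserves connectivity, and $B\otimes_A(-)$ carries connective $A$-modules to connective $B$-modules, so the two candidate left adjoints to the common composite agree.
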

\begin{proof}
The proof is quite straightforward and consists mainly of stringing together various previous observations.
Write $X= \mathfrak{S}(A)$ and $Y = \mathfrak{S}(B)$ and observe that since the pushforward functor $\mathfrak{S}(f)_!\colon \mathrm{sSet}_{\dslash Y}\to \mathrm{sSet}_{\dslash X}$ preserves $\mathrm{sSet}_{\ast}$-tensors there is a natural isomorphism $\Sigma_X \circ \mathfrak{S}(f)_!\cong \mathfrak{S}(f)_! \circ \Sigma_Y$.
Propositions \ref{prop:SlicedSdRPNat} and (the opposite of) \ref{prop:Alg2ModPNat} together imply that for any $Z\in \mathrm{sSet}_{\dslash X}$, the comparison morphism of $A$-modules of Lemma \ref{lem:SuspComparison}
\[
\tau_{\mathfrak{S}(f)_!Z}\colon \mathfrak{M}^u_A(\mathfrak{S}(f)_! Z)[-1]\longrightarrow \mathfrak{M}^u_A (\Sigma_X \mathfrak{S}(f)_! Z)
\]
is naturally isomorphic to
\[
\tau_{Z}\colon \mathfrak{M}^u_B(Z)[-1]\longrightarrow \mathfrak{M}^u_B (\Sigma_Y Z)
\]
regarded as a morphism of $A$-modules by restriction along $f$.

Let $P$ be a sequential $Y$-spectrum.
Comparing with the equaliser diagram \eqref{eqn:EqualiserforCospec} defining the terms of the cospectrum $\mathfrak{M}^\mathbb{N}_B (P)$, we now have a natural isomorphism of $A$-module cospectra
\[
f^\ast \mathfrak{M}^\mathbb{N}_B (P) \cong \mathfrak{M}^\mathbb{N}_A (\mathfrak{S}(f)_! P)
\]
with $f^\ast$ the levelwise restriction of modules along $f\colon A\to B$ (which leaves underlying sequences of cochain complexes fixed).
Shifts and limits in $A\mathrm{-Mod}$ and $B\mathrm{-Mod}$ are reflected by the forgetful functors to $Ch$, so we get a natural isomorphism $f^\ast \mathfrak{M}_B (P)\cong \mathfrak{M}_A (\mathfrak{S}(f)_! P)$ as required.
\end{proof}

\begin{remark}
The assignments $A\mapsto \mathrm{Sp}^\mathbb{N}_{\mathfrak{S}(A)}$ and $A\mapsto A\mathrm{-Mod}^\mathrm{op}$ determine pseudofunctors $\mathrm{cDGA}^\mathrm{op}\to \mathbf{Model}$ valued in the (2,1)-category of model categories, left Quillen functors, and pseudonatural isomorphisms.
The commuting diagrams of  Proposition \ref{prop:Spec2ModPNat} comprise the components of a pseudonatural transformation $\mathrm{Sp}^\mathbb{N}_{\mathfrak{S}(-)} \Rightarrow (-)\mathrm{-Mod}^\mathrm{op}$.
\end{remark}

We now turn to the issue of giving a topological interpretation of the functor $\mathfrak{M}_A$.
Under certain conditions on $A$, Proposition \ref{prop:InterpretUModule} gives that the $A$-module $\mathfrak{M}^u_A(Z)$ associated to a retractive space $Z$ over $X$ computes the $H^\bullet(X)$-action on the reduced rational cohomology of the quotient $\widetilde{H}^\bullet(Z/X)$.
As we now explain, the stable version of this statement also holds true for $\mathfrak{M}_A$.

For an $X$-spectrum $P$, the pushforward $X_! P$  is an $X_+$-comodule spectrum (cf.~Remark \ref{rem:ComoduleStructure}).
Rationalising and dualising, this $X_+$-coaction induces an action of $H^\bullet(X)$ on the graded rational vector space $\mathrm{Hom}(\spi_\bullet X_!P, \mathbb{Q}) \cong H^\bullet(X_!P)$.
Under certain conditions of $A$, the cohomology of $\mathfrak{M}_A(P)$ with its $H^\bullet
(A)$-action computes this $H^\bullet(X)$-module.

Let $P$ be a sequential $X$-spectrum.
By Proposition \ref{prop:Spec2ModPNat}, the underlying rational cochain complex of $\mathfrak{M}_A(P)$ is naturally isomorphic to $\mathfrak{M}_\mathbb{Q}(X_! P)$.
If $P\in \mathrm{Sp}^\mathbb{N}_X$ is cofibrant, the cohomology of the latter cochain complex computes $\mathrm{Hom}(\spi_\bullet X_!P, \mathbb{Q})$ by the following
\begin{lemma}
\label{lem:RatEquivofCofibSpec}
For any cofibrant sequential spectrum $E$ there are natural isomorphisms 
\[
H^\bullet(\mathfrak{M}_\mathbb{Q}E)\cong \mathrm{Hom}(\spi_{\bullet}(E),\mathbb{Q})
\cong H^\bullet(E)\,.
\]
\end{lemma}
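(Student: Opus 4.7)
The plan is to split the claim into its two composite isomorphisms. The second isomorphism $\mathrm{Hom}(\spi_\bullet E,\mathbb{Q}) \cong H^\bullet(E)$ is the universal coefficient theorem with coefficients in the field $\mathbb{Q}$: rationally any spectrum splits as a wedge of shifted copies of $H\mathbb{Q}$, so $H^n(E) = [E,\Sigma^n H\mathbb{Q}]$ coincides with $\mathrm{Hom}_\mathbb{Z}(\spi_n E,\mathbb{Q})$. The main content is the first isomorphism $H^\bullet(\mathfrak{M}_\mathbb{Q} E) \cong H^\bullet(E)$, which I propose to prove by exhibiting a natural comparison derived from the Stokes map of Remark \ref{rem:StokesMap} and verifying it is a quasi-isomorphism by cellular induction.

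For the base case, note that $\mathfrak{S}(\mathbb{Q})$ is a point, so $\mathrm{Sp}^\mathbb{N}_{\mathfrak{S}(\mathbb{Q})}$ is ordinary sequential spectra and $\mathbb{Q}\mathrm{-Mod}$ is rational cochain complexes. For a basic generator $\Sigma^{\infty-k}Z$ with $Z$ a pointed simplicial set, Lemma \ref{lem:SuspendInMod} identifies $\mathfrak{M}_\mathbb{Q}(\Sigma^{\infty-k}Z) \cong \mathrm{aug}_\mathbb{Q}\mathfrak{A}(Z)[k]$. The Stokes map restricts on augmentation ideals to a natural quasi-isomorphism $\mathrm{aug}_\mathbb{Q}\mathfrak{A}(Z) \to \widetilde{C}^\bullet(Z;\mathbb{Q})$, and taking cohomology gives $H^\bullet(\mathfrak{M}_\mathbb{Q}(\Sigma^{\infty-k}Z)) \cong \widetilde{H}^{\bullet+k}(Z;\mathbb{Q}) = H^\bullet(\Sigma^{\infty-k}Z)$.

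To extend the isomorphism to all cofibrant sequential spectra, I would induct along the cellular filtration. Every cofibrant sequential spectrum is a retract of a transfinite cell complex built from the generating cofibrations $\mathcal{I}^\mathbb{N}_\ast$ of Remark \ref{rem:SeqSpecCofibGen}. Since $\mathfrak{M}_\mathbb{Q}$ is a left Quillen functor it preserves cofibre sequences of cofibrant objects: a cofibration $P\to Q$ with cofibre $Q/P$ is sent to a short exact sequence of cochain complexes, since the induced map $\mathfrak{M}_\mathbb{Q}(Q)\to \mathfrak{M}_\mathbb{Q}(P)$ is a fibration in $\mathbb{Q}\mathrm{-Mod}$ (i.e.\ degreewise epi) with kernel modelling $\mathfrak{M}_\mathbb{Q}(Q/P)$. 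The associated long exact sequence in cohomology matches the Puppe sequence for the rational cohomology of spectra, and naturality of the Stokes map allows the five lemma to handle each successor step of the induction.

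The main obstacle I anticipate is the limit step of the transfinite induction, where one must show that both functors $H^\bullet(\mathfrak{M}_\mathbb{Q}(-))$ and $H^\bullet(-)$ convert the relevant filtered colimits of cell attachments into limits of graded vector spaces, with matching behaviour of any Milnor $\lim^1$ terms. On the topological side this is standard; on the algebraic side it requires analysing how the equaliser construction defining $\mathfrak{M}^\mathbb{N}_\mathbb{Q}$ and the subsequent limit functor $\mathfrak{l}$ interact with filtered colimits of cell attachments in $\mathrm{Sp}^\mathbb{N}$. For cellular filtrations the restriction maps on cohomology are surjective rationally and so the $\lim^1$ terms vanish, but establishing the precise compatibility between the algebraic and topological limit behaviours is the one step requiring genuine care.
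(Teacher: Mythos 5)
Your proposal is essentially correct but takes a genuinely different route from the paper's. The paper does not run a transfinite cellular induction: instead it uses the canonical \emph{degree filtration} intrinsic to sequential spectra, replacing $E$ by the $\mathbb{N}$-indexed sequence of ``eventual suspension spectra'' $E^i$ (where $E^i_n = E_n$ for $n\leq i$ and $E^i_n = \Sigma^{n-i}E_i$ for $n>i$), so that $E \cong \mathrm{hocolim}_i\, E^i$ and $\Sigma^{\infty-i}E_i \to E^i$ is a stable equivalence of cofibrant objects. By Lemma~\ref{lem:SuspendInMod} this reduces the claim for each layer to the unstable Stokes comparison exactly as in your base case, and then one single comparison of $\mathbb{N}$-indexed Milnor $\lim^1$-sequences on the algebraic and topological sides finishes the proof. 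Your cellular approach buys generality (it is the default pattern for left Quillen functors) but pays in the limit step, which is more delicate than you acknowledge: the cellular tower is in general of transfinite length, so the Milnor $\lim^1$-sequence in the form you need only applies along countable cofinal subsequences and in full generality you would have to contend with higher derived limits; moreover your claim that ``the restriction maps on cohomology are surjective rationally and so the $\lim^1$ terms vanish'' is not justified (a cell attachment that kills a cohomology class produces a non-surjective restriction map, as seen from the long exact sequence of the fibration $\mathfrak{M}_\mathbb{Q}(E_{\alpha+1})\twoheadrightarrow \mathfrak{M}_\mathbb{Q}(E_\alpha)$). That claim is also unnecessary: the induction only requires that the outer $\lim$ and $\lim^1$ terms \emph{match} on the algebraic and topological sides --- which follows from the inductive hypothesis and naturality of the Stokes comparison --- after which the five lemma on the morphism of Milnor sequences gives the middle isomorphism. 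You would also need to handle retracts of cell complexes explicitly, though this is routine since the comparison map is natural. The paper's degree filtration sidesteps all of these issues because it is always $\mathbb{N}$-indexed, so I would recommend adopting that filtration if you wish to make your argument airtight.
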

\begin{proof}
Let $E^i$ be the \lq\lq eventual suspension spectrum''
\[
E^i_n =
\begin{cases}
E_n &n\leq i
\\
\Sigma^{n-i} E_i & n > i
\end{cases}
\]
equipped with the evident structure maps.
As $E$ is cofibrant, so are each of the $E^i$ and there is a sequence of cofibrations
$
E_0
\to
\dotsb
\to
E_i
\to
E_{i+1}
\to \dotsb
$
with (homotopy) colimit $E$.
For each $i$ there is a stable weak equivalence of cofibrant sequential spectra $\Sigma^{\infty-i} E_i\to E^i$ and hence  by Lemma \ref{lem:SuspendInMod} a quasi-isomorphism $\mathfrak{M}_\mathbb{Q}E^i\to \mathfrak{M}_\mathbb{Q}(\Sigma^{\infty-i}E_i) \cong \mathfrak{M}^u_\mathbb{Q} E_i [i]$.
As $\mathfrak{M}_\mathbb{Q}$ sends cofibrations of sequential spectra to fibrations in $Ch$, we get
\[
\mathfrak{M}_\mathbb{Q}(E) \cong \underset{i}{\mathrm{lim}} \;\mathfrak{M}_\mathbb{Q}(E^i)
\cong 
\underset{i}{\mathrm{holim}} \;\mathfrak{M}_\mathbb{Q}(E^i)
\cong
\underset{i}{\mathrm{holim}} \;\mathfrak{M}^u_\mathbb{Q}(E_i)[i]\,.
\]
As in the proof of Proposition \ref{prop:InterpretUModule}, for each $i$ the Stokes map and projection to normalised cochains gives a quasi-isomorphism
\[
\mathfrak{M}_\mathbb{Q}( E_i)[i]\longrightarrow \widetilde{N}^\bullet(E_i) [i]
\]
to the (shifted) normalised cochain complex of rational chains away from the basepoint.

The rational cohomology of $E$ is calculated by the cochain complex $\widetilde{N}^\bullet(E)$ obtained as the limit of the sequence of graded duals to
\[
\begin{tikzcd}
\widetilde{N}_\bullet(P_i)[i]
\cong
\widetilde{N}_\bullet(S^1)\otimes \widetilde{N}_\bullet(P_i)[i+1]
\ar[r, "\nabla"]
&
\widetilde{N}_\bullet(S^1\wedge P_i)[i+1]
\ar[r]
&
\widetilde{N}_\bullet(P_{i+1})[i+1]
\end{tikzcd}
\]
where $\nabla$ is the shuffle map and the second arrow is the result of applying the reduced normalised chains functor to $\Sigma P_i \to P_{i+1}$ and then shifting.
By Lemma \ref{lem:SuspComparison} and naturality of the Stokes map, there is a commuting diagram of cochain complexes
\[
\begin{tikzcd}
\dotsb
\ar[r]
&
\mathfrak{M}^u_\mathbb{Q}(E_{i+1} )[i+1]
\ar[r]
\ar[d]
&
\mathfrak{M}^u_\mathbb{Q}(E_{i} )[i]
\ar[r]
\ar[d]
&
\dotsb
\ar[r]
&
\mathfrak{M}^u_\mathbb{Q}(E_{0})
\ar[d]
\\
\dotsb
\ar[r]
&
\widetilde{N}^\bullet (E_{i+1})[i+1]
\ar[r]
&
\widetilde{N}^\bullet (E_{i})[i]
\ar[r]
&
\dotsb
\ar[r]
&
\widetilde{N}^\bullet (E_0)
\end{tikzcd}
\]
in which each vertical arrow is a quasi-isomorphism.
Passing to homotopy limits in the horizontal direction and analysing the resulting morphism of Milnor-style short exact sequences, we obtain a quasi-isomorphism $\mathfrak{M}_\mathbb{Q}(E)\to \widetilde{N}^\bullet(E)$, which proves the claim.
\end{proof}
\begin{corollary}
\label{cor:RatEquiv}
For any cdga $A$, the left Quillen functor $\mathfrak{M}_A$ sends rational equivalences of cofibrant $\mathfrak{S}(A)$-spectra to quasi-isomorphisms.
\end{corollary}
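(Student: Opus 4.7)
The approach is to reduce the statement to the case $A = \mathbb{Q}$, where the analogous claim for ordinary sequential spectra is already proven as Lemma \ref{lem:RatEquivofCofibSpec}. The reduction is carried out via the pseudonaturality of the adjunction $(\mathfrak{M}_A\dashv \mathfrak{P}_A)$ in $A$, namely Proposition \ref{prop:Spec2ModPNat} applied to the unit $\eta\colon \mathbb{Q}\to A$.

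First I would apply Proposition \ref{prop:Spec2ModPNat} to $\eta\colon \mathbb{Q}\to A$. Writing $X = \mathfrak{S}(A)$ and observing that $\mathfrak{S}(\eta)_! = X_!$ is the pushforward to the point, we obtain a natural isomorphism $\eta^{\ast} \mathfrak{M}_A(P)\cong \mathfrak{M}_\mathbb{Q}(X_! P)$ of $\mathbb{Q}$-modules, i.e.~of cochain complexes. Since quasi-isomorphisms of $A$-modules are detected on underlying cochain complexes (as $\eta^\ast$ preserves and reflects quasi-isomorphisms), it suffices to prove that $\mathfrak{M}_\mathbb{Q}(X_! f)$ is a quasi-isomorphism whenever $f\colon P\to Q$ is a rational equivalence of cofibrant sequential $X$-spectra.

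Second I would verify that $X_!$ carries rational equivalences between cofibrant $X$-spectra to rational equivalences of ordinary spectra. By Remark \ref{rem:RatEquiv}, $f$ being a rational equivalence is equivalent to $f\wedge_X X^\ast H\mathbb{Q}$ being a weak equivalence. Applying the left Quillen functor $X_!$ (which preserves equivalences between cofibrant objects) to this equivalence, and combining with the projection formula $X_!(-\wedge_X X^\ast H\mathbb{Q})\cong X_!(-)\wedge H\mathbb{Q}$, yields that $X_! f \wedge H\mathbb{Q}$ is an equivalence of spectra, so $X_! f$ is a rational equivalence. Finally, since $X_!$ is left Quillen, $X_! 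P$ and $X_! Q$ are cofibrant, so by Lemma \ref{lem:RatEquivofCofibSpec} we have $H^\bullet \mathfrak{M}_\mathbb{Q}(X_! P) \cong \mathrm{Hom}(\spi_\bullet X_! P, \mathbb{Q})$ naturally. A rational equivalence of spectra induces an isomorphism on $\spi_\bullet\otimes_\mathbb{Z}\mathbb{Q}$, hence on the rational dual, so $\mathfrak{M}_\mathbb{Q}(X_! f)$ is a quasi-isomorphism. This completes the plan.

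The main obstacle is the second step: one must have the projection formula available as a strict (or at least homotopically meaningful) identity in the combinatorial model $\mathrm{Sp}^\mathbb{N}_X$ rather than only at the $\infty$-categorical level quoted in the introduction. This should be taken from the companion paper \cite{braunack-mayer_combinatorial_2020}; if a point-set formulation is not directly available, an alternative is to resolve a cofibrant $X$-spectrum by a transfinite composition of pushforwards $x_! S$ along points $x\colon \ast\to X$ (cf.~Corollary \ref{cor:CompactGen}) and argue directly that rational cohomology commutes with the resulting homotopy colimit, so that $X_!$ preserves rational equivalences on this generating class and hence on all cofibrant $X$-spectra.
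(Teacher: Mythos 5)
Your proposal follows essentially the same route as the paper's own argument: reduce to $A=\mathbb{Q}$ via the pseudonaturality square of Proposition \ref{prop:Spec2ModPNat} applied to the unit $\mathbb{Q}\to A$, observe that the forgetful functor $A\mathrm{-Mod}\to Ch$ reflects quasi-isomorphisms, show that $X_!$ preserves rational equivalences of cofibrant $X$-spectra using the projection formula, and then invoke Lemma \ref{lem:RatEquivofCofibSpec}. The paper's version of this argument is terser (and even leaves the projection-formula step as an aside); your write-up supplies the missing detail and also correctly flags the point-set subtlety around the projection formula that the paper glosses over, with a reasonable fallback via compact generators.
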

\begin{proof}
Writing $X=\mathfrak{S}(A)$ as usual, we have that the functor $X_!\colon \mathrm{Sp}^\mathbb{N}_X\to \mathrm{Sp}^\mathbb{N}$ preserves rational equivalences (this can be viewed as a consequence of the projection formula and the fact that rationalisation is implemented by smashing with $H\mathbb{Q}$).
By Lemma \ref{lem:RatEquivofCofibSpec}, we now have that the composite left Quillen functor $\mathfrak{M}_\mathbb{Q}\circ X_!\colon \mathrm{Sp}^\mathbb{N}_X\to Ch^\mathrm{op}$ sends rational equivalences of cofibrant sequential $X$-spectra to cofibrations.
By Proposition \ref{prop:Spec2ModPNat}, $\mathfrak{M}_\mathbb{Q}\circ X_!$ is naturally isomorphic to the composite of $\mathfrak{M}_A$ with the forgetful functor $A\mathrm{-Mod}^\mathrm{op}\to Ch^\mathrm{op}$, which reflects quasi-isomorphisms of $A$-modules.
\end{proof}

We are now able to provide a topological interpretation of the $A$-module $\mathfrak{M}_A$.
\begin{proposition}
\label{prop:TopInterp}
Let $A$ be a cofibrant connected cdga of finite homotopical type and let $X= \mathfrak{S}(A)$.
For any cofibrant sequential $X$-spectrum $P$, the $H^\bullet(A)$-action on $H^\bullet(\mathfrak{M}_A(P))$ is naturally isomorphic to the $H^\bullet(X)$-action on $H^\bullet(X_!P) \cong \mathrm{Hom}(\spi_{\bullet} (X_!P), \mathbb{Q})$.
\end{proposition}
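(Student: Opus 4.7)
The plan is to filter $P$ so as to reduce to the unstable identification of Proposition \ref{prop:InterpretUModule} at each level, and then to assemble these levelwise identifications in a homotopy limit as in the proof of Lemma \ref{lem:RatEquivofCofibSpec}. Set $X = \mathfrak{S}(A)$; since $A$ is a cofibrant connected cdga of finite homotopical type, the Sullivan--de Rham counit $A \to \mathfrak{A}\mathfrak{S}(A) = \mathfrak{A}(X)$ is a quasi-isomorphism by Theorem \ref{thm:SdR}, so we may freely compare $H^\bullet(A)$- and $H^\bullet(X)$-module structures.

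I would start by filtering $P$ by its eventual-suspension subspectra $P^i$. Each inclusion $P^i \hookrightarrow P^{i+1}$ is a cofibration between cofibrant sequential $X$-spectra, so the left Quillen functor $\mathfrak{M}_A$ sends it to a fibration in $A\mathrm{-Mod}^\mathrm{op}$, giving a natural identification
\[
\mathfrak{M}_A(P) \;\simeq\; \underset{i}{\mathrm{holim}}\; \mathfrak{M}_A(P^i)
\]
in $A\mathrm{-Mod}$. Lemma \ref{lem:SuspendInMod} together with the stable equivalence $\Sigma^{\infty-i}_X P_i \simeq P^i$ yields natural quasi-isomorphisms $\mathfrak{M}_A(P^i) \simeq \mathfrak{M}^u_A(P_i)[i]$ of $A$-modules.

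For each level, Proposition \ref{prop:InterpretUModule} supplies a natural zig-zag of $\mathfrak{A}(X)$-module quasi-isomorphisms identifying $\mathfrak{M}^u_A(P_i)$ with a cochain model for the $H^\bullet(X)$-action on $\widetilde{H}^\bullet(P_i/X)$; restriction along the counit upgrades this to an isomorphism of $H^\bullet(A)$-modules. Naturality of Proposition \ref{prop:InterpretUModule}, combined with the shifted comparison $\tau$ of Lemma \ref{lem:SuspComparison}, ensures that the structure maps $\sigma_i\colon \Sigma_X P_i \to P_{i+1}$ intertwine these levelwise identifications, so the tower $\{\mathfrak{M}^u_A(P_i)[i]\}$ matches, up to $\mathfrak{A}(X)$-module quasi-isomorphism, the tower of shifted reduced normalised cochains computing $\widetilde{N}^\bullet(X_! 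P)$ in the proof of Lemma \ref{lem:RatEquivofCofibSpec}. Passing to the homotopy limit and analysing the resulting morphism of Milnor short exact sequences yields a natural quasi-isomorphism $\mathfrak{M}_A(P) \simeq \widetilde{N}^\bullet(X_! P)$ of $A$-modules. Taking cohomology and applying the universal coefficient theorem to the cofibrant sequential spectrum $X_!P$ then produces the desired natural isomorphism $H^\bullet(\mathfrak{M}_A(P)) \cong H^\bullet(X_! P) \cong \mathrm{Hom}(\spi_\bullet(X_!P), \mathbb{Q})$ of $H^\bullet(A) \cong H^\bullet(X)$-modules.

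The principal obstacle is checking that the $H^\bullet(X)$-action on $H^\bullet(X_!P)$ arising from the $X_+$-coaction of Remark \ref{rem:ComoduleStructure} coincides, after the identifications above, with the one assembled from the unstable levelwise coactions $P_i/X \to X_+ \wedge P_i/X$. The former is defined stably via the projection formula, while the latter is manifestly unstable; reconciling them reduces to a naturality check between the Stokes-map zig-zag used in Proposition \ref{prop:InterpretUModule} and the fibrewise-suspension comparison $\tau$ of Lemma \ref{lem:SuspComparison}. Naturality essentially forces this compatibility, but the $A_\infty$-refinement of the Stokes map noted in Remark \ref{rem:StokesMap} is what ultimately guarantees that the two multiplicative structures agree in cohomology.
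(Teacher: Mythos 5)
Your proposal is correct and follows essentially the same route as the paper's own proof: both decompose $P$ via its eventual-suspension filtration (equivalently as $\mathrm{hocolim}_i\,\Sigma^{\infty-i}_X P_i$), apply Lemma \ref{lem:SuspendInMod} and Proposition \ref{prop:InterpretUModule} levelwise, tie the towers together via Lemma \ref{lem:SuspComparison} and the $A_\infty$-Stokes quasi-isomorphism, and pass to homotopy limits of $A$-modules. The compatibility subtlety you flag (stable $X_+$-coaction versus the assembled unstable levelwise coactions) is exactly what the paper addresses by observing that the equivalence $X_!P\cong\mathrm{hocolim}_i\,\Sigma^{\infty-i}(P_i/X)$ holds as $X_+$-comodule spectra and that the comparison tower consists of morphisms of $A_\infty$-$\mathfrak{A}(X)$-modules.
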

\begin{proof}
Similarly to the proof of Lemma \ref{lem:RatEquivofCofibSpec}, any sequential $X$-spectrum is the sequential homotopy colimit of a diagram of shifted fibrewise suspension spectra \cite[Lemma 2.19]{braunack-mayer_combinatorial_2020}.
That is, for each sequential $X$-spectrum $P$ there is a sequence $Z_0, Z_1, \dotsc$ of retractive spaces over $X$ such that $P\cong \mathrm{hocolim}_i\, \Sigma^{\infty-i}_X Z_i$.
If $P$ is cofibrant, we may take $Z_i = P_i$, the spaces in the underlying sequence of $P$, noting that this decomposition is compatible with pushforward functors.
The left Quillen functor $\mathfrak{M}_A$ converts homotopy colimits of sequential $X$-spectra into homotopy limits of $A$-modules, hence $\mathfrak{M}_A (P)\cong \mathrm{holim}_i \,\mathfrak{M}_A (\Sigma^{\infty-i}_X P_i)\cong
\mathrm{holim}_i \,\mathfrak{M}^u_A (P_i)[i]$ as $A$-modules (the latter equivalence is by Lemma \ref{lem:SuspendInMod}). 

On the other hand, applying the pushforward functor $X_!$ we get equivalences of $X_+$-comodule spectra $X_! P \cong \mathrm{hocolim}_i \, X_! \Sigma^{\infty-i}_X P_i\cong \mathrm{hocolim}_i \Sigma^{\infty-i} (P_i/X)$.
Taking normalised rational cochains gives an equivalence of cochain complexes $N^\bullet(X_! P)\cong \mathrm{holim}_i \widetilde{N}^\bullet(P_i/X)[i]$
respecting $N^\bullet(X)$-actions.
As in the proof of Proposition \ref{prop:InterpretUModule}, by combining the natural $A_\infty$-quasi-isomorphism $\mathfrak{A}(Y)\rightsquigarrow C^\bullet(Y)\to N^\bullet(Y)$ (cf.~Remark \ref{rem:StokesMap}) with Lemma \ref{lem:SuspComparison} we get a commuting diagram of cochain complexes
\[
\begin{tikzcd}
\dotsb
\ar[r]
&
\mathfrak{M}^u_A(P_{i+1})[i+1]
\ar[r]
\ar[d]
&
\mathfrak{M}^u_A(P_{i})[i]
\ar[r]
\ar[d]
&
\dotsb
\ar[r]
&
\mathfrak{M}^u_A(P_{0})
\ar[d]
\\
\dotsb
\ar[r]
&
\widetilde{N}^\bullet (P_{i+1}/X)[i+1]
\ar[r]
&
\widetilde{N}^\bullet (P_{i}/X)[i]
\ar[r]
&
\dotsb
\ar[r]
&
\widetilde{N}^\bullet (P_0/X)\,.
\end{tikzcd}
\]
In this diagram, all vertical arrows are quasi-isomorphisms, horizontal arrows in the top row are $A$-module homomorphisms, horizontal arrows in the bottom row commute with $N^\bullet(X)$-actions, and all arrows are morphisms of $A_\infty$-$\mathfrak{A}(X)$-modules.
Passing to homotopy limits in the horizontal direction gives a quasi-isomorphism of cochain complexes $\mathrm{holim}_i \mathfrak{M}^u_A(P_i)[i]\to \mathrm{holim}_i \widetilde{N}^\bullet(P_i/X)[i]$ compatible with $A_\infty$-$\mathfrak{A}(X)$-module structures.

The hypotheses on $A$ guarantee that $A\to \mathfrak{A}(X)$ is a quasi-isomorphism of cdgas.
Since $\mathfrak{M}_A (P)\cong
\mathrm{holim}_i \,\mathfrak{M}^u_A (P_i)[i]$ as $A$-modules, we conclude that the cohomology $H^\bullet(A)$-module $H^\bullet(\mathfrak{M}_A(P))$ computes the $H^\bullet(X)$-action on $H^\bullet(\mathrm{holim}_i \widetilde{N}^\bullet(P_i/X))$.
As $P$ is cofibrant, so too is $X_! P$ and hence  $H^\bullet(\mathrm{holim}_i \widetilde{N}^\bullet(P_i/X))\cong \mathrm{Hom}(\spi_{\bullet} (X_!P),\mathbb{Q})$ by Lemma \ref{lem:RatEquivofCofibSpec}.
\end{proof}

\begin{corollary}
\label{cor:ConnectiveUnderM}
For a cdga $A$, if $P$ is a $k$-connective cofibrant sequential $\mathfrak{S}(A)$-spectrum then the cohomology of the $A$-module $\mathfrak{M}_A(P)$ is bounded in degrees $\geq k$. 
\end{corollary}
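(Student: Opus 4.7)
The plan is to reduce the claim to Lemma \ref{lem:RatEquivofCofibSpec} (the case $A=\mathbb{Q}$) via the pseudonaturality of $\mathfrak{M}_A$ in $A$ supplied by Proposition \ref{prop:Spec2ModPNat}. The crucial observation is that the spatial realisation of the initial cdga is a point, $\mathfrak{S}(\mathbb{Q}) = \Delta^0$, so the pushforward along the unit map $\eta\colon\mathbb{Q}\to A$ is exactly the collapse functor: $\mathfrak{S}(\eta)_! = X_!\colon \mathrm{Sp}^\mathbb{N}_X\to\mathrm{Sp}^\mathbb{N}$, with $X=\mathfrak{S}(A)$. Applying Proposition \ref{prop:Spec2ModPNat} to $\eta$ produces a natural isomorphism
\[
\eta^\ast\mathfrak{M}_A(P) \;\cong\; \mathfrak{M}_\mathbb{Q}(X_! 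P),
\]
and since $\eta^\ast\colon A\mathrm{-Mod}\to Ch$ is the forgetful functor (leaving underlying cochain complexes unchanged), this identifies the underlying cochain complex of $\mathfrak{M}_A(P)$ with that of $\mathfrak{M}_\mathbb{Q}(X_! P)$. Cohomology is computed on underlying cochain complexes, so the corollary reduces to showing $H^n\bigl(\mathfrak{M}_\mathbb{Q}(X_! P)\bigr)=0$ for $n<k$.

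For that, I would appeal to Lemma \ref{lem:RatEquivofCofibSpec}. The functor $X_!$ is left Quillen and $P$ is cofibrant by hypothesis, so $X_! P$ is a cofibrant sequential spectrum and Lemma \ref{lem:RatEquivofCofibSpec} gives
\[
H^n\bigl(\mathfrak{M}_\mathbb{Q}(X_! P)\bigr) \;\cong\; \mathrm{Hom}\bigl(\spi_n(X_! P),\mathbb{Q}\bigr).
\]
Corollary \ref{cor:PfwdConnectivity} shows that the pushforward preserves $k$-connectivity, hence $X_! P$ is a $k$-connective spectrum and $\spi_n(X_! P) = 0$ for all $n<k$. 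Consequently $H^n(\mathfrak{M}_A(P))=0$ for $n<k$, which is precisely the asserted bound.

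I do not anticipate any real obstacle: everything needed has already been assembled. The only point requiring minor care is verifying that $\mathfrak{S}(\mathbb{Q})=\Delta^0$ (an immediate consequence of $\mathrm{cDGA}(\mathbb{Q},\mathfrak{A}_n)=\{\ast\}$) and confirming that the notion of $k$-connectivity used in Corollary \ref{cor:PfwdConnectivity} is compatible with the sequential-spectrum model on which Lemma \ref{lem:RatEquivofCofibSpec} is proved; both are routine.
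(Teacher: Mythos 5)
Your proposal is correct and takes essentially the same route as the paper: the paper's proof of Corollary \ref{cor:ConnectiveUnderM} also rests on the chain $\mathfrak{M}_A(P) \cong \mathfrak{M}_\mathbb{Q}(X_!P)$ (via Proposition \ref{prop:Spec2ModPNat} applied to $\mathbb{Q}\to A$, as recorded in the paragraph immediately before Lemma \ref{lem:RatEquivofCofibSpec}), then Lemma \ref{lem:RatEquivofCofibSpec}, then Corollary \ref{cor:PfwdConnectivity}. The only difference is presentational — the paper just points to "the previous proof," whereas you have spelled the reduction out explicitly.
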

\begin{proof}
The conditions on $A$ in Proposition \ref{prop:TopInterp} were required to guarantee that $A\to \mathfrak{A}\mathfrak{S}(A)$ is a quasi-isomorphism.
If $A$ is \emph{any} cdga, and $P$ is a cofibrant sequential $\mathfrak{S}(A)$-spectrum,  the previous proof shows that $H^\bullet(\mathfrak{M}_A(P))$ is naturally isomorphic to $\mathrm{Hom}(\spi_{\bullet}(\mathfrak{S}(A)_! P), \mathbb{Q})$.
The result now follows from Corollary \ref{cor:PfwdConnectivity}.
\end{proof}

There is also a sort of \lq\lq adjoint'' to the last result:
\begin{lemma}
\label{lem:ConnectivityIsPreserved}
Fix an augmented cdga $A$ and an integer $k$.
If $M$ is a $k$-connective $A$-module then $\mathbf{R}\mathfrak{P}_A(M)$ is a $k$-connective $\mathfrak{S}(A)$-spectrum.
\end{lemma}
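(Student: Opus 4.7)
My plan is to reduce to a minimal cofibrant replacement and then verify $k$-connectivity pointwise on fibre spectra. By Lemma \ref{lem:MinimalModuleExist}, since $M$ is $k$-connective, the zero map $0\to M$ admits a minimal model factorisation $0\hookrightarrow A\otimes V\xrightarrow{\sim} M$; the standard degreewise construction---starting at the lowest cohomological degree with nontrivial cohomology and adjoining only those generators needed to correct cohomology at the current degree---shows that one may arrange $V^i=0$ for $i<k$.  Because $A\otimes V$ is cofibrant and $\mathfrak{P}_A$ is right Quillen, $\mathbf{R}\mathfrak{P}_A(M)\simeq \mathfrak{P}_A(A\otimes V)$ is a fibrant sequential $\mathfrak{S}(A)$-spectrum.

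Every $0$-simplex of $\mathfrak{S}(A)$ is an augmentation $\beta\colon A\to \mathbb{Q}$, so to verify $k$-connectivity it suffices to compute the fibre spectrum at each such $\beta$.  Passing to right adjoints in Proposition \ref{prop:Spec2ModPNat} applied to $\beta$ produces a natural isomorphism $\mathfrak{S}(\beta)^{\ast}\circ \mathfrak{P}_A\cong \mathfrak{P}_\mathbb{Q}\circ(\mathbb{Q}\otimes_A-)$; applied to the fibrant spectrum $\mathfrak{P}_A(A\otimes V)$, this identifies the fibre spectrum at $\mathfrak{S}(\beta)$ with $\mathfrak{P}_\mathbb{Q}(V_\beta)$, where $V_\beta=\mathbb{Q}\otimes_A(A\otimes V)$ is $V$ endowed with the induced differential $d'(v_\alpha)=(\beta\otimes \mathrm{id})(dv_\alpha)$.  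By minimality, each summand of $dv_\alpha$ has its $A$-coefficient in $A^{|v_\alpha|-|v_\gamma|+1}\subseteq A^{\geq 1}$, and any cdga augmentation vanishes on $A^{\geq 1}$; hence $V_\beta=V$ as a cochain complex with zero differential, concentrated in cohomological degrees $\geq k$.

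It then remains to show that $\mathfrak{P}_\mathbb{Q}(V)$ is $k$-connective for such $V$.  Using Lemma \ref{lem:SuspendInMod}, the $n$-th space of $\mathfrak{P}_\mathbb{Q}(V)$ is $\mathfrak{S}(\Lambda\,\mathrm{cn}^0(V[-n]))$; for $n$ large enough that $k+n\geq 1$, the shift $V[-n]$ lies in degrees $\geq k+n$ with trivial differential, so the spatial realisation is a product of rational Eilenberg--Mac Lane spaces $K(-,j)$ with $j\geq k+n$ and is therefore $(k+n-1)$-connected.  Consequently $\pi_{i+n}(\mathfrak{P}_\mathbb{Q}(V)_n)=0$ for $i<k$, and the $\Omega$-spectrum property of the fibrant spectrum $\mathfrak{P}_\mathbb{Q}(V)$ upgrades this to $\spi_i(\mathfrak{P}_\mathbb{Q}(V))=0$ for $i<k$.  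The main technical point will be the use of minimality to kill the induced differential on $V_\beta$: this is what reduces the fibre computation at an arbitrary augmentation to the trivial-coefficient Eilenberg--Mac Lane case, and thereby makes the connectivity estimate uniform across all components of $\mathfrak{S}(A)$.
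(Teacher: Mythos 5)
Your proposal is correct and follows essentially the same strategy as the paper's proof: choose a minimal model $A\otimes V\to M$ with $V$ concentrated in degrees $\geq k$, pass to a fibre spectrum along an augmentation via (the adjoint of) Proposition \ref{prop:Spec2ModPNat}, observe that minimality kills the induced differential on $V$, and conclude connectivity by identifying the spaces of $\mathfrak{P}_\mathbb{Q}(V)$ through Lemma \ref{lem:SuspendInMod}. Two small points of divergence, both to your credit: the paper verifies connectivity only at the \emph{canonical} augmentation $a\colon A\to \mathbb{Q}$, tacitly assuming $\mathfrak{S}(A)$ connected, whereas you run the argument at every $0$-simplex, which is more careful when $A$ has multiple components; and in the last step the paper invokes Remark \ref{rem:CohomotopytoHomotopy} (which as stated carries a finite-homotopical-type hypothesis) to compute $\pi_\ast\mathfrak{S}(\Lambda\,\mathrm{cn}^0 V[-l])$, while you identify this space directly as a product of rational Eilenberg--Mac Lane spaces $K(\mathbb{Q},j)$ with $j\geq k+n$, which sidesteps any degreewise finiteness and is a cleaner route to the connectivity estimate.
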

\begin{proof}
Since $A$ is connected there is a canonical augmentation $a\colon A\to \mathbb{Q}$ which gives rise to a distinguished $0$-simplex $x$ of $X= \mathfrak{S}(A)$ which is unique up to homotopy.
Since $M$ is $k$-connective, there is a minimal model $A\otimes V\to M$ with $V$ a graded rational vector space concentrated in degrees $\geq k$.
By (the adjoint of) Proposition \ref{prop:Spec2ModPNat}, the fibre spectrum of $\mathbf{R}\mathfrak{P}_A(M)$ at $x\in X$ is
\[
x^\ast \mathbf{R}\mathfrak{P}_A(M)
\cong 
x^\ast \mathfrak{P}_A(A\otimes V)
\cong 
\mathfrak{P}_\mathbb{Q}(a_! (A\otimes V))
\cong
\mathfrak{P}_\mathbb{Q}(V)\,,
\]
where $a_! (A\otimes V) = \mathbb{Q}\bigotimes_{A} (A\otimes V) \cong V$ has trivial differential by minimality.
The adjoint of Lemma \ref{lem:SuspendInMod} over $\mathbb{Q}$ shows that for each $l\geq 0$
\[
\Omega^{\infty-l}\mathfrak{P}_\mathbb{Q}(V)
\cong \mathfrak{P}^u_\mathbb{Q}(\mathrm{cn}^0 V[-l])
\cong \mathfrak{S}(\Lambda \,\mathrm{cn}^0 V[-l])\,,
\]
where $\Lambda\, \mathrm{cn}^0 V[-l]$ is the free cdga generated by the graded rational vector space $\mathrm{cn}^0 V[-l]$ (so that all generators are closed).
By Remark \ref{rem:CohomotopytoHomotopy}, we have group isomorphisms
\[
\spi_{n} \mathfrak{P}_\mathbb{Q}(V) \cong \pi_{n+l} \Omega^{\infty-l}\mathfrak{P}_\mathbb{Q}(V) \cong \mathrm{Hom}((\mathrm{cn}^0 V[-l])^{n+l}, \mathbb{Q}) \cong \mathrm{Hom}(V^{n}, \mathbb{Q})
\]
whenever $n+l\geq 2$,
from which it follows that $\mathfrak{P}_\mathbb{Q}(V) \cong HV^\vee$ is $k$-connective.
\end{proof}

\begin{corollary}
\label{cor:Spec2ModkConnective}
Let $A$ a cofibrant connected cdga of finite homotopical type.
For any integer $k$ the derived adjunction $(\mathbf{L}\mathfrak{M}_A\dashv \mathbf{R}\mathfrak{P}_A)$ restricts to an adjunction
\[
\begin{tikzcd}
Ho(\mathrm{Sp}^{\geq k}_{\mathfrak{S}(A)})
\ar[rr, shift left = 2, "\mathbf{L}\mathfrak{M}_A"]
\ar[rr, shift left =-2, leftarrow, "\bot", "\mathbf{R}\mathfrak{P}_A"']
&&
Ho(A\mathrm{-Mod}_{\geq k})^\mathrm{op}\,.
\end{tikzcd}
\]
\end{corollary}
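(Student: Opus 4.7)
The plan is to verify that the derived adjunction $(\mathbf{L}\mathfrak{M}_A \dashv \mathbf{R}\mathfrak{P}_A)$ preserves $k$-connectivity on both sides. Once we know that $\mathbf{L}\mathfrak{M}_A$ sends $k$-connective $\mathfrak{S}(A)$-spectra into $Ho(A\mathrm{-Mod}_{\geq k})$ and that $\mathbf{R}\mathfrak{P}_A$ sends $k$-connective $A$-modules into $Ho(\mathrm{Sp}^{\geq k}_{\mathfrak{S}(A)})$, the desired restriction is automatic: both $Ho(\mathrm{Sp}^{\geq k}_{\mathfrak{S}(A)})$ and $Ho(A\mathrm{-Mod}_{\geq k})^{\mathrm{op}}$ are full subcategories of the ambient homotopy categories, so a Quillen-derived adjunction restricts whenever both derived functors preserve them.

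For the first containment I would reduce to the case of a cofibrant representative $P$ of a $k$-connective $\mathfrak{S}(A)$-spectrum and apply Corollary~\ref{cor:ConnectiveUnderM} directly: the cohomology of $\mathfrak{M}_A(P)$ vanishes below degree $k$, so $\mathbf{L}\mathfrak{M}_A(P)$ is quasi-isomorphic to a $k$-connective $A$-module and hence lies in $Ho(A\mathrm{-Mod}_{\geq k})$. Note that this step does not require the finiteness hypotheses on $A$ coming from Proposition~\ref{prop:TopInterp}, since the corollary of connectivity was stated for an arbitrary cdga.

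For the second containment I would take an arbitrary $k$-connective $A$-module $M$ and replace it by a minimal model $A\otimes V \xrightarrow{\sim} M$, whose existence is Lemma~\ref{lem:MinimalModuleExist}; by rerunning the construction of the minimal model one degree at a time, $V$ may be arranged to be concentrated in degrees $\geq k$. Lemma~\ref{lem:ConnectivityIsPreserved} then shows that $\mathbf{R}\mathfrak{P}_A(M) \simeq \mathbf{R}\mathfrak{P}_A(A\otimes V)$ is a $k$-connective $\mathfrak{S}(A)$-spectrum. There is no real obstacle here: the entire corollary is essentially a packaging of Corollary~\ref{cor:ConnectiveUnderM} and Lemma~\ref{lem:ConnectivityIsPreserved}, combined with the aforementioned observation about restricted adjunctions between full subcategories.
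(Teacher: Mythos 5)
Your proof is correct and matches the paper's implicit argument: Corollary~\ref{cor:Spec2ModkConnective} is stated in the paper with no proof, precisely because it follows at once from Corollary~\ref{cor:ConnectiveUnderM}, Lemma~\ref{lem:ConnectivityIsPreserved}, and the formal observation that an adjunction restricts to full subcategories preserved by both functors. Your intermediate step passing from the cohomological bound of Corollary~\ref{cor:ConnectiveUnderM} to an actual $k$-connective representative via a minimal model is exactly the small detail needed, and your remark about the weaker hypotheses on $A$ for that half of the argument is accurate.
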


The proof of Lemma \ref{lem:ConnectivityIsPreserved} also establishes the following
\begin{lemma}
\label{lem:FibofMinModSpec}
Let $A$ be a cdga with augmentation $a\colon A\to \mathbb{Q}$ corresponding to $x\colon \ast \to X=\mathfrak{S}(A)$.
For a minimal $A$-module $A\otimes V$, the fibre spectrum $x^\ast \mathfrak{P}_A (A\otimes V)$ is stably equivalent to the Eilenberg--Mac Lane spectrum $HV^\vee$ on the graded dual $(V^\vee)^k := (V^k)^\vee$ of $V$.
\end{lemma}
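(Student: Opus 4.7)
The proof refines the computation already carried out in Lemma \ref{lem:ConnectivityIsPreserved}. The strategy is to reduce to a computation over $\mathbb{Q}$ via base change, to decompose $V$ into its homogeneous summands, and then to identify the result with a rational Eilenberg--Mac Lane spectrum.

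First, by (the adjoint of) Proposition \ref{prop:Spec2ModPNat} applied to the augmentation $a\colon A\to \mathbb{Q}$ there is a natural equivalence $x^\ast \mathfrak{P}_A(A\otimes V)\simeq \mathfrak{P}_\mathbb{Q}(a_!(A\otimes V))$. By Remark \ref{rem:BCMinimal}, extension of scalars along $a$ sends the minimal $A$-module $A\otimes V$ to the chain complex $V$ equipped with zero differential, so it suffices to establish a stable equivalence $\mathfrak{P}_\mathbb{Q}(V)\simeq HV^\vee$ for such $V$.

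Next, since $V$ has zero differential we have $V\cong \bigoplus_n V^n[-n]$ in $Ch$, which becomes a product $\prod_n V^n[-n]$ in $Ch^\mathrm{op}$. As a right adjoint, $\mathfrak{P}_\mathbb{Q}$ preserves products, reducing the problem to showing $\mathfrak{P}_\mathbb{Q}(\mathbb{Q}[-n])\simeq \Sigma^n H\mathbb{Q}$ for each integer $n$. For $l$ large enough that $n+l\geq 1$, the adjoint of Lemma \ref{lem:SuspendInMod} gives
\[
\Omega^{\infty-l}\mathfrak{P}_\mathbb{Q}(\mathbb{Q}[-n]) \;\cong\; \mathfrak{P}^u_\mathbb{Q}\bigl(\mathbb{Q}[-(n+l)]\bigr) \;\cong\; \mathfrak{S}\bigl(\Lambda\,\mathbb{Q}[-(n+l)]\bigr)\,,
\]
which by Example \ref{exam:MinimalEMandS} is the spatial realisation of the minimal model of $K(\mathbb{Q}, n+l)$, hence weakly equivalent to $K(\mathbb{Q}, n+l)_\mathbb{Q}$.

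The main remaining obstacle is to promote these levelwise identifications to a genuine stable equivalence. For this I would check that the adjoint structure maps $\mathfrak{S}(\Lambda\mathbb{Q}[-(n+l)])\to \Omega\, \mathfrak{S}(\Lambda\mathbb{Q}[-(n+l+1)])$ are weak equivalences: by Remark \ref{rem:CohomotopytoHomotopy} both sides have $\mathbb{Q}$ concentrated in degree $n+l$, detected by the unique closed generator of the relevant minimal cdga, and the structure map visibly matches these generators. This produces a compatible system of levelwise weak equivalences assembling into a stable equivalence $\mathfrak{P}_\mathbb{Q}(\mathbb{Q}[-n])\simeq \Sigma^n H\mathbb{Q}$, and taking products yields the claimed equivalence $\mathfrak{P}_\mathbb{Q}(V)\simeq HV^\vee$.
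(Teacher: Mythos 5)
Your proof is correct and follows the same approach as the paper, which obtains this Lemma as an immediate byproduct of the proof of Lemma \ref{lem:ConnectivityIsPreserved}: base change along $a\colon A\to\mathbb{Q}$ reduces to $\mathfrak{P}_\mathbb{Q}(V)$, and the adjoint of Lemma \ref{lem:SuspendInMod} identifies the levels via the Sullivan--de Rham adjunction. The only variation is cosmetic: you decompose $V$ into degree summands and argue one degree (indeed, one basis vector) at a time, whereas the paper simply reads off $\spi_n\mathfrak{P}_\mathbb{Q}(V)\cong\mathrm{Hom}(V^n,\mathbb{Q})$ directly from Remark \ref{rem:CohomotopytoHomotopy} applied to $\mathfrak{S}(\Lambda\,\mathrm{cn}^0 V[-l])$, which avoids having to discuss preservation of infinite products up to homotopy. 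One small observation about your last step: what you identify as the \lq\lq main remaining obstacle''---verifying that the adjoint structure maps are weak equivalences---is automatic and needs no generator-matching argument. The functor $\mathfrak{P}_\mathbb{Q}$ is right Quillen, and $\mathbb{Q}[-n]=S(n)$ is fibrant in $Ch^{\mathrm{op}}$ (equivalently, cofibrant in $Ch$, since $0\to S(n)$ is the pushout of $S(n+1)\rightarrowtail D(n+1)$ along $S(n+1)\to 0$); hence $\mathfrak{P}_\mathbb{Q}(\mathbb{Q}[-n])$ is a fibrant object of $\mathrm{Sp}^\mathbb{N}$, i.e.\ a fibrant $\Omega$-spectrum by definition, and the structure maps are weak equivalences for free. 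What both your argument and the paper's do leave implicit is the upgrade from \lq\lq same homotopy groups'' to an actual stable equivalence with $HV^\vee$, which relies on the standard fact that rational spectra split as products of shifted $H\mathbb{Q}$'s; this is harmless but worth being aware of.
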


\subsection{Strict algebraic models for rational parametrised spectra}
In the previous section we obtained, for any cdga $A$, a Quillen adjunction
\[
(\mathfrak{M}_A\dashv \mathfrak{P}_A)\colon
\begin{tikzcd}
\mathrm{Sp}^\mathbb{N}_{\mathfrak{S}(A)}
\ar[rr, shift left =2]
\ar[rr, leftarrow, shift left =-2, "\bot"]
&&
A\mathrm{-Mod}^\mathrm{op}
\end{tikzcd}
\]
relating $\mathfrak{S}(A)$-spectra and unbounded $A$-modules.
The left Quillen functor $\mathfrak{M}_A$ sends rational equivalences of cofibrant $\mathfrak{S}(A)$-spectra to quasi-isomorphisms (by Corollary \ref{cor:RatEquiv}) and hence the derived adjunction factors through the rational homotopy category of $\mathfrak{S}(A)$-spectra:
\begin{equation}
\label{eqn:DerivedAdj}
\begin{tikzcd}
Ho(\mathrm{Sp}_{\mathfrak{S}(A)})
\ar[rr, shift left =2]
\ar[rr, leftarrow, shift left =-2, "\bot"]
&&
Ho(\mathrm{Sp}_{\mathfrak{S}(A)})^\mathbb{Q}
\ar[rr, shift left =2]
\ar[rr, leftarrow, shift left =-2, "\bot"]
&&
Ho(A\mathrm{-Mod})^\mathrm{op}\,.
\end{tikzcd}
\end{equation}
By Sullivan's rational homotopy theory, if $A$ is cofibrant, connected, and of finite homotopical type it is an algebraic model for the rational homotopy type of the space $\mathfrak{S}(A)$.
Under these conditions on $A$, we prove that the derived adjunction \eqref{eqn:DerivedAdj} restricts to an equivalence between certain subcategories.
\begin{definition}
For a connected space $X$, an $X$-spectrum $P$ is said to be 
\begin{itemize}
  \item \emph{finite rational type} if for each $x\in X$, the graded vector space $\spi_\ast x^\ast P\otimes_\mathbb{Z} \mathbb{Q}$ is finite dimensional in each degree, and
  \item \emph{bounded below} if there is some integer $k$ for which $P_{\leq k} \cong 0_X$ is the zero $X$-spectrum.
\end{itemize}
We write $Ho(\mathrm{Sp}_X)^\mathbb{Q}_{\mathrm{f.t.,nil,bbl}}\hookrightarrow Ho(\mathrm{Sp}_X)^\mathbb{Q}$ for the full  subcategory spanned by the rational $X$-spectra that are nilpotent, bounded below, and of finite rational type.
\end{definition}
\begin{lemma}
\label{lem:NilFinRatType}
For a connected space $X$, an $X$-spectrum $P$ is nilpotent and of finite rational type if and only if it is rationally equivalent to an $X$-spectrum $Q$ for which for all $k$, the zero morphism $Q_{=k}\to 0_X$ factors as a finite sequence of extensions by $X$-spectra of the form $X^\ast (\Sigma^k (\bigoplus_{i=1}^n H\mathbb{Q}))$ for finite $n$.
\end{lemma}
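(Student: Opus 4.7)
The plan is to deduce this lemma as a rational refinement of Theorem~\ref{thm:Nil}, exploiting the fact that nilpotent $\pi_1 X$-actions on $\mathbb{Q}$-vector spaces admit more tractable filtrations than those on general abelian groups. The key observation is that both conditions on $P$ are invariants of its rational equivalence class, so we may replace $P$ by its fibrewise rationalisation $P\wedge_X X^\ast H\mathbb{Q}$ whenever convenient.

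For the forward direction, suppose $P$ is nilpotent and of finite rational type, and set $Q := P\wedge_X X^\ast H\mathbb{Q}$. Then $P\to Q$ is a rational equivalence and the fibrewise stable homotopy $\spi_k(x^\ast Q)\cong \spi_k(x^\ast P)\otimes_\mathbb{Z}\mathbb{Q}$ is a finite-dimensional $\mathbb{Q}$-vector space on which $\pi_1 X$ acts nilpotently; the filtration $\Gamma_\bullet \otimes \mathbb{Q}$ obtained by rationalising the nilpotent filtration of $\spi_k (x^\ast P)$ witnesses this. Since $\spi_k(x^\ast Q)$ is a finite-dimensional $\mathbb{Q}[\pi_1 X]$-module, we can refine and extend this filtration to a finite descending chain of sub-$\mathbb{Q}[\pi_1 X]$-modules
\[
\spi_k(x^\ast Q) = \Gamma_0\geq \Gamma_1 \geq \dotsb \geq \Gamma_{n_k} = 0
\]
for which each successive quotient $\Gamma_{j-1}/\Gamma_j \cong \bigoplus_{i=1}^{n_{j,k}} \mathbb{Q}$ is a finite-dimensional $\mathbb{Q}$-vector space with trivial $\pi_1 X$-action. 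The construction in the proof of Theorem~\ref{thm:Nil}, applied to this refined filtration, now produces the desired factorisation of $Q_{=k}\to 0_X$ as a finite sequence of extensions by $X^\ast \Sigma^k H(\Gamma_{j-1}/\Gamma_j) \cong X^\ast \Sigma^k \bigl(\bigoplus_{i=1}^{n_{j,k}} H\mathbb{Q}\bigr)$.

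For the backward direction, suppose $P \simeq_\mathbb{Q} Q$ with $Q$ of the stated form. Applying Theorem~\ref{thm:Nil} to $Q$ with $A_j = \bigoplus_i \mathbb{Q}$ shows that $Q$ is nilpotent, and the extension structure realises each $\spi_k(x^\ast Q)$ as a finite iterated extension of finite-dimensional $\mathbb{Q}$-vector spaces, hence a finite-dimensional $\mathbb{Q}$-vector space with nilpotent $\pi_1 X$-action. The rational equivalence induces a $\mathbb{Z}[\pi_1 X]$-linear isomorphism $\spi_k(x^\ast P)\otimes_\mathbb{Z}\mathbb{Q}\cong \spi_k(x^\ast Q)$, from which finite rational type and (after replacing $P$ by $Q$ within its rational equivalence class) nilpotence follow. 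The only delicate point, and the one place where care is needed, is the refinement step in the forward direction: one must verify that the finite-dimensionality of $\spi_k(x^\ast Q)$ permits a filtration with \emph{finite-dimensional} trivial-action quotients, since otherwise the resulting extensions in Theorem~\ref{thm:Nil} would not be of the required form $X^\ast \Sigma^k\bigl(\bigoplus_{i=1}^n H\mathbb{Q}\bigr)$ with $n$ finite. This is a straightforward exercise in elementary representation theory of $\mathbb{Q}[\pi_1 X]$-modules on finite-dimensional vector spaces.
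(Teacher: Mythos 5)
Your forward direction is correct but takes a slightly different route than the paper's: you rationalise $P$ first (replacing it by $Q = P\wedge_X X^\ast H\mathbb{Q}$), then construct a nilpotent filtration of the $\mathbb{Q}[\pi_1 X]$-module $\spi_k(x^\ast Q)$ directly and feed it into the construction of Theorem~\ref{thm:Nil}, whereas the paper applies Theorem~\ref{thm:Nil} to the integral $P$ first and then fibrewise smashes the resulting chain of extensions with $H\mathbb{Q}$, tracking the ranks of the abelian groups $A_j$ through the rationalisation. Both routes work, and yours is arguably cleaner since one never has to worry about abelian groups of infinite rank. However, the ``delicate refinement step'' you flag at the end is a non-issue: the filtration $\Gamma_\bullet\otimes\mathbb{Q}$ is already a nilpotent filtration of a finite-dimensional $\mathbb{Q}$-vector space with trivial-action quotients, and every subquotient of a finite-dimensional $\mathbb{Q}$-vector space is automatically finite-dimensional. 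There is nothing to refine.

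Your backward direction has a gap that you appear to sense---hence the parenthetical ``(after replacing $P$ by $Q$ within its rational equivalence class)''---but that hedge does not close it. Nilpotence of the $\pi_1 X$-action on $\spi_k(x^\ast P)$ in the sense of Definition~\ref{defn:Nilpotent} is \emph{not} a rational invariant: a non-nilpotent action on a finite abelian group becomes the zero action on the zero vector space after rationalisation, so rational equivalence to a nilpotent $Q$ does not imply $P$ is nilpotent. Finite rational type, by contrast, \emph{is} a rational invariant and your argument there is fine. The paper's proof suffers from the same slippage in its final sentence (``if and only if $P\wedge_X H\mathbb{Q}$ is nilpotent of finite rational type''), and the lemma is only ever invoked in the forward direction via Corollary~\ref{cor:NilBblFinRatType}, so nothing downstream breaks. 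But you should be aware that as stated the ``if'' implication is only correct if one reinterprets ``nilpotent'' to mean ``rationally equivalent to a nilpotent $X$-spectrum'', which is the reading implicitly intended for objects of $Ho(\mathrm{Sp}_X)^{\mathbb{Q}}_{\mathrm{f.t.,nil,bbl}}$.
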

\begin{proof}
Using the characterisation of nilpotent $X$-spectra of Theorem \ref{thm:Nil}, $P$ is nilpotent if and only if for each $k$ the zero morphism $P_{=k}\to 0_X$ factors as a finite sequence of extensions
\[
\begin{tikzcd}[row sep=small, column sep =tiny]
P_{=k} = (P_{=k})_{n_k}
\ar[r]
&
(P_{=k})_{n_k-1}
\ar[r]
&
\dotsb
\ar[r]
&
(P_{=k})_{j}
\ar[r]
&
(P_{=k})_{j-1}
\ar[r]
&
\dotsb 
\ar[r]
&
(P_{=k})_{1}
\ar[r]
&
0_X
\\
X^\ast \Sigma^kHA_{n_k}
\ar[u]
&
X^\ast \Sigma^kHA_{n_k-1}
\ar[u]
&
\dotsb
&
X^\ast \Sigma^kHA_{j}
\ar[u]
&
X^\ast \Sigma^kHA_{j-1}
\ar[u]
&
\dotsb 
&
X^\ast \Sigma^kHA_{1}
\ar[u]
\end{tikzcd}
\]
Rationalisation is implemented in $\mathrm{Sp}_X$ by smashing fibrewise with $H\mathbb{Q}$ and it is easy to see that $(P\wedge_X H\mathbb{Q})_{=k} \cong P_{=k}\wedge_X H\mathbb{Q}$.
Recall that the pullback functor $X\colon \mathrm{Sp}\to \mathrm{Sp}_X$ is strong monoidal for (fibrewise) smash products and we have $HA\wedge H\mathbb{Q}\cong H(A\otimes_\mathbb{Z}\mathbb{Q})\cong \bigoplus_{i=1}^{l} H\mathbb{Q}$ with $l= \mathrm{rank}(A)$ not necessarily finite.
Forming the fibrewise smash product of the above diagram with $H\mathbb{Q}$ expresses $(P\wedge_X H\mathbb{Q})_{=k} \to 0_X$ as a finite sequence of extensions by $X$-spectra of the form $X^\ast (\Sigma^k (\bigoplus_{i=1}^l H\mathbb{Q}))$ (where $l$ may be infinite).
For any $x\in X$ we have
\[
\spi_k x^\ast P_{=k} \otimes_\mathbb{Z} \mathbb{Q}
\cong
\spi_k x^\ast (P\wedge_X H\mathbb{Q})_{=k}
\cong
\spi_k x^\ast P \otimes_\mathbb{Z}\mathbb{Q}
\]
from which it follows that $\spi_k x^\ast P\otimes_\mathbb{Z} \mathbb{Q}$ is finite dimensional if and only if each abelian group $A_{j}$ in appearing in the above decomposition has finite rank.

Thus an $X$-spectrum $P$ is nilpotent of finite rational type if and only if $P\wedge_X H\mathbb{Q}$ is nilpotent of finite rational type, which in turn is the case if and only if for all $k$, $(P\wedge_X H\mathbb{Q})_{=k}\to 0_X$ factors as a finite sequence of extensions by $X$-spectra of the form $X^\ast (\Sigma^k (\bigoplus_{i=1}^lH\mathbb{Q}))$ for finite $l$.
\end{proof}

\begin{corollary}
\label{cor:NilBblFinRatType}
For a connected space $X$, a nilpotent, bounded below $X$-spectrum $P$ of finite rational type is rationally equivalent to an $X$-spectrum $Q$ for which for all $k$, the zero map $Q_{\leq k}\to 0_X$ factors as a finite sequence of extensions by $X$-spectra of the form $X^\ast (\Sigma^l(\bigoplus_{i=1}^n H\mathbb{Q}))$ for finite $n$ and $l\leq k$.
\end{corollary}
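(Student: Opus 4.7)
The plan is to take the rationalisation $Q := P\wedge_X X^\ast H\mathbb{Q}$ as the desired replacement of $P$ and then build the required filtration of $Q_{\leq k}\to 0_X$ by induction on $k$, using the Postnikov tower of $Q$ and the slice-wise filtrations produced by Lemma \ref{lem:NilFinRatType}.

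First, observe that $Q$ is rationally equivalent to $P$, and since $P$ is nilpotent and of finite rational type, so is $Q$; moreover $Q$ inherits boundedness below from $P$, so there exists an integer $N$ such that $Q_{\leq N-1}\cong 0_X$ (equivalently, the slice $Q_{=l}\cong 0_X$ for all $l<N$). Applying Lemma \ref{lem:NilFinRatType} to $P$ gives a rational equivalent whose slices have the desired form; since $(P\wedge_X X^\ast H\mathbb{Q})_{=l} \cong P_{=l}\wedge_X X^\ast H\mathbb{Q}$, we may simply take this rational equivalent to be $Q$ itself. Thus for each $l\geq N$ the zero morphism $Q_{=l}\to 0_X$ factors as a finite sequence of extensions
\[
Q_{=l} = (Q_{=l})_{n_l}\to (Q_{=l})_{n_l -1}\to \dotsb \to (Q_{=l})_0 = 0_X
\]
with each fibre of the form $X^\ast \Sigma^l(\bigoplus_{i=1}^{m_{l,j}} H\mathbb{Q})$ for finite $m_{l,j}$.

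Now induct on $k$. For $k<N$ the zero morphism $Q_{\leq k}\to 0_X$ is an isomorphism, which is the empty extension. Suppose $Q_{\leq k-1}\to 0_X$ factors as a finite sequence of extensions by $X$-spectra of the form $X^\ast\Sigma^l(\bigoplus H\mathbb{Q})$ with $l\leq k-1$. By \eqref{eqn:SliceSquares} we have a fibre sequence $Q_{=k}\to Q_{\leq k}\to Q_{\leq k-1}$, and the key technical step is to refine the single extension $Q_{\leq k}\to Q_{\leq k-1}$ (with fibre $Q_{=k}$) into a finite sequence of extensions whose fibres are the fibres appearing in the decomposition of $Q_{=k}\to 0_X$. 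Concretely, in the stable $\infty$-category $\mathrm{Sp}_X$, form the cofibres $E_j := \mathrm{cofib}\big((Q_{=k})_j \to Q_{=k}\to Q_{\leq k}\big)$ for $0\leq j\leq n_k$. Then $E_0 = Q_{\leq k}$, $E_{n_k} = Q_{\leq k-1}$, and the octahedral axiom identifies the fibre of $E_{j-1}\to E_j$ with the fibre of $(Q_{=k})_{j-1}\to (Q_{=k})_j$, which is of the form $X^\ast \Sigma^k (\bigoplus H\mathbb{Q})$ by construction. Concatenating this refined sequence with the inductive factorisation of $Q_{\leq k-1}\to 0_X$ yields the desired factorisation of $Q_{\leq k}\to 0_X$, in which every fibre has the form $X^\ast \Sigma^l(\bigoplus H\mathbb{Q})$ for some $l\leq k$.

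The only non-routine ingredient is the refinement step via the octahedral axiom, which is a standard stable $\infty$-categorical manipulation; the rest is bookkeeping. No step should present a genuine obstacle beyond care with indices and with the compatibility of the decomposition of $Q_{=k}$ with the Postnikov extension $Q_{\leq k}\to Q_{\leq k-1}$.
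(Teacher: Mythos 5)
Your overall strategy coincides with the paper's: rationalise to $Q := P\wedge_X X^\ast H\mathbb{Q}$, apply Lemma \ref{lem:NilFinRatType} slicewise, and then propagate each slice filtration to a filtration of the Postnikov map $Q_{\leq k}\to Q_{\leq (k-1)}$. However, the key refinement step is internally inconsistent as written. You set up the slice decomposition as a tower $Q_{=k} = (Q_{=k})_{n_k}\to (Q_{=k})_{n_k-1}\to\cdots\to (Q_{=k})_0 = 0_X$ with all arrows pointing \emph{away} from $Q_{=k}$, yet your formula $E_j := \mathrm{cofib}\big((Q_{=k})_j\to Q_{=k}\to Q_{\leq k}\big)$ requires a map $(Q_{=k})_j\to Q_{=k}$ which does not exist; likewise ``$\mathrm{fib}\big((Q_{=k})_{j-1}\to (Q_{=k})_j\big)$'' refers to a map in the wrong direction. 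As written, the refinement step fails.

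The fix is routine: define $E_j$ as the pushout of $Q_{\leq k}\leftarrow Q_{=k}\to (Q_{=k})_j$. Then $E_{n_k} = Q_{\leq k}$, $E_0 = \mathrm{cofib}(Q_{=k}\to Q_{\leq k}) = Q_{\leq (k-1)}$, and the maps run $E_{n_k}\to E_{n_k-1}\to\cdots\to E_0$. Pasting of pushouts makes each square
\[
\begin{tikzcd}
(Q_{=k})_j\ar[r]\ar[d] & (Q_{=k})_{j-1}\ar[d]\\
E_j\ar[r] & E_{j-1}
\end{tikzcd}
\]
a pushout, hence (by stability) a pullback, so $\mathrm{fib}(E_j\to E_{j-1})\cong\mathrm{fib}\big((Q_{=k})_j\to (Q_{=k})_{j-1}\big) = X^\ast\Sigma^k\big(\bigoplus H\mathbb{Q}\big)$. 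With this correction your induction on $k$ goes through and is the same argument the paper records as a commuting grid of pullback-pushout squares over all slice indices.
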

\begin{proof}
If $P$ is a (non-zero) bounded below $X$-spectrum there is some maximal integer $k$ such that $P_{\leq (k-1)} = 0_X$ is the zero $X$-spectrum.
For any $l\geq k$ there is a commuting diagram of $X$-spectra
\[
\begin{tikzcd}[row sep=small]
P_{=l}
\ar[r]
\ar[ddddd]
&
0_X
\ar[d]
\\
&
P_{=(l-1)}
\ar[r]
\ar[dddd]
&
0_X
\ar[d]
\\
&&
\dotsb
\ar[r]
&
0_X
\ar[d]
&
&
\\
&&
&
P_{=(k+1)}
\ar[dd]
\ar[r]
&
0_X
\ar[d]
&
\\
&&&&
P_{=k}
\ar[r]
\ar[d, "\cong"]
&
0_X
\ar[d, "\cong"]
\\
P_{\leq l}
\ar[r]
&
P_{\leq (l-1)}
\ar[r]
&
\dotsb
\ar[r]
&
P_{\leq (k+1)}
\ar[r]
&
P_{\leq k}
\ar[r]
&
P_{\leq (k-1)}
\end{tikzcd}
\]
in which each rectangle is a pullback-pushout square.
By stability, if $P_{= i}\to 0_X$, say, factors as a sequence of extensions
\[
\begin{tikzcd}[row sep=small, column sep =tiny]
X^\ast \Sigma^iHA_{n_i}
\ar[d]
&
X^\ast \Sigma^iHA_{n_i-1}
\ar[d]
&
\dotsb
&
X^\ast \Sigma^iHA_{j}
\ar[d]
&
X^\ast \Sigma^iHA_{j-1}
\ar[d]
&
\dotsb 
&
X^\ast \Sigma^iHA_{1}
\ar[d]
\\
P_{=i} = (P_{=i})_{n_i}
\ar[r]
&
(P_{=i})_{n_i-1}
\ar[r]
&
\dotsb
\ar[r]
&
(P_{=i})_{j}
\ar[r]
&
(P_{=i})_{j-1}
\ar[r]
&
\dotsb 
\ar[r]
&
(P_{=i})_{1}
\ar[r]
&
0_X
\end{tikzcd}
\]
then by the above we have a diagram of $X$-spectra
\[
\begin{tikzcd}[row sep=small, column sep =tiny]
X^\ast \Sigma^iHA_{n_i}
\ar[d]
&
X^\ast \Sigma^iHA_{n_i-1}
\ar[d]
&
\dotsb
&
X^\ast \Sigma^iHA_{j}
\ar[d]
&
X^\ast \Sigma^iHA_{j-1}
\ar[d]
&
\dotsb 
&
X^\ast \Sigma^iHA_{1}
\ar[d]
\\
P_{=i} = (P_{=i})_{n_i}
\ar[r]
\ar[d]
&
(P_{=i})_{n_i-1}
\ar[r]
\ar[d]
&
\dotsb
\ar[r]
&
(P_{=i})_{j}
\ar[r]
\ar[d]
&
(P_{=i})_{j-1}
\ar[r]
\ar[d]
&
\dotsb 
\ar[r]
&
(P_{=i})_{1}
\ar[r]
\ar[d]
&
0_X
\ar[d]
\\
P_{\leq i} =: (P_{\leq i})_{n_i}
\ar[r]
&
(P_{\leq i})_{n_i-1}
\ar[r]
&
\dotsb
\ar[r]
&
(P_{\leq i})_{j}
\ar[r]
&
(P_{\leq i})_{j-1}
\ar[r]
&
\dotsb 
\ar[r]
&
(P_{\leq i})_{1}
\ar[r]
&
P_{\leq (i-1)}
\end{tikzcd}
\]
in which each square is a pullback-pushout and hence each \lq\lq long hook'' (subdiagram consisting of the composite of two downward arrows followed by a rightward arrow) is a fibre sequence.
Replacing $P$ by the rationally equivalent $P\wedge_X H\mathbb{Q}$, the result now follows from Lemma \ref{lem:NilFinRatType}.
\end{proof}

On the algebraic side, we are interested in bounded below modules obeying a certain finiteness condition.
Any bounded below module has a minimal model by Lemma \ref{lem:MinimalModuleExist}.
\begin{definition}
\label{defn:FHT}
For a cdga $A$, a bounded below $A$-module $M$ is \emph{of finite homotopical type} if some (hence any) minimal model $A\otimes V\to M$ is such that the graded vector space $V$ is degreewise finite dimensional.
We write $
Ho(A\mathrm{-Mod})_{\mathrm{f.h.t.}}\hookrightarrow Ho(A\mathrm{-Mod})$ for the full subcategory spanned by the bounded below $A$-modules that are of finite homotopical type.
\end{definition}

\begin{remark}
\label{rem:FHTCounterexample}
If $A$ is connected of finite homotopical type, then any bounded below $A$-module of finite homotopical type has finite dimensional cohomology in each degree.
The converse does not hold.
Consider the cdga $\Lambda \langle x\rangle$ spanned by a closed generator $x$ of degree $1$ (the minimal model for $S^1$).
Setting $x$ to zero gives rise to an augmentation $\Lambda \langle x \rangle\to \mathbb{Q}$, furnishing $\mathbb{Q}$ with the structure of a $\Lambda \langle x \rangle$-module. 
The minimal model for this $\Lambda \langle x \rangle$-module is 
\[
\rho\colon \Lambda\langle x\rangle \otimes \mathrm{span}\{v_n\mid n\geq 0\}\longrightarrow
\mathbb{Q}\,,
\]
where each of the $v_n$ has degree $0$, $dv_0 = 0$ and $dv_{n+1} = x\cdot v_n$ for $n\geq 0$, and $\rho$ sends $v_0$ to $1$ and sets all remaining generators $v_n$ to zero.
Thus the $\Lambda\langle x\rangle$-module $\mathbb{Q}$ is not of finite homotopical type.
This is related to the fact that the path fibration $PS^1 \to S^1$ is not nilpotent due to having disconnected fibres.
\end{remark}

We are now in a position to prove our main result.
\begin{theorem}
\label{thm:RatParamHomThry}
Let $A$ be a cofibrant connected cdga of finite homotopical type. 
There is an adjoint equivalence of categories
\[
\begin{tikzcd}
Ho\big(\mathrm{Sp}_{\mathfrak{S}(A)}\big)^\mathbb{Q}_{\mathrm{f.t.,nil,bbl}}
\ar[rr, shift left =2, "\mathbf{L}\mathfrak{M}_A"]
\ar[rr, leftarrow, shift left =-2, "\simeq", "\mathbf{R}\mathfrak{P}_A"']
&&
Ho\big(A\mathrm{-Mod})^\mathrm{op}_\mathrm{f.h.t.}
\end{tikzcd}
\]
\end{theorem}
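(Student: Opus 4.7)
The plan is to leverage the derived adjunction $(\mathbf{L}\mathfrak{M}_A \dashv \mathbf{R}\mathfrak{P}_A)$ already produced in the previous section and verify two things: (i) it restricts to a well-defined adjunction between the subcategories $Ho(\mathrm{Sp}_{\mathfrak{S}(A)})^\mathbb{Q}_{\mathrm{f.t.,nil,bbl}}$ and $Ho(A\mathrm{-Mod})^{\mathrm{op}}_{\mathrm{f.h.t.}}$, and (ii) its unit and counit are isomorphisms on these subcategories. Throughout, I write $X=\mathfrak{S}(A)$. Both $\mathbf{L}\mathfrak{M}_A$ and $\mathbf{R}\mathfrak{P}_A$ are exact functors of triangulated categories, so my main tool is a triangulated five-lemma applied to cofibre sequences built out of fibrewise Eilenberg--Mac Lane spectra on one side and minimal module extensions on the other.

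I would first establish the base case on single untwisted Eilenberg--Mac Lane spectra. Fix an integer $k$ and a finite-dimensional $\mathbb{Q}$-vector space $W$, regarded as a minimal $A$-module $A\otimes W$ concentrated in degree $-k$. By Lemma~\ref{lem:FibofMinModSpec}, the fibre of $\mathbf{R}\mathfrak{P}_A(A\otimes W)$ at any point is $\Sigma^k H W^\vee$, and an application of Proposition~\ref{prop:HeartofSpX} together with Remark~\ref{rem:NaturalityHeart} identifies $\mathbf{R}\mathfrak{P}_A(A\otimes W)$ with $X^\ast \Sigma^k H W^\vee$ in $Ho(\mathrm{Sp}_X)^\mathbb{Q}$. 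Combined with Proposition~\ref{prop:TopInterp} and Corollary~\ref{cor:RatEquiv}, the unit and counit are rational equivalences and quasi-isomorphisms respectively on these generators. This base case handles both the bounded-below finite rational type condition on $X$-spectra (finite direct sums of such $X^\ast \Sigma^k H\mathbb{Q}$) and the finite homotopical type condition on modules.

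For the inductive step, I would combine the slice/Postnikov filtration of an $X$-spectrum with the semifree/minimal filtration of an $A$-module. Given $P \in Ho(\mathrm{Sp}_X)^\mathbb{Q}_{\mathrm{f.t.,nil,bbl}}$, Corollary~\ref{cor:NilBblFinRatType} presents each truncation $P_{\leq k}$ as a finite iterated extension of $X^\ast\Sigma^l(\bigoplus_{i=1}^{n_l}H\mathbb{Q})$ for $l\leq k$. Dually, for a minimal $A$-module $A\otimes V$ with $V$ degreewise finite-dimensional, I truncate the minimal basis $\{v_\alpha\}$ (ordered so that $|v_\alpha|\le|v_\beta|$ for $\alpha\le\beta$) at each cohomological degree, yielding a sequence of minimal extensions by free modules $A\otimes W$ with $W$ concentrated in a single degree, as in \eqref{eqn:DiskAttachAMod}. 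At each step the extension corresponds under $\mathbf{L}\mathfrak{M}_A$ and $\mathbf{R}\mathfrak{P}_A$ to an extension on the other side; the nilpotence hypothesis is exactly what ensures the topological tower remains finite at each slice, and the finite rational type ensures the $W$'s are degreewise finite-dimensional. Applying the five-lemma to the corresponding cofibre triangles propagates the base case up each finite tower.

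The principal obstacle is the limit argument: both an $A$-module of finite homotopical type and a bounded below nilpotent $X$-spectrum generically have infinitely many non-zero slices, so the above inductions do not terminate in finitely many steps. To handle this, I would compute the unit $P \to \mathbf{R}\mathfrak{P}_A\mathbf{L}\mathfrak{M}_A(P)$ and counit $\mathbf{L}\mathfrak{M}_A\mathbf{R}\mathfrak{P}_A(M) \to M$ on each Postnikov section $P_{\leq k}$ (resp.\ on the truncation $\mathrm{cn}^{-k}_A M$ of the minimal model generators) and use Corollary~\ref{cor:Spec2ModkConnective} together with the bounded-below hypothesis to show that the difference between $P$ and $P_{\leq k}$ contributes only in degrees above $k$, and symmetrically on the module side via Lemma~\ref{lem:ConnectivityIsPreserved}. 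Since rational equivalence of bounded below $X$-spectra can be checked degreewise on fibrewise stable homotopy groups (Remark~\ref{rem:RatEquiv}) and quasi-isomorphism of bounded below $A$-modules can be checked degreewise on cohomology, letting $k\to\infty$ upgrades the finite-stage equivalences to equivalences on all of $P$ and $M$. This simultaneously verifies that $\mathbf{L}\mathfrak{M}_A$ sends the topological subcategory into $Ho(A\mathrm{-Mod})^{\mathrm{op}}_{\mathrm{f.h.t.}}$ and $\mathbf{R}\mathfrak{P}_A$ sends the algebraic subcategory into $Ho(\mathrm{Sp}_X)^\mathbb{Q}_{\mathrm{f.t.,nil,bbl}}$, completing the equivalence.
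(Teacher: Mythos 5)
Your strategy coincides with the paper's: establish the base case at the ``generators'' (untwisted fibrewise Eilenberg--Mac Lane spectra $X^\ast\Sigma^k H\mathbb{Q}$ on one side, shifted free modules $A[k]$ on the other), propagate via the triangulated five lemma up the finite extensions furnished by nilpotence (Corollary~\ref{cor:NilBblFinRatType}) resp.\ minimality, and remove the boundedness-above restriction with the connectivity estimates of Corollary~\ref{cor:Spec2ModkConnective} and Lemma~\ref{lem:ConnectivityIsPreserved}. That is precisely the paper's double induction followed by a limit argument.

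The one substantive gap is in your base case. Computing that $\mathbf{R}\mathfrak{P}_A(A\otimes W)$ is \emph{abstractly} equivalent to $X^\ast\Sigma^k HW^\vee$ does not yet show that the derived unit at $X^\ast\Sigma^k HW^\vee$ (or the counit at $A\otimes W$) is an equivalence --- it is a specific map whose behaviour on fibrewise stable homotopy must be pinned down. Neither Proposition~\ref{prop:TopInterp} nor Corollary~\ref{cor:RatEquiv}, which you invoke here, supplies that. The paper closes this explicitly: after using pseudonaturality (Proposition~\ref{prop:Spec2ModPNat}) to identify $\mathfrak{P}_A(A)\cong X^\ast H\mathbb{Q}$, it observes that the unit $\Sigma^\infty_{X+}X\to\mathfrak{P}_A(A)$ is a nonzero map $X^\ast S\to X^\ast H\mathbb{Q}$ and hence is forced to be the fibrewise rationalisation, i.e.\ a rational equivalence; the counit at $A$ is then obtained by applying $\mathfrak{M}_A$ and Corollary~\ref{cor:RatEquiv}. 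You need some such argument pinning down what the unit actually \emph{is} on the generator for the induction to start.
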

\begin{proof}
Write $X = \mathfrak{S}(A)$.
By our assumptions on $A$, there is a canonical augmentation $a\colon A\to \mathbb{Q}$ corresponding to a $0$-simplex $x$ of $X$ which is unique up to homotopy.
Our strategy is to first verify that the components at $X^\ast H\mathbb{Q}$ and $A$ of the derived unit and counit of the $(\mathfrak{M}_A\dashv \mathfrak{P}_A)$-adjunction are, respectively, a rational equivalence of $X$-spectra and quasi-isomorphism of $A$-modules.
Using that both of the derived functors $\mathbf{L}\mathfrak{M}_A$ and $\mathbf{R}\mathfrak{P}_A$ commute with shifts and extensions, we prove the result by an argument over \lq\lq Postnikov towers'' (cf.~Section \ref{ss:Postnikov}).

\paragraph{Derived unit at $X^\ast H\mathbb{Q}$:} There is an equivalence of $X$-spectra between the pullback $X$-spectrum $X^\ast S$ and the fibrewise stabilisation $\Sigma^\infty_{X+} X$ of the identity $X\to X$ (Example \ref{exam:FibrewiseSuspensionSpectra}).
The sequential $X$-spectrum $\Sigma^\infty_{X+} X$ is cofibrant and is sent by $\mathfrak{M}_A$ to $A$, regarded as a module over itself in the obvious way (Lemma \ref{lem:SuspendInMod}).
Regarded as an $A$-module, $A$ is cofibrant and so the component of the derived unit at $X^\ast S$ is presented in $\mathrm{Sp}^\mathbb{N}_{X}$ by the ordinary unit
\begin{equation}
\label{eqn:xastsUnit}
\eta\colon 
\Sigma^\infty_{X+} X
\longrightarrow
\mathfrak{P}_A \mathfrak{M}_A (\Sigma^\infty_{X+} X)\cong
\mathfrak{P}_A (A)\,.
\end{equation}
As $A\in A\mathrm{-Mod}^\mathrm{op}$ is fibrant, $\mathfrak{P}_A(A)$ is a fibrant $\Omega_X$-spectrum with fibre spectrum at $x$ given by
\[
x^\ast \mathfrak{P}_A (A) \cong \mathfrak{P}_\mathbb{Q} (a_! A) \cong \mathfrak{P}_\mathbb{Q} (\mathbb{Q})\,,
\]
using the adjoint of Proposition \ref{prop:Spec2ModPNat}.
The adjoint of Lemma \ref{lem:SuspendInMod} over $\mathbb{Q}$ shows that $\Omega^{\infty-k}\mathfrak{P}_\mathbb{Q}(\mathbb{Q}) \cong \mathfrak{P}^u_\mathbb{Q}(\mathbb{Q}[-k])$.
Unwinding the definitions, we get that $\mathfrak{P}^u_\mathbb{Q}(\mathbb{Q}[-k]) \cong \mathfrak{S}(\Lambda \langle x_k \rangle)$ is the result of applying the spatial realisation functor $\mathfrak{S}$ to the minimal cdga $\Lambda \langle x_k \rangle$ with a single closed generator in degree $k$.
For $n\geq 2$ we get $\pi_n\Omega^{\infty-k}\mathfrak{P}_\mathbb{Q}(\mathbb{Q})\cong \pi_n K(\mathbb{Q}, k)$ (cf.~Remark \ref{rem:CohomotopytoHomotopy}) from which it follows that $x^\ast \mathfrak{P}_A(A)\cong \mathfrak{P}_\mathbb{Q}(\mathbb{Q})\cong H\mathbb{Q}$.
We further observe that extension of scalars along the unit $u\colon \mathbb{Q}\to A$ sends $\mathbb{Q}$ to $A$ and hence
\[
\mathfrak{P}_A (A)= \mathfrak{P}_A (u_! \mathbb{Q})
\cong
X^\ast \mathfrak{P}_\mathbb{Q} (\mathbb{Q})\cong X^\ast H\mathbb{Q}
\]
by (the adjoint of) Proposition \ref{prop:Spec2ModPNat}.
Since the derived unit \eqref{eqn:xastsUnit} is non-zero, it is necessarily rationally equivalent to the map of $X$-spectra
$
X^\ast S
\to
X^\ast H\mathbb{Q}
$
obtained by fibrewise rationalisation.
Since $\mathfrak{M}_A$ sends rational equivalences of cofibrant $X$-spectra to quasi-isomorphisms (Corollary \ref{cor:RatEquiv}), this argument shows that the derived unit at $X^\ast H\mathbb{Q}$ is a rational equivalence of $X$-spectra.

\paragraph{Derived counit at $A$:} Regarded as an object of $A\mathrm{-Mod}^\mathrm{op}$, $A$ is fibrant.
By the above, the derived unit at $\Sigma^\infty_{X+} X$ is a rational equivalence of $X$-spectra
\[
\Sigma^\infty_{X+}X\longrightarrow \mathfrak{P}_A(A)
\]
with cofibrant domain.
Since $\mathfrak{M}_A$ sends rational equivalences of cofibrant $X$-spectra to quasi-isomorphisms of $A$-modules, the derived counit at $A$ is equivalent to the map of $A$-modules
\[
A
\longrightarrow
\mathfrak{M}_A\mathfrak{P}_A (A)
\longrightarrow
\mathfrak{M}_A (\Sigma^\infty_{X+} X)
\cong A\,,
\]
where we have used Lemma \ref{lem:SuspendInMod}.
Since the derived counit at $A$ is non-zero, by degree reasons the above map of $A$-modules must be isomorphic to the identity.
Thus the derived counit at $A$ is a quasi-isomorphism of $A$-modules.

\paragraph{Nilpotent bounded below $X$-spectra of finite rational type:}
Let $P$ be a nilpotent, bounded below $X$-spectrum of finite rational type.
By Corollary \ref{cor:NilBblFinRatType}, $P$ is rationally equivalent to an $X$-spectrum $Q$ satisfying the conditions:
\begin{itemize}
  \item $Q_{\leq k}\cong 0_X$ for some integer $k$
  \item Each $Q_{\leq l}\to Q_{\leq (l-1)}$ factors as a finite sequence of extensions by $X$-spectra of the form $X^\ast(\Sigma^l (\bigoplus_{i=1}^n H\mathbb{Q}))$ for finite $n$.
\end{itemize}
Taking cofibrant replacements as necessary and using that $\mathfrak{M}_A$ sends rational equivalences of cofibrant $X$-spectra to quasi-isomorphisms, we may suppose without loss of generality that $P$ itself satisfies the above conditions.

Let $k$ be the maximal integer for which $P_{\leq (k-1)} =0_X$ is the zero $X$-spectrum.
Then for any integer $l\geq k$, $P_{\leq l}$ is obtained from the zero $X$-spectrum by a finite sequence of extensions by $X$-spectra of the form  $X^\ast(\Sigma^j (\bigoplus_{i=1}^n H\mathbb{Q}))$ for finite $n$ and $k\leq j\leq l$.
We claim that the derived unit $P_{\leq l} \to \mathbf{R}\mathfrak{P}_A \mathbf{L}\mathfrak{M}_A (P_{\leq l})$ is a rational equivalence for all $l\geq k$.

To prove the claim, let $E_0$ be an $X$-spectrum for which the derived unit $E_0 \to \mathbf{R}\mathfrak{P}_A \mathbf{L}\mathfrak{M}_A (E_0)$ is a rational equivalence.
Suppose that $E_1$ is an extension of $E_0$ by $X^\ast (\Sigma^j (\bigoplus_{i=1}^n H\mathbb{Q}))$ for some finite $n$ and integer $j$, so that we have a commuting diagram of homotopy pullback-pushout squares
\[
\begin{tikzcd}
X^\ast( \Sigma^j (\bigoplus_{i=1}^n H\mathbb{Q}))
\ar[r]
\ar[d]
&
E_1
\ar[d]
\ar[r]
&
0_X
\ar[d]
\\
0_X
\ar[r]
&
E_0
\ar[r]
&
X^\ast( \Sigma^{j-1} (\bigoplus_{i=1}^n H\mathbb{Q}))
\end{tikzcd}
\]
The component of the derived unit at $X^\ast( \Sigma^j (\bigoplus_{i=1}^n H\mathbb{Q}))$  is a rational equivalence by the above and the fact that both $\mathbf{L}\mathfrak{M}_A$ and $\mathbf{R}\mathfrak{P}_A$ commute with shifts.
We thus have a commuting diagram of $X$-spectra
\[
\begin{tikzcd}
X^\ast( \Sigma^j (\bigoplus_{i=1}^n H\mathbb{Q}))
\ar[r]
\ar[d, "\sim_{\mathrm{rat}}"']
&
E_1
\ar[r]
\ar[d]
&
E_0
\ar[d, "\sim_{\mathrm{rat}}"]
\\
\mathbf{R}\mathfrak{P}_A \mathbf{L}\mathfrak{M}_A (X^\ast( \Sigma^j (\bigoplus_{i=1}^n H\mathbb{Q})))
\ar[r]
&
\mathbf{R}\mathfrak{P}_A \mathbf{L}\mathfrak{M}_A (E_1)
\ar[r]
&
\mathbf{R}\mathfrak{P}_A \mathbf{L}\mathfrak{M}_A (E_0)
\end{tikzcd}
\]
in which the outer vertical arrows are rational equivalences and both top and bottom horizontal rows are fibre-cofibre sequences ($\mathbf{L}\mathfrak{M}_A$ and $\mathbf{R}\mathfrak{P}_A$ are exact).
Passing to fibres at the unique $0$-simplex $x$ of $X$ and computing rational stable homotopy groups, a routine application of the five lemma  shows that $E_1\to \mathbf{R}\mathfrak{P}_A \mathbf{L}\mathfrak{M}_A (E_1)$ is a rational equivalence.
The component of derived unit at the zero $X$-spectrum is clearly a rational equivalence.
For any $l\geq k$, $P_{\leq l}\to 0$ factors as a finite sequence of extensions by $X^\ast (\Sigma^j (\bigoplus_{i=1}^n H\mathbb{Q}))$'s and hence the component of the derived unit at $P_{\leq l}$ is a rational equivalence by induction.

To conclude, fix $l\geq k$ and consider the fibre-cofibre sequence of $X$-spectra $P_{\geq (l+1)}\to P\to P_{\leq l}$.
The derived unit gives rise to a morphism of fibre-cofibre sequences
\[
\begin{tikzcd}
P_{\geq (l+1)}
\ar[r]
\ar[d]
&
P
\ar[r]
\ar[d]
&
P_{\leq l}
\ar[d, "\sim_\mathrm{rat}"]
\\
\mathbf{R}\mathfrak{P}_A \mathbf{L}\mathfrak{M}_A (P_{\geq (l+1)})
\ar[r]
&
\mathbf{R}\mathfrak{P}_A \mathbf{L}\mathfrak{M}_A (P)
\ar[r]
&
\mathbf{R}\mathfrak{P}_A \mathbf{L}\mathfrak{M}_A (P_{\leq l})
\end{tikzcd}
\]
in which the right-hand vertical arrow is a rational equivalence by the above. 
By Corollary \ref{cor:Spec2ModkConnective} the $X$-spectrum $\mathbf{R}\mathfrak{P}_A \mathbf{L}\mathfrak{M}_A (P_{\geq l})$ is $(l+1)$-connective.
By passing to fibre spectra at $x$ and computing rational stable homotopy groups, the five lemma shows that $x^\ast P\to x^\ast \mathbf{R}\mathfrak{P}_A\mathbf{L}\mathfrak{M}_A(P)$ induces $\spi_n\otimes_\mathbb{Z}\mathbb{Q}$-isomorphisms for all $n\leq l$.
Since this is true for all $l\geq k$, it follows that $P\to \mathbf{R}\mathfrak{P}_A\mathbf{L}\mathfrak{M}_A(P)$ is a rational equivalence of $X$-spectra.

\paragraph{$A$-modules of finite homotopical type:}
Let $M$ be an $A$-module of finite homotopical type with minimal model $A\otimes V\to M$.
By minimality, there is a well-ordered set $\mathcal{I}$ which indexes a basis $\{v_\alpha\mid \alpha\in \mathcal{I}\}$ for $V$ such that (i) $dv_\beta \in A\otimes \mathrm{span}\{v_\alpha\mid \alpha < \beta\}$ and (ii) $\alpha \leq \beta\Rightarrow |v_\alpha|\leq |v_\beta|$.

For each $\alpha \in\mathcal{I}$ write $V_{< \alpha} = \mathrm{span}\{v_\beta \mid \beta < \alpha\}$ and set $\alpha + 1 := \inf\{\beta\in \mathcal{I}\mid \beta > \alpha\}$.
The minimal model $A\otimes V$ is the directed colimit of the $A\otimes V_{< \alpha}$, where for each $\alpha\in \mathcal{I}$ there is a pushout diagram of $A$-modules
\[
\begin{tikzcd}
A \otimes S(|v_{\alpha +1}| +1)
\ar[d]
\ar[r, "dv_{\alpha +1}"]
&
A\otimes V_{< (\alpha+1)}
\ar[d]
\\
A\otimes D(|v_{\alpha +1}| +1)
\ar[r]
&
A\otimes V_{< (\alpha +2)}
\end{tikzcd}
\]
The left vertical arrow is a cofibration, all objects are cofibrant, and $A\otimes D(|v_{\alpha+1}|+1)$ is acyclic, so that this diagram expresses $A\otimes V_{<(\alpha+1)}$ as the homotopy cofibre of a map of $A$-modules $A[-(|v_{\alpha+1}|+1)]\to A\otimes V_{<\alpha}$.

For each integer $n$, let $V^{\leq n} = \mathrm{span}\{v_\alpha\mid |v_\alpha|\leq n\}$.
Since $V$ is finite dimensional in each degree, for each $n$ the evident inclusion $A\otimes V^{\leq n}\to A\otimes V^{\leq (n+1)}$ factors as a finite sequence of pushouts along $A\otimes S(n+1)\to A\otimes D(n+1)$.
Letting $k$ be the largest integer for which $V^{\leq (k-1)} = 0$, for each $l\geq k$ the zero map $0\to A\otimes V^{\leq l}$ factors as a finite sequence  (say, $n_l$ in total) of pushouts along the maps $A\otimes S(j+1)\to A\otimes D(j+1)$ for $k\leq j\leq l$, and so we get a diagram in $Ho(A\mathrm{-Mod})$ 
\begin{equation}
\label{eqn:AModExtSeq}
\begin{tikzcd}[row sep = small, column sep = small]
A[-j_{1}-1]
\ar[d]
&
A[-j_{2}-1]
\ar[d]
&
\dotsb
%&
%A[-j_{i+1}-1]
%\ar[d]
%&
%\dotsb
&
A[-j_{n_l}-1]
\ar[d]
\\
0
\ar[r]
&
(A\otimes V^{\leq 1})_1
\ar[r]
&
\dotsb
%\ar[r]
%&
%(A\otimes V^{\leq l})_i
%\ar[r]
%&
%\dotsb
\ar[r]
&
(A\otimes V^{\leq k})_{n_l-1}
\ar[r]
&
A\otimes V^{\leq k}
\end{tikzcd}
\end{equation}
in which each \lq\lq hook'' subdiagram (consisting of a vertical arrow followed by a horizontal arrow) is a cofibre sequence and $k\leq j_1\leq j_2\leq \dotsb \leq j_{n_l}\leq l$. 

Suppose $N$ is an $A$-module for which the derived counit $N\to \mathbf{L}\mathfrak{M}_A\mathbf{R}\mathfrak{P}_A (N)$ is a quasi-isomorphism, and suppose that there is a cofibre sequence of $A$-modules
$A[n]\to N \to N'$ for some integer $n$.
Naturality of the derived counit implies a commuting diagram of $A$-modules
\[
\begin{tikzcd}
A[n]
\ar[r]
\ar[d, "\sim_{\mathrm{qis}}"']
&
N
\ar[r]
\ar[d,"\sim_{\mathrm{qis}}"']
&
N'
\ar[d]
\\
\mathbf{L}\mathfrak{M}_A\mathbf{R}\mathfrak{P}_A (A)[n]
\ar[r]
&
\mathbf{L}\mathfrak{M}_A\mathbf{R}\mathfrak{P}_A (N)
\ar[r]
&
\mathbf{L}\mathfrak{M}_A\mathbf{R}\mathfrak{P}_A (N')
\end{tikzcd}
\]
in which both top and bottom rows are cofibre sequences.
The left hand vertical arrow is a quasi-isomorphism since the derived counit $A\to \mathbf{L}\mathfrak{M}_A\mathbf{R}\mathfrak{P}_A (A)$ is a quasi-isomorphism and $\mathbf{L}\mathfrak{M}_A$, $\mathbf{R}\mathfrak{P}_A$ commute with shifts; the middle vertical arrow is a quasi-isomorphism by hypothesis.
Passing to cohomology, a routine application of the five lemma demonstrates that $N'\to \mathbf{L}\mathfrak{M}_A\mathbf{R}\mathfrak{P}_A (N')$ is a quasi-isomorphism.
A straightforward inductive argument over the stages of the diagram \eqref{eqn:AModExtSeq} now shows that the derived counit $A\otimes V^{\leq l}\to \mathbf{L}\mathfrak{M}_A\mathbf{R}\mathfrak{P}_A (A\otimes V^{\leq l})$ is a quasi-isomorphism for all $l$.

To complete the proof, fix $l\geq k$ and consider the map of $A$-modules $A\otimes V^{\leq l}\to A\otimes V$.
Since this map is a cofibration, the homotopy cofibre is $A\otimes (V/V^{\leq l})$,  which is $(l+1)$-connective.
Naturality of the derived counit implies a commuting diagram of $A$-modules
\[
\begin{tikzcd}
A\otimes V^{\leq l}
\ar[r]
\ar[d, "\sim_{\mathrm{qis}}"']
&
A\otimes V
\ar[r]
\ar[d]
&
A\otimes (V/V^{\leq l})
\ar[d]
\\
\mathbf{L}\mathfrak{M}_A\mathbf{R}\mathfrak{P}_A (A\otimes V^{\leq l})
\ar[r]
&
\mathbf{L}\mathfrak{M}_A\mathbf{R}\mathfrak{P}_A (A\otimes V)
\ar[r]
&
\mathbf{L}\mathfrak{M}_A\mathbf{R}\mathfrak{P}_A (A\otimes (V/V^{\leq l}))
\end{tikzcd}
\]
By Corollary \ref{cor:Spec2ModkConnective}, both $A\otimes (V/V^{\leq l})$ and $\mathbf{L}\mathfrak{M}_A\mathbf{R}\mathfrak{P}_A (A\otimes (V/V^{\leq l}))$ have trivial cohomology in degrees $\leq l$.
Passing to cohomology, the five lemma implies that $A\otimes V\to \mathbf{L}\mathfrak{M}_A\mathbf{R}\mathfrak{P}_A (A\otimes V)$ induces an isomorphism in cohomology in degrees $\leq l$.
Since this is true for all $l\geq k$,  the derived counit $A\otimes V\to \mathbf{L}\mathfrak{M}_A\mathbf{R}\mathfrak{P}_A (A\otimes V)$ is a quasi-isomorphism.
\end{proof}

\section{A rational homotopy theory dictionary}
\label{S:Dictionary}
Our main result, Theorem \ref{thm:RatParamHomThry}, asserts an equivalence of rational homotopy theories between parametrised spectra and differential graded modules.
This correspondence can be interpreted in two ways: on the one hand it allows for identifications of topological constructions in algebra, where it is often much easier to carry out explicit calculations; on the other hand, it produces topological realisations of differential graded modules. 
In this section, we explain various aspects of this algebro-topological dictionary, summarised in Table \ref{table}.

\begin{table}[!p]
\caption{The rational homotopy theory dictionary}
\label{table}
\begin{center}
\begin{tabular}{C{0.16\textwidth}|| C{0.37\textwidth} | C{0.37\textwidth}}
&Topology
&
Algebra
\\
\hline
\hline
stable
&
spectrum $E$
&
cochain complex $C$
\\
\hline
unstable
&
spaces $X, Y$&
cdgas $A, B$
\\
\cline{2-3}
&
map of spaces 
&
map of cdgas 
\\[-1mm]
&
$X \xrightarrow{f} Y$
&
$A\xleftarrow{\phi} B $
\\
\hline
parametrised stable
&
$X$-spectra $P, Q$
&
$A$-modules $M, N$
\\
\cline{2-3} 
&
map of $X$-spectra
&
map of $A$-modules
\\[-1mm]
&
$P\to Q$
&
$M\leftarrow N$
\\
\cline{2-3}
&
fibrewise stabilisation (unpointed)
&
forget algebraic structure
\\[-1mm]
&
$(Y\to X)\mapsto \Sigma^\infty_{X+}Y$
&
$(B\leftarrow A)\mapsto B$
as $A$-module
\\
\cline{2-3}
&
fibrewise stabilisation (pointed)
&
augmentation ideal
\\[-1mm]
&
$(X \to Y\to X)\mapsto \Sigma^\infty_{X}Y$
&
$(A\leftarrow B\leftarrow A)\mapsto \mathrm{aug}_A(B)$
\\
\cline{2-3}
&
fibrewise infinite loop space
&
free $A$-algebra on connective cover
\\[-1mm]
&
$P\mapsto (\Omega^\infty_X \to X)$
&
$M\mapsto (\Lambda_A (\mathrm{cn}^0_A M)\leftarrow A)$
\\
\cline{2-3}
&
fibrewise (de)suspension
&
shift
\\[-1mm]
&
$P\mapsto \Sigma_XP$
&
$M\mapsto M[-1]$
\\[-1mm]
&
$Q\mapsto \Omega_X Q$
&
$N\mapsto N[1]$
\\
\cline{2-3}
&
pushforward along $f\colon X\to Y$
&
restriction along $\phi \colon B \to A$
\\[-1mm]
&
$P\mapsto f_! P$
&
$M \mapsto M$ (as $B$-module)
\\
\cline{2-3}
&
pullback along $f\colon X\to Y$
&
extension along $\phi\colon B\to A$
\\[-1mm]
&
$Q\mapsto f^\ast Q$
&
$N \mapsto A \bigotimes^\mathbf{L}_{B} N$
\\
\cline{2-3}
&
$k$-connective cover
&
quotient of minimal model
\\[-1mm]
&
$P_{\geq k}\to  P$
&
$A\otimes V^{\geq k}\leftarrow A\otimes V$
\\
\cline{2-3}
&
$k$-th Postnikov section
&
submodule of minimal model
\\[-1mm]
&
$P\to P_{\leq k}$
&
$A\otimes V\leftarrow A\otimes V^{\leq k}$
\\
\cline{2-3}
&
fibrewise smash product
&
derived tensor product
\\[-1mm]
&
$P\wedge_X Q$
&
$M\bigotimes^\mathbf{L}_A N$
\\
\cline{2-3}
&
fibrewise stable maps
&
Ext groups
\\[-1mm]
&
$\{P, Q\}^\ast_X\otimes_\mathbb{Z}\mathbb{Q}$
&
$\mathrm{Ext}_A^\ast(N, M)$
\\
\end{tabular}
\end{center}
\end{table}

\begin{remark}[Standard assumptions]
\label{rem:StandingAssumptions}
Throughout this section, we take $A$ to be a cdga satisfying the \emph{ standard assumptions}: $A$ is a cofibrant, connected cdga of finite homotopical type, with spatial realisation $X = \mathfrak{S}(A)$.
As $A$ is connected, $A^0= \mathbb{Q}$ and so there is a canonical augmentation $a\colon A\to\mathbb{Q}$ corresponding to a distinguished $0$-simplex $x$ of $X$. 
\end{remark}

\subsection{Fibrewise (de)stabilisation}
Let $p\colon Y\to X$ be a nilpotent fibration, with $Y$ nilpotent of finite rational type.
Let $F$ be the fibre of $p$ at $x$, noting that $F$ is also nilpotent of finite rational type.
By Proposition \ref{prop:StabNilFib}, the fibrewise stabilisation $\Sigma^\infty_{X+} Y = \Sigma^\infty_X (X\coprod Y)$ is a connective nilpotent $X$-spectrum with stable fibre $\Sigma^\infty_+ F$ at $x$.
Our assumptions imply that the graded vector space $H_\bullet (F)\cong \spi_\ast F\otimes_\mathbb{Z}\mathbb{Q}$ is finite dimensional in each degree and hence that $\Sigma^\infty_{X+} Y$ is of finite rational type. 
By Theorem \ref{thm:RatParamHomThry} there is an $A$-module corresponding to $\Sigma^\infty_{X+} Y$ in rational homotopy theory.

By Lemma \ref{lem:SuspendInMod} we have
\[
\mathfrak{M}_A \big(\Sigma^\infty_{X+} Y\big)
\cong
\mathfrak{M}^u_A \Big(X\coprod Y\Big)
\cong 
\mathfrak{A}(Y)\,,
\]
with $A$-action induced by the composite map of cdgas $A\xrightarrow{\varepsilon_A} \mathfrak{A}(\mathfrak{S}(A)) =\mathfrak{A}(X)\xrightarrow{\mathfrak{A}(p)} \mathfrak{A}(Y)$, with $\varepsilon_A$ the Sullivan--de Rham counit.
For any cofibrant replacement  $B\to \mathfrak{A}(Y)$ in $\mathrm{cDGA}$ then we can find an up-to-homotopy factorisation of $A\to \mathfrak{A}(Y)$ through $B$.
We can thus regard $B$ as an $A$-algebra, and the equivalence of Theorem \ref{thm:RatParamHomThry} identifies the rational homotopy type of the $X$-spectrum $\Sigma^\infty_{X+} Y$ with $B$.

\begin{remark}
More generally, consider the case of a retractive space $X\to Z\to X$ where $p\colon Z\to X$ is a nilpotent fibration and $Z$ is nilpotent of finite rational type. 
As above, the fibre $F$ of $p$ is also nilpotent of finite rational type.
By Proposition \ref{prop:StabNilFib2} the fibrewise stabilisation $\Sigma^\infty_X Z$ is a nilpotent connective $X$-spectrum with fibre $\Sigma^\infty F$.
The rationalised stable homotopy groups $\spi_\ast F\otimes_\mathbb{Z}\mathbb{Q}$ compute the reduced rational homology of $F$, which is finite dimensional in each degree.
By Lemma \ref{lem:SuspendInMod}, the $A$-module corresponding to $\Sigma_X^\infty Z$ under the correspondence of Theorem \ref{thm:RatParamHomThry} is 
\[
\mathfrak{M}_A \big(\Sigma^\infty_X Z\big) \cong \mathfrak{M}^u_A \big(Z\big)
\cong 
\mathrm{aug}_{\mathfrak{A}(X)} \mathfrak{A}(Z)
\]
(see also the discussion in Section \ref{ss:ModularModels}).
\end{remark}

We write:
\begin{itemize}
  \item $Ho(\mathcal{S}_{/X})^\mathbb{Q}_{\mathrm{nil,f.t.}}$ for the full subcategory of $Ho(\mathcal{S}_{/X})$ spanned by the nilpotent fibrations $Y\to X$ for which $Y$ is nilpotent of finite rational type; and
  
  \item $Ho(\mathrm{cDGA}^{A/})^\mathrm{op}_{\mathrm{f.t.,}\geq 1}$ for the full subcategory of $Ho(\mathrm{cDGA}^{A/})^\mathrm{op}$ spanned by maps of cdgas $\phi\colon A\to B$ for which (i) $B$ is of finite homotopical type and (ii) $H^1(\phi)\colon H^1(A)\to H^1 (B)$ is an isomorphism\footnote{The latter condition guarantees that $\phi$ has a minimal model $A\to A\otimes \Lambda V$ for $V$ concentrated in degrees strictly greater than $1$.}.
\end{itemize}
A straightforward adaptation of Bousfield and Gugenheim's proof of the Sullivan--de Rham equivalence theorem \cite[Section 10]{bousfield_rational_1976} shows that there is an adjoint equivalence of homotopy categories
\[ 
\begin{tikzcd}
Ho(\mathcal{S}_{/X})^\mathbb{Q}_{\mathrm{nil,f.t.}}
\ar[rr, shift left = 2, "\mathbf{L}\mathfrak{A}_{/A}"]
\ar[rr, leftarrow, shift left = -2, "\mathbf{R}\mathfrak{S}_{/A}"', "\simeq"]
&&
Ho(\mathrm{cDGA}^{A/})^\mathrm{op}_{\mathrm{f.t.,}\geq 1}\,.
\end{tikzcd}
\]
The left adjoint is the derived functor of the assignment
$
\mathfrak{A}_{/A}\colon (Y\to X)\mapsto (A\to \mathfrak{A}(X)\to \mathfrak{A}(Y))
$
which has right adjoint
$
\mathfrak{S}_{/A}\colon (A\to B)\mapsto (\mathfrak{S}(B)\to \mathfrak{S}(A) = X)
$.
Combined with the discussion of the preceding paragraphs, we have all but proven the following
\begin{proposition}
\label{prop:FibSuspendinMod}
Let $A$ be a cofibrant connected cdga of finite homotopical type with spatial realisation $X = \mathfrak{S}(A)$.
There is a diagram of derived left adjoint functors commuting up to natural isomorphism
\[
\begin{tikzcd}
Ho(\mathcal{S}_{/X})^\mathbb{Q}_{\mathrm{nil,f.t.}}
\ar[rr, "\mathbf{L}\mathfrak{A}_{/A}", "\simeq"']
\ar[d, "\Sigma^\infty_{X+}"']
&&
Ho(\mathrm{cDGA}^{A/})^\mathrm{op}_{\mathrm{f.t.,}\geq 1}
\ar[d, "U"]
\\
Ho\big(\mathrm{Sp}_{X}\big)^\mathbb{Q}_{\mathrm{f.t.,nil,bbl}}
\ar[rr, "\mathbf{L}\mathfrak{M}_A", "\simeq"']
&&
Ho\big(A\mathrm{-Mod})^\mathrm{op}_\mathrm{f.h.t.}
\end{tikzcd}
\]
where $U\colon (A\to B)\mapsto B$ sends an $A$-algebra to its underlying $A$-module.
\end{proposition}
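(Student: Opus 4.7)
The plan is to show that both composites in the diagram naturally send a nilpotent fibration $p\colon Y\to X$ (with $Y$ nilpotent of finite rational type) to the $A$-module $\mathfrak{A}(Y)$, where the $A$-module structure is obtained via the composition $A\xrightarrow{\varepsilon_A}\mathfrak{A}(X)\xrightarrow{\mathfrak{A}(p)}\mathfrak{A}(Y)$, with $\varepsilon_A$ the Sullivan--de Rham counit. Both identifications are already essentially computed in the discussion preceding the proposition; what remains is to verify compatibility of derived functors and naturality.

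For the upper composite, the key observation is that all simplicial sets are cofibrant in the Kan model structure, so the left Quillen functor $\mathfrak{A}$ preserves all weak equivalences and descends directly to homotopy categories without derivation; the same applies to the sliced version $\mathfrak{A}_{/A}$. Similarly, the forgetful functor $U\colon \mathrm{cDGA}^{A/}\to A\mathrm{-Mod}$ preserves weak equivalences, since in both categories these are defined to be underlying quasi-isomorphisms of cochain complexes. Consequently $U\circ \mathbf{L}\mathfrak{A}_{/A}$ sends $(Y\to X)$ to $\mathfrak{A}(Y)$ with its evident $A$-action, tautologically.

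For the lower composite, I would argue that the retractive space $Y_{X+}=X\coprod Y$ is cofibrant in $\mathrm{sSet}_{\dslash X}$ (the structure map $X\hookrightarrow X\coprod Y$ is a coproduct inclusion, hence a cofibration), so $\Sigma_{X+}^\infty Y=\Sigma^{\infty-0}_X Y_{X+}$ is a cofibrant sequential $X$-spectrum and $\mathbf{L}\mathfrak{M}_A(\Sigma^\infty_{X+}Y)$ is computed by $\mathfrak{M}_A$ directly. Lemma~\ref{lem:SuspendInMod} (with $k=0$) then gives a natural isomorphism $\mathfrak{M}_A(\Sigma^{\infty-0}_X Y_{X+})\cong \mathfrak{M}^u_A(Y_{X+})$, and Example~\ref{exam:FreeFibBasePointModule} identifies this in turn with $\mathfrak{A}(Y)$, equipped with the same $A$-module structure as identified in the previous paragraph.

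Naturality of the resulting isomorphism of functors follows because both identifications are point-set natural in the fibration $(Y\to X)$: in the upper composite this holds tautologically, and in the lower composite it follows from the naturality statements in Lemma~\ref{lem:SuspendInMod} and Example~\ref{exam:FreeFibBasePointModule}. Since $\mathbf{L}\mathfrak{A}_{/A}$ is a Quillen equivalence, every morphism in $Ho(\mathcal{S}_{/X})^\mathbb{Q}_{\mathrm{nil,f.t.}}$ is represented by a point-set morphism up to weak equivalence, so these natural isomorphisms descend to a natural isomorphism of derived functors. I do not anticipate a substantial obstacle here; the proposition is essentially a compilation of the identifications already established in Section~\ref{ss:ModularModels} together with the Bousfield--Gugenheim equivalence for fibrations. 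The only mildly delicate point is verifying that $U$ sends $Ho(\mathrm{cDGA}^{A/})^\mathrm{op}_{\mathrm{f.t.,}\geq 1}$ into $Ho(A\mathrm{-Mod})^\mathrm{op}_{\mathrm{f.h.t.}}$, which amounts to the observation that if $A\to A\otimes \Lambda V$ is a minimal model with $V$ finite-dimensional in each degree and concentrated in degrees $\geq 2$, then $A\otimes \Lambda V$ is semifree as an $A$-module with generating graded vector space $\Lambda V$ again finite-dimensional in each degree.
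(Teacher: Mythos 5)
Your proposal is correct and reproduces the paper's own argument: both composites are identified with $(Y\to X)\mapsto\mathfrak{A}(Y)$ (as an $A$-module via $A\to\mathfrak{A}(X)\to\mathfrak{A}(Y)$), using Lemma~\ref{lem:SuspendInMod} and Example~\ref{exam:FreeFibBasePointModule} for the lower composite and the relative Bousfield--Gugenheim equivalence for the upper. The paper packages precisely this reasoning in the prose immediately preceding the proposition, remarking that it is thereby ``all but proven''.
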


The forgetful functor $U\colon \mathrm{cDGA}^{/A}\to A\mathrm{-Mod}$ is right Quillen with left adjoint given by the composite
\[
\Lambda_A^{\geq 0}
\colon
\begin{tikzcd}
A\mathrm{-Mod}
\ar[r, "\mathrm{cn}_A^0"]
&
A\mathrm{-Mod}_{\geq 0}
\ar[r, "\Lambda_A"]
&
\mathrm{cDGA}^{A/}
\end{tikzcd}
\]
of the $A$-module connective cover and free $A$-algebra functors.
Note that $U$ already presents the derived functor since all objects of $\mathrm{cDGA}^{A/}$ are fibrant. 
For any minimal $A$-module $A \otimes V$ with $V$ concentrated in degrees $>1$ we have
\[
\mathbf{L}\Lambda^{\geq 0}_A(A\otimes V) \cong  
\Lambda^{\geq 0}_A(A\otimes V)
\cong A\otimes \Lambda V
\] 
in $Ho(\mathrm{cDGA}^{A/})^\mathrm{op}_{\mathrm{f.t.,}\geq 1}$.
Taking adjoints in Proposition \ref{prop:FibSuspendinMod}, we get rational homotopy equivalences
\begin{align*}
\big(
\Omega^\infty_X \mathbf{R}\mathfrak{P}_A (A\otimes V)
\to X\big)
&\sim_{\mathrm{rat}}
\big((\mathbf{R}\mathfrak{S}_{/A} \mathbf{L} \Lambda^{\geq 0}_A )(A\otimes V) \to X\big)
\\
&\sim_{\mathrm{rat}}
\big(\mathfrak{S}(A\otimes \Lambda V) \to X\big)
\end{align*}
of nilpotent fibrations over $X$.

\begin{example}
The Hopf fibration $S^3 \to S^2$ is modelled in rational homotopy theory by the minimal morphism of cdgas
\[
A_{S^2}:= \Lambda\langle x_2, x_4\rangle
\Big/
\left(
{\begin{aligned}
dx_2 &= 0\\
dx_4 &= x_2\wedge x_2
\end{aligned}}
\right)
\longrightarrow
A_{S^3} :=\Lambda\langle x_1, x_2, x_4\rangle
\Big/
\left(
{\begin{aligned}
dx_1 &= x_2\\
dx_2 &= 0\\
dx_4 &= x_2\wedge x_2
\end{aligned}}
\right)
\]
with subscripts on generators indicating degree.
The rational homotopy type of the fibrewise stabilisation $\Sigma^\infty_{S^2+} S^3$ is thus given by $A_{S^3}$ regarded as an $A_{S^2}$-module.
Note that there is an isomorphism of $A_{S^2}$-modules
\[
A_{S^3} \cong A_{S^2} \otimes \Lambda \langle x_1\rangle
\cong A_{S^2}\otimes \big( \mathrm{span}\{x_1\}\oplus \mathbb{Q}\big)\,,
\qquad dx_1 = x_2
\]
so that $A_{S^3}$ is already minimal.
It follows that the component of the derived $(\Lambda^{\geq 0}_{A_{S^2}}\dashv U)$-counit at $A_{S^2}\to A_{S^3}$ is an isomorphism and hence that 
\[
\begin{tikzcd}[row sep = small]
S^3
\ar[rr, "\sim_{\mathrm{rat}}"]
\ar[dr]
&&
\Omega^\infty_{S^2}\Sigma^\infty_{S^2+} S^3
\ar[dl]
\\
&
S^2
\end{tikzcd}
\]
is a fibrewise rational homotopy equivalence.
\end{example}

\begin{example}[Path fibrations]
Let $A$ be a connected, cofibrant cdga of finite homotopical type such that $X = \mathfrak{S}(A)$ is simply connected.
In this case the path fibration $PX \to X$ is nilpotent and hence the fibrewise stabilisation of $PX\to X$ is modelled in rational homotopy theory by the homotopy type of $\mathbb{Q}$, regarded as an $A$-module by the canonical augmentation $A\to\mathbb{Q}$.
The assumption of simple connectedness is necessary; see Remark \ref{rem:FHTCounterexample} for a counterexample when this assumption does not hold.
As an $A$-module, $\mathbb{Q}$ is rarely cofibrant but the bar resolution $B(A;A; \mathbb{Q}) \to \mathbb{Q}$ provides a canonical cofibrant replacement. 
\end{example}

\subsection{Base change}
Let $\phi\colon B\to A$ be a morphism of cofibrant connected cdgas of finite homotopical type.
Under the Sullivan--de Rham equivalence, $\phi$ corresponds to a morphism of rational homotopy types
\[
(f\colon X\to Y) = \big(\mathfrak{S}(\phi)\colon \mathfrak{S}(A)\to \mathfrak{S}(B)\big)\,.
\]
It is easy to see that the derived pullback functor $\mathbf{R}f^\ast\colon Ho(\mathrm{Sp}_Y)\to Ho(\mathrm{Sp}_X)$ preserves the conditiones of nilpotence, boundedness from below, and finite rational type and so restricts to a functor
\[
\mathbf{R}f^\ast \colon 
Ho\big(\mathrm{Sp}_{Y}\big)^\mathbb{Q}_{\mathrm{f.t.,nil,bbl}}
\longrightarrow
Ho\big(\mathrm{Sp}_{X}\big)^\mathbb{Q}_{\mathrm{f.t.,nil,bbl}}\,.
\]
By (the adjoint of) Proposition \ref{prop:Spec2ModPNat}, the equivalences of Theorem \ref{thm:RatParamHomThry} fit into a diagram of derived functors that commutes up to natural isomorphism:
\[
\begin{tikzcd}
B\mathrm{-Mod}^\mathrm{op}_\mathrm{f.h.t.}
\ar[r,"\mathbf{R}\mathfrak{P}_B", "\simeq"']
\ar[d, "\mathbf{L}\phi_!"']
&
Ho\big(\mathrm{Sp}_{Y}\big)^\mathbb{Q}_{\mathrm{f.t.,nil,bbl}}
\ar[d, "\mathbf{R}f^\ast"]
\\
A\mathrm{-Mod}^\mathrm{op}_\mathrm{f.h.t.}
\ar[r, "\mathbf{R}\mathfrak{P}_A", "\simeq"']
&
Ho\big(\mathrm{Sp}_{X}\big)^\mathbb{Q}_{\mathrm{f.t.,nil,bbl}}
\end{tikzcd}
\]
The upshot is that if $N$ is a cofibrant $B$-module of finite homotopical type corresponding to the rational homotopy type of the $Y$-spectrum $P:= \mathfrak{P}_B (N)$, then the $A$-module 
\[
\mathbf{L}\phi_! N \cong \phi_! N\cong  A\bigotimes_B N
\]
corresponds to the rational homotopy type of the $X$-spectrum $f^\ast P$.

\begin{example}[Fibre spectra]
Under the standard assumptions on $A$ there is a canonical augmentation $a\colon A\to \mathbb{Q}$ corresponding to a point $x\colon \ast\to X$ of the spatial realisation.
If $M$ is a cofibrant $A$-module of finite homotopical type with $P = \mathfrak{P}_A(M)$, the fibre of $P$ at $x$ is $x^\ast P\cong \mathfrak{P}_\mathbb{Q}(a_! M)$.
\end{example}

\begin{remark}[Minimal model bootstrap]
\label{rem:MinModBoot}
If $A\otimes V$ is a minimal $A$-module corresponding to the rational homotopy type of the $X$-spectrum $P$, the previous example shows that
$
x^\ast P
\cong 
\mathfrak{P}_\mathbb{Q}(V)
$,
where $V$ has trivial differential (see Remark \ref{rem:BCMinimal}). 
By Lemma \ref{lem:FibofMinModSpec}, $x^\ast P$ is thus the Eilenberg--Mac Lane spectrum $HV^\vee$.
In particular, if $P$ is a nilpotent, bounded below $X$-spectrum of finite rational type for which the rational stable homotopy groups of $x^\ast P$ are known, then there is a minimal model for $M:= \mathbf{L}\mathfrak{M}_A(P)$ with underlying $A$-module
\[
A\otimes \big(\spi_\ast x^\ast P\otimes_\mathbb{Z}\mathbb{Q}\big)^\vee\,.
\]
Given knowledge of the graded rational vector space $\spi_\ast X_!P \otimes_\mathbb{Z}\mathbb{Q}\cong (H^\bullet(M))^\vee$, one is often able to completely determine a minimal model of $M$---see \cite[Proposition 2.49]{braunack-mayer_gauge_2019} for an implementation of this approach.
\end{remark}

Provided $f\colon X\to Y$ is homotopy equivalent to a nilpotent fibration, we can also give an algebraic description of the derived pushforward functor $\mathbf{L}f_!$.
The next result is phrased in terms of the equivalent algebraic characterisation on the map of cdgas $\phi$:
\begin{proposition}
Let $\phi\colon B\to A$ be a morphism of cofibrant connected cdgas of finite homotopical type such that $H^1(\phi)\colon H^1(B)\to H^1(A)$ is an injection.
Then there is a diagram of derived functors that commutes up to natural isomorphism:
\[
\begin{tikzcd}
Ho\big(\mathrm{Sp}_{X}\big)^\mathbb{Q}_{\mathrm{f.t.,nil,bbl}}
\ar[r,"\mathbf{L}\mathfrak{M}_B", "\simeq"']
\ar[d, "\mathbf{L}f_!"']
&
A\mathrm{-Mod}^\mathrm{op}_\mathrm{f.h.t.}
\ar[d, "\mathbf{R}\phi^\ast"]
\\
Ho\big(\mathrm{Sp}_{Y}\big)^\mathbb{Q}_{\mathrm{f.t.,nil,bbl}}
\ar[r, "\mathbf{L}\mathfrak{M}_A", "\simeq"']
&
B\mathrm{-Mod}^\mathrm{op}_\mathrm{f.h.t.}
\end{tikzcd}
\]
\end{proposition}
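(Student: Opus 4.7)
The main strategy is to invoke Proposition \ref{prop:Spec2ModPNat} with $A$ and $B$ swapped. Applying that proposition to the cdga morphism $\phi\colon B\to A$ yields a natural isomorphism of left Quillen functors
\[
\phi^\ast\circ \mathfrak{M}_A \;\cong\; \mathfrak{M}_B\circ \mathfrak{S}(\phi)_!\,.
\]
Both sides are composites of left Quillen functors, so deriving gives an isomorphism of composite derived functors. Since the opposite of a model category reverses left and right derivations, the left derived functor of $\phi^\ast\colon A\mathrm{-Mod}^\mathrm{op}\to B\mathrm{-Mod}^\mathrm{op}$ coincides with the right derived functor of the usual restriction $\phi^\ast\colon A\mathrm{-Mod}\to B\mathrm{-Mod}$, i.e.~with the $\mathbf{R}\phi^\ast$ appearing in the statement. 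Setting $f=\mathfrak{S}(\phi)\colon X\to Y$, we obtain the desired commutation $\mathbf{R}\phi^\ast\circ \mathbf{L}\mathfrak{M}_A\cong \mathbf{L}\mathfrak{M}_B\circ \mathbf{L}f_!$ on the full rational homotopy categories.

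It remains to verify that this commutation restricts to the named subcategories. By Theorem \ref{thm:RatParamHomThry} applied to both $A$ and $B$, the two equivalences identify $Ho(\mathrm{Sp}_X)^\mathbb{Q}_{\mathrm{f.t.,nil,bbl}}$ with $Ho(A\mathrm{-Mod})^\mathrm{op}_{\mathrm{f.h.t.}}$ and likewise for $B$. Hence it suffices to check \emph{either} that $\mathbf{L}f_!$ preserves finite type, nilpotence, and boundedness below, \emph{or} that $\mathbf{R}\phi^\ast$ preserves finite homotopical type; the other statement then follows automatically. I plan to attack the problem on the algebraic side.

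To see that $\mathbf{R}\phi^\ast$ preserves finite homotopical type, the plan is to exploit a minimal relative Sullivan factorisation $B\hookrightarrow B\otimes \Lambda V'\xrightarrow{\;\sim\;} A$ of $\phi$. The hypothesis that $H^1(\phi)$ is injective, combined with the finite homotopical type of $A$ and $B$, ensures that such a factorisation exists with $V'$ concentrated in degrees $\geq 1$ and degreewise finite dimensional. Given a cofibrant $A$-module $M$ of finite homotopical type with minimal model $A\otimes V\to M$, extending scalars along the quasi-isomorphism $B\otimes \Lambda V'\to A$ then produces a quasi-isomorphism of $B$-modules $B\otimes (\Lambda V'\otimes V) \xrightarrow{\;\sim\;}\phi^\ast(A\otimes V)$. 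The derived indecomposables $\mathbb{Q}\otimes^{\mathbf{L}}_B \phi^\ast M$ are therefore quasi-isomorphic to $\Lambda V'\otimes V$, which is bounded below and degreewise finite dimensional since $V'$ is in strictly positive degrees and $V$ is bounded below. Passing to a minimal $B$-model of $\phi^\ast M$ and reading off its generators shows that $\phi^\ast M$ has finite homotopical type.

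The hard part will be the relative Sullivan bookkeeping for $\phi$---in particular arranging the relative minimal factorisation with the correct finiteness properties, and transferring degreewise finiteness of indecomposables through extension of scalars---rather than the commutation itself, which is essentially a formal dualisation of Proposition \ref{prop:Spec2ModPNat}.
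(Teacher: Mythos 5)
Your approach is correct and takes a genuinely different route from the paper. Both proofs begin by extracting a relative minimal Sullivan factorisation $B\hookrightarrow B\otimes\Lambda V'\xrightarrow{\;\sim\;}A$ from the hypothesis $H^1(\phi)$ injective, and both reduce the commutation itself to Proposition~\ref{prop:Spec2ModPNat} (read in opposite categories). The difference is which vertical functor you verify preserves the named subcategory. The paper works \emph{topologically}: since $\mathbf{L}f_!$ is exact and preserves connectivity (Corollary~\ref{cor:PfwdConnectivity}), by Corollary~\ref{cor:NilBblFinRatType} it suffices to check $\mathbf{L}f_!\,X^\ast H\mathbb{Q}$ lands in $Ho(\mathrm{Sp}_Y)^\mathbb{Q}_{\mathrm{f.t.,nil,bbl}}$, and the projection formula identifies $\mathbf{L}f_!\,X^\ast H\mathbb{Q}\simeq \Sigma^\infty_{Y+}X\wedge_Y Y^\ast H\mathbb{Q}$, which is handled by Proposition~\ref{prop:StabNilFib} since the Sullivan factorisation shows $X\to Y$ is (rationally) a nilpotent fibration. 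You instead work \emph{algebraically}, showing $\mathbf{R}\phi^\ast$ preserves finite homotopical type by lifting the minimal $A$-module $A\otimes V$ to a minimal $C$-module $C\otimes V$ over the quasi-isomorphism $C=B\otimes\Lambda V'\to A$ and computing derived indecomposables. The paper's route is somewhat slicker because the entire finite-type/nilpotence/bounded-below verification is delegated to one previously-proved structural result about a single generator; your route is more hands-on but has the pedagogical merit of making explicit \emph{why} restriction of scalars is well behaved, and of exhibiting the generators of a minimal $B$-model of $\phi^\ast M$ concretely as $H^\bullet(\Lambda V'\otimes V)$. Two small points worth pinning down if you write this out in full: (i) the assertion that $\phi^\ast(C\otimes V)\cong B\otimes(\Lambda V'\otimes V)$ is a \emph{semifree} $B$-module requires interleaving the well-ordered filtration of $V'$ (from the relative Sullivan algebra structure of $C$ over $B$) with that of $V$ (from minimality of $C\otimes V$ over $C$), so the differential of each generator lands in earlier terms; and (ii) the degreewise finiteness of $\Lambda V'\otimes V$ uses both that $V'$ is concentrated in strictly positive degrees (so each graded piece of $\Lambda V'$ receives contributions from only finitely many exterior powers) and that $V$ is bounded below. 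Both points are straightforward, as you anticipated.
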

\begin{proof}
The assumption on $\phi$ guarantees the existence of a minimal model $B\to B\otimes \Lambda V \xrightarrow{\sim_\mathrm{qis}} A$ with $V$ concentrated in degrees $\geq 1$.
Taking spatial realisations, $X \xrightarrow{\sim_\mathrm{rat}} \mathfrak{S}(B\otimes \Lambda V ) \to Y$  is a nilpotent fibration by the Sullivan--de Rham equivalence.

We now aim to show that $\mathbf{L}f_!$ restricts to a functor 
\[
Ho\big(\mathrm{Sp}_{X}\big)^\mathbb{Q}_{\mathrm{f.t.,nil,bbl}}
\longrightarrow
Ho\big(\mathrm{Sp}_{Y}\big)^\mathbb{Q}_{\mathrm{f.t.,nil,bbl}}\,.
\]
Once this has been established, the result follows at once from Theorem \ref{thm:RatParamHomThry} and Proposition \ref{prop:Spec2ModPNat}.
The functor $\mathbf{L}f_!$ is exact and preserves connectivity (Corollary \ref{cor:PfwdConnectivity}), hence by
Corollary \ref{cor:NilBblFinRatType} it is sufficient to show that $\mathbf{L}f_! X^\ast H\mathbb{Q}$ is a nilpotent, bounded below $Y$-spectrum of finite rational type.
Omitting \lq\lq$\mathbf{L}$'' and \lq\lq$\mathbf{R}$'' for clarity, we have
\[
f_! X^\ast H\mathbb{Q} \cong
f_! (X^\ast S \wedge_X X^\ast H\mathbb{Q})
\cong
(f_! X^\ast S) \wedge_Y Y^\ast H\mathbb{Q}
\cong \Sigma^\infty_{Y+} X \wedge_Y Y^\ast H\mathbb{Q}\,.
\]
Since $X\to Y$ is (homotopy equivalent to) a nilpotent fibration and $X$ is nilpotent of finite rational type, the fibrewise stabilisation $\Sigma^\infty_{Y+} X$ is a nilpotent, bounded below $Y$-spectrum of finite rational type (see the discussion preceding Proposition \ref{prop:FibSuspendinMod}).
\end{proof}

\begin{example}[Collapse spectra]
For any cofibrant connected cdga $A$ of finite homotopical type, the unit $\mathbb{Q}\to A$ satisfies the conditions of the Proposition.
Let $P$ be a nilpotent, bounded below $X=\mathfrak{S}(A)$-spectrum of finite rational type with rational homotopy type corresponding to the $A$-module $M$.
The rational homotopy type of the spectrum $X_! P$ is thus modelled by the underlying cochain complex of $M$.

As special case we have that the spectrum $X_! \Sigma^\infty_{X+}X  \cong \Sigma^\infty_+ X$ is modelled in rational homotopy by the underlying cochain complex of $A$ (cf.~Proposition \ref{prop:TopInterp}).
\end{example}

\subsection{Connective covers and Postnikov sections}
\label{ss:Postnikov}
If $P$ is a nilpotent, bounded below $X$-spectrum of finite rational type it is trivial to check that for all integers $k$, the $X$-spectra $P_{\geq k}$ and $P_{\leq k}$ are also nilpotent, bounded below, and of finite rational type.
We now identify maps of $A$-modules corresponding to $P_{\geq k}\to P$ and $P\to P_{\leq k}$ in rational homotopy theory.

Fix a minimal model $A\otimes V$ for the rational homotopy type of $P$.
By minimality, $V$ is generated by a basis $\{v_\alpha\}_{\alpha\in \mathcal{I}}$ for some well-ordered set $\mathcal{I}$ such that (i) $dv_\beta \in A\otimes \mathrm{span}\{a_\alpha\mid \alpha < \beta\}$ and (ii) $\alpha\leq \beta \Rightarrow |v_\alpha|\leq |v_\beta|$.
Writing
\[
V^{\leq k} :=\mathrm{span}\{v_\alpha \mid |v_\alpha|\leq k\}
\qquad
\mbox{ and }
\qquad
V^{\geq k} :=\mathrm{span}\{v_\alpha \mid |v_\alpha|\geq k\}\,,
\]
the inclusion $V^{\leq k}\to V$ induces a map of minimal $A$-modules
$
A\otimes V^{\leq k}\to A\otimes V
$.
Similarly, there is a natural map of minimal $A$-modules
$
A\otimes V \to A\otimes V^{\geq k}
$
obtained by passing to the quotient $(A\otimes V)/(A\otimes V^{\leq (k-1)})\cong A\otimes V^{\geq k}$.
\begin{proposition}
Let $A\otimes V$ be a minimal model for the rational homotopy type of $P\in Ho(\mathrm{Sp}_X)^\mathbb{Q}_{\mathrm{f.t.,nil,bbl}}$.
Then the short exact sequence of minimal $A$-modules
\[
A\otimes V^{\leq k}
\longrightarrow 
A\otimes V
\longrightarrow
A\otimes V^{\geq (k+1)}
\]
corresponds  to the fibre-cofibre  sequence of $X$-spectra
\[
P_{\leq k}\longleftarrow P
\longleftarrow
P_{\geq (k+1)}
\]
under the equivalence of Theorem \ref{thm:RatParamHomThry}.
\end{proposition}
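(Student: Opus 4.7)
The strategy is to apply the derived right adjoint $\mathbf{R}\mathfrak{P}_A$ to the short exact sequence of minimal $A$-modules and then recognise the resulting fibre-cofibre sequence of $X$-spectra as the Postnikov--connective cover decomposition by matching connectivities. First I would observe that the inclusion $A\otimes V^{\leq k}\hookrightarrow A\otimes V$ is a semifree, hence cofibrant, inclusion of $A$-modules (Remark \ref{rem:SemiFreeChar}), with cofibre isomorphic to $A\otimes V^{\geq(k+1)}$. Since $\mathfrak{P}_A$ is right Quillen from $A\mathrm{-Mod}^{\mathrm{op}}$, the derived functor $\mathbf{R}\mathfrak{P}_A$ is exact and converts this cofibre sequence into a fibre-cofibre sequence of $X$-spectra
\[
\mathbf{R}\mathfrak{P}_A(A\otimes V^{\geq(k+1)})\longrightarrow \mathbf{R}\mathfrak{P}_A(A\otimes V)\longrightarrow \mathbf{R}\mathfrak{P}_A(A\otimes V^{\leq k}),
\]
whose middle term is rationally equivalent to $P$ by Theorem \ref{thm:RatParamHomThry}.

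Next I would verify the connectivity estimates for the outer terms. The $A$-module $A\otimes V^{\geq(k+1)}$ is $(k+1)$-connective by construction, so Lemma \ref{lem:ConnectivityIsPreserved} gives that $\mathbf{R}\mathfrak{P}_A(A\otimes V^{\geq(k+1)})$ is $(k+1)$-connective as an $X$-spectrum. For the Postnikov end I would compute the fibre spectrum at the canonical point $x\in X$ corresponding to the augmentation $a\colon A\to\mathbb{Q}$: by (the adjoint of) Proposition \ref{prop:Spec2ModPNat} combined with Lemma \ref{lem:FibofMinModSpec} and Remark \ref{rem:BCMinimal} we have
\[
x^\ast\mathbf{R}\mathfrak{P}_A(A\otimes V^{\leq k})\;\simeq\;\mathfrak{P}_\mathbb{Q}(a_!(A\otimes V^{\leq k}))\;\simeq\;\mathfrak{P}_\mathbb{Q}(V^{\leq k})\;\simeq\;H(V^{\leq k})^\vee,
\]
which has stable homotopy groups $\spi_n\cong\mathrm{Hom}((V^{\leq k})^n,\mathbb{Q})$ vanishing for $n>k$. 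Hence $\mathbf{R}\mathfrak{P}_A(A\otimes V^{\leq k})$ is $k$-coconnective.

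Finally I would invoke the uniqueness of Postnikov--connective cover factorisations. Any fibre sequence $C\to P\to D$ with $C$ being $(k+1)$-connective and $D$ being $k$-coconnective is canonically equivalent to $P_{\geq(k+1)}\to P\to P_{\leq k}$: by the universal property of the coreflection $\mathrm{cn}^X_{k+1}$ (Corollary \ref{cor:ConnCover}) the map $C\to P$ factors through $P_{\geq(k+1)}\to P$, and the induced map $C\to P_{\geq(k+1)}$ is an equivalence since both compute the same fibrewise stable homotopy groups in degrees $\geq k+1$; a dual argument with the reflective subcategory of $k$-coconnective $X$-spectra identifies the cofibre $D$ with $P_{\leq k}$. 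The main (but mild) obstacle is bookkeeping with the sign of the connectivity shift under $\mathbf{R}\mathfrak{P}_A$, which is resolved precisely by the graded-dual identification $\spi_n x^\ast\mathbf{R}\mathfrak{P}_A(A\otimes W)\cong (W^n)^\vee$ for minimal $A$-modules established in the proof of Lemma \ref{lem:ConnectivityIsPreserved}; given this, the two connectivity estimates above are symmetric and the argument closes.
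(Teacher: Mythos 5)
Your proof is correct and follows essentially the same route as the paper: apply $\mathfrak{P}_A$ to the short exact sequence of minimal modules, compute fibre spectra at the canonical basepoint via Lemma \ref{lem:FibofMinModSpec} (and Lemma \ref{lem:ConnectivityIsPreserved}) to pin down the connectivity/coconnectivity of the outer terms, and then recognise the result as the Postnikov--connective-cover factorisation. The one cosmetic difference is that you close by invoking the abstract uniqueness of such factorisations via the (co)reflection universal properties, whereas the paper checks directly that $\mathfrak{P}_A(A\otimes V)\to\mathfrak{P}_A(A\otimes V^{\leq k})$ is an isomorphism on fibrewise stable homotopy in degrees $\leq k$; these are interchangeable, and your terminology \lq\lq$k$-coconnective\rq\rq{} corresponds to the paper's \lq\lq$(k+1)$-coconnective\rq\rq{} convention but describes the same vanishing range.
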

\begin{proof}
Applying $\mathfrak{P}_A$ to the short exact sequence of minimal $A$-modules
\[
A\otimes V^{\leq k}
\longrightarrow 
A\otimes V
\longrightarrow
A\otimes V^{\geq (k+1)}
\]
gives rise to the fibre-cofibre sequence of $X$-spectra
\[
\mathfrak{P}_A(A\otimes V^{\leq k})
\longrightarrow 
\mathfrak{P}_A(A\otimes V)
\longrightarrow
\mathfrak{P}_A(A\otimes V^{\geq (k+1)})\,.
\]
Taking fibres at $(x\colon \ast \to X)= \mathfrak{S}(A\to \mathbb{Q})$, by Lemma \ref{lem:FibofMinModSpec} we get the fibre-cofibre sequence of Eilenberg--Mac Lane spectra
$
H(V^{\leq k})^\vee
\leftarrow
HV
\leftarrow
H(V^{\geq (k+1)})^\vee
$.
Since $X$ is connected, this shows that $\mathfrak{P}_A(A\otimes V^{\leq k})$ is $(k+1)$-coconnective and $\mathfrak{P}_A(A\otimes V)\to \mathfrak{P}_A(A\otimes V^{\leq k})$ is an isomorphism on fibrewise stable homotopy groups in degrees $\leq k$.
As $P\sim_\mathrm{rat} \mathfrak{P}_A(A\otimes V)$ by assumption, it is clear that $\mathfrak{P}_A(A\otimes V)\to \mathfrak{P}_A(A\otimes V^{\leq k})$ is rationally equivalent to the truncation map $P\to P_{\leq k}$.
Similarly, $\mathfrak{P}_A(A\otimes V^{\geq (k+1)})\to \mathfrak{P}_A(A\otimes V)$ is rationally equivalent to the $(k+1)$-connective cover $P_{\geq(k+1)}\to P$.
\end{proof}

For a minimal $A$-module $A\otimes V$ as above,
the cofibre of $V^{\leq (k-1)}\to V^{\leq k}$ is a minimal $A$-module with underlying graded module of the form $A\otimes V^{=k}$ with $V^{=k} =\mathrm{span}\{v_\alpha\mid |v_\alpha| = k\}$.
In rational homotopy theory, $A\otimes V^{=k}$ corresponds to the $k$-th slice of an $X$-spectrum:

\begin{corollary}
\label{cor:kthSlice}
Let $A\otimes V$ be a minimal model for the rational homotopy type of $P\in Ho(\mathrm{Sp})^\mathbb{Q}_{\mathrm{f.t.,nil,bbl}}$.
Then $A\otimes V^{=k}$ is a minimal model for the $X$-spectrum $P_{=k}$.
\end{corollary}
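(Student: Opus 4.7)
The strategy is to deduce the corollary directly from the preceding proposition together with the slice pullback--pushout squares \eqref{eqn:SliceSquares}. First I would recall that $P_{=k}$ is by definition the fibre of the truncation map $P_{\leq k}\to P_{\leq (k-1)}$. Applying the preceding proposition at heights $k$ and $k-1$ identifies this fibre sequence in $Ho(\mathrm{Sp}_X)^\mathbb{Q}_{\mathrm{f.t.,nil,bbl}}$ with the image under $\mathbf{R}\mathfrak{P}_A$ of the short exact sequence of minimal $A$-modules
\[
A\otimes V^{\leq (k-1)}\hookrightarrow A\otimes V^{\leq k}\twoheadrightarrow A\otimes V^{=k},
\]
where the quotient is identified as $A\otimes V^{=k}$ using the decomposition $V^{\leq k}=V^{\leq (k-1)}\oplus V^{=k}$ of graded vector spaces.

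Since the derived equivalence of Theorem \ref{thm:RatParamHomThry} is an exact equivalence between $Ho(\mathrm{Sp}_X)^\mathbb{Q}_{\mathrm{f.t.,nil,bbl}}$ and $Ho(A\mathrm{-Mod})^\mathrm{op}_\mathrm{f.h.t.}$, fibre sequences of $X$-spectra correspond to cofibre sequences of $A$-modules; thus $P_{=k}$ is modelled by the cofibre of $A\otimes V^{\leq (k-1)}\to A\otimes V^{\leq k}$, namely $A\otimes V^{=k}$ with differential induced from that of $A\otimes V$.

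The remaining point is to verify that $A\otimes V^{=k}$ is in fact a minimal $A$-module in the sense of Definition \ref{defn:SemifreeAMod}. For this I would note that $V^{=k}$ inherits a well-ordering from $V$, and for any basis element $v_\beta\in V^{=k}$ the original differential $d_Nv_\beta$ lies in $A\otimes V_{<\beta}$; projecting to the quotient $A\otimes V^{=k}$ kills all terms involving generators outside $V^{=k}$, leaving $d v_\beta\in A\otimes V^{=k}_{<\beta}$, so semifreeness is preserved. The monotonicity condition is vacuous since every generator has the same degree $k$.

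The argument is essentially bookkeeping with the previous proposition, so there is no substantial obstacle; the only mild subtlety is checking that the inherited differential on $A\otimes V^{=k}$ is consistent with semifreeness, which is immediate once one observes that terms of $dv_\beta$ in $V^{<k}$ are simply set to zero in the quotient.
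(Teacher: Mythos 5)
Your proof is correct and matches the argument the paper implicitly relies on: the corollary follows from the preceding proposition by applying exactness of the derived equivalence of Theorem \ref{thm:RatParamHomThry} to the cofibre sequence $A\otimes V^{\leq(k-1)}\rightarrowtail A\otimes V^{\leq k}\twoheadrightarrow A\otimes V^{=k}$, which the paper states without detail. Your explicit verification that $A\otimes V^{=k}$ is a minimal $A$-module (using that the monotone well-ordering on $V$ forces $V_{<\beta}\subseteq V^{\leq k}$ for any $v_\beta$ of degree $k$, so the quotient differential still lands in the correct submodule) makes precise the assertion in the sentence immediately preceding the corollary.
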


\begin{remark}
\label{rem:ModularEM}
If the cdga $A$ is simply connected (so that $A^0 =\mathbb{Q}$ and $A^1 = 0$) then any minimal $A$-module of the form $A\otimes V^{=k}$ is free.
If $A$ is not simply connected, such minimal $A$-modules may have non-trivial differentials.
This situation is naturally interpreted as an algebraic version of Propsition \ref{prop:HeartofSpX}.
\end{remark}

\subsection{Monoidal products in rational homotopy theory}
We now explain how the rational homotopy theory equivalence of Theorem \ref{thm:RatParamHomThry} relates fibrewise smash products of parametrised spectra with derived tensor products of modules.
More precisely, for a cdga $A$ satisfying the standard hypotheses with spatial realisation $X=\mathfrak{S}(A)$ we show that the homotopy category $Ho(\mathrm{Sp}_X)^\mathbb{Q}_{\mathrm{f.t.,nil,bbl}}$ is closed under forming fibrewise smash products and moreover that for $P$, $Q$ nilpotent, bounded below $X$-spectra of finite rational type there is a natural isomorphism
\begin{equation}
\label{eqn:MonoidalComp}
\mathbf{L}\mathfrak{M}_A (P) \bigotimes^\mathbf{L}_A \mathbf{L}\mathfrak{M}_A(Q)
\xrightarrow{\;\;\cong\;\;}
\mathbf{L}\mathfrak{M}_A \big(P\wedge_X Q\big)
\end{equation}
in $Ho(A\mathrm{-Mod})$.

Our first aim is to construct the comparison map \eqref{eqn:MonoidalComp}.
In the following, we make use of various properties of the fibrewise smash product in $\mathrm{sSet}_{\dslash X}$ summarised in \cite[Section 1.1]{braunack-mayer_combinatorial_2020}.
In particular, we use that:
\begin{enumerate}[label=(\roman*)]
  \item $X^\ast S^0$ is the unit for $\wedge_X$;
  \item Pullbacks are strongly closed monoidal,  hence we have a projection formula for fibrewise smash products of retractive spaces; and
  \item $\mathrm{sSet}_\ast$-tensors of retractive spaces are expressed   in terms of the fibrewise smash product as $K\owedge_X Z\cong X^\ast K\wedge_X Z$ for $K\in \mathrm{sSet}_\ast$ and $Z\in \mathrm{sSet}_{\dslash X}$.
\end{enumerate}
Now, for retractive spaces $Y, Z$ over $X$ there is a commuting diagram in $\mathrm{sSet}_{\dslash X}$:
\begin{equation}
\label{eqn:ComponRetSp}
\begin{tikzcd}
Y\wedge_X Z
\ar[r, "\delta_{-1}"]
&
Y\wedge_X X^\ast X_! Z
\ar[r, shift left =4.5, "\delta_0"]
\ar[r, leftarrow, "\sigma_0"]
\ar[r, shift left = -4.5, "\delta_1"]
&
Y\wedge_X X^\ast (X_+ \wedge X_! Z)
\ar[r, shift left = 9, "\delta_0"]
\ar[r, leftarrow, shift left = 4.5, "\sigma_0"]
\ar[r, "\delta_1"]
\ar[r, leftarrow, shift left =-4.5, "\sigma_1"]
\ar[r, shift left = -9, "\delta_2"]
&
\dotsb
\end{tikzcd}
\end{equation}
In this diagram the augmentation $\delta_{-1}$ is the fibrewise smash product of $Y$ with the unit morphism $Z\to X^\ast X_! Z = X^\ast (Z/X)$.
The coface maps
\[
\delta_{0}\colon 
Y\wedge_X X^\ast (X_+^{\wedge(n-1)} \wedge Z/X)
\longrightarrow
Y\wedge_X X^\ast (X_+^{\wedge n} \wedge Z/X)
\]
are the results of forming fibrewise smash products of $Y\wedge_X X^\ast (X_+^{\wedge(n-1)} \wedge Z/X)$ with the component $X^\ast S^0 \to X^\ast X^\ast X_! X^\ast S^0 = X^\ast X_+$ of the  $(X_!\dashv X^\ast)$-unit at $X^\ast S^0$.
The remaining cosimplicial structure maps are obtained by applying the functor $Y\wedge_X X^\ast(-)$ to the diagram
\[
\begin{tikzcd}
Z/X
\ar[r, leftarrow, shift left = 1]
\ar[r, shift left = -1]
&
X_+ \wedge Z/X
\ar[r, leftarrow, shift left = 3]
\ar[r, shift left = 1]
\ar[r, leftarrow, shift left = -1]
\ar[r, shift left = -3]
&
X_+ \wedge X_+ \wedge Z/X
\ar[r, leftarrow, shift left = 5]
\ar[r, shift left = 3]
\ar[r, leftarrow, shift left = 1]
\ar[r, shift left = -1]
\ar[r, leftarrow, shift left = -3]
\ar[r, shift left = -5]
&
\dotsb
\end{tikzcd}
\]
constructed using the $X_+$-comodule structure maps of $Z/X$.

Applying $\mathfrak{M}^u_A \colon \mathrm{sSet}_{\dslash X}\to A\mathrm{-Mod}^\mathrm{op}$ to \eqref{eqn:ComponRetSp}, by Corollary \ref{cor:FinSmashModuleCompare} we get a commuting diagram of $A$-modules
\[
\begin{tikzcd}
&
\mathfrak{M}^u_A (Y)\otimes \mathfrak{M}^u_\mathbb{Q} (Z/X)
\ar[d]
\ar[r, leftarrow, shift left = 2]
\ar[r]
\ar[r, leftarrow, shift left = -2]
&
\mathfrak{M}^u_A (Y) \otimes \mathfrak{M}^u_\mathbb{Q} (X_+) \otimes \mathfrak{M}^u_\mathbb{Q} (Z/X)
\ar[d]
\ar[r, leftarrow, shift left = 4]
\ar[r, shift left = 2]
\ar[r, leftarrow]
\ar[r, shift left =-2]
\ar[r, leftarrow, shift left = -4]
&
\dotsb
\\
\mathfrak{M}^u_A (Y\wedge_X Z)
\ar[r, leftarrow]
&
\mathfrak{M}^u_A (Y\owedge_X Z/X)
\ar[r, leftarrow, shift left = 2]
\ar[r]
\ar[r, leftarrow, shift left = -2]
&
\mathfrak{M}^u_A (Y\owedge_X (X_+ \wedge Z/X))
\ar[r, leftarrow, shift left = 4]
\ar[r, shift left = 2]
\ar[r, leftarrow]
\ar[r, shift left =-2]
\ar[r, leftarrow, shift left = -4]
&
\dotsb
\end{tikzcd}
\]
in which the vertical arrows are quasi-isomorphisms.
The cochain complex $\mathfrak{M}^u_\mathbb{Q}(Z/X)$ coincides with the underlying cochain complex of the $A$-module $\mathfrak{M}^u_A(Z)$ and under our assumptions there is a quasi-isomorphism $A \to \mathfrak{A}(X) = \mathfrak{M}^u_\mathbb{Q}(X_+)$.
We thus have a commuting diagram of $A$-modules
\[
\begin{tikzcd}
\mathfrak{M}^u_A (Y)\otimes \mathfrak{M}^u_A(Z)
\ar[d]
\ar[r, leftarrow, shift left = 2]
\ar[r]
\ar[r, leftarrow, shift left = -2]
&
\mathfrak{M}^u_A (Y) \otimes A \otimes \mathfrak{M}^u_A(Z)
\ar[d]
\ar[r, leftarrow, shift left = 4]
\ar[r, shift left = 2]
\ar[r, leftarrow]
\ar[r, shift left =-2]
\ar[r, leftarrow, shift left = -4]
&
\dotsb
\\
\mathfrak{M}^u_A (Y)\otimes \mathfrak{M}^u_\mathbb{Q} (Z/X)
\ar[r, leftarrow, shift left = 2]
\ar[r]
\ar[r, leftarrow, shift left = -2]
&
\mathfrak{M}^u_A (Y) \otimes \mathfrak{M}^u_\mathbb{Q} (X_+) \otimes \mathfrak{M}^u_\mathbb{Q} (Z/X)
\ar[r, leftarrow, shift left = 4]
\ar[r, shift left = 2]
\ar[r, leftarrow]
\ar[r, shift left =-2]
\ar[r, leftarrow, shift left = -4]
&
\dotsb
\end{tikzcd}
\]
in which the vertical arrows are quasi-isomorphisms.
A careful analysis of the construction shows that all simplicial structure maps in the top diagram except for the zeroth face maps arise from the $A$-action on $\mathfrak{M}^u_A(Z)$ and the unit map $\mathbb{Q}\to A$, with the zeroth face maps given by the $A$-module structure on $\mathfrak{M}^u_A(Y)$.
We have thus constructed an augmented simplicial diagram of $A$-modules
\[
B_\bullet(\mathfrak{M}^u_A(Y);A;\mathfrak{M}^u_A(Z))\longrightarrow
\mathfrak{M}^u_A(Y\wedge_X Z)
\]
with $B_\bullet(\mathfrak{M}^u_A(Y);A;\mathfrak{M}^u_A(Z))$ the two-sided simplicial bar construction.
Passing to the homotopy colimit, we have proven the following
\begin{lemma}
\label{lem:UCompSmash}
Let $A$ be a cofibrant connected cdga of finite homotopical type, with $Y$ and $Z$ retractive spaces over $X =\mathfrak{S}(A)$.
In the homotopy category of $A$-modules, there is a morphism
\[
\mathbf{L}\mathfrak{M}^u_A (Y)\bigotimes^\mathbf{L}_A \mathbf{L}\mathfrak{M}^u_A (Z)
\longrightarrow
\mathbf{L}\mathfrak{M}^u_A (Y\wedge_X Z)
\]
that is natural in both arguments.
\end{lemma}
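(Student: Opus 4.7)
The plan is to convert the explicit construction that precedes the lemma into a genuine map in the derived category of $A$-modules. Concretely, the diagram \eqref{eqn:ComponRetSp} exhibits $Y\wedge_X Z$ as the augmentation of a cosimplicial object in $\mathrm{sSet}_{\dslash X}$, namely $[n]\mapsto Y\wedge_X X^\ast\big(X_+^{\wedge n}\wedge Z/X\big)$, whose structure maps are built from the $X_+$-comodule structure on $Z/X$ and the unit and counit of the $(X_!\dashv X^\ast)$-adjunction. Applying the contravariant functor $\mathfrak{M}^u_A$ and invoking Corollary~\ref{cor:FinSmashModuleCompare} turns this into an augmented simplicial object of $A$-modules which, after identifying $\mathfrak{M}^u_\mathbb{Q}(X_+)\simeq \mathfrak{A}(X)$ with $A$ via the Sullivan--de Rham counit (a quasi-isomorphism by our standing hypotheses on $A$), yields precisely the two-sided bar construction $B_\bullet(\mathfrak{M}^u_A(Y);A;\mathfrak{M}^u_A(Z))$ augmented over $\mathfrak{M}^u_A(Y\wedge_X Z)$.

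First I would check that the simplicial identities really match: the face map $d_0$ should use the $A$-action on $\mathfrak{M}^u_A(Y)$, the top face $d_n$ should use the $A$-action on $\mathfrak{M}^u_A(Z)$, inner faces should use the multiplication $A\otimes A\to A$, and degeneracies should use the unit $\mathbb{Q}\to A$. Each of these reduces to a bookkeeping exercise: one traces the cosimplicial structure of \eqref{eqn:ComponRetSp} through Corollary~\ref{cor:FinSmashModuleCompare} and then through the chain of quasi-isomorphisms of $A_\infty$-$\mathfrak{A}(X)$-modules $A\to \mathfrak{A}(X)$. Because all the comparison morphisms in Corollary~\ref{cor:FinSmashModuleCompare} are natural, this assembles into a coherent zig-zag of $A$-module simplicial objects in which each intermediate arrow is a levelwise quasi-isomorphism.

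Next I would take homotopy colimits. Replacing $\mathfrak{M}^u_A(Y)$ and $\mathfrak{M}^u_A(Z)$ by cofibrant models (e.g.\ by choosing cofibrant retractive spaces $Y$ and $Z$, which ensures that $\mathfrak{M}^u_A(Y)$ and $\mathfrak{M}^u_A(Z)$ are cofibrant $A$-modules since $\mathfrak{M}^u_A$ is left Quillen), the standard identification
\[
\mathrm{hocolim}_{\Delta^\mathrm{op}} B_\bullet(M;A;N)\;\simeq\; M\bigotimes^{\mathbf{L}}_A N
\]
furnishes a natural map $\mathbf{L}\mathfrak{M}^u_A(Y)\bigotimes^{\mathbf{L}}_A\mathbf{L}\mathfrak{M}^u_A(Z)\to \mathbf{L}\mathfrak{M}^u_A(Y\wedge_X Z)$. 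Naturality in $Y$ and $Z$ is automatic: every ingredient (the projection formula, the comparison $\mathfrak{A}(K)\otimes \mathfrak{M}^u_A(-)\to \mathfrak{M}^u_A(K\otimes_X -)$, the unit/counit of the base-change adjunctions, and the bar construction) is natural in both slots.

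The main obstacle will be verifying that the various comparison quasi-isomorphisms really are compatible with the simplicial structure rather than just with individual face maps. A brute-force check is unwieldy, so I would instead phrase the argument via the universal property: the cosimplicial diagram in \eqref{eqn:ComponRetSp} is a canonical resolution coming from the comonad $X^\ast X_!$ on $\mathrm{sSet}_{\dslash X}$, and Corollary~\ref{cor:FinSmashModuleCompare} together with $A\xrightarrow{\sim}\mathfrak{A}(X)$ exhibits $\mathfrak{M}^u_A$ as intertwining this comonad with the monad $A\otimes(-)$ on $A\mathrm{-Mod}$ up to natural quasi-isomorphism. Once this monad/comonad compatibility is recorded, identifying the induced simplicial object with the two-sided bar construction becomes formal, and the lemma follows from the general fact that derived functors of monoidal constructions are computed by bar resolutions.
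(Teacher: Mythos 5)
Your construction mirrors the paper's proof very closely: both arguments apply $\mathfrak{M}^u_A$ to the cosimplicial cobar resolution \eqref{eqn:ComponRetSp} built from the $X_+$-comodule structure on $Z/X$, invoke Corollary~\ref{cor:FinSmashModuleCompare} and the quasi-isomorphism $A\to\mathfrak{A}(X)$ to identify the resulting simplicial object with the two-sided bar construction $B_\bullet(\mathfrak{M}^u_A(Y);A;\mathfrak{M}^u_A(Z))$ augmented over $\mathfrak{M}^u_A(Y\wedge_X Z)$, and then pass to homotopy colimits. Your closing suggestion to organise the verification of simplicial identities via the comonad $X^\ast X_!$ and monad $A\otimes(-)$ is a sensible conceptual reformulation that the paper leaves implicit; it buys cleanliness in the bookkeeping at no real cost.

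One point in your write-up is incorrect, though not fatal. You claim that choosing cofibrant retractive spaces $Y$, $Z$ ensures $\mathfrak{M}^u_A(Y)$, $\mathfrak{M}^u_A(Z)$ are cofibrant $A$-modules \lq\lq since $\mathfrak{M}^u_A$ is left Quillen.\rq\rq\ But $\mathfrak{M}^u_A\colon \mathrm{sSet}_{\dslash X}\to A\mathrm{-Mod}^\mathrm{op}$ being left Quillen sends cofibrations to cofibrations in $A\mathrm{-Mod}^\mathrm{op}$, that is, to \emph{fibrations} in $A\mathrm{-Mod}$; cofibrant retractive spaces therefore land in fibrant (in fact all) $A$-modules, not cofibrant ones. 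The step you were trying to justify does not actually require cofibrancy: since we work over the field $\mathbb{Q}$, every $A$-module is flat and the simplicial bar construction $B_\bullet(M;A;N)$ computes $M\bigotimes^{\mathbf{L}}_A N$ for arbitrary $A$-modules $M$, $N$. That is why the paper's proof does not mention cofibrant replacement at this stage.
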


Having dispensed with the unstable setting, we now construct the comparison map for fibrewise smash products of $X$-spectra.
Fixing $X$-spectra $P$ and $Q$, by \cite[Lemma 2.19]{braunack-mayer_combinatorial_2020} there are sequences $\{Y_i\}$ and $\{Z_i\}$ of retractive spaces over $X$ such that
$
P\cong \mathrm{hocolim}_i \Sigma^{\infty-i}_X Y_i
$
and 
$Q\cong \mathrm{hocolim}_i \Sigma^{\infty-i}_X Z_i
$.
Examining the argument of \emph{loc.~cit.}~shows that this colimit description can be made sufficiently natural in $P$ and $Q$.
The functor $\mathfrak{M}_A$ turns homotopy colimits of $X$-spectra into homotopy limits of $A$-modules so by Lemmas \ref{lem:UCompSmash} and \ref{lem:SuspendInMod} we get a morphism
\[
\mathbf{L}\mathfrak{M}_A (P)
\bigotimes^\mathbf{L}_A
\mathbf{L}\mathfrak{M}_A (Q)
\longrightarrow
\underset{i,j}{\mathrm{holim}}\,\bigg(
\mathbf{L}\mathfrak{M}^u_A (Y_i)[i]
\bigotimes^\mathbf{L}_A
\mathbf{L}\mathfrak{M}^u_A (Z_j)[j]
\bigg)
\longrightarrow
\underset{i,j}{\mathrm{holim}}\,
\mathbf{L}\mathfrak{M}^u_A (Y_i\wedge_X Z_j)[i+j]\,.
\]
But $P\wedge_X Q \cong \mathrm{hocolim}_{i,j} \Sigma^{\infty-(i+j)}_X (Y_i\wedge_X Z_j)$ so that the codomain of the above morphism may be identified with $\mathbf{L}\mathfrak{M}_A(P\wedge_X Q)$.
This completes the proof of the following
\begin{lemma}
\label{lem:CompSmash}
Let $A$ be a cofibrant connected cdga of finite homotopical type, with $P$ and $Q$ parametrised spectra over $X =\mathfrak{S}(A)$.
In the homotopy category of $A$-modules, there is a morphism
\[
\mathbf{L}\mathfrak{M}_A (P)\bigotimes^\mathbf{L}_A \mathbf{L}\mathfrak{M}_A (Q)
\longrightarrow
\mathbf{L}\mathfrak{M}_A (P\wedge_X Q)
\]
that is natural in both arguments.
\end{lemma}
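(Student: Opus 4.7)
My plan is essentially to promote the unstable comparison map of Lemma \ref{lem:UCompSmash} to the stable setting by writing every $X$-spectrum as a homotopy colimit of shifted fibrewise suspension spectra, applying $\mathbf{L}\mathfrak{M}_A$ levelwise, and using that $\mathbf{L}\mathfrak{M}_A$ is a left adjoint to an opposite model category, so it converts homotopy colimits of $X$-spectra into homotopy limits of $A$-modules.

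Concretely, I would first invoke \cite[Lemma 2.19]{braunack-mayer_combinatorial_2020} to present any sequential $X$-spectrum as $P \cong \mathrm{hocolim}_i \Sigma^{\infty-i}_X Y_i$ for a suitable sequence of retractive spaces $Y_i$, functorially enough in $P$ to handle the double-indexed situation for $P$ and $Q$ simultaneously. The key compatibility is the natural isomorphism of fibrewise smash products
\[
P \wedge_X Q \cong \underset{i,j}{\mathrm{hocolim}}\, \Sigma^{\infty-(i+j)}_X(Y_i \wedge_X Z_j),
\]
which follows from the fact that $\Sigma_X$ and $\wedge_X$ are compatible (both preserve colimits and the fibrewise suspension is the fibrewise smash with $X^\ast S^1$).

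Next, I would use Lemma \ref{lem:SuspendInMod} to identify $\mathbf{L}\mathfrak{M}_A(\Sigma^{\infty-i}_X Y_i) \simeq \mathbf{L}\mathfrak{M}^u_A(Y_i)[i]$, and the fact that shifts commute with tensor products in an obvious way: $M[i] \bigotimes^\mathbf{L}_A N[j] \cong (M \bigotimes^\mathbf{L}_A N)[i+j]$. Combining this with the unstable comparison map of Lemma \ref{lem:UCompSmash} at each pair $(i,j)$, and taking the resulting homotopy limit, I obtain a map
\[
\mathbf{L}\mathfrak{M}_A(P) \bigotimes^\mathbf{L}_A \mathbf{L}\mathfrak{M}_A(Q) \longrightarrow \underset{i,j}{\mathrm{holim}}\, \mathbf{L}\mathfrak{M}^u_A(Y_i \wedge_X Z_j)[i+j] \simeq \mathbf{L}\mathfrak{M}_A(P \wedge_X Q),
\]
which is natural in both arguments by the naturality assertions in Lemmas \ref{lem:UCompSmash} and \ref{lem:SuspendInMod} together with the functoriality of the colimit presentation.

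The main obstacle I anticipate is the interchange of the derived tensor product over $A$ with the double homotopy limit required to move $\bigotimes^\mathbf{L}_A$ inside $\mathrm{holim}_{i,j}$. Since $-\bigotimes^\mathbf{L}_A -$ does not generally commute with homotopy limits, one has to be careful: what saves the argument is that the target of the comparison map is only required to be natural, not an equivalence, so it suffices to assemble the unstable maps coherently into a map out of the left-hand side. I would resolve this by first producing, for each finite stage of the filtration, a natural map using the unstable Lemma and exactness of $\mathbf{L}\mathfrak{M}_A$, and then passing to the homotopy limit to realise the target as $\mathbf{L}\mathfrak{M}_A(P \wedge_X Q)$. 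The naturality and coherence needed is routine but tedious; the essential content is already contained in the preceding Lemma \ref{lem:UCompSmash}.
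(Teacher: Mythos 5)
Your proposal follows essentially the same route as the paper: present $P$ and $Q$ as homotopy colimits of shifted fibrewise suspension spectra via \cite[Lemma 2.19]{braunack-mayer_combinatorial_2020}, apply Lemma \ref{lem:SuspendInMod} to identify $\mathbf{L}\mathfrak{M}_A$ on each stage, use Lemma \ref{lem:UCompSmash} termwise, and pass to homotopy limits, using that the canonical map from a tensor product of homotopy limits to the homotopy limit of tensor products always exists even though it need not be an equivalence. The concern you flag about interchanging $\bigotimes^\mathbf{L}_A$ with $\mathrm{holim}$ is exactly the point the paper glosses over, and your resolution (only a map is needed, not an equivalence) is the correct one.
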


Next, we show that the homotopy category $Ho(\mathrm{Sp}_X)^\mathbb{Q}_{\mathrm{f.t.,nil,bbl}}$ is closed under forming fibrewise smash products.
Let $P$ be a nilpotent, bounded below $X$-spectrum of finite rational type and let 
\[
\mathcal{C}_P :=
\big\{J\in Ho(\mathrm{Sp}_X)^\mathbb{Q}_{\mathrm{f.t.,nil,bbl}} \mid P\wedge_X J \in Ho(\mathrm{Sp}_X)^\mathbb{Q}_{\mathrm{f.t.,nil,bbl}}\big\}\,.
\]
The goal is to show $\mathcal{C}_P$ contains all objects of $Ho(\mathrm{Sp}_X)^\mathbb{Q}_{\mathrm{f.t.,nil,bbl}}$.
Observe that:
\begin{itemize}
  \item $\mathcal{C}_P$ contains $X^\ast H\mathbb{Q}$, as there is a rational equivalence $P\wedge_X X^\ast H\mathbb{Q}\sim_\mathrm{rat} P$.
  
  \item $\mathcal{C}_P$ is closed under forming finite sums, for if $J_1, \dotsc, J_n\in \mathcal{C}_P$ then
  \[
  P\wedge_X\bigg(\bigoplus_{i=1}^n J_i\bigg) \cong \bigoplus_{i=1}^j P\wedge_X J_i
  \]
  and $Ho(\mathrm{Sp}_X)^\mathbb{Q}_{\mathrm{f.t.,nil,bbl}}$ is closed under finite sums (cf.~Corollary \ref{cor:NilSum}).
  
  \item $\mathcal{C}_P$ is closed under shifts; for any integer $k$ and $J\in\mathcal{C}_P$ we have $P\wedge_X \Sigma^k_X J\cong \Sigma^k_X(P\wedge_X J)$, which is in $Ho(\mathrm{Sp}_X)^\mathbb{Q}_{\mathrm{f.t.,nil,bbl}}$ since this category is closed under shifts.
  
  \item $\mathcal{C}_P$ is closed under forming fibre sequences. Indeed, suppose that $F\to J_1, \to J_2$ is a fibre(-cofibre) sequence of $X$-spectra with $J_1, J_2\in \mathcal{C}_P$.
  The fibrewise smash product is exact in each variable, so the we have a fibre(-cofibre) sequence $P\wedge_X F\to P\wedge_X J_1\to P\wedge_X J_2$.
  The $X$-spectrum $P\wedge_X F$ is bounded below and of finite rational type, and is moreover nilpotent by Proposition \ref{prop:NilXSpecSeq}.
  Hence $F\in \mathcal{C}_P$.
\end{itemize}
By Corollary \ref{cor:NilBblFinRatType}, these properties imply that $\mathcal{C}_P$ contains all \emph{bounded} objects of $Ho(\mathrm{Sp}_X)^\mathbb{Q}_{\mathrm{f.t.,nil,bbl}}$.
To remove the boundedness condition, fix some integer $l$ and suppose without loss of generality that $P$ is $m$-connective.
Connectivity is additive under forming smash products\footnote{The smash product of an $m$-connective spectrum $A$ with an $n$-connective spectrum $B$ is $(m+n)$-connective.}, hence taking fibres at $x\in X$ we have that
\[
x^\ast (P\wedge_X Q_{\geq (l+1)})
\cong 
x^\ast P
\wedge
(x^\ast Q)_{\geq (l+1)}
\]
is $(m+l+1)$-connective and hence $P\wedge_X Q_{\geq (l+1)}$ is an $(m+l+1)$-connective $X$-spectrum.
Forming the fibrewise smash product of $P$ with the fibre-cofibre sequence $Q_{\geq (l+1)}\to Q\to Q_{\leq l}$ yields the fibre-cofibre sequence
\[
P\wedge_X Q_{\geq (l+1)}
\longrightarrow
P\wedge_X Q
\longrightarrow
P\wedge_X Q_{\leq l}\,.
\]
The map $P\wedge_X Q\to P\wedge_X Q_{\leq l}$ induces fibrewise stable homotopy groups in dimensions $\leq (m+l)$ and hence $(P\wedge_X Q)_{\leq (m+l)}\cong (P\wedge_X Q_{\leq l})_{\leq (m+l)}$.
As $Q_{\leq l}$ is bounded we have $Q_{\leq l}\in \mathcal{C}_P$ and so $(P\wedge_X Q)_{\leq (m+l)}\cong (P\wedge_X Q_{\leq l})_{\leq (m+l)}$ is a nilpotent, bounded below $X$-spectrum of finite rational type.
Taking $l$ arbitarily large, Corollary \ref{cor:NilTrunc} implies that $P\wedge_X Q$ is nilpotent.
Since this $X$-spectrum is clearly bounded below and of finite rational type, we have proven the following
\begin{lemma}
\label{lem:NilSmashClosure}
$Ho(\mathrm{Sp}_X)^\mathbb{Q}_{\mathrm{f.t.,nil,bbl}}$ is closed under forming fibrewise smash products of $X$-spectra.
\end{lemma}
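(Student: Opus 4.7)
The plan is to fix a nilpotent, bounded below $X$-spectrum $P$ of finite rational type and define the full subcategory
\[
\mathcal{C}_P := \big\{J \in Ho(\mathrm{Sp}_X)^\mathbb{Q}_{\mathrm{f.t.,nil,bbl}} \mid P\wedge_X J \in Ho(\mathrm{Sp}_X)^\mathbb{Q}_{\mathrm{f.t.,nil,bbl}}\big\}.
\]
My goal will be to show $\mathcal{C}_P$ exhausts the whole category. The strategy is first to establish closure of $\mathcal{C}_P$ under enough operations to pin down all \emph{bounded} members via Corollary \ref{cor:NilBblFinRatType}, and then to remove the boundedness-above assumption using a Postnikov/connectivity argument.

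For the first step, I would record that $X^\ast H\mathbb{Q}\in \mathcal{C}_P$ since $P\wedge_X X^\ast H\mathbb{Q}\sim_{\mathrm{rat}} P$; that $\mathcal{C}_P$ is closed under finite sums (using Corollary \ref{cor:NilSum} combined with distributivity of $\wedge_X$ over direct sums); closed under shifts (since $\wedge_X$ commutes with $\Sigma_X^k$ and the target category is stable); and closed under fibre sequences (since $\wedge_X$ is exact in each variable and Proposition \ref{prop:NilXSpecSeq} preserves nilpotence in extensions, while bounded below and finite rational type are visibly inherited from a fibre sequence). Together with Corollary \ref{cor:NilBblFinRatType}, which expresses any object as a finite iterated extension built out of the generating $X^\ast(\Sigma^k\bigoplus H\mathbb{Q})$'s, this lets me conclude that every \emph{bounded} rationalisation lies in $\mathcal{C}_P$.

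To remove the remaining boundedness, I would exploit the fact that smashing with an $m$-connective spectrum shifts connectivity fibrewise by $m$. Assuming without loss of generality that $P$ is $m$-connective, for any $Q\in Ho(\mathrm{Sp}_X)^\mathbb{Q}_{\mathrm{f.t.,nil,bbl}}$ and integer $l$, the fibre-cofibre sequence $Q_{\geq (l+1)}\to Q \to Q_{\leq l}$ smashes with $P$ to a fibre-cofibre sequence whose fibre $P\wedge_X Q_{\geq (l+1)}$ is $(m+l+1)$-connective. Hence $(P\wedge_X Q)_{\leq (m+l)} \cong (P\wedge_X Q_{\leq l})_{\leq (m+l)}$, and the right-hand side is nilpotent, bounded below and of finite rational type by the bounded case already established, combined with the fact that Postnikov sections preserve these properties. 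Letting $l$ grow without bound and invoking Corollary \ref{cor:NilTrunc}, I conclude that $P\wedge_X Q$ is itself nilpotent; since finite rational type and boundedness below for $P\wedge_X Q$ are immediate from the analogous properties for $P$ and $Q$ (via the K\"unneth theorem on fibres), the lemma follows.

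The main obstacle is the second step: the class $Ho(\mathrm{Sp}_X)^\mathbb{Q}_{\mathrm{f.t.,nil,bbl}}$ is \emph{not} closed under sequential homotopy colimits (which would ruin finite type) and includes unbounded-above spectra, so one cannot directly extend the bounded case by a cofinal colimit argument. The Postnikov-tower truncation trick, anchored in Corollary \ref{cor:NilTrunc}, is what bypasses this: each individual Postnikov stage is bounded and thus handled by the first step, while the connectivity shift from $P$ ensures the stages assemble correctly.
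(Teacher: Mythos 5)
Your proposal is correct and follows essentially the same route as the paper: the same auxiliary class $\mathcal{C}_P$, the same four closure properties combined with Corollary \ref{cor:NilBblFinRatType} to handle bounded objects, and the same connectivity/Postnikov truncation argument using Corollary \ref{cor:NilTrunc} to remove the upper bound. The paper proves exactly what you outline.
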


Similarly, in the algebraic setting we show that $A$-modules of finite homotopical type are closed under derived tensor products:
\begin{lemma}
\label{lem:finAmodTens}
The derived tensor product of $A$-modules restricts to a symmetric monoidal structure on $Ho(A\mathrm{-Mod})_{\mathrm{f.h.t.}}$.
\end{lemma}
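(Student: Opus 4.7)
The plan is to verify two things: that $A$ itself lies in $Ho(A\mathrm{-Mod})_{\mathrm{f.h.t.}}$, and that this subcategory is closed under the derived tensor product $\otimes^{\mathbf{L}}_A$. The first is immediate, since $A$ is its own minimal model with underlying graded vector space $\mathbb{Q}$ concentrated in degree zero; granted closure under $\otimes^{\mathbf{L}}_A$, the full subcategory inherits the symmetric monoidal structure on $Ho(A\mathrm{-Mod})$ automatically, as the structural isomorphisms live in any full subcategory containing their endpoints.

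To prove closure, I would fix $M, N \in Ho(A\mathrm{-Mod})_{\mathrm{f.h.t.}}$ with minimal models $A \otimes V \xrightarrow{\sim} M$ and $A \otimes W \xrightarrow{\sim} N$ where $V, W$ are bounded below and degreewise finite dimensional. Semifree $A$-modules are cofibrant (Remark \ref{rem:SemiFreeChar}), so the derived tensor product is computed strictly:
\[
M \bigotimes^{\mathbf{L}}_A N \simeq (A \otimes V) \otimes_A (A \otimes W) \cong A \otimes V \otimes W\,.
\]
The graded vector space $V \otimes W$ is bounded below (by the sum of the lower bounds of $V$ and $W$) and degreewise finite dimensional (as a convolution of two bounded below, degreewise finite dimensional graded vector spaces). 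In particular $A \otimes V \otimes W$ is a bounded below $A$-module, so by Lemma \ref{lem:MinimalModuleExist} it admits a minimal model $A \otimes U \xrightarrow{\sim} A \otimes V \otimes W$.

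The remaining task, which I expect to be the main obstacle, is to show that $U$ is degreewise finite dimensional. For this I would apply the left Quillen extension-of-scalars functor $a_! = \mathbb{Q} \otimes_A -$ along the canonical augmentation $a \colon A \to \mathbb{Q}$ provided by the connectedness of $A$. Both $A \otimes U$ and $A \otimes V \otimes W$ are cofibrant, so $a_!$ sends the connecting quasi-isomorphism to a quasi-isomorphism of cochain complexes. By Remark \ref{rem:BCMinimal}, $a_!(A \otimes U) = U$ with the trivial differential. The crux is to verify that $a_!(A \otimes V \otimes W) = V \otimes W$ likewise has the trivial differential: unwinding the differential on $(A \otimes V) \otimes_A (A \otimes W)$, the component on generators has the form $d(v \otimes w) = (d_{A \otimes V} v) \otimes w + (-1)^{|v|} v \otimes (d_{A \otimes W} w)$, and minimality of the two factors forces $d_{A \otimes V} v \in \bar{A} \otimes V$ and $d_{A \otimes W} w \in \bar{A} \otimes W$, both annihilated by the augmentation $a$. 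The induced quasi-isomorphism $U \xrightarrow{\sim} V \otimes W$ of cochain complexes with trivial differentials is therefore an isomorphism of graded vector spaces, so $U$ is degreewise finite dimensional, completing the verification that $M \bigotimes^{\mathbf{L}}_A N \in Ho(A\mathrm{-Mod})_{\mathrm{f.h.t.}}$.
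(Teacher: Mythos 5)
Your proof is correct and follows essentially the same route as the paper's: tensor the minimal models, observe $(A\otimes V)\otimes_A(A\otimes W)\cong A\otimes (V\otimes W)$ computes the derived tensor product, take a minimal model $A\otimes U$ of the result, and apply $a_!$ along the augmentation to get a quasi-isomorphism $U\to V\otimes W$ with $U$ having trivial differential. The one place you do more work than necessary is in arguing that $V\otimes W$ \emph{also} has trivial differential (via the observation that minimality forces $d v \in \overline{A}\otimes V$); the paper sidesteps this by noting only that $V\otimes W$, being a bounded below degreewise-finite complex, has degreewise-finite cohomology, and then $U\cong H^\bullet(U)\cong H^\bullet(V\otimes W)$ suffices. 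Your extra argument is valid and slightly sharper, but the cohomological shortcut is a little cleaner since it avoids unwinding the Leibniz rule on the tensor product; your opening check that $A$ itself lies in the subcategory (so the monoidal unit survives the restriction) is a small completeness point the paper leaves implicit.
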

\begin{proof}
$A\mathrm{-Mod}$ is a symmetric monoidal model category with respect to the Quillen bifunctor $(M, N)\mapsto M\bigotimes_A N$.
If $M$ and $N$ are cofibrant $A$-modules, it follows that the derived tensor product $M\bigotimes^\mathbf{L}_A N$ is (the homotopy type of) the ordinary tensor product $M\bigotimes_A N$.

Suppose that $M$ and $N$ are of finite homotopical type with minimal models $A\otimes V\to M$ and $A\otimes W\to N$.
The graded rational vector spaces $V, W$ are bounded below and finite dimensional in each degree and we have
\[
(A\otimes V)\bigotimes_A (A\otimes W)\cong A\otimes (V\otimes W)\,,
\]
with differential on the right-hand side induced by the differentials on $A\otimes V$ and $A\otimes W$ via the Leibniz rule.
Minimal models are cofibrant so that $A\otimes (V\otimes W)$ models the derived tensor product $M\bigotimes_A^\mathbf{L} N$.

%To complete the proof, we must show that $A\otimes(V\otimes W)$ is of finite homotopical type.
%This follows at once from the fact that $A\otimes(V\otimes W)$ is a minimal $A$-module, however to avoid getting bogged down in the combinatorics of proving minimality we follow a less direct path.
Fixing a minimal model $A\otimes U\to A\otimes (V\otimes W)$, taking extensions of scalars along the augmentation $a\colon A\to \mathbb{Q}$ gives a map of cochain complexes $U\to V\otimes W$.
As $A\otimes U$ and $A\otimes (V\otimes W)$ are cofibrant $A$-modules, $U\to V\otimes W$ is a quasi-isomorphism.
The differential on $U$ is trivial by minimality and hence, since $V\otimes W$ certainly has finite dimensional cohomology in each degree, $U$ is finite dimensional.
This shows that homotopy category of $A$-modules of finite homotopical type is closed under forming derived tensor products.
\end{proof}

We now turn to the main result of this section.
\begin{theorem}
\label{thm:SmashComp}
Let $A$ be a cofibrant connected cdga of finite homotopical type with $X= \mathfrak{S}(A)$.
The equivalence of rational homotopy theories 
\[
\begin{tikzcd}
Ho(\mathrm{Sp}_X)^\mathbb{Q}_{\mathrm{f.t.,nil,bbl}}
\ar[rr, shift left =2, "\mathbf{L}\mathfrak{M}_A"]
\ar[rr, shift left =-2, leftarrow, "\simeq", "\mathbf{R}\mathfrak{P}_A"']
&& 
Ho(A\mathrm{-Mod})^\mathrm{op}_\mathrm{f.h.t.}
\end{tikzcd}
\]
identifies the fibrewise smash product with the derived tensor product of $A$-modules.
That is, for any nilpotent, bounded below $X$-spectra $P, Q$ of finite rational type there is an equivalence of $A$-modules
\[
\mathbf{L}\mathfrak{M}_A (P)\bigotimes^\mathbf{L}_A \mathbf{L}\mathfrak{M}_A (Q)
\longrightarrow
\mathbf{L}\mathfrak{M}_A (P\wedge_X Q)
\]
which is natural in both arguments.
\end{theorem}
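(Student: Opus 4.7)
My plan is to verify that the natural comparison map $\Phi_{P,Q}$ constructed in Lemma \ref{lem:CompSmash} is a quasi-isomorphism for all $P, Q$ in $Ho(\mathrm{Sp}_X)^\mathbb{Q}_{\mathrm{f.t.,nil,bbl}}$, via an inductive argument modelled on the proof of Theorem \ref{thm:RatParamHomThry}. Fix $P$ and let $\mathcal{E}_P\subseteq Ho(\mathrm{Sp}_X)^\mathbb{Q}_{\mathrm{f.t.,nil,bbl}}$ be the full subcategory of $Q$'s for which $\Phi_{P,Q}$ is a quasi-isomorphism. Both the domain $\mathbf{L}\mathfrak{M}_A(P)\bigotimes^\mathbf{L}_A \mathbf{L}\mathfrak{M}_A(-)$ and codomain $\mathbf{L}\mathfrak{M}_A(P\wedge_X -)$ of $\Phi_{P,Q}$ are exact functors in $Q$: the derived tensor product is exact, $\mathbf{L}\mathfrak{M}_A$ converts fibre-cofibre sequences to fibre-cofibre sequences of $A$-modules, and $P\wedge_X -$ is exact. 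Combined with naturality of $\Phi_{P,Q}$ in $Q$, a standard five-lemma argument shows that $\mathcal{E}_P$ is closed under shifts, finite direct sums, and fibre-cofibre sequences.

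Next I would establish the base case $X^\ast H\mathbb{Q}\in\mathcal{E}_P$. From the proof of Theorem \ref{thm:RatParamHomThry}, the derived unit at $\Sigma^\infty_{X+}X\cong X^\ast S$ is a rational equivalence $X^\ast S\to X^\ast H\mathbb{Q}\cong \mathbf{R}\mathfrak{P}_A(A)$, so $\mathbf{L}\mathfrak{M}_A(X^\ast H\mathbb{Q})\cong A$ in $Ho(A\mathrm{-Mod})$. Moreover, $P\wedge_X X^\ast H\mathbb{Q}$ is the fibrewise rationalisation of $P$ and is therefore rationally equivalent to $P$ itself, which yields $\mathbf{L}\mathfrak{M}_A(P\wedge_X X^\ast H\mathbb{Q})\cong \mathbf{L}\mathfrak{M}_A(P)$. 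Unpacking the bar construction used to define $\Phi_{P,Q}$ in Lemma \ref{lem:CompSmash}, one identifies the comparison map at $Q=X^\ast H\mathbb{Q}$ with the unit isomorphism $\mathbf{L}\mathfrak{M}_A(P)\bigotimes^\mathbf{L}_A A\xrightarrow{\;\cong\;} \mathbf{L}\mathfrak{M}_A(P)$. By closure under shifts and finite direct sums, each $X^\ast(\Sigma^l(\bigoplus_{i=1}^n H\mathbb{Q}))$ lies in $\mathcal{E}_P$; by Corollary \ref{cor:NilBblFinRatType} and closure under fibre-cofibre sequences, $\mathcal{E}_P$ therefore contains every bounded $X$-spectrum in $Ho(\mathrm{Sp}_X)^\mathbb{Q}_{\mathrm{f.t.,nil,bbl}}$.

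To extend to general bounded below $Q$, I would run a Postnikov tower argument with connectivity estimates. Without loss of generality $P$ is $m$-connective, and for each integer $l$ consider the fibre-cofibre sequence $Q_{\geq(l+1)}\to Q\to Q_{\leq l}$. Fibrewise smashing with $P$ yields a fibre-cofibre sequence in which $P\wedge_X Q_{\geq(l+1)}$ is $(m+l+1)$-connective (since connectivity is additive under smash products on fibre spectra, cf.~Corollary \ref{cor:PfwdConnectivity} and Proposition \ref{prop:FibModEquiv}). On the algebraic side, choosing minimal models and using the description of derived tensor products from the proof of Lemma \ref{lem:finAmodTens}, the $A$-module $\mathbf{L}\mathfrak{M}_A(P)\bigotimes^\mathbf{L}_A \mathbf{L}\mathfrak{M}_A(Q_{\geq(l+1)})$ has cohomology vanishing in degrees $\leq m+l$ by Corollary \ref{cor:ConnectiveUnderM}. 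Comparing the long exact sequences in cohomology associated to both sides of $\Phi_{P,-}$ via the five-lemma, the maps $\Phi_{P,Q}$ and $\Phi_{P,Q_{\leq l}}$ induce the same isomorphism in cohomology in degrees $\leq m+l$. Since $Q_{\leq l}\in \mathcal{E}_P$ by the previous step, and since $l$ is arbitrary, $\Phi_{P,Q}$ is a quasi-isomorphism, completing the proof.

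The main obstacle will be the base case: explicitly identifying $\Phi_{P,X^\ast H\mathbb{Q}}$ with the unit isomorphism requires carefully tracing through the augmented cosimplicial diagram \eqref{eqn:ComponRetSp} and its stabilisation, and checking compatibility with the rationalisation equivalence $X^\ast S\sim_\mathrm{rat} X^\ast H\mathbb{Q}$. A secondary technical point is the connectivity estimate for the derived tensor product of modules of finite homotopical type, which relies essentially on the existence and cohomological-degree bounds of minimal models (Lemma \ref{lem:MinimalModuleExist}) and is not automatic for arbitrary $A$-modules.
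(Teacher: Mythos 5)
Your proposal is correct and follows essentially the same route as the paper's proof: define the subcategory $\mathcal{D}_P$ (your $\mathcal{E}_P$) of spectra for which the comparison map of Lemma~\ref{lem:CompSmash} is an equivalence, verify the base case $X^\ast H\mathbb{Q}$ via the unit isomorphism $M\bigotimes^{\mathbf{L}}_A A\cong M$, close under sums, shifts, and fibre--cofibre sequences to get all bounded objects by Corollary~\ref{cor:NilBblFinRatType}, and then remove boundedness via a Postnikov tower plus connectivity estimate on $P\wedge_X Q_{\geq(l+1)}$. The only quibble is a minor misattribution in your final step --- Corollary~\ref{cor:ConnectiveUnderM} gives the connectivity of the tensor \emph{factors}, and one then needs additivity of connectivity under $\bigotimes^{\mathbf{L}}_A$ (the ``property of the derived tensor product'' the paper invokes) to conclude --- but the substance matches the paper.
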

\begin{proof}
In view of Lemmas \ref{lem:CompSmash}, \ref{lem:NilSmashClosure}, and \ref{lem:finAmodTens}, it suffices to show that for all $P, Q\in Ho(\mathrm{Sp}_X)^\mathbb{Q}_{\mathrm{f.t.,nil,bbl}}$ the comparison map
\[
\kappa_{P,Q}\colon
\mathbf{L}\mathfrak{M}_A (P)\bigotimes^\mathbf{L}_A \mathbf{L}\mathfrak{M}_A (Q)
\longrightarrow
\mathbf{L}\mathfrak{M}_A (P\wedge_X Q)
\]
is an equivalence.
To this end we define
\[
\mathcal{D}_P :=
\left\{J\in Ho(\mathrm{Sp}_X)^\mathbb{Q}_{\mathrm{f.t.,nil,bbl}} 
\;\big|\; 
\kappa_{P,J}\text{ is an equivalence}\right\}
\,.
\]
We aim to show that $\mathcal{D}_P$ contains all nilpotent, bounded below $X$-spectra of finite rational type.
Writing $M := \mathbf{L}\mathfrak{M}_A(P)$ for brevity, observe that:
\begin{itemize}
  \item $\mathcal{D}_P$ contains $X^\ast H\mathbb{Q}$; in this case  $\kappa_{P,X^\ast H\mathbb{Q}}$ is equivalent the evident map
  $
  M\bigotimes^\mathbf{L}_A A
  \to
  M
  $
  which is an equivalence.
  
  \item $\mathcal{D}_P$ is closed under forming finite sums; if $J_1,\dotsc, J_n\in \mathcal{D}_P$ then
  \begin{align*}
  \mathbf{L}\mathfrak{M}_A\bigg( P\wedge_X \bigoplus_{i=1}^n J_i\bigg)
  &\cong
  \mathbf{L}\mathfrak{M}_A\bigg( \bigoplus_{i=1}^n P\wedge_X  J_i\bigg)
  \\
  &\cong
  \bigoplus_{i=1}^n \mathbf{L}\mathfrak{M}_A\big( P\wedge_X J_i\big)
  \\
  &\!\!\!\!\overset{J_i\in \mathcal{D}_P}{\cong}
  \bigoplus_{i=1}^n \bigg(M\bigotimes^\mathbf{L}_A \mathbf{L}\mathfrak{M}_A(J_i)\bigg)\\
  &\cong
  M\bigotimes^\mathbf{L}_A \bigg(\bigoplus_{i=1}^n \mathbf{L}\mathfrak{M}_A(J_i)\bigg)\,.
  \end{align*}
  
  \item $\mathcal{D}_P$ is closed under shifts; for any integer $k$ and $J\in \mathcal{D}_P$ we have 
  \begin{align*}
  \mathbf{L}\mathfrak{M}_A \big( P\wedge_X \Sigma^k_X J\big)
  &\cong
  \mathbf{L}\mathfrak{M}_A \big(\Sigma^k_X  P\wedge_X J\big)\\
  &\cong
  \mathbf{L}\mathfrak{M}_A \big(P\wedge_X J\big)[-k]\\
  &\!\!\!\!\overset{J\in \mathcal{D}_P}{\cong}
  \bigg(M\bigotimes^\mathbf{L}_A \mathbf{L}\mathfrak{M}_A(J)\bigg)[-k]\\
  &\cong
  M\bigotimes^\mathbf{L}_A \mathbf{L}\mathfrak{M}_A(J)[-k]\,.
  \end{align*}
  
  \item $\mathcal{D}_P$ is closed under forming fibre sequences; suppose that $F\to J_1\to J_2$ is a fibre-cofibre sequence of $X$-spectra with $J_1, J_2\in \mathcal{D}_P$.
  Forming fibrewise smash products with $P$ we get a (fibre)-cofibre sequence of $X$-spectra
  \[
  P\wedge_X  F
  \longrightarrow
  P\wedge_X J_1 
  \longrightarrow 
  P\wedge_X J_2
  \]
  that is sent by $\mathbf{L}\mathfrak{M}_A$ to the fibre(-cofibre) sequence comprising the bottom row of the commuting diagram of $A$-modules
  \[
  \begin{tikzcd}
  M\bigotimes^\mathbf{L}_A \mathbf{L}\mathfrak{M}_A(J_2)
  \ar[r]
  \ar[d, "\sim_\mathrm{qis}", "\kappa_{P,J_2}"']
  &
  M\bigotimes^\mathbf{L}_A \mathbf{L}\mathfrak{M}_A(J_1)
  \ar[r]
  \ar[d, "\sim_\mathrm{qis}", "\kappa_{P,J_1}"']
  &
   M\bigotimes^\mathbf{L}_A \mathbf{L}\mathfrak{M}_A(F)
  \ar[d, "\kappa_{P,K}"]
  \\
  \mathbf{L}\mathfrak{M}_A(P \wedge_X J_2)
  \ar[r]
  &
  \mathbf{L}\mathfrak{M}_A(P \wedge_X J_1)
  \ar[r]
  &
  \mathbf{L}\mathfrak{M}_A(P \wedge_X F)
  \,.
  \end{tikzcd}
  \]
  The left-hand and middle vertical arrows are equivalences in $Ho(A\mathrm{-Mod})$ by assumption on $J_1, J_2$.
  But $\mathbf{L}\mathfrak{M}_A(F)$ is the cofibre of $\mathbf{L}\mathfrak{M}_A(J_2)\to\mathbf{L}\mathfrak{M}_A(J_1)$, so that the top row of the above diagram is a fibre-cofibre sequence of $A$-modules and hence $\kappa_{P,F}$ is an equivalence.
\end{itemize}
By Corollary \ref{cor:NilBblFinRatType}, these properties imply that $\mathcal{D}_P$ contains all bounded objects of $Ho(\mathrm{Sp}_X)^\mathbb{Q}_{\mathrm{f.t.,nil,bbl}}$.

To remove the boundedness condition, we argue along the same lines as Lemma \ref{lem:CompSmash}.
Let $Q$ be a nilpotent, bounded below $X$-spectrum of finite rational type.
We may suppose without loss of generality that $P$ is $m$-connective, then for any integer $l$ the fibrewise smash product $P\wedge_X Q_{\geq (l+1)}$ is $(m+l+1)$-connective.
As $Q_{\leq l}$ is bounded, by Lemma \ref{lem:CompSmash} and the above, the fibre-cofibre sequence $Q_{\geq (l+1)}\to Q \to Q_{\leq l}$ determines a commuting diagram in $Ho(A\mathrm{-Mod})$
\[
  \begin{tikzcd}
  M\bigotimes^\mathbf{L}_A \mathbf{L}\mathfrak{M}_A(Q_{\leq l})
  \ar[r]
  \ar[d, "\sim_\mathrm{qis}", "\kappa_{P,Q_{\leq l}}"']
  &
  M\bigotimes^\mathbf{L}_A \mathbf{L}\mathfrak{M}_A(Q)
  \ar[r]
  \ar[d, "\kappa_{P,Q}"']
  &
   M\bigotimes^\mathbf{L}_A \mathbf{L}\mathfrak{M}_A(Q_{\geq(l+1)})
  \ar[d, "\kappa_{P,Q_{\geq(l+1)}}"]
  \\
  \mathbf{L}\mathfrak{M}_A(P \wedge_X Q_{\leq l})
  \ar[r]
  &
  \mathbf{L}\mathfrak{M}_A(P \wedge_X Q)
  \ar[r]
  &
  \mathbf{L}\mathfrak{M}_A(P \wedge_X Q_{\geq (l+1)})
  \,.
  \end{tikzcd}
\]
in which the left-hand vertical arrow is an equivalence.
The top and bottom horizontal rows are fibre-cofibre sequences, and the $A$-modules $M\bigotimes^\mathbf{L}_A \mathbf{L}\mathfrak{M}_A(Q_{\geq (l+1)})$ and $\mathbf{L}\mathfrak{M}_A(P\wedge_X Q_{\geq (l+1)})$ are both $(m+l+1)$-connective (by the properties of the derived tensor product and Corollary \ref{cor:ConnectiveUnderM}, respectively).
It follows that $\kappa_{P, Q}$ induces an isomorphism in cohomology in degrees $\leq (m+l)$.
Taking $l$ arbitrarily large shows that $\kappa_{P,Q}$ induces an isomorphism in cohomology and is thus an equivalence in $Ho(A\mathrm{-Mod})$.
\end{proof}

\begin{remark}
\label{rem:EMSS}
In the construction of the map \eqref{eqn:MonoidalComp} 
we used the fact that the derived tensor product of $A$-modules $M$ and $N$ is presented by the two sided simplicial bar construction; that is, $M\bigotimes^\mathbf{L}_A N$ is presented by the total complex of the normalised chain complex of $B_k(M;A;N) = M\otimes A^{\otimes k}\otimes N$.
The skeletal filtration of $B_\bullet(M;A;N)$ induces a spectral sequence of Eilenberg--Moore type with $E_2$-term
\[
E_2 = \mathrm{Tor}_{H^\bullet(A)} (H^\bullet(M), H^\bullet(N))\,. 
\]
If $A$ is simply connected ($A^0=\mathbb{Q}$ and $A^1 = 0$) the filtration is bounded and the spectral sequence converges strongly to $\mathrm{Tor}^\bullet_A(M,N)$.
\end{remark}

\begin{example}
Let $Y$ and $X$ be of finite rational type with nilpotent fibrations $Y\to X$ and $Z\to X$ be nilpotent fibrations over $X = \mathfrak{S}(A)$ with.
For $k, l\geq 0$ there is an equivalence of $X$-spectra
\[
\Sigma^{\infty-k}_{X+} Y \wedge_X \Sigma^{\infty-l}_{X+} Z 
\cong
\Sigma^{\infty-(k+l)}_{X+} (Y\times_X Z)\,. 
\]
By Proposition \ref{prop:FibSuspendinMod} and Theorem \ref{thm:SmashComp} the rational homotopy type of this $X$-spectrum is identified with the $A$-module
\[
\mathfrak{A}(Y)[k]\bigotimes^\mathbf{L}_A \mathfrak{A}(Z)[l]\cong
\bigg(\mathfrak{A}(Y)\bigotimes^\mathbf{L}_A \mathfrak{A}(Z)\bigg)[k+l]\,.
\]
In this case the spectral sequence of Remark \ref{rem:EMSS} is
\[
\mathrm{Tor}_{H^\bullet(X)} (H^\bullet(Y)[k], H^\bullet(Z)[l])
\Longrightarrow
H^\bullet(Y\times_X Z)[k+l]\,,
\]
the familiar Eilenberg--Moore spectral sequence, up to a shift.
\end{example}
  
\subsection{Rational homotopy classes of fibrewise stable maps}
\label{ss:RatFibStabMap}
Let $P$ and $Q$ be nilpotent, bounded below $X$-spectra of finite rational type. 
The $\mathbb{Z}$-graded rational vector space of fibrewise stable maps from $P$ to $Q$ is given in degree $k$ by
\begin{align*}
\{P, Q\}_X^k \otimes_\mathbb{Z}\mathbb{Q} 
&\cong
\spi_{-k} \big(X_\ast F_X(P,Q)\big)\otimes_\mathbb{Z}\mathbb{Q} 
\\
&\cong
Ho(\mathrm{Sp}_X)^\mathbb{Q}_{\mathrm{f.t.,nil,bbl}}(\Sigma^{-k}_X P , Q)\,.
\end{align*}
Under the equivalence of rational homotopy theories of Theorem \ref{thm:RatParamHomThry}, $P$ and $Q$ correspond to $A$-modules $M \cong \mathbf{L}\mathfrak{M}_A(P)$ and $N\cong \mathbf{L}\mathfrak{M}_A(Q)$ and we have equivalences
\[
Ho(\mathrm{Sp}_X)^\mathbb{Q}_{\mathrm{f.t.,nil,bbl}}(\Sigma^{-k}_X P , Q)
\cong 
Ho(A\mathrm{-Mod})_{\mathrm{f.h.t.}}\big(N, M[k]\big)
\cong \mathrm{Ext}^k_A (N,M)\,.
\]
Hence we obtain the following
\begin{proposition}
\label{prop:FibStapMapandExt}
Let $A$ be a cofibrant connected cdga of finite homotopical type with spatial realisation $X = \mathfrak{S}(A)$.
For nilpotent, bounded below $X$-spectra $P, Q$ of finite rational type corresponding in rational homotopy theory to the $A$-modules $M$ and $N$, respectively, there is an isomorphism of $\mathbb{Z}$-graded rational vector spaces
\[
\{P,Q\}^\ast_X \otimes_\mathbb{Z}\mathbb{Q}\cong \mathrm{Ext}^\ast_A (N,M)
\]
natural in both arguments.
\end{proposition}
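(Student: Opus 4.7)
The plan is essentially to chain together three identifications that have either been established or are implicit in the earlier results. The essential content of the proposition is already laid out in the paragraph immediately preceding its statement; my task is to justify each step carefully.

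First, I would fix an integer $k$ and express the degree-$k$ fibrewise stable maps as morphisms in the rational homotopy category. Since the class of rational equivalences is generated by smashing with $X^\ast H\mathbb{Q}$ (Remark \ref{rem:RatEquiv}), and since $Ho(\mathrm{Sp}_X)^\mathbb{Q}_{\mathrm{f.t.,nil,bbl}}$ is closed under shifts and under fibrewise smash products with $X^\ast H\mathbb{Q}$ (Lemma \ref{lem:NilSmashClosure}), standard arguments for localisation at an idempotent (rationalisation) functor give
\[
\{P,Q\}^k_X \otimes_\mathbb{Z} \mathbb{Q}
\;\cong\;
\{P, Q \wedge_X X^\ast H\mathbb{Q}\}^k_X
\;\cong\;
Ho\big(\mathrm{Sp}_X\big)^\mathbb{Q}_{\mathrm{f.t.,nil,bbl}}\big(\Sigma^{-k}_X P, Q\big)\,,
\]
using that $Q$ is already rational up to equivalence (since $Q\wedge_X X^\ast H\mathbb{Q}$ is rationally equivalent to $Q$ and computes the same hom-set after localisation).

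Second, I would apply the equivalence of rational homotopy theories of Theorem \ref{thm:RatParamHomThry} to translate this hom-set into the homotopy category of $A$-modules. Under this equivalence $P$ corresponds to $M$ and $Q$ corresponds to $N$. The shift $\Sigma^{-k}_X P$ corresponds to $M[k]$: by the derived version of Lemma \ref{lem:SuspComparison} (via the stabilisation Lemma \ref{lem:SuspendInMod}), applying $\mathbf{L}\mathfrak{M}_A$ to fibrewise suspension translates to the shift $[-1]$ on $A$-modules, and these operations are mutually inverse equivalences on both sides. Because $\mathbf{L}\mathfrak{M}_A$ lands in the opposite category, we obtain
\[
Ho\big(\mathrm{Sp}_X\big)^\mathbb{Q}_{\mathrm{f.t.,nil,bbl}}\big(\Sigma^{-k}_X P, Q\big)
\;\cong\;
Ho\big(A\mathrm{-Mod}\big)_{\mathrm{f.h.t.}}^{\mathrm{op}}\big(M[k], N\big)
\;\cong\;
Ho\big(A\mathrm{-Mod}\big)_{\mathrm{f.h.t.}}\big(N, M[k]\big)\,.
\]

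Third, I would identify the right-hand side with the Ext group. Since $A\mathrm{-Mod}$ is a stable model category with shift functor $[1]$ modelling the suspension on the homotopy category, the classical identification $Ho(A\mathrm{-Mod})(N, M[k]) \cong \mathrm{Ext}^k_A(N,M)$ holds by the general Dold--Kan/derived-category machinery for modules over a dga. Concretely, one computes the right-hand side by resolving $N$ by a semifree (hence cofibrant) $A$-module $A\otimes W \to N$; then $Ho(A\mathrm{-Mod})(N, M[k])$ is the $k$-th cohomology of the cochain complex $\mathrm{Hom}_A(A\otimes W, M)$, which is the definition of $\mathrm{Ext}^k_A(N,M)$. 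Naturality in both arguments is clear at each stage.

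I do not anticipate any serious obstacles — essentially every step is either an application of a theorem already established in the paper or a routine fact about hom-sets in stable model categories. The step requiring the most care is the first, where one must verify that rationalisation of the abelian group of stable maps coincides with the hom-set in the localised category; this is where the finiteness hypotheses on $P$ enter essentially, since $\{P,Q\}^k_X$ is a finitely generated $\mathbb{Z}$-module in each degree under our assumptions, so that rationalising commutes appropriately with the localisation functor.
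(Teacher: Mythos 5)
Your proposal follows the paper's argument step for step: express the rationalised fibrewise stable maps as hom-sets in the rational homotopy category, transport through the equivalence of Theorem \ref{thm:RatParamHomThry} (keeping track of the opposite category and the shift, both of which you handle correctly), and compute the resulting hom-sets in $Ho(A\mathrm{-Mod})$ as $\mathrm{Ext}$-groups via a semifree resolution. The paper presents this as a bare chain of isomorphisms immediately before the statement; your justification of the third step is a welcome amplification of what the paper leaves implicit.

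One point needs correcting. You claim that "$\{P,Q\}^k_X$ is a finitely generated $\mathbb{Z}$-module in each degree under our assumptions," but this does not follow from the hypotheses. Being of \emph{finite rational type} only constrains the $\mathbb{Q}$-vector spaces $\spi_\ast (x^\ast P)\otimes_\mathbb{Z}\mathbb{Q}$ to be degreewise finite dimensional; it places no bound on torsion in the integral fibrewise stable homotopy groups and hence none on the integral groups $\{P,Q\}^k_X$. The identification $\{P,Q\}^k_X\otimes_\mathbb{Z}\mathbb{Q}\cong Ho(\mathrm{Sp}_X)^\mathbb{Q}_{\mathrm{f.t.,nil,bbl}}(\Sigma^{-k}_X P, Q)$ is the one link in the chain that genuinely requires care: the right-hand side is computed mapping $P$ into the sequential homotopy colimit $Q\wedge_X X^\ast H\mathbb{Q}$, and since $P$ need not be compact, filtered colimits do not automatically commute with $Ho(\mathrm{Sp}_X)(\Sigma^{-k}_X P, -)$. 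To be fair, the paper asserts this isomorphism without further comment, so this is as much a gap in the source exposition as in your proposal -- but the finiteness hypotheses do not resolve it in the direct way you suggest.
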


Since the homotopy theory of $A$-modules is fairly simple we are able to go a fair way beyond Proposition \ref{prop:FibStapMapandExt}.
For any pair of $A$-modules $N, M$ there is a cochain complex of $A$-linear maps
\[
[N,M]_A^k = A\mathrm{-Mod}(N, M[k])
\]
with differential 
$
d\lambda = d_M  \circ \lambda - (-1)^k \lambda \circ d_N
$
for $\lambda \colon N \to M[k]$.
In the case of modules over $\mathbb{Q}$ we simply write $[N,M]$ instead of $[N,M]_\mathbb{Q}$.

The complexes $[N,M]_A$ enjoy good homotopical properties; for instance if $f\colon N'\to N$ is a cofibration and $g\colon M\to M'$ is a fibration then the induced map of hom complexes
\[
[N, M]_A
\longrightarrow
[N, M']_A \prod_{[N', M']_A} [N', M]_A
\]
is a fibration which is moreover a quasi-isomorphism if either of $f$ or $g$ is.
For $A$-modules $N, M$ the rational vector spaces $\mathrm{Ext}_A^\ast(N,M)$ can be computed in terms of the derived hom complexes as
\begin{equation}
\label{eqn:ExtDescr}
\mathrm{Ext}_A^k(N,M) \cong H^k [N^c, M]_A \cong H^0 [N^c, M[k]]_A
\end{equation}
with $N^c \to N$ any cofibrant replacement in $A\mathrm{-Mod}$.
The result is independent of the choice of cofibrant replacement and depends on $M$ and $N$ only up to quasi-isomorphism.

We consider two spectral sequences for $\mathrm{Ext}_A$ arising from two different cofibrant resolutions in $A\mathrm{-Mod}$.
We first  consider a \lq\lq hyper-Ext'' spectral sequence arising from the bar resolution.
For an $A$-module $N$, consider the simplicial $A$-module $B_k(A;A;N) = A\otimes A^{\otimes k} \otimes N$, with face and degeneracy maps induced by the $A$-module structures on $A$ and $N$.
Taking normalised chains yields a double complex whose total complex is the \emph{bar construction} $B(A;A;N)$.
The bar construction $B(A;A;N)$ is a cofibrant $A$-module equipped with a quasi-isomorphism of $A$-modules $B(A;A;N)\to N$.

Filtering the simplicial bar construction $B_\bullet(A;A;N)$ by its skeleta gives rise to an exhaustive filtration
\[
\mathcal{F}_0
\subset
\mathcal{F}_1
\subset
\dotsb
\subset
\mathcal{F}_p
\subset 
\dotsb
\subset B(A;A;N)\,.
\]
For each $p\geq 0$ the inclusion $\mathcal{F}_p\to \mathcal{F}_{p+1}$ is a cofibration of $A$-modules and there is a (homotopy) cofibre sequence of $A$-modules
\[
\begin{tikzcd}
\mathcal{F}_p
\ar[r, rightarrowtail]
&
\mathcal{F}_{p+1}
\ar[r]
&
A\otimes \overline{A}{}^{\otimes(p+1)}\otimes N [p+1]\,,
\end{tikzcd}
\]
with cofibre the free $A$-module on $\overline{A}{}^{\otimes(p+1)}\otimes N[p+1]$, for $\overline{A} = \ker(A\to \mathbb{Q})$ the augmentation ideal with respect to the canonical augmentation (cf.~Remark \ref{rem:StandingAssumptions}).
Forming cochain complexes of $A$-linear maps, we get a sequence of fibrations of cochain complexes
\begin{equation}
\label{eqn:BarResFibSeq}
\begin{tikzcd}[sep=small]
\dotsb
\ar[r, twoheadrightarrow]
&
{[\mathcal{F}_{p+1}, M]_A}
\ar[r, twoheadrightarrow]
&
{[\mathcal{F}_p, M]_A}
\ar[r, twoheadrightarrow]
&
\dotsb
%\ar[r, twoheadrightarrow]
%&
%{[\mathcal{F}_1, M]_A}
%\ar[r, twoheadrightarrow]
%&
%{[\mathcal{F}_0, M]_A}
\\
&
{[A\otimes \overline{A}{}^{\otimes(p+1)}\otimes N [p+1], M]_A}
\ar[u]
&
{[A\otimes \overline{A}{}^{\otimes p}\otimes N [p], M]_A}
\ar[u]
&
%&
%{[A\otimes \overline{A}^{\otimes(p+1)}\otimes N [-1], M]_A}
%\ar[u]
%&
\end{tikzcd}
\end{equation}
with (homotopy) fibres as indicated.
The complexes $[A\otimes \overline{A}{}^{\otimes p}\otimes N [p], M]_A \cong [\overline{A}{}^{\otimes p}\otimes N [p], M]$ have cohomology
\[
H^q [\overline{A}{}^{\otimes p}\otimes N[p], M]
\cong 
[H^\bullet (\overline{A}{}^{\otimes p}\otimes N),H^\bullet( M)[-p]]^q
\cong 
[H^\bullet (\overline{A}{}^{\otimes p}\otimes N),H^\bullet( M)]^{q-p}\,,
\] 
that is, the vector space of degree-$(q-p)$ maps $H^\bullet (\overline{A}{}^{\otimes p}\otimes N)\to H^\bullet( M)$.
Passing to cohomology, the sequence of fibrations \eqref{eqn:BarResFibSeq} gives an exact couple
\begin{equation}
\label{eqn:HyperExtExactCouple}
\begin{tikzcd}
{H^q [\mathcal{F}_p, M]_A}
\ar[r, "i"]
\ar[from = d, "k"]
&
{H^q [\mathcal{F}_{p-1}, M]_A}
\ar[dl, bend left = 10, dashed, "j"]
\\
{[H^\bullet (\overline{A}{}^{\otimes p}\otimes N),H^\bullet( M)]^{q-p}}
\end{tikzcd}
\end{equation}
and hence a spectral sequence with $E_1^{p,\bullet} = [H^\bullet (\overline{A}{}^{\otimes p}\otimes N),H^\bullet( M)]^{\bullet-p}$.
The differential $d_1$ is the composite
\[
\begin{tikzcd}[row sep = tiny]
H^\bullet {[A\otimes \overline{A}{}^{\otimes p} \otimes N[p], M]_A}
\ar[r, "k"]
\ar[d, equals,  "\wr"]
&
H^\bullet {[\mathcal{F}_p, A]_A}
\ar[r, "j"]
&
H^{\bullet+1} {[A\otimes \overline{A}{}^{\otimes (p+1)} \otimes N[p+1], M]_A}
\ar[d, equals, "\wr"]
\\
{[H(\overline{A}){}^{\otimes p} \otimes H(N), H(M)]^{\bullet - p}}
\ar[rr, "d_1^{p, \bullet}"']
&&
{[H(\overline{A})^{\otimes (p+1)} \otimes H(N), H(M)]^{\bullet - p}}
\end{tikzcd}
\]
and we identify the $E_2$-term as follows.
Regarding $H^\bullet(A)$ as a cdga with trivial differential, the simplicial bar construction 
\[
\underbrace{
\begin{tikzcd}[ampersand replacement = \&]
\dotsb
\ar[r, shift left = 4]
\ar[r, leftarrow, shift left = 2]
\ar[r]
\ar[r, leftarrow, shift left = -2]
\ar[r, shift left = -4]
\&
H(A)\otimes H(A)\otimes H(N)
\ar[r, shift left = 2]
\ar[r, leftarrow]
\ar[r, shift left = -2]
\&
H(A)\otimes H(N)
\end{tikzcd}
}_{\vphantom{\Big(}B_\bullet(H(A);H(A);H(N))}
\longrightarrow
H(N)
\]
is a Reedy cofibrant simplicial object of $H(A)\mathrm{-Mod}$.
The normalised complex
\[
\begin{tikzcd}
\dotsb
\ar[r]
&
H(A)\otimes H(\overline{A})\otimes H(N)
\ar[r]
&
H(A)\otimes H(N)
\ar[r]
&
H(N)
\end{tikzcd}
\]
is a free resolution of $H(N)$ as a (differential graded) $H(A)$-module and forming complexes of $H(A)$-linear maps to $H(M)$ yields the complex of cochain complexes
\[
\begin{tikzcd}
{[H(N),H(M)]_{H(A)}^\bullet}
\ar[r]
&
{[H(N), H(M)]^\bullet}
\ar[r, "\delta^{0,\bullet}"]
&
{[H(\overline{A})\otimes H(N), H(M))]^\bullet}
\ar[r, "\delta^{1,\bullet}"]
&
\dotsb
\end{tikzcd}
\]
in which each of the complexes $[H(\overline{A})^{\otimes s}\otimes H(N), H(M))]$ has trivial differential.
Forming the $\delta$-cohomology groups of this complex gives the bigraded vector spaces
\[
\mathrm{Ext}^{s,\bullet}_{H(A)} (H(N), H(M)) \cong \ker(\delta^{s,\bullet})/\mathrm{im}(\delta^{s-1,\bullet})\,.
\]
Both differentials $d_1^{\bullet, \bullet}$ and $\delta^{\bullet, \bullet}$ arise from skeletal filtrations of bar constructions (over $A$ and $H(A)$ respectively) and a careful analysis of the constructions shows that $d_1^{p,q} = \delta^{p,q-p}$.
We may therefore identify the $E_2$-term as
\[
E_2^{p,q} = \mathrm{Ext}_{H(A)}^{p, q-p}(H(N), H(M))\,.
\] 
We have so far obtained a half-plane spectral sequence with entering differentials.
To settle the issue of converge, we impose some additional conditions on $A$, $M$ and $N$.
Recall that a cdga $A$ is \emph{simply connected} if $A^0 =\mathbb{Q}$ and $A^1 = 0$; these conditions guarantee that the spatial realisation $X=\mathbf{R}\mathfrak{S}(X)$ is also simply connected.
\begin{proposition}[Hyper-Ext spectral sequence]
\label{prop:HyperExt}
Let $A$ be a simply connected cdga.
For $A$-modules $M$ and $N$ with cohomology groups that are bounded above and bounded below respectively, there is a strongly convergent spectral sequence
\[
\mathrm{Ext}^{p,q}_{H(A)}(H(N), H(M))
\Longrightarrow
\mathrm{Ext}^{p+q}_A(N,M)\,.
\]
\end{proposition}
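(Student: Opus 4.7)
The construction of the spectral sequence, together with the identification of its $E_1$ and $E_2$ pages, has been completed in the discussion preceding the statement; it remains only to establish strong convergence.

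The plan is to show that the induced filtration on the abutment $\mathrm{Ext}^n_A(N,M) \cong H^n[B(A;A;N), M]_A$ is finite in each cohomological degree, from which strong convergence is automatic. I would first reduce to the case where $N$ is bounded below in cochain degree (using the minimal model $A\otimes V\simeq N$ from Lemma \ref{lem:MinimalModuleExist}) and $M$ is bounded above in cochain degree (via a Postnikov-style replacement along the lines of Section \ref{ss:Postnikov}). Both are quasi-isomorphic to the originals and hence leave the abutment unchanged.

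With $N$ bounded below by $b$ and $M$ bounded above by $a$, the simple-connectedness of $A$ forces $\overline{A}$ to live in cochain degrees $\geq 2$, so each filtration layer $\mathcal{F}_q/\mathcal{F}_{q-1}\cong A\otimes\overline{A}^{\otimes q}\otimes N[q]$ is concentrated in cochain degrees $\geq q+b$. Consequently $B/\mathcal{F}_{p-1}$ is a cofibrant $A$-module whose underlying cochain complex lies in degrees $\geq p+b$, and Lemma \ref{lem:MinimalModuleExist} yields a minimal model $A\otimes W \simeq B/\mathcal{F}_{p-1}$ with $W$ concentrated in degrees $\geq p+b$. The hom complex satisfies $[A\otimes W, M]_A^n \cong \prod_k \mathrm{Hom}(W^k, M^{k+n})$, and each factor vanishes whenever $k\geq p+b$ and $k+n>a$, that is, whenever $p+b+n>a$. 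Thus $\mathrm{Ext}^n_A(B/\mathcal{F}_{p-1}, M) = 0$ and consequently $F^p\mathrm{Ext}^n_A(N,M) = 0$ for $p > a - b - n$, yielding a finite filtration in each cohomological degree and hence strong convergence.

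The main technical obstacle is producing a bounded-above quasi-isomorphic replacement of $M$ within the category of $A$-modules, since the na\"ive cochain truncation $\tau^{\leq a}M$ is not generally an $A$-submodule. I would handle this by taking a minimal model of $M$ and truncating its generators past degree $a$; a parallel degreewise analysis, exploiting the standing finiteness hypotheses on $A$, then controls the cochain support of the resulting representative. A secondary point worth checking is that the minimal model of $B/\mathcal{F}_{p-1}$ really can be chosen with $W$ in degrees $\geq p+b$, which amounts to observing that the underlying cochain complex and hence the cohomology of $B/\mathcal{F}_{p-1}$ are concentrated in these degrees.
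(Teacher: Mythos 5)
Your overall strategy is sound and broadly parallel to the paper's: use the simple-connectivity of $A$ to force the filtration layers of the bar construction to be cohomologically bounded below with an offset growing in $p$, so that the tower $H^n[\mathcal{F}_p, M]_A$ eventually stabilizes, giving both vanishing of the $\lim^1$ term and strong convergence. However, there is a genuine gap in one of the preliminary reductions. You propose replacing $M$ by a quasi-isomorphic $A$-module whose underlying \emph{cochain complex} is bounded above, and you suggest doing this by ``taking a minimal model of $M$ and truncating its generators past degree $a$.'' But the paper's Lemma \ref{lem:MinimalModuleExist} furnishes minimal models only for $A$-modules with bounded-\emph{below} cohomology, and nothing in the hypotheses of the proposition gives a lower bound on $H^\bullet(M)$. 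The good truncation $\tau^{\leq a}M$ fails to be an $A$-submodule (as you note), the brutal truncation $M/\sigma^{>a}M$ is not a quasi-isomorphism, and there is no visible dual of the minimal-model machinery in the bounded-above direction. So this reduction step, which you flag as the main technical obstacle, does not have a repair available within the tools the paper develops.

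The fix is to drop the reduction on $M$ entirely and argue cohomologically: since we work over a field, $H^n[C, M]\cong [H(C), H(M)]^n$ for any cochain complex $C$, so the relevant Ext groups of the filtration quotients can be estimated from $H^\bullet(M)$ being bounded above and $H^\bullet(\overline{A}^{\otimes p}\otimes N)$ vanishing below degree $2p + k$ --- no cochain-level bound on $M$ is ever needed. This is exactly what the paper does, reading off the bound $H^q[\mathcal{F}_{s+1}, M]_A \to H^q[\mathcal{F}_s, M]_A$ an isomorphism for $q > l - s - k$ directly from the long exact sequence and the $E_1$-term vanishing. Your appeal to a minimal model $A\otimes W$ of $B/\mathcal{F}_{p-1}$ is also an unnecessary detour: the filtration quotients $\mathcal{F}_p/\mathcal{F}_{p-1}$ are already free $A$-modules, and the universal coefficient isomorphism applied to them gives the needed vanishing without passing through $B/\mathcal{F}_{p-1}$ or invoking minimal models. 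Finally, ``finite filtration in each degree'' is a bit quick for a tower-type abutment --- one should, as the paper does, verify the Mittag--Leffler condition and apply \cite[Theorem 7.4]{boardman_conditionally_1999} together with the Milnor sequence to identify $H^\bullet[B(A;A;N), M]_A$ with the limit; your stabilization argument does give this, but the logical chain should be spelled out.
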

\begin{proof}
Referring to the exact couple \eqref{eqn:HyperExtExactCouple}, let 
\[
Z^{s, \bullet}_r = k^{-1}\big(\mathrm{im} \{
\!\!
\begin{tikzcd}[sep = small]
H^\bullet {[\mathcal{F}_{s+r-1},M]_A }
\ar[r, "i"]
&
\dotsb
\ar[r, "i"]
&
H^\bullet {[\mathcal{F}_{s},M]_A }
\end{tikzcd}
\!\!
\}\big)
\subset
[H(\overline{A}^{\otimes s}\otimes N), H(M)]^{\bullet-s}\,.
\]
Now suppose that $N$ is $k$-connective and $M$ is $l$-coconnective.
Since $A$ is simply connected, $\overline{A}$ is $2$-connective and hence
$
H(\overline{A}{}^{\otimes s} \otimes N) \cong H(\overline{A})^{\otimes s} \otimes H(N)
$
vanishes in degrees $< 2s+ k$.
In particular, any map of graded rational vector spaces $H(\overline{A}{}^{\otimes s} \otimes N) \to H(M)$ of degree $q$ vanishes identically if $2s+k+q > l$.
The long exact sequence of cohomology groups
\[
\dotsb 
\longrightarrow
[H(\overline{A}{}^{\otimes (s+1)}\otimes N), H(M)]^{q - s -1}
\longrightarrow
H^q[\mathcal{F}_{s+1}, M]_A
\longrightarrow
H^q[\mathcal{F}_s, M]_A
\longrightarrow
\dotsb
\]
implies that $H^q [\mathcal{F}_{s+1}, M]_A\to  H^q [\mathcal{F}_{s}, M]_A$ is an isomorphism whenever $q> l-s-k$.
This lower bound decreases with $s$, which implies that the sequence 
\[
\dotsb \subset 
Z^{p,q}_{r+1} \subset Z^{p,q}_r
\subset \dotsb \subset Z^{p,q}_1
\]
satisfies the Mittag-Leffler condition for all $p,q$ and hence $\mathrm{lim}^1_r Z^{p,q}_r = 0$.
By \cite[Theorem 7.4]{boardman_conditionally_1999}, this is equivalent to the statements that (i) $\mathrm{lim}^1_r H^\bullet[\mathcal{F}_r, M]_A = 0$  and (ii) the spectral sequence converges strongly to $\mathrm{lim}_r H^\bullet[\mathcal{F}_r, M]_A$.
Since $\mathrm{colim}_r \mathcal{F}_r = B(A;A;N)$, we have $\mathrm{lim}_r [\mathcal{F}_r, M]_A \cong [B(A;A;N), M]_A$ and the Milnor-style short exact sequence
\[
0
\longrightarrow
\mathrm{lim}^1_r H^{\bullet-1}[\mathcal{F}_r, M]_A
\longrightarrow
H^\bullet[B(A;A;N), M]_A
\longrightarrow
\mathrm{lim}_r H^\bullet[\mathcal{F}_r, M]_A
\longrightarrow
0
\]
shows that $H^\bullet[B(A;A;N), M]_A \cong \mathrm{lim}_r H^\bullet[\mathcal{F}_r, M]_A$ by vanishing of the $\mathrm{lim}^1$ term.
The spectral sequence thus converges strongly to $H^\bullet[B(A;A;N), M]_A \cong \mathrm{Ext}^\bullet_{A}(N,M)$,
\[
E^{p,q}_2 \Longrightarrow
\mathrm{Ext}^q_A(N,M)\,,
\]
and the result follows from the identification $E_2^{p,q} \cong \mathrm{Ext}_{H(A)}^{p, q-p}(H(N), H(M))$.
\end{proof}

Interpreted in terms of the rational homotopy theory of parametrised spectra, the previous result gives rise to an \lq\lq Adams-style'' spectral sequence in topology:
\begin{proposition}
\label{prop:SSAdamsStyle}
Let $A$ be a cofibrant simply connected cdga of finite homotopical type with $X =\mathfrak{S}(A)$ and let $P, Q$ be bounded below $X$-spectra of finite rational type such that the rational cohomology of $X_! P$ is bounded above.
Then there is a strongly convergent spectral sequence
\[
\mathrm{Ext}^{p,q}_{H^\bullet(X)}\big(
H^\bullet (X_! Q), H^\bullet (X_! P)\big)
\Longrightarrow
\{P, Q\}^{p+q}_X\otimes_\mathbb{Z}\mathbb{Q}\,.
\] 
\end{proposition}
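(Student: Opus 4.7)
The plan is to combine the algebraic hyper-Ext spectral sequence of Proposition~\ref{prop:HyperExt} with the translation dictionary established by Theorem~\ref{thm:RatParamHomThry}, Proposition~\ref{prop:FibStapMapandExt}, and the cohomological interpretation of $\mathfrak{M}_A$ given by Proposition~\ref{prop:TopInterp}. Since $X=\mathfrak{S}(A)$ is simply connected, every $X$-spectrum is automatically nilpotent, so the hypotheses of Theorem~\ref{thm:RatParamHomThry} are satisfied by $P$ and $Q$. Let $M:=\mathbf{L}\mathfrak{M}_A(P)$ and $N:=\mathbf{L}\mathfrak{M}_A(Q)$, both of which are $A$-modules of finite homotopical type.

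First I would verify that $M$ and $N$ satisfy the boundedness hypotheses of Proposition~\ref{prop:HyperExt}. By Proposition~\ref{prop:TopInterp}, the cohomology of $M$ (resp.~$N$) as an $H^\bullet(A)$-module is naturally isomorphic to $H^\bullet(X_!P)$ (resp.~$H^\bullet(X_!Q)$) as an $H^\bullet(X)$-module, via the identification $H^\bullet(A)\cong H^\bullet(X)$ furnished by the Sullivan--de Rham counit. By hypothesis, $H^\bullet(M)\cong H^\bullet(X_!P)$ is bounded above. Since $Q$ is bounded below, Corollary~\ref{cor:PfwdConnectivity} implies that $X_!Q$ is a bounded below spectrum, and finite type of $Q$ identifies $H^\bullet(X_!Q)$ with $\mathrm{Hom}(\spi_\bullet(X_!Q),\mathbb{Q})$, which is therefore bounded below. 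Hence $H^\bullet(N)$ is bounded below, as required.

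Applying Proposition~\ref{prop:HyperExt} then yields a strongly convergent spectral sequence
\[
\mathrm{Ext}^{p,q}_{H^\bullet(A)}\big(H^\bullet(N),H^\bullet(M)\big)
\Longrightarrow
\mathrm{Ext}^{p+q}_A(N,M)\,.
\]
It remains to translate each side into topology. For the abutment, Proposition~\ref{prop:FibStapMapandExt} provides a natural isomorphism $\mathrm{Ext}^\bullet_A(N,M)\cong \{P,Q\}^\bullet_X\otimes_\mathbb{Z}\mathbb{Q}$. For the $E_2$-term, Proposition~\ref{prop:TopInterp} gives compatible isomorphisms of $H^\bullet(A)\cong H^\bullet(X)$-modules $H^\bullet(M)\cong H^\bullet(X_!P)$ and $H^\bullet(N)\cong H^\bullet(X_!Q)$, which induce an isomorphism
\[
\mathrm{Ext}^{p,q}_{H^\bullet(A)}\big(H^\bullet(N),H^\bullet(M)\big)
\cong
\mathrm{Ext}^{p,q}_{H^\bullet(X)}\big(H^\bullet(X_!Q),H^\bullet(X_!P)\big)\,,
\]
which yields the desired spectral sequence.

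The substantive content lies entirely in Proposition~\ref{prop:HyperExt}; the remaining work is bookkeeping to verify that the boundedness conditions transport correctly across the equivalence of Theorem~\ref{thm:RatParamHomThry} and that the $H^\bullet(A)$-module structure on $H^\bullet(\mathfrak{M}_A(-))$ matches the $H^\bullet(X)$-module structure on $H^\bullet(X_!(-))$. The latter compatibility is exactly the assertion of Proposition~\ref{prop:TopInterp}, so no new technical work is required; the main point to check carefully is that the hypothesis on $P$ (bounded-above rational cohomology of $X_!P$) truly is the precise condition needed to translate into cohomological boundedness of the algebraic model $M$, which it is by Proposition~\ref{prop:TopInterp}.
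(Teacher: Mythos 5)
Your proposal is correct and follows essentially the same path as the paper's own proof: pass to $A$-modules via Theorem~\ref{thm:RatParamHomThry}, identify the cohomology $H^\bullet(A)$-module structures with the topological $H^\bullet(X)$-module structures via Proposition~\ref{prop:TopInterp}, apply the hyper-Ext spectral sequence of Proposition~\ref{prop:HyperExt}, and identify the abutment using Proposition~\ref{prop:FibStapMapandExt}. Your only departure is that you spell out the verification that the boundedness hypotheses of Proposition~\ref{prop:HyperExt} are met (bounded above cohomology for $M$ from the hypothesis, bounded below cohomology for $N$ via Corollary~\ref{cor:PfwdConnectivity}), which the paper leaves implicit.
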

\begin{proof}
Let $P$ and $Q$ correspond to the $A$-modules $M$ and $N$ respectively under the rational homotopy theory equivalence of Theorem \ref{thm:RatParamHomThry} (note that $P$ and $Q$ are automatically nilpotent).
By Proposition \ref{prop:TopInterp}, the $A$-module $M$ models the $H^\bullet(X)$-action on $H^\bullet(X_!P)$ in cohomology.
Likewise, $N$ models the cohomology action on $H^\bullet(X_!Q)$ so that
\[
\mathrm{Ext}^{p,q}_{H^\bullet(X)}\big(H^\bullet(X_! Q),H^\bullet(X_! P)\big)
\cong 
\mathrm{Ext}^{p,q}_{H^\bullet(A)}\big(H^\bullet(N),H^\bullet(M)\big)\,.
\]
We now use Proposition \ref{prop:FibStapMapandExt} to identify the target of the spectral sequence of Proposition \ref{prop:HyperExt}.
\end{proof}

\begin{remark}
In the case that $P$, $Q$ are fibrewise suspension spectra of finite relative complexes we recover the spectral sequences of \cite[Proposition 15.25]{crabb_fibrewise_1998}.
\end{remark}

\begin{example}
For a cofibrant simply connected cdga $A$ of finite rational type with spatial realisation $X=\mathfrak{S}(A)$, consider a fibration $p\colon Y\to X$ where $Y$ is of finite rational type.
The fibrewise suspension spectrum $\Sigma^{\infty}_{X+} Y$ is a bounded below $X$ spectrum of finite rational type and, writing $F$ for the fibre of $p$, we have
\begin{align*}
\spi_{-k} F\otimes_\mathbb{Z} \mathbb{Q}  
&
\cong \{S^{-k}, \Sigma^\infty_+ F\}\otimes_\mathbb{Z} \mathbb{Q}
\\
&\cong \{x_! S^{-k}, \Sigma^\infty_{X+} Y\}_X \otimes_\mathbb{Z} \mathbb{Q}
\\
&
\cong
\mathrm{Ext}^k_{A} (\mathfrak{A}(Y), \mathbb{Q})
\end{align*}
by Proposition \ref{prop:FibStapMapandExt}.
This is a \lq\lq strict'' version of the isomorphism $\mathrm{Ext}^\bullet_{
C^\ast(X)}(C^\ast(Y), \mathbb{Q})\cong H_{-\bullet}(F)$.
By Proposition \ref{prop:SSAdamsStyle} we have a strongly convergent spectral sequence
\[
\mathrm{Ext}^{p,q}_{H^\bullet(X)}\big(
H^\bullet (Y), \mathbb{Q} \big)
\Longrightarrow
H_{-(p+q)} (F)
\]
in this case.
\end{example}

\begin{example}
Specialising the previous example to $\ast \to X$ for $X$ simply-connected, we get isomorphisms
\[
H_{-\bullet}(\Omega X) 
\cong 
\{\Sigma^{\infty}_{X+}\ast, \Sigma^{\infty}_{X+}\ast\}_X^\bullet\otimes_\mathbb{Z}\mathbb{Q}
\cong
\mathrm{Ext}^{\bullet}_{A}(\mathbb{Q},\mathbb{Q})\,.
\] 
Under this isomorphism, the Pontrjagin product on $H_\bullet(\Omega X)$ coincides with the Yoneda product on $\mathrm{Ext}$-groups.
\end{example}

Using minimal models we construct another spectral sequence for $\mathrm{Ext}_A$ which has slightly better convergence properties.
Let $N$ be an $A$-module with bounded below cohomology which has minimal model $A\otimes V\to N$.
By minimality, $V$ has a basis $\{v_\alpha\}_{\alpha\in \mathcal{I}}$ indexed by a well-ordered set $\mathcal{I}$ such that (i) $dv_\beta \in A\otimes \mathrm{span}\{v_\alpha\mid \alpha < \beta\}$ and (ii) $\alpha\leq \beta\Rightarrow |v_\alpha|\leq |v_\beta|$.
With notation as in Section \ref{ss:Postnikov}, consider the induced sequence of cofibrations of minimal $A$-modules (\lq\lq Postnikov tower'')
\[
\begin{tikzcd}
\dotsb 
\ar[r, rightarrowtail]
&
A\otimes V^{\leq (k-1)}
\ar[r, rightarrowtail]
&
A\otimes V^{\leq k}
\ar[r, rightarrowtail]
&
A\otimes V^{\leq (k+1)}
\ar[r, rightarrowtail]
&
\dotsb
\end{tikzcd}
\]
The colimit of this sequence is $A\otimes V$ and the (homotopy) cofibre of $A\otimes V^{\leq (k-1)}\to A\otimes V^k$ is the minimal $A$-module $A\otimes V^{=k}$.
Note that the differential on $A\otimes V^{=k}$ is not necessarily trivial if $A$ is not simply connected (cf.~Remark \ref{rem:ModularEM}).

Forming compelexes of $A$-linear maps to a fixed $A$-module $M$, we get a sequence of fibrations 
\[
\begin{tikzcd}
\dotsb
\ar[r, twoheadrightarrow]
&
{[A\otimes V^{\leq k}, M]_A}
\ar[r,  twoheadrightarrow]
&
{[A\otimes V^{\leq (k-1)}, M]_A}
\ar[r,  twoheadrightarrow]
&
\dotsb
\\
&
{[A\otimes V^{=k}, M]_A}
\ar[u]
&
{[A\otimes V^{=(k-1)}, M]_A}
\ar[u]
&
\end{tikzcd}
\]
with (homotopy) fibres as indicated.
Passing to cohomology, the sequence of fibrations gives rise to an exact couple
\begin{equation}
\label{eqn:MinExactCouple}
\begin{tikzcd}
{H^q [A\otimes V^{\leq p}, M]_A}
\ar[r, "i"]
\ar[from = d, "k"]
&
{H^q [A\otimes V^{\leq (p-1)}, M]_A}
\ar[dl, bend left = 10, dashed, "j"]
\\
{H^q [A\otimes V^{=p},M]_A}
\end{tikzcd}
\end{equation}
and hence a spectral sequence with $E_1^{p,q} = H^q [A\otimes V^{=p},M]_A$.
\begin{proposition}[Minimal spectral sequence]
\label{prop:minSpecSeq}
Let $N$ be a bounded below $A$-module with minimal model $A\otimes V\to N$.
If the $A$-module $M$ is bounded above, the spectral sequence of the exact couple \eqref{eqn:MinExactCouple} converges strongly:
\[
E^{p,q}_1 = H^q [A\otimes V^{=p},M]_A
\Longrightarrow
\mathrm{Ext}^{q}_A (N, M)\,.
\]
\end{proposition}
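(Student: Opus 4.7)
The plan is to derive the spectral sequence from the Bousfield--Kan-style tower of the Postnikov filtration on the minimal model, and handle convergence using the fact that the generators of $V^{=p}$ are concentrated in a single degree. For the construction, for each $p$ the inclusion $A\otimes V^{\leq(p-1)}\hookrightarrow A\otimes V^{\leq p}$ is a cofibration of $A$-modules with cofibre $A\otimes V^{=p}$. Applying the Quillen bifunctor $[-,M]_A$ produces a fibration of cochain complexes $[A\otimes V^{\leq p},M]_A\twoheadrightarrow[A\otimes V^{\leq(p-1)},M]_A$ with fibre $[A\otimes V^{=p},M]_A$, and passing to cohomology in the long exact sequences assembles into the exact couple \eqref{eqn:MinExactCouple}. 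This gives a spectral sequence with first page $E_1^{p,q}=H^q[A\otimes V^{=p},M]_A$; since $\mathrm{colim}_p\,A\otimes V^{\leq p} = A\otimes V$ is a cofibrant replacement of $N$, the natural map $[A\otimes V,M]_A\to\lim_p[A\otimes V^{\leq p},M]_A$ is an isomorphism, providing a candidate target $H^\bullet[A\otimes V,M]_A\cong\mathrm{Ext}^\bullet_A(N,M)$ via the Milnor short exact sequence.

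The heart of the proof is strong convergence. The key observation is that, by condition (ii) of minimality in Definition \ref{defn:SemifreeAMod}, $V^{=p}$ is concentrated in degree $p$, so $A\otimes V^{=p}$ is free over $A$ on a graded vector space located solely in degree $p$. An $A$-linear map $A\otimes V^{=p}\to M$ of degree $q$ is thus determined by a linear map $V^{=p}\to M^{p+q}$, so the underlying graded vector space of $[A\otimes V^{=p},M]_A$ in degree $q$ is $\mathrm{Hom}(V^{=p},M^{p+q})$. If $M^k=0$ for all $k>l$, we obtain $[A\otimes V^{=p},M]_A^q=0$ whenever $p+q>l$, and consequently $H^{q}[A\otimes V^{=p},M]_A=0$ for $p>l-q$. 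The long exact sequence then forces each restriction $H^q[A\otimes V^{\leq p},M]_A\to H^q[A\otimes V^{\leq(p-1)},M]_A$ to be an isomorphism for $p>l-q+1$, so the tower is eventually constant in every cohomological degree. The Mittag--Leffler condition therefore holds trivially and ${\lim}^1=0$; by \cite[Theorem 7.4]{boardman_conditionally_1999}, this is equivalent to strong convergence of the spectral sequence to $\lim_p H^\bullet[A\otimes V^{\leq p},M]_A\cong \mathrm{Ext}^\bullet_A(N,M)$.

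The step I expect to require most care is verifying that the cofibre $A\otimes V^{=p}$ really is free on a graded vector space concentrated in the single degree $p$ (rather than merely bounded below by $p$). This relies essentially on the condition $|v_\alpha|\leq |v_\beta|$ whenever $\alpha\leq\beta$, which guarantees both that $A\otimes V^{\leq p}$ is closed under the minimal model differential and that the quotient has basis generators only in the single degree $p$. Observe that, in contrast to the hyper-Ext spectral sequence of Proposition \ref{prop:HyperExt}, no simple-connectivity assumption on $A$ is required: it is the grading of the minimal model, rather than that of $A$, which drives the vanishing argument and hence the convergence. I would also note that the differential on $A\otimes V^{=p}$ may be nontrivial when $A$ is not simply connected (cf.\ Remark \ref{rem:ModularEM}), but this does not obstruct the degree-counting argument, since it concerns only the underlying graded vector space of $[A\otimes V^{=p},M]_A$.
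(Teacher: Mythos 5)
Your construction of the tower, the exact couple, and the identification of the abutment are all correct, and the overall architecture matches the paper's. However, there is a genuine gap in the convergence step: your degree-counting requires the \emph{underlying graded vector space} of $M$ to vanish above some degree $l$ (you write ``if $M^k=0$ for all $k>l$''), whereas the paper's proof requires only $H^{>l}(M)=0$, i.e.\ that $M$ is $l$-coconnective. These conditions genuinely differ for $A$-modules: unlike cochain complexes over $\mathbb{Q}$, an $A$-module with bounded above cohomology need not admit a bounded above model --- the intelligent truncation $\tau^{\leq l}M$ is a cochain subcomplex but generally not an $A$-submodule, and cofibrant replacements are semifree, hence bounded \emph{below}. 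The downstream application in Proposition \ref{prop:minSpecSeqTop} explicitly invokes only the cohomological condition (``the cohomology of $M$ is bounded above''), so the weaker hypothesis is the one actually needed, and a typical $M$ arising there --- say a minimal $A$-module $A\otimes V$ over unbounded $A$ --- is never bounded above on the nose.

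When only $H^{>l}(M)=0$ is available, the identity $[A\otimes V^{=p},M]_A^q\cong\mathrm{Hom}(V^{=p},M^{p+q})$ still holds, but these hom spaces do not vanish for $p+q>l$, so one must instead control the \emph{cohomology} of $[A\otimes V^{=p},M]_A$. This is precisely where the nontrivial differential on $A\otimes V^{=p}$ (for non-simply-connected $A$) becomes an obstruction, contrary to your closing remark that it ``does not obstruct the degree-counting argument.'' The paper's proof handles this by regarding $A\otimes V^{=p}$ itself as a minimal $A$-module and running a transfinite induction over a well-ordered basis $\{w_\alpha\}_{\alpha\in\mathcal{J}}$ of $V^{=p}$: each stage is a fibration $[A\otimes V^{=p}_{\leq(\alpha+1)},M]_A\twoheadrightarrow[A\otimes V^{=p}_{\leq\alpha},M]_A$ with fibre $[A\langle w_{\alpha+1}\rangle,M]_A\cong M[p]$, whose cohomology vanishes in degrees $>l-p$ by the cohomological bound; together with a Milnor sequence at the (transfinite) limit, this gives $H^q[A\otimes V^{=p},M]_A=0$ for $q>l+1-p$, which suffices for the Mittag--Leffler argument. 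Your proof should be repaired by replacing the pointwise vanishing of $[A\otimes V^{=p},M]_A$ with this cohomological vanishing, or by explicitly strengthening the hypothesis (which would then no longer match the paper's use in Proposition \ref{prop:minSpecSeqTop}).
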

\begin{proof}
Since $N$ is bounded below there is some integer $k_0$ such that $V^{\leq k} = 0$ for $k< k_0$.
The spectral sequence of the exact couple \eqref{eqn:MinExactCouple} is thus a half-plane spectral sequence with entering differentials and we appeal to results of Boardman to deduce strong convergence to the limit.
Let
\[
Z^{s, \bullet}_r = k^{-1}\big(\mathrm{im} \{
\!\!
\begin{tikzcd}[sep = small]
H^\bullet {[A\otimes V^{\leq (s+r-1)},M]_A }
\ar[r, "i"]
&
\dotsb
\ar[r, "i"]
&
H^\bullet {[A\otimes V^{\leq s},M]_A }
\end{tikzcd}
\!\!
\}\big)
\subset
H^q [A\otimes V^{=p},M]_A\,.
\]
We claim that the sequence 
\[
\dotsb \subset Z^{s, t}_{r+1} \subset Z^{s,t}_r \subset \dotsb \subset Z^{s,t}_1
\]
satisfies the Mittag-Leffler condition for all $s,t$ and hence $\mathrm{lim}^1_r Z^{s,t}_r = 0$.
\medskip

\noindent \emph{Proof of claim.}
Suppose without loss of generality that $M$ is $l$-coconnective, i.e.~$H^{> l}(M) = 0$.
For any $k$, the $A$-module $A\otimes V^{=k}$ is minimal so that we may find a basis $\{w_\alpha\}_{\alpha \in \mathcal{J}}$ of $V^{=k}$ indexed by a well-ordered set $\mathcal{J}$ and such that $dw_\alpha\in A\otimes \mathrm{span}\{w_\beta\mid \beta< \alpha\}$ for all $\alpha\in \mathcal{J}$.
Writing $V^{= k}_{\leq \alpha} =\mathrm{span}\{w_\beta\mid \beta\leq \alpha\}$, there are cofibre sequences of minimal $A$-modules
\[
A\otimes V^{= k}_{\leq \alpha} \longrightarrow A\otimes V^{= k}_{\leq (\alpha+1)}
\longrightarrow
A\langle w_{\alpha +1}\rangle
\]
where $A\langle w_{\alpha +1}\rangle$ is the free $A$-module on $\mathrm{span}\{w_{\alpha+1}\}$.
We get a fibre sequence of cochain complexes of $A$-linear maps
\[
M[k]\cong [A\langle w_{\alpha +1}\rangle , M]_A
\longrightarrow
[A\otimes V^{= k}_{\leq (\alpha+1)}, M]_A
\longrightarrow
[A\otimes V^{= k}_{\leq \alpha}, M]_A\,,
\]
and
since $H^{q}M[k] = H^{q+k}M$ vanishes for $q+k > l$ the long exact sequence on cohomology implies that $H^q[A\otimes V^{= k}_{\leq (\alpha+1)}, M]_A
\to
H^q [A\otimes V^{= k}_{\leq \alpha}, M]_A$ is an isomorphism for $q> l-k$.

Let $\alpha_0$ be the least element of $\mathcal{J}$, then $H^q [A\otimes V^{=k}_{\leq \alpha_0}, M]_A \cong H^q M[-k]$ vanishes for $q>l-k$.
Using the above, transfinite induction over $\mathcal{J}$ shows that  $H^q [A\otimes V^{=k}_{\leq \alpha}, M]_A$ vanishes for $q> l-k$.
Observing that $[A\otimes V^{=k}, M]_A \cong \mathrm{lim}_\mathcal{J} [A\otimes V^{=k}_{\leq \alpha}, M]_A$, from the Milnor-style short exact sequence
\[
0
\longrightarrow
\mathrm{lim}^1_\mathcal{J} H^{\bullet -1 }[A\otimes V^{=k}_{\leq \alpha}, M]_A
\longrightarrow
H^\bullet [A\otimes V^{=k}, M]_A
\longrightarrow
\mathrm{lim}_\mathcal{J} H^\bullet [A\otimes V^{=k}_{\leq \alpha}, M]_A
\longrightarrow
0
\]
we have that $H^q [A\otimes V^{=k}, M]_A$ vanishes for $q > l+1-k$.

Examining the long exact sequence in cohomology induced by the fibre sequence
\[
[A\otimes V^{=k}, M]_A
\longrightarrow
[A\otimes V^{\leq k}, M]_A
\longrightarrow
[A\otimes V^{\leq (k-1)}, M]_A
\]
we find that $H^q [A\otimes V^{\leq k}, M]_A
\to H^q [A\otimes V^{\leq (k-1)}, M]_A$ is an isomorphism for $q > l+1-k$.
That the sequence $\dotsb \subset Z^{s, t}_{r+1} \subset Z^{s,t}_r \subset \dotsb \subset Z^{s,t}_1$ satisfies the Mittag-Leffler condition now follows from the fact that this lower bound decreases as $k$ increases.
\hfill
\qedsymbol
\medskip

By \cite[Theorem 7.4]{boardman_conditionally_1999}, $\mathrm{lim}^1_r Z^{s,t}_r = 0$ is equivalent to strong convergence of  spectral sequence to $\mathrm{lim}_r H^\bullet [A\otimes V^{\leq p}, M]_A$ and $\mathrm{lim}^1_r H^\bullet [A\otimes V^{\leq r}, M]_A = 0$.
Given the Milnor-style short exact sequence 
\[
0
\longrightarrow
\mathrm{lim}^1_r H^{\bullet-1}[A\otimes V^{\leq r}, M]_A
\longrightarrow
H^\bullet [A\otimes V, M]_A
\longrightarrow
\mathrm{lim}_r H^\bullet [A\otimes V^{\leq r}, M]_A
\longrightarrow
0
\]
and vanishing of the $\mathrm{lim}^1$ term, we conclude that the spectral sequence converges strongly to $H^\bullet [A\otimes V, M]_A \cong \mathrm{Ext}^\bullet_A (N, M)$.
\end{proof}

We now identify this spectral sequence in topology.
\begin{proposition}
\label{prop:minSpecSeqTop}
Let $A$ be a cofibrant connected cdga of finite homotopical type with $X=\mathfrak{S}(A)$ and let $P,Q $ be a bounded below $X$-spectra of finite rational type such that the cohomology of $X_!P$ is bounded above.
Then there is a strongly convergent spectral sequence
\[
E_1^{s,t} = \{P, Q_{=s}\}_{X}^t\otimes_\mathbb{Z}\mathbb{Q}
\Longrightarrow
\{P, Q\}_X^{t}\otimes_\mathbb{Z}\mathbb{Q}
\]
\end{proposition}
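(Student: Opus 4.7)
The plan is to transport the algebraic minimal spectral sequence of Proposition \ref{prop:minSpecSeq} through the equivalence of Theorem \ref{thm:RatParamHomThry}. Set $M := \mathbf{L}\mathfrak{M}_A(P)$ and $N := \mathbf{L}\mathfrak{M}_A(Q)$. By Proposition \ref{prop:TopInterp}, the hypothesis that $H^\bullet(X_! P)$ is bounded above translates to $M$ having cohomology bounded above; and by Corollary \ref{cor:ConnectiveUnderM}, the assumption that $Q$ is bounded below translates to $N$ having bounded below cohomology, so that by Lemma \ref{lem:MinimalModuleExist} a minimal model $A\otimes V\to N$ exists. The algebraic hypotheses of Proposition \ref{prop:minSpecSeq} are thus in force.

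Applying that proposition yields a strongly convergent spectral sequence
\[
E_1^{s,t} = H^t[A\otimes V^{=s}, M]_A \Longrightarrow \mathrm{Ext}^t_A(N, M).
\]
The identification of the abutment is immediate from Proposition \ref{prop:FibStapMapandExt}, which gives $\mathrm{Ext}^t_A(N, M) \cong \{P, Q\}^t_X\otimes_\mathbb{Z}\mathbb{Q}$.

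It remains to identify the $E_1$-page topologically. By Corollary \ref{cor:kthSlice}, $A\otimes V^{=s}$ is a minimal model for the slice $Q_{=s}$; since the slice is nilpotent, bounded, and of finite rational type, it lies in $Ho(\mathrm{Sp}_X)^\mathbb{Q}_{\mathrm{f.t.,nil,bbl}}$. Because $A\otimes V^{=s}$ is minimal and hence cofibrant as an $A$-module, $H^t[A\otimes V^{=s}, M]_A$ computes $\mathrm{Ext}^t_A(A\otimes V^{=s}, M)$; a second application of Proposition \ref{prop:FibStapMapandExt} then yields a natural isomorphism with $\{P, Q_{=s}\}^t_X\otimes_\mathbb{Z}\mathbb{Q}$, which identifies the $E_1$-term as claimed.

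There is no serious obstacle: the work reduces to matching the boundedness conditions across the equivalence and invoking the compatibility between the topological slice tower and the minimal-model filtration already established in Section \ref{ss:Postnikov}. The one point of possible unease is the tacit nilpotence of $P$ and $Q$ required by Theorem \ref{thm:RatParamHomThry}; this should either be listed among the hypotheses or is automatic once $A$ is simply connected, as in the analogous Proposition \ref{prop:SSAdamsStyle}.
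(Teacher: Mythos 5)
Your proof is correct and follows essentially the same route as the paper's own: set $M = \mathbf{L}\mathfrak{M}_A(P)$ and $N = \mathbf{L}\mathfrak{M}_A(Q)$, transport the minimal spectral sequence of Proposition~\ref{prop:minSpecSeq} across the equivalence of Theorem~\ref{thm:RatParamHomThry}, identify the abutment by Proposition~\ref{prop:FibStapMapandExt}, and identify the $E_1$-page by combining Corollary~\ref{cor:kthSlice} with a second application of Proposition~\ref{prop:FibStapMapandExt}. (Incidentally, the paper's proof contains the typo $\{P_{=s},Q\}_X^t$ where it should read $\{P,Q_{=s}\}_X^t$; your version is the correct one.)

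Your closing concern is well-founded: the statement of Proposition~\ref{prop:minSpecSeqTop} omits nilpotence of $P$ and $Q$, yet both Theorem~\ref{thm:RatParamHomThry} and Proposition~\ref{prop:FibStapMapandExt} require it, and with $A$ assumed only connected rather than simply connected it is not automatic. The paper's own proof has exactly this gap; the statement should either add the nilpotence hypothesis explicitly or strengthen the assumption to $A$ simply connected, as is done in Proposition~\ref{prop:SSAdamsStyle}.
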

\begin{proof}
Suppose that the $A$-modules $M$, $N$ correspond to $P$ and $Q$, respectively, under the rational homotopy theory equivalence of Theorem \ref{thm:RatParamHomThry}.
Let $A\otimes V\to N$ be a minimal model for the rational homotopy type of $Q$.
By Proposition \ref{prop:TopInterp} the cohomology of $M$ is bounded above so we have a strongly convergent spectral sequence as in Proposition \ref{prop:minSpecSeq}.
Using Proposition \ref{prop:FibStapMapandExt} we identify the target of the spectral sequence as $\{P, Q\}_X^{t}\otimes_\mathbb{Z}\mathbb{Q} \cong \mathrm{Ext}_A^t (N, M)$ and the $E_1$-term is identified as $\{P_{=s}, Q\}_X^{t}\otimes_\mathbb{Z}\mathbb{Q} \cong \mathrm{Ext}_A^t (A\otimes V_{=s}, M)$ using Corollary \ref{cor:kthSlice}.
\end{proof}

\begin{remark}
In the case that $A$ (and hence $X$) is simply connected, any minimal $A$-module of the form $A\otimes W$ with $W$ concentrated in a single degree is necessarily free. 
The $E_1$-term in the spectral sequence arising from the minimal model (Proposition \ref{prop:minSpecSeq}) therefore simplifies: 
\begin{align*}
E_1^{s,t} &= H^t[A\otimes V^{=s}, M]_A
\\
&\cong H^{t}[V^{=s}, M]
\\
&\cong 
(V^{=s})^\vee \otimes H^{t+s}(M)\,.
\end{align*}
When the parameter space is simply connected, the spectral sequence of Proposition \ref{prop:minSpecSeqTop} thus becomes
\[
\mathrm{Hom}(\spi_{s+t} (X_! P), \spi_s(x^\ast Q))\otimes_\mathbb{Z}\mathbb{Q}\Longrightarrow
\{P, Q\}_X^t\otimes_\mathbb{Z}\mathbb{Q}\,,
\]
using $H^\bullet(X_!P) \cong \mathrm{Hom}(\spi_{\bullet}(X_! P),\mathbb{Q})$ and Lemma \ref{lem:FibofMinModSpec}.
\end{remark}

\begin{remark}
The topological spectral sequence of Proposition \ref{prop:minSpecSeqTop} is obtained by identifying the terms in an algebraic spectral sequence constructed using $A$-modules. 
That algebraic spectral sequence is itself obtained by working with an algebraic analogue of the Postnikov filtration.
A similar method can be used to give a more direct, topological derivation of the spectral sequence of Proposition \ref{prop:minSpecSeqTop}.
To see this, let $P, Q$ be parametrised spectra over an arbitrary base space $X$.
The tower of Postnikov sections $\dotsb \to Q_{\leq k}\to Q_{\leq (k-1)}\to \dotsb$ gives rise to a sequence of fibrations of spectra
\[
X_\ast F_X(P,Q)
\longrightarrow
\dotsb
\longrightarrow
X_\ast F_{X}(P,Q_{\leq k})
\longrightarrow
X_\ast F_{X}(P, Q_{\leq (k-1)})
\longrightarrow
\dotsb
\] 
in which the fibre of $F_{X}(P,Q_{\leq k})
\to
F_{X}(P, Q_{\leq (k-1)})$ at the zero map is $F_X (P, Q_{=k})$.
Upon taking stable homotopy groups, this sequence of fibrations determines an exact couple and hence a spectral sequence with $E_1^{s,t} = \spi_{-t} X_\ast F_X(P, Q_{=s}) \cong \{P, Q_{=s}\}_X^{t}$.
The convergence properties of this spectral sequence depend on $P$ and $Q$:
if $Q$ is bounded below we have a half-page spectral sequence with entering differentials and if $P$ is also bounded above, the spectral sequence converges strongly
\[
E_1^{s,t} = \{P, Q_{=s}\}^t_X 
\Longrightarrow
\{P, Q\}^t_X\,.
\]
The convergence argument uses Boardman's criterion \cite[Theorem 7.4]{boardman_conditionally_1999}.
\end{remark}

%%%%%%%%%%%%%%%%%
%%%%%%%%%%%%%%%%%

%%%%%%%%%%%%%%%%%
%%%%%%%%%%%%%%%%%

%%%%%%%%%%%%%%%%%

\end{document}